\newcommand{\SORTNOOPCYR}[1]{} % STAAT AL IN BIBTEX FILE....
\newcommand{\SortNoop}[1]{}
\theoremstyle{change}
\newtheorem{theorem}{Theorem}[section]
\newtheorem{lemma}[theorem]{Lemma}
\theoremstyle{change}
\newtheorem{obs}[theorem]{\!\!}
\theoremstyle{change}
\newtheorem{definition}[theorem]{Definition}
\newtheorem{remark}[theorem]{Remark}
\newtheorem{example}[theorem]{Example}
\newtheorem{examples}[theorem]{Examples} % MEERVOUD
\theoremstyle{plain} % break omdat met plain de tekst schuingedrukt is. 
\theoremstyle{margin}
\theoremstyle{definition}
\g@addto@macro\th@remark{\thm@headpunct{}}
\newcommand{\emphind}[1]{\emph{#1}\index{#1}}
\renewcommand{\epsilon}{\varepsilon}
\renewcommand{\P}{\mathbb{P}}
\newcommand{\1}{\mathbbm{1}}
\newcommand{\R}{\mathbb{R}}
\newcommand{\N}{\mathbb{N}}
\newcommand{\cA}{\mathcal{A}}
\newcommand{\cB}{\mathcal{B}}
\newcommand{\cE}{\mathcal{E}}
\newcommand{\cF}{\mathcal{F}}
\newcommand{\cG}{\mathcal{G}}
\newcommand{\cH}{\mathcal{H}}
\newcommand{\cM}{\mathcal{M}}
\newcommand{\cN}{\mathcal{N}}
\newcommand{\cP}{\mathcal{P}}
\newcommand{\cR}{\mathcal{R}}
\newcommand{\cS}{\mathcal{S}}
\newcommand{\cU}{\mathcal{U}}
\newcommand{\cV}{\mathcal{V}}
\newcommand{\cX}{\mathcal{X}}
\newcommand{\cY}{\mathcal{Y}}
\newcommand{\cZ}{\mathcal{Z}}
	\newcommand{\fd}{\mathfrak{d}}
\newcommand{\limn}{\lim_{n\rightarrow \infty}}
\newcommand{\liminfm}{\liminf_{m\rightarrow \infty}}
\newcommand{\liminfn}{\liminf_{n\rightarrow \infty}}
\newcommand{\limsupm}{\limsup_{m\rightarrow \infty}}
\newcommand{\limsupn}{\limsup_{n\rightarrow \infty}}
\newcommand{\supm}{\sup_{m\in \N}}
\DeclareMathOperator{\supp}{supp}
\DeclareMathOperator{\D}{\, \mathrm{d}\!}
\DeclareMathOperator{\DD}{\, \mathrm{d}}
\newcommand\putatop[2]{\genfrac{}{}{0pt}{}{#1}{#2}}
\newcommand{\assump}[1]{#1}%{\textbf{\emph{#1}}}
\newcommand{\rateindef}{\tfrac{1}{r_n}}
\newcommand{\traten}{\tfrac{1}{n}}
\newcommand{\tratem}{\tfrac{1}{m}}
\newcommand{\tratenm}{\tfrac{1}{n_m}} %ALS EERSTE TERUGVERANDEREN!!!
\DeclareMathOperator{\id}{Id}
\newcommand{\K}{K}
\newcommand{\J}{J}
\newcommand{\absnu}{|\nu|}
\newcommand{\absmu}{|\mu|}
\newcommand{\I}{\mathbb{I}}
\newcommand{\signed}{} % VUL IN "(signed) "
\let\oldforall\forall
\renewcommand{\forall}{\ \oldforall}
\let\oldexists\exists
\renewcommand{\exists}{\ \oldexists}
\newcommand{\margi}{\mathfrak{m}}
\numberwithin{equation}{section}
\begin{document}

%%%%%%%%%%%%%%%%%%%%%%%%%%%%%%%%%%%%%%%%%%%%%%%%%%%%%%%%%%%%%%%%%%%%
%%%%%%%%%%%%%%%%%%%%%%%%%%%%%%%%%%%%%%%%%%%%%%%%%%%%%%%%%%%%%%%%%%%%

\title{Large deviations of 
continuous regular conditional probabilities}

%\title{Regular conditional probabilities and their large deviations}

\renewcommand{\thefootnote}{*}

\author{
W. van Zuijlen\footnotemark[1]
\\
}

\footnotetext[1]{
Mathematical Institute, Leiden University, P.O.\ Box 9512, 2300 RA, Leiden, The
Netherlands.
}

\renewcommand{\thefootnote}{\arabic{footnote}} % rest artikel wel gewone nummers

\maketitle

%%%%%%%%%%%%%%%%%%%%%%%%%%%%%%%%%%%%%%%%%%%%%%%%%%%%%%%%%%%%%%%%%%%%
%%%%%%%%%%%%%%%%%%%%%%%%%%%%%%%%%%%%%%%%%%%%%%%%%%%%%%%%%%%%%%%%%%%%

\begin{abstract}
We study product regular conditional probabilities under measures of two coordinates with respect to the  second coordinate that are weakly continuous on the support of the marginal of the second coordinate. 
Assuming that there exists a sequence of probability measures on the product space that satisfies a large deviation principle, we present necessary and sufficient conditions for the conditional probabilities under these measures to satisfy a large deviation principle. 
The arguments of these conditional probabilities are assumed to converge. 
A way to view regular conditional probabilities as a special case of product regular conditional probabilities is presented. This is used to derive conditions for large deviations of regular conditional probabilities.
In addition, we derive a Sanov-type theorem for large deviations of the empirical distribution of the first coordinate conditioned on fixing the empirical distribution of the second coordinate.

\bigskip

\noindent
\emph{Mathematics Subject Classification (2010)}. 60A10, 60F10. \\
\emph{Key words and phrases}. (product) regular conditional kernel,  weakly continuous, large deviations.  \\
\emph{Acknowledgement}. 
The author is supported by ERC Advanced Grant VARIS-267356 of Frank den Hollander.
The author is grateful to both Frank den Hollander and Frank Redig for valuable suggestions and useful discussions. 
\end{abstract}

%%%%%%%%%%%%%%%%%%%%%%%%%%%%%%%%%%%%%%%%%%%%%%%%%%%%%%%%%%%%%%%%%%%%
%%%%%%%%%%%%%%%%%%%%%%%%%%%%%%%%%%%%%%%%%%%%%%%%%%%%%%%%%%%%%%%%%%%%

%%%%%%%%%%%%%%%%%% SECTION 1 %%%%%%%%%%%%%%%%%%%

\section{Introduction and main results}
\label{section:Intoduction_and_main_results}

%\subsection{Regular conditional probabilities and conditional large deviations} 

In the present paper we study large deviations of probabilities ``of the form'' 
\begin{align}
\label{eqn:conditional_probability_of_RVS}
\P(X_n \in A \, | \, Y_n = y_n), 
%-\inf I_y(A^\circ)  \le \liminfn \tfrac1n \log  \P(X_n \in A \ | \ Y_n = y_n) \\
%\le \limsupn  \tfrac1n \log  \P(X_n \in A \ | \ Y_n = y_n) \le -\inf I_y(\overline A)
\end{align}
where $((X_n,Y_n))_{n\in\N}$ is a sequence of couples of random variables that satisfies a large deviation principle and $y_n \rightarrow y$ for some $y$. 
As the event $[Y_n = y_n]$ may have probability zero, we make sense of \eqref{eqn:conditional_probability_of_RVS} in terms of a kernel $\eta_n$, so that 
$$\eta_n(y_n,A)$$
 ``represents'' \eqref{eqn:conditional_probability_of_RVS}. 

Such kernels are called regular conditional probabilities and form an important object in probability theory. 
The existence of regular conditional probabilities has been studied extensively, for example, by Faden \cite{Fa85} or by Leao, Fragoso and Fuffino \cite{LeFrRu04}. 
There exist in fact various forms of regular conditional probabilities; namely either 
%There exist in fact several versions of it, in the sense that one has a regular conditional probability either 
with respect to a $\sigma$-algebra, 
with respect to a measurable map, 
or with respect to the projection on one of the coordinates (in case of a product space). 
%In the present paper we study regular conditional probabilities  with respect to measurable maps and product regular conditional probabilities in the context of topological spaces. 

In order to consider large deviations of conditional probabilities, we have to specify which conditional probability we are considering; the conditional probability may not be unique. 
However, if a (product) regular conditional probability is weakly continuous on the support of the measure composed with the inverse of the measurable map (or projection), it is unique on that domain. 
%Whenever a (product) regular conditional probability is weakly continuous on the support of the measure composed with the inverse of the measurable map (or projection), it is unique on that domain. 
For these (product) regular conditional probabilities it is natural to study their large deviations, whenever the argument of the probability is in the domain on which it is unique. 
In this paper we study the large deviations in the case when the arguments of these kernels converge, i.e., we study large deviations of $(\eta_n(y_n,\cdot))_{n\in\N}$ for the case that $y_n \rightarrow y$. 
To the best of our knowledge, current literature 
%As far as we know, the literature 
does not provide a general condition under which such kernels satisfy a large deviation principle.

\subsection{Literature}

Some examples in this direction are present. 
For example in Adams, Dirr, Peletier and Zimmer \cite{AdDiPeZi11}, the large deviation principle is proved for the empirical distribution that is evolved by independent Brownian motions conditioned on their initial empirical distribution to lie in a ball (see \cite[Theorem 1]{AdDiPeZi11}).  
They proceed by proving that the large deviation principle rate function converges as the radius of the ball converges to zero. 
For the purpose of this paper, 
we have to show that the limit of the radius of the ball and the limit belonging to the large deviation principle can be interchanged. 
L\'{e}onard \cite{Le07}
proves the large deviation principle of the empirical distribution that is evolved by independent Brownian motions conditioned on their initial empirical distribution; 
those initial empirical distributions are assumed to be converging (see \cite[Proposition 2.19]{Le07}). 
In both papers, the evolved state is conditioned on the initial state, while there is also interest in large deviations of the initial state conditioned on the evolved state. 
In this paper we prove the large deviation principle in this setting for finite state spaces. 

There exist various results on quenched large deviations, i.e., large deviations for regular conditional probabilities in the sense that for almost all realisations of the disorder the conditional probabilities satisfy the large deviation principle with a rate function that does not depend on the disorder. 
Examples of papers on quenched large deviations are 
Comets \cite{Co89} for conditional large deviations of i.i.d. random fields,
Greven and den Hollander \cite{GrdH98} and Comets, Gantert and Zeitouni \cite{CoGaZe00} for random walks in random environments,
Kosygina-Rezakhanlou-Varadhan \cite{KoReVa06}, for a diffusion with a random drift, 
Rassoul-Agha, Sepp{\"a}l{\"a}inen and Yilmaz \cite{RaSeYi13} for polymers in a random potential. 

%To some extent we complement the article of Biggins \cite{Bi04} in the opposite direction, as he proves the following under general assumptions. 
%If $\eta_n$ is a kernel for which the sequence $(\eta_n(y_n,\cdot))_{n\in\N}$ satisfies the large deviation principle with rate function $I_y$ whenever $y_n \rightarrow y$, and the sequence $(\nu_n)_{n\in\N}$ satisfies the large deviation principle with rate function $\Psi$, then the sequence $(\mu_n)_{n\in\N}$, where $\mu_n$ is a mixture of $\eta_n$ and $\nu_n$ defined by $\mu_n(A) = \int_\cY \int_\cX\1_A(x,y) \DD[ \eta_n(y,\cdot)] \D \nu_n(y)$, satisfies the large deviation principle with rate function $(x,y) \mapsto \Psi(y) + I_y(x)$. 
%In the present paper we assume the sequence $(\mu_n)_{n\in\N}$ to satisfy a large deviation principle and are interested in the large deviation principle of the sequence $(\eta_n(y_n,\cdot))_{n\in\N}$. 

%Assuming that the kernels satisfy the large deviation principle as their arguments converge, Biggins  \cite{Bi04} obtains the large deviation principle for mixtures of these kernels with probability measures that satisfy the large deviation principle. 

% OF 
Biggins \cite{Bi04} obtains the large deviation principle for mixtures of probability measures that satisfy the large deviation principle with kernels that satisfy the large deviation principle as their arguments converge.
To some extent we complement the article in the opposite direction, in the sense that we assume the large deviation principle of the mixture and derive the large deviation principle of the kernels.

Our main motivation to study the above large deviations lies in the theory of Gibbs-non-Gibbs transitions. 
There is a correspondence between the large deviation rate function of the conditional probability with respect to the evolved coordinate and the evolved state (measure or sequence) being Gibbs (see van Enter, Fern\'{a}ndez, den Hollander and Redig \cite{vEFedHoRe10}). 
We refer to Section \ref{subsection:GNG_discussion} for further discussions on Gibbs-non-Gibbs transitions.

%%%%%%%%%%%%%%
\subsection{Large deviations}
\label{subsection:LDP}
%%%%%%%%%%%%%%

In the literature on large deviations two dominant definitions of large deviation principles are used. One is  in terms of a $\sigma$-algebra on the topological space, as is done in the book by Dembo and Zeitouni \cite{DeZe10} and in the book by Deuschel and Stroock \cite{DeSt84}, the other is  in terms of the topology, i.e., in terms of open and closed sets, as is done in the book by den Hollander \cite{dH00} and in the book by Rassoul-Agha and Sepp\"al\"ainen \cite{RaSe15}.
Whenever one considers the Borel-$\sigma$-algebra on the topological space, the two definitions agree. 

We define the  large deviation lower bound and the  large deviation upper bound separately, as in 
Section \ref{subsection:main_results} and in 
Section \ref{section:ldp_product_RCP} we describe the necessary and sufficient conditions for each of the bounds separately. Moreover, we define them on a set of subsets of the topological space, which is not required to be a $\sigma$-algebra. 
In Remark \ref{remark:motivation_LDP_definition} we motivate the choice for this definition. 

\begin{definition}
\label{def:ldp_adapted}
Let $\cX$ be a topological space and $\cA$ be a set of subsets of $\cX$.  
Let $I: \cX \rightarrow [0,\infty]$ be lower semicontinuous. 
Let $(\mu_n)_{n\in\N}$ be a sequence of probability measures on $\cA$. 
Let $(r_n)_{n\in\N}$ be an increasing sequence in $(0,\infty)$ with $\limn r_n = \infty$. 
We say that $(\mu_n)_{n\in\N}$ satisfies a 
\emphind{large deviation lower bound}
on $\cA$ 
with rate function $I$ and rates $(r_n)_{n\in\N}$  if 
\begin{align}
\label{eqn:def_ldp_lower_bound}
\liminfn \rateindef \log \mu_n(A) \ge - \inf I(A^\circ) \qquad (A\in \cA). 
\end{align}
We say that $(\mu_n)_{n\in\N}$ satisfies a 
\emphind{large deviation upper bound}
on $\cA$ 
with rate function $I$ and rates $(r_n)_{n\in\N}$ if 
\begin{align}
\label{eqn:def_ldp_upper_bound}
\limsupn \rateindef \log \mu_n(A) \le - \inf I(\overline A) \qquad (A\in \cA). 
\end{align}
\assump{In the rest of the paper we only consider the rates $r_n=n$.} 
However, the theory presented is still valid for general rates $(r_n)_{n\in\N}$.
We say that $(\mu_n)_{n\in\N}$ satisfies a 
\emphind{large deviation principle}
on $\cA$ 
 with rate function $I$ %(and rates $(r_n)_{n\in\N}$) 
 whenever it satisfies both the large deviation lower bound and the large deviation upper bound with rate function $I$. 
 
We omit ``on $\cA$'' whenever $\cA$ is the Borel-$\sigma$-algebra $\cB(\cX)$ on $\cX$.
In this case the large deviation lower bound is satisfied if and only if the inequality in \eqref{eqn:def_ldp_lower_bound} holds for all open subsets of $\cX$ and the large deviation upper bound is satisfied if and only if the inequality in \eqref{eqn:def_ldp_upper_bound} holds for all closed subsets of $\cX$. 
\end{definition}

%%%%%%%%%%%%%%
\subsection{Main results}
\label{subsection:main_results}
%%%%%%%%%%%%%%

See Section \ref{section:rcp} and Section \ref{section:weak_continuous_cps} for the definitions of the objects in the statements of the following theorems. 
In Section \ref{section:ldp_product_RCP} and Section \ref{section:ldp_RCP} we consider a more general situation. Theorem \ref{theorem:equivalent_notions_ldp_bounds_metric_PRCP} 
is a consequence of Theorem \ref{theorem:equivalent_notions_bounds_prodRCP} and 
Theorem \ref{theorem:equivalent_notions_ldp_bounds_metric_RCP} is a consequence of Theorem \ref{theorem:equivalent_notions_bounds_RCP}. 

\assump{In this section $\cX$ and $\cY$ are a metric spaces.} 

\begin{theorem}
\label{theorem:equivalent_notions_ldp_bounds_metric_PRCP}
Let $\pi : \cX \times \cY \rightarrow \cY$ be given by $\pi(x,y)=y$. 
Suppose that
$(\mu_n)_{n\in\N}$ is a sequence of probability measures on $\cB(\cX) \otimes \cB(\cY)$ that satisfies the large deviation principle with rate function $J: \cX \times \cY \rightarrow [0,\infty]$ that has compact sublevel sets. 
Suppose that for each $n\in\N$ there exists a product regular conditional probability 
 $\eta_n : \cY \times \cB(\cX) \rightarrow [0,1]$ under $\mu_n$ with respect to $\pi$ 
that is weakly continuous on $\supp( \mu_n \circ \pi^{-1})$, which we assume to be nonempty.
Let $y \in \cY$ be such that $\inf J(\cX \times \{y\})<\infty$. 
Define $I: \cX \rightarrow [0,\infty]$ by 
\begin{align}
I(x) = J(x,y) - \inf J(\cX \times \{y\}). 
\end{align}
$I$ has compact sublevel sets, and, for each $n\in\N$, $\eta_n$ is unique on $\supp(\mu_n \circ \pi^{-1})$. %\footnote{Unique in the sense that $\zeta_n(y,\cdot) = \eta_n(y,\cdot)$ for all $y\in \supp(\mu_n \circ \pi^{-1})$ for any product regular conditional probability $\zeta_n$ under $\mu_n$ with respect to $\pi$.} 
Moreover,
\begin{center}
\ref{item:forall_y_seq_cond_lower_bound_metric} $\iff$ \ref{item:uniform_condition_lower_bound_metric} and 
\ref{item:forall_y_seq_cond_upper_bound_metric} $\iff$ \ref{item:uniform_condition_upper_bound_metric},
\end{center}
where
\begin{enumerate}[label=\emph{(A\arabic*)}]
\item 
\label{item:forall_y_seq_cond_lower_bound_metric}
For all $(y_n)_{n\in\N}$ with $y_n \rightarrow y$ and $y_n \in \supp(\mu_n\circ \pi^{-1})$  for all $n$ large enough,\footnote{Meaning that there exists an $N\in\N$ such that $y_n \in \supp(\mu_n\circ \pi^{-1})$ for all $n\ge N$.}
the sequence $(\eta_n(y_n,\cdot))_{n\in\N}$ satisfies the large deviation lower bound with rate function $I$. 
\item 
\label{item:uniform_condition_lower_bound_metric}
For all $x\in \cX$ and $r>0$,
with $U= B(x,r)$,
\begin{align}
\label{eqn:metric_equiv_cond_all_seq_lower_ldp}
\sup_{\epsilon>0}  \liminfn 
\inf_{ \putatop{z\in \cY, \delta \in (0,\epsilon) }{ B(z,\delta) \subset B(y,\epsilon) } }
\traten \log \mu_n \Big( \overline{U} \times \cY  \, \Big| \, \cX \times B(z,\delta) \Big) \ge -\inf I( U ). 
\end{align}
\end{enumerate}
\begin{enumerate}[label=\emph{(B\arabic*)}]
\item 
\label{item:forall_y_seq_cond_upper_bound_metric}
For all $(y_n)_{n\in\N}$ with $y_n \rightarrow y$ and $y_n \in \supp(\mu_n\circ \pi^{-1})$  for all $n$ large enough,
the sequence $(\eta_n(y_n,\cdot))_{n\in\N}$ satisfies the large deviation upper bound with rate function $I$. 
\item 
\label{item:uniform_condition_upper_bound_metric}
For all $x_1,\dots,x_k\in \cX$ and $r_1,\dots,r_k>0$,  with $W = \cX \setminus [B(x_1,r_1)\cup \cdots \cup B(x_k,r_k)]$,
\begin{align}
\label{eqn:metric_equiv_cond_all_seq_upper_ldp}
\inf_{\epsilon>0}  \limsupn  
\sup_{ \putatop{z\in \cY, \delta \in (0,\epsilon) }{ B(z,\delta) \subset B(y,\epsilon) } }
\traten \log \mu_n \Big( W^\circ \times \cY \, \Big| \, \cX \times B(z,\delta) \Big) \le -\inf I(W ). 
\end{align}
\end{enumerate}
\end{theorem}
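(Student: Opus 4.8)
Throughout, write $\nu_n:=\mu_n\circ\pi^{-1}$. The tool that drives everything is the averaging identity for a product regular conditional probability: for Borel $A\subseteq\cX$ and Borel $B\subseteq\cY$ with $\nu_n(B)>0$,
\begin{align}
\mu_n(A\times\cY\mid\cX\times B)=\frac{1}{\nu_n(B)}\int_B\eta_n(y,A)\,\nu_n(\mathrm{d}y)=:\bar\eta_n^{B}(A),
\end{align}
the $\nu_n$-average of $\eta_n(\cdot,A)$ over $B$; this is exactly the quantity appearing, with $B$ a small ball about $y$, on the ``uniform'' sides \ref{item:uniform_condition_lower_bound_metric} and \ref{item:uniform_condition_upper_bound_metric}, so the whole proof consists in transferring information between $\bar\eta_n^{B(z,\delta)}$ for small $\delta$ and $\eta_n(y_n,\cdot)$. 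The two side statements come first: uniqueness of $\eta_n$ on $\supp\nu_n$ is the fact (Section~\ref{section:weak_continuous_cps}) that two weakly continuous product regular conditional probabilities agree on the support of the marginal, and compactness of the sublevel sets of $I$, with its lower semicontinuity, holds because $\{I\le a\}$ is the $\{y\}$-section of the compact set $\{J\le a+\inf J(\cX\times\{y\})\}$ and $x\mapsto J(x,y)$ is lower semicontinuous. Two further facts will be used repeatedly: (i) for $y_0\in\supp\nu_n$, weak continuity of $y\mapsto\eta_n(y,\cdot)$ at $y_0$ gives $\bar\eta_n^{B(y_0,\delta)}\to\eta_n(y_0,\cdot)$ weakly as $\delta\downarrow0$, hence by the portmanteau theorem $\limsup_{\delta\downarrow0}\bar\eta_n^{B(y_0,\delta)}(C)\le\eta_n(y_0,C)$ for closed $C$ and $\liminf_{\delta\downarrow0}\bar\eta_n^{B(y_0,\delta)}(V)\ge\eta_n(y_0,V)$ for open $V$; (ii) since $\inf J(\cX\times\{y\})<\infty$, the large deviation lower bound for $(\mu_n)$ forces $\nu_n(B(y,\epsilon))>0$ for every $\epsilon>0$ and all large $n$, so sequences $y_n\to y$ with $y_n\in\supp\nu_n$ exist (making \ref{item:forall_y_seq_cond_lower_bound_metric} and \ref{item:forall_y_seq_cond_upper_bound_metric} non-vacuous) and any finite ``bad'' sub-collection can be filled in to such a sequence.

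For the lower bound, \ref{item:forall_y_seq_cond_lower_bound_metric} $\Rightarrow$ \ref{item:uniform_condition_lower_bound_metric} I would prove by contraposition: if \eqref{eqn:metric_equiv_cond_all_seq_lower_ldp} fails for some $U=B(x,r)$, fix $\gamma>0$ with its left-hand side below $-\inf I(U)-\gamma$, so that for every $\epsilon>0$ the inner limit inferior in $n$ is below $-\inf I(U)-\gamma$; using $\bar\eta_n^{B}(\overline U)\ge\bar\eta_n^{B}(U)\ge\inf_{y'\in B\cap\supp\nu_n}\eta_n(y',U)$, extract a subsequence $n_m$, admissible balls $B_m\subseteq B(y,1/m)$, and points $y'_m\in B_m\cap\supp\nu_{n_m}$ with $\tfrac{1}{n_m}\log\eta_{n_m}(y'_m,U)<-\inf I(U)-\tfrac{\gamma}{2}$, fill these into a sequence $y_n\to y$ in $\supp\nu_n$, and contradict \ref{item:forall_y_seq_cond_lower_bound_metric} applied to the open set $U$. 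For \ref{item:uniform_condition_lower_bound_metric} $\Rightarrow$ \ref{item:forall_y_seq_cond_lower_bound_metric}: given $y_n\to y$ in $\supp\nu_n$ and open $V$, it suffices to show $\liminfn\tfrac{1}{n}\log\eta_n(y_n,\overline U)\ge-\inf I(U)$ for every ball $U=B(x,r)$ with $\overline U\subseteq V$ and then take the supremum over $x\in V$; for this, for each large $n$ choose $\delta_n$ so small that $B(y_n,\delta_n)\subseteq B(y,\epsilon)$ and, by (i), $\bar\eta_n^{B(y_n,\delta_n)}(\overline U)\le 2\,\eta_n(y_n,\overline U)$ (with a harmless $+\,e^{-n^2}$ when the right side vanishes), whereupon the lower bound that \ref{item:uniform_condition_lower_bound_metric} gives for $\tfrac1n\log\bar\eta_n^{B(y_n,\delta_n)}(\overline U)$ transfers to $\eta_n(y_n,\overline U)$, the correction being negligible on the exponential scale.

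The upper-bound equivalence \ref{item:forall_y_seq_cond_upper_bound_metric} $\iff$ \ref{item:uniform_condition_upper_bound_metric} is entirely parallel, with infima over balls replaced by suprema, the closed balls $\overline U$ replaced by the open sets $W^\circ=\cX\setminus(\overline{B(x_1,r_1)}\cup\cdots\cup\overline{B(x_k,r_k)})$, the inequality $\bar\eta_n^{B}(\overline U)\ge\inf\cdots$ replaced by $\bar\eta_n^{B}(W^\circ)\le\sup_{y'\in B\cap\supp\nu_n}\eta_n(y',W^\circ)$, and the closed-set half of (i) replaced by the open-set half. There is one extra step in \ref{item:uniform_condition_upper_bound_metric} $\Rightarrow$ \ref{item:forall_y_seq_cond_upper_bound_metric}: the upper bound for an arbitrary closed $C$ must first be reduced to sets $W^\circ$, which is done by covering the compact set $\{I\le a\}$, for $a<\inf I(C)$, by finitely many balls whose closures are disjoint from $C$; this yields $C\subseteq W^\circ$ and $\inf I(W)\ge a$, so the bound for $W^\circ$ gives $\limsupn\tfrac1n\log\eta_n(y_n,C)\le-a$, and $a\uparrow\inf I(C)$ finishes it. The contrapositive direction extracts, as for the lower bound, a subsequence on which $\eta_{n_m}(y'_m,W^\circ)$ is too large on the exponential scale and contradicts \ref{item:forall_y_seq_cond_upper_bound_metric} for $A=W^\circ$, using $\overline{W^\circ}\subseteq W$ so that $\inf I(\overline{W^\circ})\ge\inf I(W)$.

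The step I expect to be the genuine obstacle is the comparison in (i) between the average $\bar\eta_n^{B(y_0,\delta)}(A)$ and the point value $\eta_n(y_0,A)$: weak continuity provides convergence as $\delta\downarrow0$ only for each fixed $n$, with no uniformity in $n$, so a naive comparison incurs an additive error that destroys the exponential scale. The remedy — used in both converse directions above — is to let the tolerance in the portmanteau estimate depend on $n$, indeed on $\eta_n(y_n,A)$ itself, exploiting that the conditioning radius $\delta_n$ may be chosen afresh for each $n$; this turns the additive error into a multiplicative factor $1+o(1)$ (or $\tfrac12$, in the open-set direction), which is invisible after $\tfrac1n\log(\cdot)$ followed by $\liminfn$ or $\limsupn$. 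One could instead deduce the theorem from its topological-space version, Theorem~\ref{theorem:equivalent_notions_bounds_prodRCP}, by taking the open balls as a neighbourhood basis, but the analytic content would be the same.
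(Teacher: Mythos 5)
Your proposal is correct and follows essentially the same route as the paper: the averaging sandwich $\inf_{y'\in B\cap\supp\nu_n}\eta_n(y',A)\le\mu_n(A\times\cY\mid\cX\times B)\le\sup_{y'\in B\cap\supp\nu_n}\eta_n(y',A)$ combined with the Portmanteau bounds from weak continuity (with the radius $\delta_n$ chosen per $n$, which is exactly how the paper's $\limsup_{V\in\cV_n}$/$\liminf_{V\in\cV_n}$ inside the $n$-limit resolves the non-uniformity you flag), reduction of general open and closed sets to balls and complements of finite unions of balls via normality and the compact sublevel sets of $I$, and contraposition with subsequence extraction for the converse directions. The only difference is presentational: the paper factors the argument through the general topological statements (Theorems \ref{theorem:lower_and_upper_bound_eta_n_convergent_y_n} and \ref{theorem:equivalent_notions_bounds_prodRCP}) and specialises to balls, whereas you argue directly in the metric setting, as you yourself note.
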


The next theorem is similar to Theorem \ref{theorem:equivalent_notions_ldp_bounds_metric_PRCP}, 
but considers the large deviation bounds for regular conditional kernels instead of product regular conditional probabilities. 

\begin{theorem}
\label{theorem:equivalent_notions_ldp_bounds_metric_RCP}
Let $\tau: \cX \rightarrow \cY$ be continuous. 
Suppose that
$(\nu_n)_{n\in\N}$ is a sequence of probability measures on $\cB(\cX) $ that satisfies the large deviation principle with rate function $J: \cX  \rightarrow [0,\infty]$ that has compact sublevel sets. 
Suppose that for each $n\in\N$ there exists a regular conditional probability
 $\eta_n : \cY \times \cB(\cX) \rightarrow [0,1]$ under $\nu_n$ with respect to $\tau$ that is weakly continuous on $\supp( \nu_n \circ \tau^{-1})$, which is assumed to be nonempty. 
Let $y \in \cY$ be such that $\inf J( \tau^{-1}(\{y\}))<\infty$. 
Define $I: \cX \rightarrow [0,\infty]$ by 
\begin{align}
I(x) =
\begin{cases}
 J(x) - \inf J(\tau^{-1}(\{y\})) & \tau(x)=y, \\
 \infty & \tau(x) \ne y. 
\end{cases}
\end{align}
$I$ has compact sublevel sets, and, for each $n\in\N$, $\eta_n$ is unique on $\supp(\nu_n \circ \tau^{-1})$. 
Moreover,
\begin{center}
\ref{item:forall_y_seq_cond_lower_bound_RCP_metric} $\iff$ \ref{item:uniform_condition_lower_bound_RCP_metric} and 
\ref{item:forall_y_seq_cond_upper_bound_RCP_metric} $\iff$ \ref{item:uniform_condition_upper_bound_RCP_metric}, 
\end{center}
where
\begin{enumerate}[label=\emph{(A\arabic*)}]
\item 
\label{item:forall_y_seq_cond_lower_bound_RCP_metric}
For all $(y_n)_{n\in\N}$ with $y_n \rightarrow y$ and $y_n \in \supp(\nu_n\circ \pi^{-1})$  for all $n$ large enough,
the sequence $(\eta_n(y_n,\cdot))_{n\in\N}$ satisfies the large deviation lower bound with rate function $I$. 
\item 
\label{item:uniform_condition_lower_bound_RCP_metric}
For all $x\in \cX$ and $r>0$, 
with $U= B(x,r)$,
\begin{align}
\label{eqn:metric_equiv_cond_all_seq_lower_ldp_for_RCP}
\sup_{\epsilon>0}  \liminfn 
\inf_{ \putatop{z\in \cY, \delta \in (0,\epsilon) }{ B(z,\delta) \subset B(y,\epsilon) } }
\traten \log \nu_n \Big( \overline{U}  \ \Big| \, \tau^{-1}( B(z,\delta) ) \Big) \ge -\inf I( U ). 
\end{align}
\end{enumerate}
\begin{enumerate}[label=\emph{(B\arabic*)}]
\item 
\label{item:forall_y_seq_cond_upper_bound_RCP_metric}
For all $(y_n)_{n\in\N}$ with $y_n \rightarrow y$ and $y_n \in \supp(\nu_n\circ \pi^{-1})$  for all $n$ large enough,
the sequence $(\eta_n(y_n,\cdot))_{n\in\N}$ satisfies the large deviation upper bound with rate function $I$. 
\item 
\label{item:uniform_condition_upper_bound_RCP_metric}
For all $x_1,\dots,x_k\in \cX$ and $r_1,\dots,r_k>0$,  with $W = \cX \setminus [B(x_1,r_1)\cup \cdots \cup B(x_k,r_k)]$,
\begin{align}
\label{eqn:metric_equiv_cond_all_seq_upper_ldp_for_RCP}
\inf_{\epsilon>0}  \limsupn  
\sup_{ \putatop{z\in \cY, \delta \in (0,\epsilon) }{ B(z,\delta) \subset B(y,\epsilon) } }
\traten \log \nu_n \Big( W^\circ  \, \Big| \, \tau^{-1}(B(z,\delta) ) \Big) \le -\inf I(W ). 
\end{align}
\end{enumerate}
\end{theorem}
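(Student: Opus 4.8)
The plan is to realise the regular‑conditional‑probability situation as a special instance of the product‑regular‑conditional‑probability situation and then invoke Theorem~\ref{theorem:equivalent_notions_ldp_bounds_metric_PRCP}. (This is the way of viewing a regular conditional probability as a special case of a product regular conditional probability announced in the introduction; alternatively one would deduce the statement from the general Theorem~\ref{theorem:equivalent_notions_bounds_RCP} by specialising its open‑neighbourhood conditions to balls, which form a neighbourhood basis in a metric space.) I would introduce the graph map $g:\cX\to\cX\times\cY$, $g(x)=(x,\tau(x))$. Since $\tau$ is continuous and $\cY$ is Hausdorff, the graph $\Gamma:=g(\cX)$ is closed, hence Borel; $g$ is a homeomorphism of $\cX$ onto $\Gamma$ with $\pi\circ g=\tau$. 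Setting $\mu_n:=\nu_n\circ g^{-1}$, a probability measure on $\cB(\cX)\otimes\cB(\cY)$ concentrated on $\Gamma$, the contraction principle gives that $(\mu_n)_{n\in\N}$ satisfies the large deviation principle with rate function $\tilde J(x,y)=J(x)$ for $\tau(x)=y$ and $\tilde J(x,y)=\infty$ otherwise; its sublevel sets are $g$‑images of the sublevel sets of $J$, hence compact.

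Next I would set up the dictionary between the two pictures. From $\pi\circ g=\tau$ one gets $\mu_n\circ\pi^{-1}=\nu_n\circ\tau^{-1}$, so the two supports coincide and are nonempty. A weakly continuous regular conditional probability $\eta_n$ under $\nu_n$ with respect to $\tau$ is also a product regular conditional probability under $\mu_n$ with respect to $\pi$: on a measurable rectangle $A\times B$ the disintegration identity reads $\mu_n(A\times B)=\nu_n(A\cap\tau^{-1}(B))=\int_B\eta_n(y,A)\,(\nu_n\circ\tau^{-1})(\D y)$, and the general case follows by a monotone‑class argument; ``weakly continuous on the support'' is literally the same requirement on both sides since the marginals on $\cY$ are equal. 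Moreover $\inf\tilde J(\cX\times\{y\})=\inf J(\tau^{-1}(\{y\}))<\infty$, and the rate function $x\mapsto\tilde J(x,y)-\inf\tilde J(\cX\times\{y\})$ produced by Theorem~\ref{theorem:equivalent_notions_ldp_bounds_metric_PRCP} is precisely the function $I$ of the present statement (with value $\infty$ when $\tau(x)\neq y$). Hence the assertions that $I$ has compact sublevel sets and that each $\eta_n$ is unique on $\supp(\nu_n\circ\tau^{-1})$ follow at once from the corresponding assertions of that theorem.

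Finally I would match the four conditions. Since $\eta_n(y_n,\cdot)$ is literally the same object in both pictures, \ref{item:forall_y_seq_cond_lower_bound_RCP_metric} and \ref{item:forall_y_seq_cond_upper_bound_RCP_metric} are exactly \ref{item:forall_y_seq_cond_lower_bound_metric} and \ref{item:forall_y_seq_cond_upper_bound_metric} applied to $(\mu_n)_{n\in\N}$. For the uniform conditions, fix a Borel set $A\subseteq\cX$, $z\in\cY$ and $\delta>0$; using that $\mu_n$ is concentrated on $\Gamma$ and that $g$ transports $\nu_n$ to $\mu_n$,
\begin{align*}
\mu_n\big(A\times\cY \,\big|\, \cX\times B(z,\delta)\big)
&=\frac{\mu_n\big((A\times\cY)\cap(\cX\times B(z,\delta))\big)}{\mu_n\big(\cX\times B(z,\delta)\big)} \\
&=\frac{\nu_n\big(A\cap\tau^{-1}(B(z,\delta))\big)}{\nu_n\big(\tau^{-1}(B(z,\delta))\big)}
=\nu_n\big(A\,\big|\,\tau^{-1}(B(z,\delta))\big).
\end{align*}
Taking $A=\overline U$ with $U=B(x,r)$ converts \eqref{eqn:metric_equiv_cond_all_seq_lower_ldp} into \eqref{eqn:metric_equiv_cond_all_seq_lower_ldp_for_RCP}, and taking $A=W^\circ$ with $W=\cX\setminus[B(x_1,r_1)\cup\cdots\cup B(x_k,r_k)]$ converts \eqref{eqn:metric_equiv_cond_all_seq_upper_ldp} into \eqref{eqn:metric_equiv_cond_all_seq_upper_ldp_for_RCP}; the suprema and infima over $\epsilon,z,\delta$ and the $\liminfn,\limsupn$ are untouched, and the right‑hand sides agree because the rate functions agree. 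Thus \ref{item:forall_y_seq_cond_lower_bound_RCP_metric}$\iff$\ref{item:uniform_condition_lower_bound_RCP_metric} and \ref{item:forall_y_seq_cond_upper_bound_RCP_metric}$\iff$\ref{item:uniform_condition_upper_bound_RCP_metric} follow from the two equivalences of Theorem~\ref{theorem:equivalent_notions_ldp_bounds_metric_PRCP}.

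I expect the \emph{main obstacle} to be the dictionary of the second paragraph rather than the large‑deviation estimates: one must check that a regular conditional probability with respect to $\tau$ genuinely is a product regular conditional probability with respect to $\pi$ for the pushed‑forward measure \emph{on the full product $\sigma$‑algebra}, not merely on rectangles, and that the notions of ``weak continuity on the support'' and ``uniqueness on the support'' transfer verbatim. This is exactly the correspondence developed in the earlier sections (and encapsulated in Theorem~\ref{theorem:equivalent_notions_bounds_RCP}); once it is in hand, the argument above is pure bookkeeping, with all of the analytic content supplied by Theorem~\ref{theorem:equivalent_notions_ldp_bounds_metric_PRCP}.
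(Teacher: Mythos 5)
Your proposal is correct and follows the paper's overall strategy: realise the regular conditional probability as a product regular conditional probability for a measure on $\cX\times\cY$, and then invoke Theorem~\ref{theorem:equivalent_notions_ldp_bounds_metric_PRCP} together with the dictionary $\mu_n\circ\pi^{-1}=\nu_n\circ\tau^{-1}$ and $\mu_n(A\times\cY\mid\cX\times V)=\nu_n(A\mid\tau^{-1}(V))$ (this is exactly the content of \ref{obs:RCP_in_same_situation_as_PRCP}). The one place where you genuinely deviate is in how the product measure and its large deviation principle are obtained. The paper constructs $\mu_n$ with $\mu_n(A\times B)=\nu_n(A\cap\tau^{-1}(B))$ by a Carath\'eodory extension from the ring of finite unions of rectangles (Theorem~\ref{theorem:extension_and_RCP_as_special_case_of_PRCP}, which works for merely measurable $\tau$), and then proves the large deviation bounds \emph{only on rectangles} by hand, using $(A\cap\tau^{-1}(B))^\circ\supset A^\circ\cap\tau^{-1}(B^\circ)$ and $\overline{A\cap\tau^{-1}(B)}\subset\overline{A}\cap\tau^{-1}(\overline{B})$ (Theorem~\ref{theorem:ldp_for_extension_measure}); Remark~\ref{remark:motivation_LDP_definition} explains that the rectangle version suffices for Theorem~\ref{theorem:equivalent_notions_bounds_prodRCP}. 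You instead take $\mu_n=\nu_n\circ g^{-1}$ for the graph map $g=(\id,\tau)$ and apply the contraction principle; this is available precisely because $\tau$ is continuous in the present theorem, produces the same measure (both agree on the generating $\pi$-system of rectangles), and actually yields the full large deviation principle rather than only its restriction to rectangles, so the hypothesis of Theorem~\ref{theorem:equivalent_notions_ldp_bounds_metric_PRCP} is met without any caveat. Two small remarks: the defining identity \eqref{eqn:def_rcp_product} of a product regular conditional kernel is only required on rectangles, so the monotone-class step you flag as the main obstacle is not needed at all; and the closedness of the graph $\Gamma$ plays no role in the computation of $\mu_n(A\times\cY\mid\cX\times B(z,\delta))$, which follows directly from the rectangle identity.
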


%%%%%%%%%%%%%%
\subsection{Gibbs-non-Gibbs transitions and future research}
\label{subsection:GNG_discussion}
%%%%%%%%%%%%%%

In this section we discuss the relation between the large deviation results in this paper and Gibbs-non-Gibbs transitions in more detail. In particular, we discuss possible future directions regarding large deviations of conditional kernels. 

The following situation for interacting particle systems occurs in the mean-field context (a similar context holds in the context of lattices). 
The initial system of -so called- spins consists of distributions describing the interaction between spins via a potential $V$ (for each $n$ there is a distribution describing the law of $n$ spins). 
This initial system is assumed to be Gibbs, which is called sequentially Gibbs in the mean-field context. 
Allowing the initial state to be transformed, for example, by an evolution of the spins, a question of interest is whether the transformed state is (sequentially) Gibbs. 
This question has been addressed in the mean-field context by Ermoleav and K\"{u}lske \cite{ErKu10} and by Fern\'{a}ndez, den Hollander and Mart\'{i}nez \cite{FedHoMa13} for $\{-1,+1\}$-valued spins, by den Hollander, Redig and van Zuijlen \cite{dHRevZ} for $\R$-valued spins and by K\"{u}lske and Opoku \cite{KuOp08} and van Enter, K{\"u}lske, Opoku and Ruszel \cite{vEKuOpRu10} for compactly valued spins. 
In these papers, independent dynamics of the spins are considered (the evolution of each spin is independent of the evolution of the other spins). 
Independent dynamics simplify the situation. 
Namely, the evolved measure on either the product space of the initial and the final space, or -in case of an evolution- the space of trajectories, is a tilted measure of the evolved measure when considering $V=0$. In this case the measure is a product measure, which means that the spins are independent. 
As a consequence (this will be clarified in a forthcoming paper) the conditional kernel $\eta_n$ 
of the initial state on $n$ spins with respect to the final state (for a fixed potential $V$) is a tilted version of the conditional kernel $\eta_n^0$ of the initial state with respect to the final state of independent spins (i.e., $V=0$). 
Because of this tilting, by Varadhan's lemma, $(\eta_n(y_n,\cdot))_{n\in\N}$ satisfies the large deviation principle with rate function $V+ I_y - \inf (V+ I_y)$ if $(\eta_n^0(y_n,\cdot))_{n\in\N}$ satisfies the large deviation principle with rate function $I_y$. 
In the forthcoming paper we will prove that the evolved sequence is sequentially Gibbs if $V+I_\zeta$ has a unique global minimiser. 

The large deviation principle of $(\eta_n(y_n,\cdot))_{n\in\N}$ has been mentioned in the case of trajectories in 
%Ermolaev and K\"{u}lske 
\cite[Corollary 2.4]{ErKu10} 
and -as a corollary of that theorem- for the case of the product space of the initial and the final space in
%Fern{\'a}ndez, den Hollander and Mart{\'i}nez 
\cite[Corollary 1.3]{FedHoMa13}. However, no proof was given. 
Theorem \ref{theorem:application_sanov_type} provides a rigorous proof of the large deviation principle statement in \cite[Corollary 1.3]{FedHoMa13}. 
In this paper we do not provide a rigorous proof of \cite[Corollary 2.4]{ErKu10}. 
But Theorem \ref{theorem:equivalent_notions_ldp_bounds_metric_RCP} may be used, as the conditioning on the final state is a regular conditional kernel with respect to the map $\tau: C([0,T],\cX) \rightarrow \cX$, $\tau(f)=f(T)$. 

In order to deal with empirical distributions (and not with magnetisations as is done in \cite{dHRevZ}), in future research we strive to `extend' the statement of Theorem \ref{theorem:application_sanov_type} to infinite and possibly non-compact state spaces. 
In the case of non-compact spaces it may be that topologies on the space of probability measures are considered that are not metrisable.

%%%%%%%%%%%%%%
\subsection{Outline}
%%%%%%%%%%%%%%

We list some notations, definitions and assumptions in Section \ref{section:notation_and_conventions_cond_LDP}. 
In Section \ref{section:rcp} we give and compare the notions of regular conditional kernels, we show that a regular conditional kernel under a measure $\nu$ is in fact a product regular conditional kernel under a measure that is related to $\nu$. 
In Section \ref{section:weak_continuous_cps} we introduce and study weakly continuous regular conditional kernels. 
In  Section \ref{section:facts_about_lsc_functions_with_compact_level_sets} we present some facts about lower semicontinuous functions with compact sublevel sets. 
Relying on the results of Sections \ref{section:weak_continuous_cps} and \ref{section:facts_about_lsc_functions_with_compact_level_sets}, in Section \ref{section:ldp_product_RCP} we present results on large deviation bounds for product regular conditional probabilities, in particular, necessary and sufficient conditions for these bounds to hold. 
In Section \ref{section:ldp_RCP} we discuss how to obtain large deviation bounds for regular conditional probabilities from the results in Section \ref{section:ldp_product_RCP}. 
In Section \ref{section:application_sanov_type} we apply the theory to obtain the large deviation principle for the empirical density of the first coordinate given the empirical density of the second coordinate, for independent and identically distributed pairs of random variables. 
In Section \ref{section:examples} 
we give some examples. 
We also include an example for which the conditions are not satisfied. 
For this example we compare the quenched large deviations with large deviations of the weakly continuous regular conditional probabilities and comment on the difference with an example by La Cour and Schieve \cite{CoSc15}. 
In appendices \ref{appendix:elemtary_fact} and \ref{appendix:sufficient_bounds} we state some general results considering large deviations bounds that are used in the different sections. 
In appendix \ref{section:proof_theorem_examples} we provide the proof of a theorem on which the examples of Section \ref{section:examples} rely.

\section{Notations and conventions}
\label{section:notation_and_conventions_cond_LDP}

$\N=\{1,2,3,\dots\}$. 
For a topological space $\cX$ we write $\cB(\cX)$ for the Borel-$\sigma$-algebra and $\cP(\cX)$ and $\cM(\cX)$ for the spaces of probability and signed measures on $\cB(\cX)$, respectively.  
For $A\subset \cX$ we write $A^\circ$ for the interior of $A$ and $\overline A$ for the closure of $A$. 
For $x\in \cX$ we write $\delta_x$ for the element in $\cP(\cX)$ with $\delta_x(A) =1$ if $ x\in A$ and $\delta_x(A)=0$ otherwise.
For $x\in \cX$ we write $\cN_x$ for the set of $\cB(\cX)$-measurable neighbourhoods of $x$. 
For a $\mu\in \cM(\cX)$ we write $\supp \mu = \{ x\in \cX: |\mu|(V)>0 \mbox{ for all } V\in \cN_x\}$ and call this the support of $\mu$. 
For a function $f$ from a set $\cX$ into $\R$ and $c\in \R$ we write $[f \ge c] = \{x\in \cX: f(x) \ge c\}$. Similarly, we use the notations $[f > c]$, $[f\le c]$ and $[f<c]$. 
Whenever $(x_\iota)_{\iota\in \I}$ is a net, where $\I$ is a directed set by (a direction) $\preceq$, we write $\liminf_{\iota \in \I} x_\iota = \sup_{\iota_0\in \I} \inf_{\iota \succeq \iota_0, \iota \in \I} x_\iota$ (similarly $\limsup$). 
In particular, if $\cV\subset \cN_x$ and $\bigcap \cV = \{x\}$ and $f: \cV \rightarrow \R$ we write $\liminf_{V\in \cV} f(V) = \sup_{V_0\in \cV} \inf_{V\subset V_0, V\in \cV} f(V)$ (i.e., we consider $(f(V))_{V\in \cV}$ as a net where $\cV$ is directed by $\supset$ (as $\preceq$)).

Whenever we write $\mu(A| B)$ we implicitly assume that it is well defined (as $\mu(A\cap B)/\mu(B)$), i.e., that $\mu(B)\ne 0$. 

We use the conventions $\log 0 = - \infty$ and $\inf I(\emptyset) = \infty$ whenever $I$ is a function with values in $[0,\infty]$. 

\assump{All measures in this paper are signed measures, unless mentioned otherwise.
%Throughout the paper $(r_n)_{n\in\N}$ is an increasing sequence in $(0,\infty)$ with $\limn r_n = \infty$ and all large deviation bounds are with rates  rates $(r_n)_{n\in\N}$.
}

%%%%%%%%%%%%%%%
%%%%%%%%%%%%%%%
\section{Regular conditional kernels being product regular conditional kernels}
%\sectionmark{}
\label{section:rcp}
%%%%%%%%%%%%%%%
%%%%%%%%%%%%%%%

In this section we introduce the notion of a (product) regular conditional kernel. For an extensive study on regular conditional kernels see Bogachev \cite[Section  10.4]{Bo07}. The notion of a product regular conditional kernel does not appear in \cite{Bo07}, but it does in Faden \cite{Fa85} and in Leao, Fragoso and Fuffino \cite{LeFrRu04}. 
Besides giving definitions we make a few observations, of which Theorem \ref{theorem:extension_and_RCP_as_special_case_of_PRCP} 
is used later on to derive statements of regular conditional kernels from statements of product regular conditional kernels. 

\assump{
In this section $(X,\cA)$, $(Y,\cB)$ are measurable spaces,
$\nu$ is a \signed  measure on $\cA$ and $\mu$ is a \signed  measure on $\cA \otimes \cB$, 
$\tau: X \rightarrow Y$ is measurable and  $\pi : X \times Y \rightarrow Y$ is  given by $\pi(x,y)=y$. 
 }

\begin{definition}
A function $\eta: Y \times \cA \rightarrow \R$ is called a 
($\cB$-)\emphind{kernel} 
 if  
 $\eta(\cdot ,A)$ is \mbox{($\cB$-)}measur\-able 
  for all $A\in \cA$ and $\eta(y,\cdot)$ is a \signed  measure for all $y \in Y$.
A kernel $\eta$ is called a \emphind{probability kernel} if $\eta(y,\cdot)$ is a probability measure for all $y \in Y$. 
\end{definition}

\begin{definition}
\label{def:rcps}
Let $\eta: Y \times \cA \rightarrow \R$ be a (probability) kernel. 
\begin{enumerate}
%\item 
%\label{item:def_rcp}
%A degenerate transition probability $\eta: X \times \cF \rightarrow [0,1]$ is called a 
%\emphind{regular conditional probability} under $\mu$ with respect to $\cE$ whenever
%\begin{align}
%\mu(F\cap E) = \int_X  \1_E(x) \eta(x,F)  \D \mu(x) \qquad (F\in \cF, E \in \cE). 
%\end{align}
\item 
\label{item:def_rcp_function}
$\eta$ is called a 
\emphind{regular conditional kernel} (\emphind{regular conditional probability}) under $\nu$ with respect to $\tau$
if
\begin{align}
\label{eqn:def_rcp_function}
\nu(F \cap \tau^{-1}(B) ) = \int_Y \1_B(y) \eta(y,F) \DD \left[\absnu \circ \tau^{-1}\right](y) \qquad (F\in \cF, B\in \cB).
\end{align}
\item 
\label{item:def_rcp_product}
$\eta$ is called a 
\emphind{product regular conditional kernel} (\emphind{product regular conditional probability})
under $\mu$ with respect to $\pi$ if
\begin{align}
\label{eqn:def_rcp_product}
\mu(A \times B ) = \int_Y \1_B(y) \eta(y,A) \DD \left[\absmu \circ \pi^{-1}\right](y) \qquad (A\in \cA, B\in \cB).
\end{align}
%Such $\eta$ is also called a 
%\emph{product regular conditional kernel/probability under $\mu$ with respect to the second coordinate} (or $\cY$-coordinate).
\end{enumerate}
\end{definition}

\begin{obs}
\label{obs:RCP_wrt_sigma_algebra}
Suppose that $\cE$ is a sub-$\sigma$-algebra of $\cF$. 
Let $(Y,\cB) = (X, \cE)$ and $\id : (X, \cA) \rightarrow (Y,\cB)$ be the identity map. 
In agreement of \cite[Definition 10.4.1]{Bo07} 
a kernel $\eta: Y \times \cA \rightarrow \R$ is a 
\emphind{regular conditional kernel} under $\mu$ with respect to $\cE$ if and only if $\eta$ is a regular conditional kernel under $\mu$ with respect to $\id$. 
\end{obs}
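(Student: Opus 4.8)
\textbf{Proof plan for Observation \ref{obs:RCP_wrt_sigma_algebra}.}

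The plan is to unwind both definitions and check that the defining identities coincide term by term, since with the identification $(Y,\cB)=(X,\cE)$ and $\tau=\id$ the two notions are literally about the same kind of object: a kernel $\eta: (X,\cE)\times\cA\to\R$. First I would recall Bogachev's definition of a regular conditional kernel with respect to the sub-$\sigma$-algebra $\cE\subset\cF$: it is an $\cE$-kernel $\eta$ such that for every $F\in\cF$ the function $x\mapsto\eta(x,F)$ is a version of the conditional expectation $\mathbb{E}[\1_F\mid\cE]$ with respect to the measure in question, i.e.\ $\eta(\cdot,F)$ is $\cE$-measurable and
\begin{align}
\mu(F\cap E)=\int_E \eta(x,F)\DD\left[|\mu|_{|\cE}\right](x)\qquad (F\in\cF,\ E\in\cE),
\end{align}
where $|\mu|_{|\cE}$ denotes the restriction of $|\mu|$ to $\cE$ (this is the measure against which the conditional expectation is defined). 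I would state this explicitly so the reader sees precisely which formulation of \cite[Definition 10.4.1]{Bo07} is being used.

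Next I would write out Definition \ref{def:rcps}\ref{item:def_rcp_function} in the special case $\tau=\id:(X,\cA)\to(X,\cE)$. Here $\tau^{-1}(B)=B$ for $B\in\cE$, and $|\mu|\circ\tau^{-1}=|\mu|\circ\id^{-1}$ is exactly the restriction $|\mu|_{|\cE}$ of $|\mu|$ to the sub-$\sigma$-algebra $\cE$. Thus the defining identity \eqref{eqn:def_rcp_function} becomes
\begin{align}
\mu(F\cap B)=\int_X \1_B(x)\,\eta(x,F)\DD\left[|\mu|_{|\cE}\right](x)\qquad (F\in\cF,\ B\in\cE),
\end{align}
and since $\1_B\cdot$ integrated over $X$ equals integration over $B$, this is precisely the identity in Bogachev's definition. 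The measurability requirement is the same in both cases: $\eta$ being a $\cB$-kernel with $\cB=\cE$ means exactly that $\eta(\cdot,F)$ is $\cE$-measurable for all $F\in\cF$, matching Bogachev's condition. Hence the two definitions impose the identical conditions on $\eta$, giving the ``if and only if''.

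I do not anticipate a real obstacle here; the content is purely definitional bookkeeping. The only point requiring a word of care is the identification of the reference measure: one must observe that $|\mu|\circ\id^{-1}$ (the pushforward of $|\mu|$ under the identity viewed as a map $(X,\cA)\to(X,\cE)$) is the same set function as $|\mu|_{|\cE}$ (the restriction of $|\mu|$ to $\cE$), which is immediate since for $E\in\cE$ both equal $|\mu|(E)$. I would remark that this also explains why the general Definition \ref{def:rcps} genuinely subsumes the classical notion, which is the point of recording the observation. Note also that since the paper works with signed measures, ``version of the conditional expectation'' should be read in the sense of integration against $|\mu|$ as in \eqref{eqn:def_rcp_function}, which is exactly how Bogachev sets things up for general (signed) measures; I would point this out to avoid any ambiguity.
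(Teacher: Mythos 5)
Your proof is correct and is exactly the definitional unwinding the paper intends: the observation is stated without proof precisely because, once one notes that $\id^{-1}(B)=B$ for $B\in\cE$ and that $\absmu\circ\id^{-1}$ is the restriction of $\absmu$ to $\cE$, the two defining identities are literally the same. Your care in distinguishing the restriction of $\absmu$ to $\cE$ (which is what the pushforward under $\id$ gives) from the total variation of the restricted measure is a worthwhile point that the paper leaves implicit.
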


\begin{obs}
\label{obs:rcps_compared}
Consider the two kernels $\eta: Y \times \cA \rightarrow \R$ and $\xi: Y \times ( \cA \otimes \cB) \rightarrow \R$, corresponding to each other by the formulas $\xi (y, F) = \int_\cX \1_{F}(x,y) \DD [\eta(y,\cdot)](x)$ and $\eta(y,A) = \xi(y,A\times Y)$. 
Then $\xi$ is a regular conditional kernel under $\mu$ given $\pi$ if and only if $\eta$ is a product regular conditional kernel under $\mu$ given $\pi$. 

In general, $X\times Y$ may be equipped with a $\sigma$-algebra $\cF$ different from $\cA \otimes \cB$. In this situation, where $\mu$ is a \signed measure 
on $\cF$ and $\pi$ is $\cF$-measurable the above  correspondence cannot be used in general to reduce statements about product regular conditional kernels to  statements about regular conditional kernels. See also example \ref{obs:comparison_product_and_function_rcp_for_unicity}. 

On the other hand, regular conditional probabilities can be seen as special cases of product regular conditional probabilities, see Theorem \ref{theorem:extension_and_RCP_as_special_case_of_PRCP}. In the present paper we use this to derive Theorem \ref{theorem:equivalent_notions_ldp_bounds_metric_RCP} from Theorem \ref{theorem:equivalent_notions_ldp_bounds_metric_PRCP} but also Theorem \ref{theorem:equivalent_notions_bounds_RCP} from Theorem \ref{theorem:equivalent_notions_bounds_prodRCP}. 
\end{obs}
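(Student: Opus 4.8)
The content to be proved is the equivalence in the first paragraph: with kernels $\eta\colon Y\times\cA\to\R$ and $\xi\colon Y\times(\cA\otimes\cB)\to\R$ linked by $\xi(y,F)=\int_X\1_F(x,y)\,\DD[\eta(y,\cdot)](x)$ and $\eta(y,A)=\xi(y,A\times Y)$, the kernel $\xi$ is a regular conditional kernel under $\mu$ with respect to $\pi$ if and only if $\eta$ is a product regular conditional kernel under $\mu$ with respect to $\pi$. The remaining two paragraphs are pointers to Example~\ref{obs:comparison_product_and_function_rcp_for_unicity} and Theorem~\ref{theorem:extension_and_RCP_as_special_case_of_PRCP} and carry no proof obligation, so the plan concerns only this ``if and only if''.

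First I would check that the correspondence is well posed and record the two elementary identities on which everything hinges. For fixed $y$ the map $\iota_y\colon X\to X\times Y$, $\iota_y(x)=(x,y)$, is $\cA/(\cA\otimes\cB)$-measurable, so $\xi(y,\cdot)$ is simply the image of the signed measure $\eta(y,\cdot)$ under $\iota_y$, hence a signed measure on $\cA\otimes\cB$; and the family of $F\in\cA\otimes\cB$ for which $y\mapsto\xi(y,F)$ is $\cB$-measurable is a $\lambda$-system that contains the rectangles $A\times B'$ (where $\xi(y,A\times B')=\1_{B'}(y)\,\eta(y,A)$ is visibly measurable in $y$), hence equals $\cA\otimes\cB$; so $\xi$ is indeed a kernel. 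The two defining formulas are consistent, since $\xi(y,A\times Y)=\int_X\1_A(x)\,\DD[\eta(y,\cdot)](x)=\eta(y,A)$. The two identities I will reuse are $\xi(y,A\times B')=\1_{B'}(y)\eta(y,A)$ and $(A\times Y)\cap\pi^{-1}(B)=A\times B$.

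For ``$\xi$ a regular conditional kernel $\Rightarrow$ $\eta$ a product regular conditional kernel'', I would just specialize the defining identity of $\xi$, namely $\mu(G\cap\pi^{-1}(B))=\int_Y\1_B(y)\,\xi(y,G)\,\DD[\absmu\circ\pi^{-1}](y)$, to $G=A\times Y$: by the two recorded identities the left side becomes $\mu(A\times B)$ and the integrand becomes $\1_B(y)\eta(y,A)$, which is exactly the defining identity of a product regular conditional kernel for $\eta$. For the converse, assume $\eta$ is a product regular conditional kernel, fix $B\in\cB$, and compare the two set functions $G\mapsto\mu(G\cap\pi^{-1}(B))$ and $G\mapsto\int_Y\1_B(y)\,\xi(y,G)\,\DD[\absmu\circ\pi^{-1}](y)$ on $\cA\otimes\cB$; both are signed measures in $G$. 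On a rectangle $G=A\times B'$ the first equals $\mu(A\times(B\cap B'))$, and, using $\xi(y,A\times B')=\1_{B'}(y)\eta(y,A)$, the second equals $\int_Y\1_{B\cap B'}(y)\eta(y,A)\,\DD[\absmu\circ\pi^{-1}](y)$, and these coincide by the product regular conditional kernel identity for $\eta$ applied with $B\cap B'$ in place of $B$. Since the measurable rectangles form a $\pi$-system that generates $\cA\otimes\cB$ and contains $X\times Y$, a Dynkin $\pi$--$\lambda$ argument upgrades the equality to all $G\in\cA\otimes\cB$, which is the defining identity of a regular conditional kernel for $\xi$.

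I do not anticipate a genuine obstacle: the proof is bookkeeping organized around $(A\times Y)\cap\pi^{-1}(B)=A\times B$. The only points needing mild care are the $\cB$-measurability of $y\mapsto\xi(y,F)$ and the $\pi$--$\lambda$ extension in the converse, both of which rely on the signed measures involved being finite (finite total variation) so that complements and increasing limits behave — which is the situation for all the kernels and measures used in this paper, in particular for probability kernels and probability measures.
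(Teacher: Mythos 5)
Your proof is correct: the paper states this observation without proof, and your verification — the pushforward description of $\xi(y,\cdot)$ under $x\mapsto(x,y)$, the identity $(A\times Y)\cap\pi^{-1}(B)=A\times B$ for the forward direction, and the Dynkin $\pi$--$\lambda$ argument on rectangles for the converse — is exactly the standard bookkeeping the paper leaves implicit. Your caveat that the $\lambda$-system steps (complements, increasing unions, and the interchange of limit and integral) rely on the kernels having finite total variation is the right one to flag, and is harmless in the paper's setting of probability kernels.
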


\begin{remark}
\label{remark:almost_everywhere_uniqueness}
If $\cA$ is generated by a countable set, two
regular conditional probabilities 
under a \signed measure 
with respect to a $\sigma$-algebra (see \ref{obs:RCP_wrt_sigma_algebra}) 
are almost everywhere equal (see Bogachev \cite[Theorem 10.4.3]{Bo07}).
Similarly one could state an analogues statement for regular conditional kernels with respect to measurable maps and for product regular conditional kernels. 
In Theorem \ref{theorem:unicity_weakly_continuous_rcp_wrt_pi} we prove that (product) regular conditional kernels are unique on the domain on which they are weakly continuous, in case the underlying topological space is perfectly normal. 
For such space the Borel-$\sigma$-algebra may not be generated by a countable set.\footnote{The Sorgenfrey line, the space $\R$ with the right half-open interval topology, is perfectly normal but not second countable (see Steen and Seebach \cite[Example 51]{StSe70}).}
\end{remark}

\begin{theorem}
\label{theorem:extension_and_RCP_as_special_case_of_PRCP}
\begin{enumerate}
\item 
\label{item:extension_measure_to_product}
There exists a \signed  measure $\tilde \mu$ on $(X \times Y, \cA \otimes \cB)$ for which 
$\tilde \mu(A\times B) = \nu( A \cap \tau^{-1}(B))$. 
\item 
\label{item:RCP_as_PRCP}
$\eta: Y\times \cA \rightarrow \R$ is a regular conditional kernel under $\nu$ with respect to $\tau$ if and only if $\eta$ is a product conditional kernel under $\tilde \mu$ with respect to $\pi$. 
\end{enumerate}
\end{theorem}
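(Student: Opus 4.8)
The plan is to construct $\tilde\mu$ directly via a suitable pushforward/composition and then verify the defining identities. For part \ref{item:extension_measure_to_product}, the natural candidate is $\tilde\mu = |\nu| \circ \phi^{-1}$ restricted appropriately, where $\phi : X \to X \times Y$ is the graph map $\phi(x) = (x, \tau(x))$. Note $\phi$ is $(\cA, \cA\otimes\cB)$-measurable because $\tau$ is measurable, so the pushforward is well-defined; but one must track signs, since $\nu$ is a signed measure. The clean way is: let $\nu = \nu^+ - \nu^-$ be the Jordan decomposition and set $\tilde\mu := (\nu^+ \circ \phi^{-1}) - (\nu^- \circ \phi^{-1})$, a signed measure on $\cA\otimes\cB$. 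Then for $A\in\cA$, $B\in\cB$ one computes $\phi^{-1}(A\times B) = A \cap \tau^{-1}(B)$, so $\tilde\mu(A\times B) = \nu^+(A\cap\tau^{-1}(B)) - \nu^-(A\cap\tau^{-1}(B)) = \nu(A\cap\tau^{-1}(B))$, which is exactly the required identity. (One should remark that this determines $\tilde\mu$ on the generating $\pi$-system of rectangles, hence uniquely on $\cA\otimes\cB$, and that $|\tilde\mu| = |\nu|\circ\phi^{-1}$, a fact needed below.)

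For part \ref{item:RCP_as_PRCP}, the strategy is to show the two defining integral identities \eqref{eqn:def_rcp_function} and \eqref{eqn:def_rcp_product} coincide once $\mu$ is replaced by $\tilde\mu$. First I would establish the key pushforward relation $\tilde\mu \circ \pi^{-1} = \nu\circ\tau^{-1}$ (and, crucially, $|\tilde\mu|\circ\pi^{-1} = |\nu|\circ\tau^{-1}$), which follows since $\pi\circ\phi = \tau$ on $X$, so that the measures against which one integrates on the right-hand sides of the two definitions are literally the same measure on $\cB$. With that in hand, the right-hand side of \eqref{eqn:def_rcp_product} for $\tilde\mu$ is
\begin{align*}
\int_Y \1_B(y)\, \eta(y,A)\, \DD\!\left[|\tilde\mu|\circ\pi^{-1}\right](y) = \int_Y \1_B(y)\, \eta(y,A)\, \DD\!\left[|\nu|\circ\tau^{-1}\right](y),
\end{align*}
which is exactly the right-hand side of \eqref{eqn:def_rcp_function} with $F = A$. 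Since by part \ref{item:extension_measure_to_product} the left-hand sides also agree ($\tilde\mu(A\times B) = \nu(A\cap\tau^{-1}(B))$), the identity \eqref{eqn:def_rcp_product} holds for all rectangles $A\times B$ if and only if \eqref{eqn:def_rcp_function} holds for all $F\in\cA$, $B\in\cB$. This gives the equivalence directly, with $\cF := \cA$ in \eqref{eqn:def_rcp_function}.

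The only genuine subtlety — the step I would expect to need the most care — is the bookkeeping around the \emph{absolute values} $|\nu|$ and $|\tilde\mu|$ in the definitions, since the paper works with signed measures throughout. One must check that $|\tilde\mu| = |\nu|\circ\phi^{-1}$; this holds because $\phi$ is injective (it is a graph map), so $\nu^+\circ\phi^{-1}$ and $\nu^-\circ\phi^{-1}$ remain mutually singular (they are carried by the $\phi$-images of the Hahn sets), whence $(\nu^+\circ\phi^{-1}) - (\nu^-\circ\phi^{-1})$ is the Jordan decomposition of $\tilde\mu$ and $|\tilde\mu| = \nu^+\circ\phi^{-1} + \nu^-\circ\phi^{-1} = |\nu|\circ\phi^{-1}$. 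From this, $|\tilde\mu|\circ\pi^{-1} = |\nu|\circ(\pi\circ\phi)^{-1} = |\nu|\circ\tau^{-1}$ as needed. A secondary point is measurability of $\eta(\cdot, A)$ against the correct $\sigma$-algebra on $Y$, but this is inherited unchanged since $\eta$ is the same kernel in both statements. Apart from these, the argument is a routine unwinding of definitions.
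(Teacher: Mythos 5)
Your proof is correct, but part (a) takes a genuinely different route from the paper. The paper constructs $\tilde\mu$ by defining a set function on the ring of finite disjoint unions of measurable rectangles, checking $\sigma$-additivity there, and invoking the Carath\'eodory extension theorem. You instead push $\nu^{\pm}$ forward along the graph map $\phi(x)=(x,\tau(x))$, which is $(\cA,\cA\otimes\cB)$-measurable precisely because $\tau$ is measurable and $\phi^{-1}(A\times B)=A\cap\tau^{-1}(B)$. Your construction is cleaner: it sidesteps the $\sigma$-additivity check entirely (the pushforward of a measure is automatically a measure), and it hands you the identity $|\tilde\mu|=|\nu|\circ\phi^{-1}$, hence $|\tilde\mu|\circ\pi^{-1}=|\nu|\circ(\pi\circ\phi)^{-1}=|\nu|\circ\tau^{-1}$, which is exactly what makes part (b) a literal coincidence of the two defining identities. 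The paper's proof of (b) is a one-liner asserting $\nu\circ\tau^{-1}=\tilde\mu\circ\pi^{-1}$ without addressing the absolute values that actually appear in \eqref{eqn:def_rcp_function} and \eqref{eqn:def_rcp_product}; your treatment of this point is more careful and is the genuine content of the equivalence. What the paper's approach buys in exchange is that it never needs to discuss Jordan decompositions of the image measure at all, since it works with $\absnu$ from the start.

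One small imprecision: you justify the mutual singularity of $\nu^{+}\circ\phi^{-1}$ and $\nu^{-}\circ\phi^{-1}$ by saying they are ``carried by the $\phi$-images of the Hahn sets.'' Those images need not lie in $\cA\otimes\cB$, so they cannot serve as the witnessing sets. The fix is immediate: if $P,N$ is a Hahn decomposition of $X$ for $\nu$, take $S=P\times Y$; then $\phi^{-1}(S)=P$ and $\phi^{-1}((X\times Y)\setminus S)=N$, so $S$ carries $\nu^{+}\circ\phi^{-1}$ and its complement carries $\nu^{-}\circ\phi^{-1}$. With that substitution your identification of the Jordan decomposition of $\tilde\mu$, and hence of $|\tilde\mu|$, is complete.
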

\begin{proof}
\ref{item:extension_measure_to_product}
We may assume $\nu$ to be positive, since $\nu = \nu^+ - \nu^-$. 
Let $\cE$ be the set that consists of $\bigcup_{i=1}^n A_i \times B_i$, where $n\in\N$ and $A_i \in \cA, B_i \in \cB$ are such that $A_1\times B_1,\dots, A_n\times B_n$ are disjoint. 
Define $\nu^*: \cE \rightarrow [0,\infty)$ by 
$
\nu^* \left( \bigcup_{i=1}^n A_i \times B_i \right)  = \nu \left( \bigcup_{i=1}^n A_i \cap \tau^{-1}(B_i) \right )
$ for $A_1,\dots,A_n\in \cA$ and $B_1,\dots,B_n $ $ \in \cB$ as above.  
Checking that $\cE$ is a ring of sets and that $\nu^*$ is $\sigma$-additive is left for the reader. 
The existence and unicity of the extension $\tilde \mu$ follows from the Carath\'eodory Theorem (see Halmos \cite[Section  13, Theorem A]{Ha74}). \\
\ref{item:RCP_as_PRCP}
Follows from by definition of $\tilde \mu$ (note that $\nu \circ \tau^{-1} = \tilde \mu \circ \pi^{-1}$). 
\end{proof}

%%%%%%%%%%%%%%%
%%%%%%%%%%%%%%%
\section{Weakly continuous kernels}
\label{section:weak_continuous_cps}
%%%%%%%%%%%%%%%
%%%%%%%%%%%%%%%

In this section we introduce the notion of weak continuity for kernels on topological spaces. 
In Theorem \ref{theorem:unicity_weakly_continuous_rcp_wrt_pi} we show uniqueness of (product) regular conditional kernels that are weakly continuous. 
In Theorem \ref{theorem:discrete_top_PRCP} and Theorem  \ref{theorem:mixing_with_function_gives_weakly_continuous_kernel} 
we describe conditions that imply the existence of weakly continuous regular conditional probabilities. 
Similarly as is done in the Portmanteau Theorem when one considers metric spaces, weak convergence implies lower bounds for open sets and upper bounds for closed sets, as is shown in Theorem \ref{theorem:weak_convergence_equivalences}. 
As described in Lemma \ref{lemma:liminf_and_limsup_of_conditional_open_compared_with_kernel_PRCP}
and Lemma \ref{lemma:liminf_and_limsup_of_conditional_open_compared_with_kernel_RCP}
these $\liminf$ and $\limsup$ bounds imply bounds
for (product) regular conditional probabilities on which the results of 
Sections \ref{section:ldp_product_RCP} and \ref{section:ldp_RCP} are based.

\assump{In this section $\cX$ and $\cY$ are topological spaces,
$\nu$ is a \signed  measure on $\cB(\cX)$ and $\mu$ is a \signed  measure on $\cB(\cX)\otimes \cB(\cY)$,
$\tau : \cX \rightarrow \cY$ is measurable and $\pi : \cX \times \cY \rightarrow \cY$ is given by $\pi(x,y) = y$. 
}

\begin{definition}
\label{def:weakly_continuous_transition_prob}
We equip the space of \signed  measures, $\cM(\cX)$,  with the weak topology (generated by $C_b(\cX)$, and denoted by $\sigma(\cM(\cX),C_b(\cX))$ as in the book of Schaefer \cite[Chapter II, Section 5]{Sc70}). In this topology, a net $(\mu_\iota)_{\iota\in \I}$ in $\cM(\cX)$ converges to a $\mu$ in $\cM(\cX)$ if 
$\int_\cX f \D \mu_\iota \rightarrow \int_\cX f \D \mu $ for all $f\in C_b(\cX)$. \\
Let $D\subset \cY$. 
A kernel $\eta : \cY \times \cB(\cX) \rightarrow \R$ is called 
\emphind{weakly continuous}
on $D$ if the map $D\rightarrow \cM(\cX)$ given by $y\mapsto \eta(y,\cdot)$ is continuous in the weak topology. 
$\eta$ is called 
\emph{weakly continuous}
 if $\eta$ is weakly continuous on $\cY$.
\end{definition}

\begin{theorem}
\label{theorem:measures_on_perfectly_normal_space_have_support}
Let $\cX$ be a perfectly normal\footnote{Perfectly normal means that every open set in $\cX$ is equal to $f^{-1}((0,\infty))$ for some $f\in C(\cX)$. All metric spaces are perfectly normal; Bogachev \cite[Proposition 6.3.5]{Bo07}.}  space and $\mu \in \cM(\cX)$. 
Then 
\begin{align}
\supp \mu = \Big\{ x\in \cX :  \int_\cX f \D \absmu >0 \mbox{ for all }  f\in C(\cX,[0,1]) \mbox{ with } f(x)>0 \  \Big\}.
\end{align}
Moreover, $|\mu|( \cX \setminus \supp(\mu)) =0$.\footnote{This is not true in general. For an example see Bogachev \cite[Example 7.1.3]{Bo07}.} 
As a consequence, $\mu =0$ if and only if $\int_\cX f \D \absmu =0$ for all $f\in C_b(\cX)$. 
\end{theorem}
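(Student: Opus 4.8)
The plan is to establish the support formula first, then deduce the ``moreover'' claim from it via an $F_\sigma$-decomposition and a covering argument, and finally read off the last equivalence. Write $S$ for the set on the right-hand side of the displayed formula. The inclusion $\supp\mu\subseteq S$ needs no topological hypothesis: if $x\in\supp\mu$ and $f\in C(\cX,[0,1])$ with $f(x)>0$, then $V:=[f>f(x)/2]$ is open, hence a Borel neighbourhood of $x$, so $|\mu|(V)>0$ by the definition of $\supp\mu$, and therefore $\int_\cX f\D|\mu|\ge\tfrac{f(x)}{2}\,|\mu|(V)>0$; thus $x\in S$. For $S\subseteq\supp\mu$ I would argue by contraposition, and this is where perfect normality enters: if $x\notin\supp\mu$, choose a Borel neighbourhood $V$ of $x$ with $|\mu|(V)=0$ and an open $U$ with $x\in U\subseteq V$, write $U=h^{-1}((0,\infty))$ with $h\in C(\cX)$, and set $f:=0\vee(h\wedge 1)\in C(\cX,[0,1])$; then $f(x)>0$, $[f>0]=U$, and $\int_\cX f\D|\mu|=\int_U f\D|\mu|\le|\mu|(U)=0$, so $x\notin S$.

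For the ``moreover'', note that unwinding the definition of the support shows $G:=\cX\setminus\supp\mu$ to be precisely the union of all open $|\mu|$-null subsets of $\cX$; in particular $G$ is open. By perfect normality $G=h^{-1}((0,\infty))$ for some $h\in C(\cX)$, which we may take nonnegative, so $G=\bigcupn[h\ge 1/n]$ with the closed sets $[h\ge 1/n]$ increasing in $n$. Since $|\mu|$ is a finite measure, $|\mu|(G)=\limn|\mu|([h\ge 1/n])$, so it suffices to show $|\mu|([h\ge 1/n])=0$ for each $n$. Each $[h\ge 1/n]$ is a closed subset of $G$, hence is covered by the family of open $|\mu|$-null sets; if this cover admits a countable subcover $\{O_k\}_{k}$, then $|\mu|([h\ge 1/n])\le\sum_k|\mu|(O_k)=0$ by countable subadditivity.

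The extraction of a countable subcover is the step I expect to be the main obstacle, and it is the only place where the hypothesis on $\cX$ is really used beyond the support formula: for a metric (more generally, hereditarily Lindel\"{o}f) space the closed subspace $[h\ge 1/n]$ is Lindel\"{o}f and such a subcover exists; for a general perfectly normal $\cX$ one appeals instead to the corresponding result on supports of Borel measures (see Bogachev \cite{Bo07}; the example in \cite[Example 7.1.3]{Bo07} shows that a hypothesis on $\cX$ cannot be dropped). In the write-up I would simply invoke this, as all spaces occurring later in the paper are metric.

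Finally, the stated consequence is immediate from what precedes: if $\mu=0$ then trivially $\int_\cX f\D|\mu|=0$ for all $f$, while if $\int_\cX f\D|\mu|=0$ for every $f\in C_b(\cX)\supseteq C(\cX,[0,1])$, then by the support formula no point of $\cX$ lies in $\supp\mu$ (the defining condition already fails for the constant function $1$), so $\supp\mu=\emptyset$, and the ``moreover'' then gives $|\mu|(\cX)=|\mu|(\cX\setminus\supp\mu)=0$, i.e.\ $\mu=0$. (Taking $f\equiv 1$ yields this last implication directly as well.)
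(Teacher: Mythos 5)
Your proof of the displayed support identity is correct and is essentially the paper's own argument: for $\supp \mu \subseteq S$ (writing $S$ for the right-hand side) you use the Chebyshev-type bound on $[f>f(x)/2]$ where the paper uses $\absmu\big(f^{-1}((0,\infty))\big)=\limn \int_\cX \min\{nf,1\}\D\absmu$ — an immaterial difference — and the reverse inclusion is the paper's argument written in contrapositive form, perfect normality entering in both cases only to represent an open set as $h^{-1}((0,\infty))$. One detail you handle more carefully than the written proof: the paper applies the representation $V=f^{-1}((0,\infty))$ directly to a measurable neighbourhood $V\in\cN_x$, which need not be open, whereas your passage to an open $U$ with $x\in U\subseteq V$ is the correct reading.

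Concerning the ``moreover'': the paper's proof in fact says nothing about it (the displayed argument consists only of the two inclusions), so there is no in-paper argument to compare against. Your reduction of $\cX\setminus\supp\mu$ to an increasing union of closed sets, each covered by open $\absmu$-null sets, is sound, and you have correctly identified that extracting a countable subcover is the genuine obstacle: it is available when $\cX$ is hereditarily Lindel\"of (in particular separable metric), but perfect normality alone does not supply it — already for a discrete (hence metric, hence perfectly normal) space of sufficiently large cardinality the claim amounts to the nonexistence of a real-valued measurable cardinal of that size. So deferring that step to the literature, or restricting to the (hereditarily Lindel\"of) spaces actually used later in the paper, is the right call, and is in any case more than the paper itself offers. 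The final equivalence is immediate, as you note, e.g.\ by taking $f\equiv 1$.
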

\begin{proof} 
We may assume $\mu$ is positive. 
Let $x\in \supp \mu$. 
Then $\mu(V)>0$ for all $V\in \cN_x$. 
Let $f\in C(\cX,[0,1])$ be such that $f(x)>0$. 
Then $V= f^{-1}(0,\infty)$ has strictly positive measure.
Since $\mu(V) = \limn \int_\cX \min \{nf, 1\} \D \mu$, there exists an $n$ such that $\int_\cX \min \{nf, 1\} \D \mu>0$. Consequently, as $f \ge  \frac1n \min \{nf, 1\}$, we have $\int_\cX f \D \mu>0$. 

Let $x\in \cX$ be such that $\int_\cX f \D \mu >0$ for all $f \in C(\cX,[0,1])$ with $f(x)>0$. 
Let $V\in \cN_x$. 
As $V= f^{-1}(0,\infty)$ for some $f\in C(\cX,[0,1])$, we have $\mu(V) \ge \int_\cX f \D \mu >0$. 
\end{proof}

\begin{theorem}
\label{theorem:unicity_weakly_continuous_rcp_wrt_pi}
Suppose that $\cX$ is a perfectly normal space.
\begin{enumerate}
\item Let $\eta$ and $\zeta$ be regular conditional kernels under $\nu$ with respect to $\tau$ that are weakly continuous on $\supp (\absnu \circ \tau^{-1})$.
Then $\eta(y,\cdot)=\zeta(y,\cdot)$ for all $y\in \supp (\absnu \circ \tau^{-1})$. 
If $\nu$ is a probability measure, then $\eta(y,\cdot)$ is a probability measure for all $y\in \supp(\absnu \circ \tau^{-1})$.
\item Let $\eta$ and $\zeta$ be product regular conditional kernels under $\mu$ with respect to $\pi$ that are weakly continuous on $\supp(\absmu \circ \pi^{-1})$. Then $\eta(y,\cdot)=\zeta(y,\cdot)$ for all $y\in \supp (\absmu \circ \pi^{-1})$. 
If $\mu$ is a probability measure, then $\eta(y,\cdot)$ is a probability measure for all $y\in \supp(\absmu \circ \pi^{-1})$.
\end{enumerate}
\end{theorem}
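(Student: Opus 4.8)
The plan is to prove the two items in parallel, since by Theorem~\ref{theorem:extension_and_RCP_as_special_case_of_PRCP} the first is a special case of the second (taking $\tilde\mu$ in place of $\mu$, and using $\nu\circ\tau^{-1}=\tilde\mu\circ\pi^{-1}$); so it suffices to treat the product regular conditional kernel case. Fix $y\in\supp(\absmu\circ\pi^{-1})$ and let $\eta,\zeta$ be two such kernels. The core idea is that the defining identity \eqref{eqn:def_rcp_product} forces $\eta$ and $\zeta$ to agree \emph{almost everywhere} with respect to $\absmu\circ\pi^{-1}$, and weak continuity at a point of the support upgrades ``almost everywhere near $y$'' to ``exactly at $y$''. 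Concretely, I would argue by contradiction: if $\eta(y,\cdot)\ne\zeta(y,\cdot)$ as signed measures, then by the last assertion of Theorem~\ref{theorem:measures_on_perfectly_normal_space_have_support} (applied on the perfectly normal space $\cX$) there is some $f\in C_b(\cX)$ with $\int_\cX f\,\D\eta(y,\cdot)\neq\int_\cX f\,\D\zeta(y,\cdot)$.

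The key step is then to show $\int_\cX f\,\D\eta(z,\cdot)=\int_\cX f\,\D\zeta(z,\cdot)$ for $(\absmu\circ\pi^{-1})$-a.e.\ $z\in\cY$, for each fixed $f\in C_b(\cX)$. For indicator functions $f=\1_A$ with $A\in\cB(\cX)$ this is immediate from subtracting the two instances of \eqref{eqn:def_rcp_product}: for every $B\in\cB(\cY)$,
\begin{align}
\int_\cY \1_B(z)\,[\eta(z,A)-\zeta(z,A)]\,\D[\absmu\circ\pi^{-1}](z)=0,
\end{align}
and since $\eta(\cdot,A)-\zeta(\cdot,A)$ is a measurable function whose integral over every $B$ vanishes, it is $0$ a.e. Extending from indicators to simple functions is linear, and to general bounded measurable $f$ by a uniform-approximation (or monotone/dominated convergence) argument; this handles $f\in C_b(\cX)$. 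One must be a little careful that the exceptional null set depends on $f$ — but that is fine because in the final step only countably many functions, or rather a single fixed $f$, are needed at a time.

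Now the finish: fix the $f$ separating $\eta(y,\cdot)$ from $\zeta(y,\cdot)$, and set $g(z)=\int_\cX f\,\D\eta(z,\cdot)-\int_\cX f\,\D\zeta(z,\cdot)$. By weak continuity of both kernels on $\supp(\absmu\circ\pi^{-1})$, the map $g$ restricted to that support is continuous, and $g(y)\neq 0$. Hence there is a neighbourhood $V\in\cN_y$ on which $|g|$ is bounded away from $0$ on $\supp(\absmu\circ\pi^{-1})$. But $g=0$ $(\absmu\circ\pi^{-1})$-a.e., and $(\absmu\circ\pi^{-1})(V)>0$ because $y\in\supp(\absmu\circ\pi^{-1})$, while $(\absmu\circ\pi^{-1})$ is concentrated on its support (again Theorem~\ref{theorem:measures_on_perfectly_normal_space_have_support}); so $(\absmu\circ\pi^{-1})\big(V\cap\supp(\absmu\circ\pi^{-1})\big)>0$, forcing $g=0$ somewhere on that set — a contradiction. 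Therefore $\eta(y,\cdot)=\zeta(y,\cdot)$. Finally, if $\mu$ is a probability measure, apply this uniqueness with $\zeta$ chosen to be a \emph{genuine} probability kernel (which exists: any product regular conditional kernel of a probability measure can be modified off the support, or one invokes the a.e.\ uniqueness of Remark~\ref{remark:almost_everywhere_uniqueness} together with the above to transfer the probability-measure property to every $y$ in the support); since $\eta(y,\cdot)=\zeta(y,\cdot)$ there, $\eta(y,\cdot)$ is a probability measure for all $y\in\supp(\absmu\circ\pi^{-1})$.

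The main obstacle I anticipate is the technical passage from the a.e.\ identity of the \emph{numbers} $\eta(z,A)$ to the a.e.\ identity of the \emph{measures} $\eta(z,\cdot)$: a priori the null set depends on $A$, and $\cB(\cX)$ need not be countably generated (precisely the subtlety flagged in Remark~\ref{remark:almost_everywhere_uniqueness} — the Sorgenfrey line footnote). This is exactly why one works with a single fixed $f\in C_b(\cX)$ at the point $y$ rather than trying to get a uniform-in-$A$ null set, and why weak continuity (continuity of $z\mapsto\int f\,\D\eta(z,\cdot)$) is the right hypothesis: it lets a pointwise conclusion at $y$ be deduced from an a.e.\ statement without ever needing the measures to agree a.e.\ as measures.
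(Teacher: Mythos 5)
Your uniqueness argument is essentially the paper's own proof: reduce to testing against a single fixed $f\in C_b(\cX)$ (justified by Theorem \ref{theorem:measures_on_perfectly_normal_space_have_support}), derive $\int_\cX f \D \eta(z,\cdot)=\int_\cX f \D \zeta(z,\cdot)$ for almost every $z$ from the defining identity via simple-function approximation, and upgrade to equality at every point of the support using weak continuity there; your contradiction phrasing and the paper's ``$Z$ is dense in $D$'' phrasing are the same step, and your remark about working with one $f$ at a time rather than a uniform-in-$A$ null set is exactly the point of the paper's Remark \ref{remark:almost_everywhere_uniqueness}.

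The one place you genuinely diverge is the final probability-measure assertion, and there your argument as written is circular. You propose comparing $\eta$ with a ``genuine'' weakly continuous probability kernel $\zeta$, but the existence of such a $\zeta$ is not established: modifying a kernel off the support cannot change $\eta(y,\cdot)$ for $y$ \emph{in} the support, and the claim that some weakly continuous version is a probability measure at every support point is precisely what is being proved. The fix is direct and you already have all the ingredients: apply your a.e.-plus-continuity argument to the single function $f=\1_\cX$ to get $\eta(y,\cX)=1$ for all $y$ in the support (this is what the paper does), and to $f\in C(\cX,[0,1])$ to get nonnegativity.
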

\begin{proof}
We prove (a), the proof of (b) is similar (replace ``$\absnu \circ \tau^{-1}$'' by ``$\absmu \circ \pi^{-1}$''). 
To prove $\eta = \zeta$ on $D=\supp (\absnu \circ \tau^{-1})$, by Theorem \ref{theorem:measures_on_perfectly_normal_space_have_support}, it is sufficient to prove $\int_\cX f \D\eta(y,\cdot) = \int_\cX f \D \zeta(y,\cdot)$ for all $y\in D$ and all $f\in C_b(\cX)$. 
Let $f\in C_b(\cX)$. 
Because $f$ is the uniform limit of simple functions, one has for all $B\in \cB(\cY)$
\begin{align}
\int_\cY \1_B(y) \left[ \int_\cX f \D \eta(y,\cdot) \right] \DD [ \absnu \circ \tau^{-1} ](y)
= \int_\cY \1_B(y) \left[ \int_\cX f \D \zeta(y,\cdot) \right] \DD [ \absnu \circ \tau^{-1} ](y).
\end{align}
Therefore there exists a set 
$Z\in \cB(\cY)$ with $\absnu \circ \tau^{-1}(\cY \setminus Z)=0$  such that 
\begin{align}
\int_\cX f \D \eta(z,\cdot) = \int_\cX f \D \zeta(z,\cdot) \qquad (z\in Z). 
\end{align}
Since both $y \mapsto \int_\cX f \D\eta(y,\cdot)$ and $y \mapsto \int_\cX f \D\zeta(y,\cdot)$ are weakly continuous on $D$, and $Z$ is dense in $D$ by 
Theorem \ref{theorem:measures_on_perfectly_normal_space_have_support},
 we have $\int_\cX f \D\eta(y,\cdot) = \int_\cX f \D \zeta(y,\cdot)$ for all $y\in D$. 
The second statement is proved by taking $f=\1_\cX$. 
\end{proof}

\begin{obs}
When $\eta$ is a regular conditional kernel under $\nu$ with respect to $\tau$, the value of the function $\eta(\cdot,A)$ on the complement of $\supp(\absnu \circ \tau^{-1})$ is not determined, in the sense that, if $\tilde \eta$ is a kernel with $\tilde \eta(y,\cdot) = \eta(y,\cdot)$ for all $y\in \supp(\absnu \circ \tau^{-1})$, then $\tilde \eta$ is also a regular conditional kernel under $\nu$ with respect to $\tau$. 

For example $\tilde \eta$ given by $\tilde \eta(y,\cdot) = \eta(y,\cdot)$ for $y\in \supp(\absnu \circ \tau^{-1})$ and $\tilde \eta(y,\cdot) = \delta_x$ for $y\in \supp(\absnu \circ \tau^{-1})^c$ for some chosen $x\in \cX$, is such regular conditional kernel. 

Whence if $\nu$ is a probability measure and there exists a regular conditional kernel under $\nu$ with respect to $\tau$ that is weakly continuous on $\supp(\absnu \circ \tau^{-1})$, then we may as well assume this kernel to be a probability kernel. A similar statement is true for product regular conditional kernels. 
\end{obs}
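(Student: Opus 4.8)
The plan is to reduce all assertions of the observation to a single ingredient: the positive measure $\rho:=\absnu\circ\tau^{-1}$ on $\cY$ gives no mass to the complement of its support. This is the ``Moreover'' clause of Theorem~\ref{theorem:measures_on_perfectly_normal_space_have_support} applied to $\rho$, and it is where perfect normality of $\cY$ (implicit in the context; in particular true when $\cY$ is metric) is needed. Write $S:=\supp\rho=\supp(\absnu\circ\tau^{-1})$; recall that a support is always closed, so $S\in\cB(\cY)$.

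First I would prove the main claim. Let $\tilde\eta$ be a kernel with $\tilde\eta(y,\cdot)=\eta(y,\cdot)$ for every $y\in S$, and check the defining identity \eqref{eqn:def_rcp_function} for $\tilde\eta$ directly. Fix $F\in\cB(\cX)$ and $B\in\cB(\cY)$; the functions $y\mapsto\1_B(y)\,\tilde\eta(y,F)$ and $y\mapsto\1_B(y)\,\eta(y,F)$ agree on $S$, hence $\rho$-almost everywhere because $\rho(\cY\setminus S)=0$. Since $\eta$ is a regular conditional kernel the latter lies in $L^1(\rho)$, so the former does too and the two integrals coincide; thus the right-hand side of \eqref{eqn:def_rcp_function} is unchanged when $\eta$ is replaced by $\tilde\eta$, while the left-hand side does not involve the kernel. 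Hence $\tilde\eta$ satisfies \eqref{eqn:def_rcp_function}, i.e.\ it is a regular conditional kernel under $\nu$ with respect to $\tau$. (Swapping the roles of $\eta$ and $\tilde\eta$ shows this is in fact an equivalence.)

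For the example it then suffices to check that the concrete $\tilde\eta$ — equal to $\eta(y,\cdot)$ on $S$ and to $\delta_x$ off $S$ — is a kernel, after which the main claim applies: for $A\in\cB(\cX)$ the map $\tilde\eta(\cdot,A)=\1_S\,\eta(\cdot,A)+\1_{\cY\setminus S}\,\delta_x(A)$ is $\cB(\cY)$-measurable as $S\in\cB(\cY)$, and $\tilde\eta(y,\cdot)$ is a measure for each $y$. For the ``whence'' part I would assume $\nu$ is a probability measure and that some regular conditional kernel $\eta$ under $\nu$ with respect to $\tau$ is weakly continuous on $S$; then $\eta(y,\cdot)$ is a probability measure for every $y\in S$ by Theorem~\ref{theorem:unicity_weakly_continuous_rcp_wrt_pi}(a) (using that $\cX$ is perfectly normal too), so the $\tilde\eta$ above, for any fixed $x\in\cX$, is a probability kernel, is a regular conditional kernel by the main claim, and stays weakly continuous on $S$ since it coincides there with $\eta$ — so one may replace $\eta$ by $\tilde\eta$. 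The product-kernel version is obtained by repeating the argument verbatim with $\nu$, $\tau$, \eqref{eqn:def_rcp_function}, Theorem~\ref{theorem:unicity_weakly_continuous_rcp_wrt_pi}(a) replaced by $\mu$, $\pi$, \eqref{eqn:def_rcp_product}, Theorem~\ref{theorem:unicity_weakly_continuous_rcp_wrt_pi}(b), and $S$ by $\supp(\absmu\circ\pi^{-1})$.

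I do not expect a real obstacle: the only step that needs an external input is $\rho(\cY\setminus S)=0$, since without it altering $\eta$ outside $S$ could spoil \eqref{eqn:def_rcp_function}, and this input is exactly Theorem~\ref{theorem:measures_on_perfectly_normal_space_have_support}. Everything else — closedness of $S$, measurability of the patched kernel, the probability-measure assertion from Theorem~\ref{theorem:unicity_weakly_continuous_rcp_wrt_pi} — is routine.
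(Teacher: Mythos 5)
Your proof is correct and follows the argument the paper evidently intends (the observation is stated there without proof): everything reduces to the fact that $[\absnu\circ\tau^{-1}](\cY\setminus\supp(\absnu\circ\tau^{-1}))=0$, supplied by Theorem~\ref{theorem:measures_on_perfectly_normal_space_have_support}, after which the right-hand side of \eqref{eqn:def_rcp_function} is insensitive to modifying the kernel off the (closed, hence Borel) support, and the remaining assertions are the routine measurability and probability-kernel checks you give. You are also right to flag that this step silently requires $\cY$ (not only $\cX$) to be perfectly normal, or at least that $\absnu\circ\tau^{-1}$ vanish off its support, a hypothesis the section's standing assumptions do not literally include.
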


\begin{obs}
\label{obs:comparison_product_and_function_rcp_for_unicity}
By Theorem \ref{theorem:extension_and_RCP_as_special_case_of_PRCP} statement (a) of Theorem \ref{theorem:unicity_weakly_continuous_rcp_wrt_pi} is a consequence of statement (b). 
In an attempt to reduce statement (b) to statement (a) the following problem occurs to the correspondence between regular conditional kernels and product regular conditional kernels that is mentioned in \ref{obs:rcps_compared}. 

The Borel-$\sigma$-algebra of $\cX \times \cY$, i.e., $\cB(\cX \times \cY)$ may be strictly larger as $\cB(\cX) \otimes \cB(\cY)$ (see, e.g., Bogachev \cite[Lemma 6.4.1 and Example 6.4.3]{Bo07}). 
If this is the case, i.e., $\cB(\cX)\otimes \cB(\cY) \subsetneq \cB(\cX \times \cY)$, and $\cB(\cX \times \cY)$ equals the Baire-$\sigma$-algebra on $\cX \times \cY$, i.e., the smallest $\sigma$-algebra that makes all continuous function $\cX \times \cY \rightarrow \R$ measurable; then there exists a continuous function $f\in C(\cX \times \cY)$ that is not $\cB(\cX) \otimes \cB(\cY)$-measurable. 
Composing the function $f$ with $\arctan$, we obtain a $g\in C_b(\cX \times \cY)$ that is not measurable with respect to $\cB(\cX)\otimes \cB(\cY)$. 
So if $\eta: \cY \times \cB(\cX) \rightarrow \R$ is a product regular conditional kernel under $\mu$ with respect to $\pi$, and $\xi : \cY \times \cB(\cX) \otimes \cB(\cY) \rightarrow \R$ is as in Example \ref{obs:rcps_compared} then $g$ is not integrable with respect to $\xi(y,\cdot)$ for any $y\in \cY$. 

$\cB(\cX \times \cY)$ equals the Baire-$\sigma$-algebra if $\cX\times \cY$ is a metric space (Bogachev \cite[Proposition 6.3.4]{Bo07}). 
Therefore $\cX = \cY = \R^\R$ equipped with the discrete topology form an example for which the above is the case. 
\end{obs}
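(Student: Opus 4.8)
The plan is to establish the two substantive claims bundled into this observation in turn: (i) statement~(a) of Theorem~\ref{theorem:unicity_weakly_continuous_rcp_wrt_pi} follows from statement~(b) via the reduction of Theorem~\ref{theorem:extension_and_RCP_as_special_case_of_PRCP}; and (ii) the reverse passage — obtaining (b) from (a) by the $\eta\leftrightarrow\xi$ correspondence of \ref{obs:rcps_compared} — genuinely fails once $\cB(\cX\times\cY)\supsetneq\cB(\cX)\otimes\cB(\cY)$, with $\cX=\cY=\R^\R$ carrying the discrete topology as a witness.

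For (i): given $\nu$ on $\cB(\cX)$ and measurable $\tau\colon\cX\to\cY$ as in (a), I would pass to the measure $\tilde\mu$ on $\cB(\cX)\otimes\cB(\cY)$ from Theorem~\ref{theorem:extension_and_RCP_as_special_case_of_PRCP}\ref{item:extension_measure_to_product}, with $\tilde\mu(A\times B)=\nu(A\cap\tau^{-1}(B))$. The book-keeping to settle first is that the relevant supports agree: putting $A=\cX$ gives $\tilde\mu\circ\pi^{-1}=\nu\circ\tau^{-1}$, and running the extension separately on the Jordan parts $\nu^{\pm}$ and noting that $\widetilde{\nu^{+}},\widetilde{\nu^{-}}$ are carried by the complementary cylinders $P\times\cY,(\cX\setminus P)\times\cY$ (with $P$ a Hahn set for $\nu$), hence are mutually singular, yields $|\tilde\mu|=\widetilde{|\nu|}$ and thus $|\tilde\mu|\circ\pi^{-1}=|\nu|\circ\tau^{-1}$, so that $\supp(|\tilde\mu|\circ\pi^{-1})=\supp(|\nu|\circ\tau^{-1})=:D$. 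By Theorem~\ref{theorem:extension_and_RCP_as_special_case_of_PRCP}\ref{item:RCP_as_PRCP} the kernels $\eta,\zeta$ in (a) are product regular conditional kernels under $\tilde\mu$ with respect to $\pi$; weak continuity on $D$ is a property of the kernel alone, so it persists; hence statement~(b) applied to $\tilde\mu$ gives $\eta(y,\cdot)=\zeta(y,\cdot)$ for $y\in D$, which is (a). Since $\tilde\mu(\cX\times\cY)=\nu(\cX)$ and $\tilde\mu\ge 0$ when $\nu\ge 0$, the probability-kernel clause of (a) likewise drops out of that of (b).

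For (ii): the correspondence of \ref{obs:rcps_compared} produces $\xi(y,\cdot)$ only as a measure on $\cB(\cX)\otimes\cB(\cY)$, so it cannot encode a product regular conditional kernel attached to a $\sigma$-algebra $\cF$ on $\cX\times\cY$ properly larger than $\cB(\cX)\otimes\cB(\cY)$. To see this is not a mere artefact, suppose $\cB(\cX)\otimes\cB(\cY)\subsetneq\cB(\cX\times\cY)$ with $\cB(\cX\times\cY)$ equal to the Baire-$\sigma$-algebra. If every $f\in C(\cX\times\cY)$ were $\cB(\cX)\otimes\cB(\cY)$-measurable, then the Baire-$\sigma$-algebra — generated by $C(\cX\times\cY)$ — would sit inside $\cB(\cX)\otimes\cB(\cY)$, which together with $\cB(\cX)\otimes\cB(\cY)\subseteq\cB(\cX\times\cY)$ contradicts the strict inclusion; so fix $f\in C(\cX\times\cY)$ not $\cB(\cX)\otimes\cB(\cY)$-measurable and set $g=\arctan\circ f\in C_b(\cX\times\cY)$. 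As $\arctan$ is a Borel isomorphism of $\R$ onto $(-\tfrac{\pi}{2},\tfrac{\pi}{2})$, $g$ is still not $\cB(\cX)\otimes\cB(\cY)$-measurable, hence not $\xi(y,\cdot)$-integrable for any $y\in\cY$. Finally, $\cX=\cY=\R^\R$ with the discrete topology realises these hypotheses: being discrete it is metrisable, so $\cX\times\cY$ is metrisable and its Baire- and Borel-$\sigma$-algebras coincide, while $\cB(\cX)\otimes\cB(\cY)\subsetneq\cB(\cX\times\cY)$ by Bogachev~\cite[Lemma 6.4.1, Example 6.4.3, Proposition 6.3.4]{Bo07}.

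The one place I expect to need care is the identity $|\tilde\mu|=\widetilde{|\nu|}$ used in (i): it relies on the uniqueness in the Carath\'eodory extension of Theorem~\ref{theorem:extension_and_RCP_as_special_case_of_PRCP}\ref{item:extension_measure_to_product}, applied to $\widetilde{\nu^{+}}-\widetilde{\nu^{-}}$, together with the observation that these two parts live on complementary cylinders. Everything else is a matter of unwinding the definitions of (product) regular conditional kernel and of weak continuity and quoting the cited facts of Bogachev.
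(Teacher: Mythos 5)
Your proposal is correct and follows the same route as the paper: part (i) is exactly the reduction via Theorem \ref{theorem:extension_and_RCP_as_special_case_of_PRCP}, and part (ii) reproduces the paper's measurability obstruction and the $\R^\R$ example verbatim in substance. The only addition is your careful verification that $|\tilde\mu|=\widetilde{|\nu|}$ (via the Hahn decomposition and uniqueness of the Carath\'eodory extension), a detail the paper leaves implicit in the remark ``$\nu\circ\tau^{-1}=\tilde\mu\circ\pi^{-1}$'' but which is indeed needed for the supports, and hence the domains of weak continuity, to coincide.
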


We state two theorems (Theorem \ref{theorem:discrete_top_PRCP} and Theorem \ref{theorem:mixing_with_function_gives_weakly_continuous_kernel}) showing the existence of product regular conditional probabilities that are weakly continuous on $\supp( \absmu \circ \pi^{-1})$. 

\begin{theorem}
\label{theorem:discrete_top_PRCP}
Suppose that $\cY$ is countable and equipped with the discrete topology. 
Then $\eta: \cY \times \cB(\cX) \rightarrow \R$ defined by 
\begin{align}
\eta(y,A) = 
\begin{cases}
\mu(A\times \cY| \cX \times \{y\}) & \mu(\cX \times \{y\}) \ne 0, \\
0	& \mu(\cX \times \{y\})=0, 
\end{cases}
\end{align}
is a product regular conditional kernel under $\mu$ with respect to $\pi$ that is weakly continuous on $\supp (\absmu \circ \pi^{-1})$. 
\end{theorem}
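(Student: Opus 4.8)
The plan is to verify the three things the statement asserts — that $\eta$ is a kernel, that it satisfies the defining identity of a product regular conditional kernel under $\mu$ with respect to $\pi$, and that it is weakly continuous on $D:=\supp(\absmu\circ\pi^{-1})$ — and the point is that the discreteness of $\cY$ trivialises all but one of them. Since $\cY$ carries the discrete topology, $\cB(\cY)$ is the full power set of $\cY$, so every function $\eta(\cdot,A):\cY\to\R$ is automatically $\cB(\cY)$-measurable; and for fixed $y$ the set map $A\mapsto\eta(y,A)$ is either identically $0$ or a scalar multiple of the finite signed measure $A\mapsto\mu(A\times\{y\})$, hence in either case a signed measure. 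Thus $\eta$ is a kernel. Weak continuity on $D$ is equally free: the subspace topology that $D$ inherits from $\cY$ is again discrete, so \emph{every} map out of $D$ into a topological space is continuous; in particular $y\mapsto\eta(y,\cdot)$ is continuous from $D$ into $\cM(\cX)$ with the topology $\sigma(\cM(\cX),C_b(\cX))$.

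The one substantive step is the identity $\mu(A\times B)=\int_\cY\1_B(y)\,\eta(y,A)\,\DD[\absmu\circ\pi^{-1}](y)$ for all $A\in\cB(\cX)$ and $B\in\cB(\cY)$, and I would reduce it to a pointwise check. Because $\cY$ is countable, $\alpha:=\absmu\circ\pi^{-1}$ is a finite, purely atomic measure with $\alpha(\{y\})=\absmu(\cX\times\{y\})$, so the right-hand side equals the sum $\sum_{y\in B}\eta(y,A)\,\absmu(\cX\times\{y\})$; on the other hand $A\times B=\bigsqcup_{y\in B}(A\times\{y\})$ is a countable disjoint union, so countable additivity of $\mu$ gives $\mu(A\times B)=\sum_{y\in B}\mu(A\times\{y\})$. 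It therefore suffices to prove, term by term, that $\mu(A\times\{y\})=\eta(y,A)\,\absmu(\cX\times\{y\})$ for every $y\in\cY$. If $\absmu(\cX\times\{y\})=0$ both sides vanish, the left one because $|\mu(A\times\{y\})|\le\absmu(\cX\times\{y\})=0$; and if $\mu(\cX\times\{y\})\neq0$ the identity is precisely the definition of $\eta(y,A)$ multiplied through by the denominator (in the probability-measure setting used in the applications one has $\absmu(\cX\times\{y\})=\mu(\cX\times\{y\})$, so the two are literally the same equation; for a genuinely signed $\mu$ the same argument works provided the definition of $\eta$ is read with $\absmu(\cX\times\{y\})$ in the denominator, which also covers the remaining case $\mu(\cX\times\{y\})=0<\absmu(\cX\times\{y\})$).

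I do not expect a genuine obstacle here. Once one observes that a countable discrete $\cY$ makes $\alpha$ purely atomic and makes every set function on $\cY$ both measurable and continuous, the whole proof is bookkeeping with the identity $\mu(A\times\{y\})=\eta(y,A)\,\absmu(\cX\times\{y\})$. The only points that call for a little care are the fibrewise accounting between $\mu$ and $\absmu$ on $\cX\times\{y\}$ in the final step, and the absolute convergence of $\sum_{y\in B}\eta(y,A)\,\alpha(\{y\})$ needed to justify writing the integral as that sum — both of which are immediate when $\mu$ is a probability measure, since then $0\le\eta(y,A)\le1$ and $\sum_{y}\alpha(\{y\})=\absmu(\cX\times\cY)=1$.
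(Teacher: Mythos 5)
Your proof is correct and follows essentially the same route as the paper, whose entire argument is the one-line observation that $\mu(A\times B)=\sum_{y\in B}\mu(A\times\{y\})$ — you have simply spelled out the fibrewise verification and the (trivial) measurability and continuity checks. Your remark that for a genuinely signed $\mu$ the denominator in the definition of $\eta$ should be read as $\absmu(\cX\times\{y\})$ is a valid and careful observation about the statement, though it is moot in the probability-measure setting in which the theorem is applied.
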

\begin{proof}
Follows from the fact that $\mu(A\times B) = \sum_{y\in B}  \mu(A\times \{y\})$ for $A\in \cB(\cX)$, $B\in \cB(\cY)$. 
\end{proof}

The following theorem is an easy consequence of Lebesgue's Dominated
Convergence Theorem. 

\begin{theorem}
\label{theorem:mixing_with_function_gives_weakly_continuous_kernel}
Let $\lambda$ be a probability measure on $\cB(\cX)$. 
Let $D\subset \cY$. 
Let $f: \cX \times \cY \rightarrow [0,\infty)$ be a bounded $\cB(\cX) \otimes \cB(\cY)$-measurable function such that $y\mapsto f(x,y)$ is continuous on $D$ and equal to zero on $\cY \setminus D$ for $\lambda$-almost all $x\in \cX$. 
Suppose that $\int_\cX f(x,y) \D \lambda(x)>0$ for all $y\in D$. 
If $\eta\colon\,
\cY \times \cB(\cX) \rightarrow [0,1]$ is given by 
\begin{align}
 \eta(y,A) =
\begin{cases} 
 \frac{\int_\cX \1_A(x) f(x,y) \D \lambda(x)}{\int_\cX  f(x,y) \D \lambda(x)} & y\in D, \\
0 & y \notin D. 
\end{cases}
\end{align}
then $\eta$ is weakly continuous on $D$ (even strongly continuous, i.e., $y \mapsto \eta(y,A)$ is continuous for all $A\in \cB(\cX)$). 
Let $\kappa$ be a probability measure on $\cB(\cY)$ and assume $D= \supp \kappa$. 
Then $\eta$ is a product regular conditional kernel under 
\begin{align}
\mu: \cB(\cX) \otimes \cB(\cY) \rightarrow [0,1], \qquad 
\mu(A)=
% \frac{\int_{\cX \times \cY} \1_A(x,y) f(x,y) \DD [\lambda \otimes \kappa](x,y)}{\int_{\cX \times \cY}  f(x,y) \DD [\lambda \otimes \kappa](x,y)}, \\
\frac{\int_{\cX \times \cY} \1_A  f  \DD [\lambda \otimes \kappa]}{\int_{\cX \times \cY}  f  \DD [\lambda \otimes \kappa]}
\end{align}
with respect to $\pi$, that is weakly continuous on $D= \supp(\absmu\circ \pi^{-1})$. 
\end{theorem}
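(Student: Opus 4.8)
The plan is to handle the two assertions in turn, the second resting on the first. Throughout, write $g_h(y) := \int_\cX h(x)\,f(x,y)\,\D\lambda(x)$ for bounded $\cB(\cX)$-measurable $h$, so that $\eta(y,A) = g_{\1_A}(y)/g_\cX(y)$ for $y\in D$ and $\eta(y,A)=0$ otherwise. The first step is to show that $g_h$ is continuous on $D$ for every such $h$: if $y_\iota\to y$ in $D$, then $f(x,y_\iota)\to f(x,y)$ for $\lambda$-a.e.\ $x$ by the continuity hypothesis on $f(x,\cdot)$, and $|h(x)f(x,y_\iota)|\le \|h\|_\infty\,\sup f$ is $\lambda$-integrable since $\lambda$ is finite, so Lebesgue's dominated convergence theorem yields $g_h(y_\iota)\to g_h(y)$. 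Since $g_\cX>0$ on $D$ by hypothesis, the quotient $\eta(\cdot,A)=g_{\1_A}/g_\cX$ is then continuous on $D$ for every $A\in\cB(\cX)$; moreover $A\mapsto\int_\cX\1_A\,f(\cdot,y)\,\D\lambda$ is a finite measure, so $\eta(y,\cdot)\in\cP(\cX)$ for $y\in D$. This is the claimed strong continuity. Weak continuity on $D$ follows from it because every $\varphi\in C_b(\cX)$ is a uniform limit of simple functions $s_k=\sum_i c_i^{(k)}\1_{A_i^{(k)}}$, whence $y\mapsto\int_\cX s_k\,\D\eta(y,\cdot)=\sum_i c_i^{(k)}\eta(y,A_i^{(k)})$ converges to $y\mapsto\int_\cX\varphi\,\D\eta(y,\cdot)$ uniformly on $D$ (all $\eta(y,\cdot)$ being probability measures), so the limit is continuous.

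For the second assertion, set $c:=\int_{\cX\times\cY}f\,\D[\lambda\otimes\kappa]$. By Tonelli, $c=\int_\cY g_\cX\,\D\kappa$; since $g_\cX$ is continuous and strictly positive on $\supp\kappa=D$ (and $\kappa$ sits on $\supp\kappa$), one gets $c>0$, so $\mu:=\tfrac1c f\cdot(\lambda\otimes\kappa)$ is a well-defined probability measure on $\cB(\cX)\otimes\cB(\cY)$, and $\absmu=\mu$ because $f\ge 0$. One also checks that $\eta$ is a kernel: $\eta(\cdot,A)$ is $[0,1]$-valued, continuous on the closed set $D$ and zero off $D$, hence Borel.

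Next I would identify $\mu\circ\pi^{-1}$ and verify the defining identity. Again by Tonelli, $\mu(\cX\times B)=\tfrac1c\int_B g_\cX\,\D\kappa$, so $\mu\circ\pi^{-1}$ has $\kappa$-density $g_\cX/c$, which is positive $\kappa$-a.e.\ (indeed everywhere on $D$); hence $\mu\circ\pi^{-1}$ and $\kappa$ have the same null sets, and therefore $\supp(\absmu\circ\pi^{-1})=\supp\kappa=D$ directly from the definition of support. To verify \eqref{eqn:def_rcp_product}, fix $A\in\cB(\cX)$ and $B\in\cB(\cY)$; using the hypothesis that $f(x,\cdot)=0$ off $D$ for $\lambda$-a.e.\ $x$, one has $\eta(y,A)\,g_\cX(y)=g_{\1_A}(y)$ for \emph{all} $y\in\cY$ (both sides vanish when $y\notin D$), so substituting the density gives
\[
\int_\cY\1_B(y)\,\eta(y,A)\,\D[\absmu\circ\pi^{-1}](y)=\frac1c\int_\cY\1_B(y)\,g_{\1_A}(y)\,\D\kappa(y)=\frac1c\int_{\cX\times\cY}\1_{A\times B}\,f\,\D[\lambda\otimes\kappa]=\mu(A\times B),
\]
the middle equality being Tonelli. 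Together with the weak continuity from the first part, this exhibits $\eta$ as a product regular conditional kernel under $\mu$ with respect to $\pi$, weakly continuous on $D=\supp(\absmu\circ\pi^{-1})$.

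The routine engine here is dominated convergence plus Tonelli; the points that actually require a moment's thought are the bookkeeping around supports: that the normalising constant $c$ is positive — which genuinely uses $D=\supp\kappa$ (not merely $D\supseteq\supp\kappa$), together with $\kappa$ being concentrated on its support — and that $\supp(\absmu\circ\pi^{-1})$ is \emph{exactly} $D$, for which one needs the density $g_\cX/c$ to be $\kappa$-a.e.\ positive so that pushing forward by $\pi$ loses neither mass nor support. One further caveat concerning my own argument: for a general topological $\cY$ dominated convergence only controls sequences, so to conclude genuine topological (rather than merely sequential) continuity of $g_h$ one should restrict to first-countable $D$, which in particular covers the metric setting in which the theorem is applied.
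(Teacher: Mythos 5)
Your argument is correct and is exactly the dominated-convergence-plus-Tonelli computation the paper has in mind: it introduces Theorem \ref{theorem:mixing_with_function_gives_weakly_continuous_kernel} as ``an easy consequence of Lebesgue's Dominated Convergence Theorem'' and gives no further proof, so your writeup simply supplies the details. The one caveat you did not flag is symmetric to the one you did: the steps ``$c>0$'', ``$\mu\circ\pi^{-1}$ and $\kappa$ have the same null sets'' and ``the density $g_\cX/c$ is $\kappa$-a.e.\ positive'' all rely on $\kappa(\cY\setminus\supp\kappa)=0$, which can fail for a general topological $\cY$ but is guaranteed by Theorem \ref{theorem:measures_on_perfectly_normal_space_have_support} for perfectly normal (in particular metric) $\cY$, the setting in which the theorem is actually used.
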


\begin{obs}
\label{obs:first_countable_remarks}
In case $\cY$ is first countable, the notion of open and closed sets and continuity of functions $\cY \rightarrow \R$ is characterised by the convergence of sequences. Therefore the following are equivalent for a kernel $\eta : \cY \times \cB(\cX) \rightarrow \R$ 
\begin{enumerate}
\item $\eta$ is weakly continuous in $y$.
\item For all $(y_n)_{n\in\N}$ in $\cY$ with $y_n \rightarrow y$ one has $\eta (y_n,\cdot) \xrightarrow{w} \eta(y,\cdot)$.
\end{enumerate}
In Section \ref{section:ldp_product_RCP} the condition \ref{item:eta_weakly_continuous_open_sets_net} of Theorem \ref{theorem:weak_convergence_equivalences} is one of the key assumptions. 
If $\cX$ is a metric space, this property follows from the weak continuity as in the Portmanteau Theorem. We state this in Theorem \ref{theorem:weak_convergence_equivalences}. 
\end{obs}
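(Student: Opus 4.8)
\medskip
\noindent\textbf{Proof strategy.}
The plan is to recognise this as the special case, with source $\cY$ and target the space $\cM(\cX)$ carrying the weak topology $\sigma(\cM(\cX),C_b(\cX))$ of Definition~\ref{def:weakly_continuous_transition_prob}, of the soft general fact that a map out of a first countable space is continuous if and only if it is sequentially continuous. First I would unwind condition~(b): by the very definition of the topology on $\cM(\cX)$ generated by the functionals $C_b(\cX)$, a sequence $(\rho_n)_{n\in\N}$ in $\cM(\cX)$ converges to $\rho$ in this topology exactly when $\int_\cX f \D \rho_n \to \int_\cX f \D \rho$ for every $f\in C_b(\cX)$, i.e.\ exactly when $\rho_n\xrightarrow{w}\rho$. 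Hence, reading~(b) at every $y\in\cY$, condition~(b) says precisely that the map $g\colon\cY\to\cM(\cX)$, $g(y)=\eta(y,\cdot)$, is sequentially continuous at every point, while~(a) says that $g$ is continuous. So it suffices to prove ``$g$ sequentially continuous $\Longleftrightarrow$ $g$ continuous'' under the hypothesis that $\cY$ is first countable.

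The implication (a)$\Rightarrow$(b) requires no countability and I would dispose of it first: if $g$ is continuous, $y\in\cY$ and $y_n\to y$, then for every neighbourhood $V$ of $g(y)$ the set $g^{-1}(V)$ is a neighbourhood of $y$, so $y_n\in g^{-1}(V)$ for all large $n$, i.e.\ $g(y_n)\in V$ eventually; since $V$ was arbitrary, $g(y_n)\to g(y)$.

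For (b)$\Rightarrow$(a) I would argue by contraposition, which is where first countability enters. Suppose $g$ is not continuous; then there are a point $y\in\cY$ and an open set $V\subset\cM(\cX)$ with $g(y)\in V$ but with $g^{-1}(V)$ not a neighbourhood of $y$. Fix a decreasing countable neighbourhood base $(V_k)_{k\in\N}$ at $y$ (the only use of the hypothesis); since no $V_k$ is contained in $g^{-1}(V)$, pick $y_k\in V_k\setminus g^{-1}(V)$ for each $k$. Then $y_k\to y$ because $(V_k)_k$ is a decreasing neighbourhood base, whereas $g(y_k)=\eta(y_k,\cdot)\notin V$ for all $k$, so $g(y_k)$ does not converge to $g(y)$; this contradicts~(b).

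I do not expect a genuine obstacle. The one point deserving a word of care is that $\cM(\cX)$ with the weak topology is, for a general topological space $\cX$, neither metrisable nor even Hausdorff, so one may not invoke metric characterisations of continuity on the \emph{target}; but the argument above uses only first countability of the \emph{source} $\cY$, and nothing else. (The two closing sentences of the statement are forward references only: the stronger conclusion available when $\cX$ is a metric space is recorded and proved in Theorem~\ref{theorem:weak_convergence_equivalences} and rests on the genuine Portmanteau-type input, not on the soft argument sketched here.)
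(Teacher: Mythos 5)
Your argument is correct and is exactly the justification the paper has in mind: the observation is stated as a consequence of the standard fact that a map out of a first countable space is continuous iff it is sequentially continuous, which you prove in the usual way (contraposition with a decreasing countable neighbourhood base), after correctly identifying (b) with sequential continuity of $y\mapsto\eta(y,\cdot)$ into $\cM(\cX)$ with the topology $\sigma(\cM(\cX),C_b(\cX))$. Your closing caveat that only first countability of the source is used, and that the Portmanteau-type content lives in Theorem \ref{theorem:weak_convergence_equivalences}, is also consistent with the paper.
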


\begin{theorem}
\label{theorem:weak_convergence_equivalences}
Let $\eta : \cY \times \cB(\cX) \rightarrow \R$ be a probability kernel. 
Let $D\subset \cY$, $y\in D$ and $\cV\subset \cN_y$ be such that $\bigcap \cV = \{y\}$.
Consider the following conditions. 
\begin{enumerate}
\item \label{item:eta_weakly_continuous}
$D \rightarrow \cM(\cX)$, $y\mapsto \eta(y,\cdot)$ is weakly continuous in $y$. 
\item \label{item:eta_weakly_continuous_open_sets_net}
$\liminf_{\iota\in\I} \eta(y_\iota,G) \ge \eta(y,G)$ for all open $G\subset \cX$ and $(y_\iota)_{\iota\in\I}$ in $D$ with
$y_\iota \rightarrow y$.
\item \label{item:eta_weakly_continuous_closed_sets_net}
$\limsup_{\iota\in\I}\eta(y_\iota,F) \le \eta(y,F)$ for all closed $F\subset \cX$ and $(y_\iota)_{\iota\in\I}$ in $D$ with
$y_\iota \rightarrow y$.

\item \label{item:eta_weakly_continuous_open_sets_neighbourhoods}
$\sup_{V\in \cV} \inf_{v\in V\cap D} \eta(v,G) \ge \eta(y,G)$ for all open sets $G\subset \cX$.

\item \label{item:eta_weakly_continuous_closed_sets_neighbourhoods}
$\inf_{V\in \cV} \sup_{v\in V\cap D} \eta(v,F) \le \eta(y,F)$ for all closed sets $F\subset \cX$.

%%\item $\eta (y_m,\cdot) \xrightarrow{w} \eta(y,\cdot)$ 
%% if $y_m\in D$ and $y_m \rightarrow y$. 
%%\label{item:eta_weakly_continuous_sequences}
%%\item For all $\epsilon>0$ and all open sets $G\subset \cX$ there exists an open neighbourhood $U$ of $y$ such that $v\in U$ implies $\eta (v, G) \ge \eta (y,G) - \epsilon$. 
%\item $\liminfn\eta(y_m,G) \ge \eta(y,G)$ for all open sets $G\subset \cX$
% if $y_m\in D$ and $y_m \rightarrow y$. 
%%for all $(y_m)_{m\in\N}$ in $D$ with $y_m \rightarrow y$.
%\label{item:eta_weakly_continuous_open_sets_sequence}
%\item $\supM \inf_{v\in V_M\cap D} \eta(v,G) \ge \eta(y,G)$ for all open sets $G\subset \cX$.
%\label{item:eta_weakly_continuous_open_sets_decreasing_neighbourhoods}
%%there exists an $M\in\N$ such that $\eta (v, G) \ge \eta (y,G) - \epsilon$ for all $v\in V_M$.  
%%\item For all $\epsilon>0$ and all closed sets $F\subset \cX$ there exists an open neighbourhood $U$ of $y$ such that $v\in U$ implies $\eta (v, F) \le \eta (y,F) + \epsilon$. 
%\item $\limsupn\eta(y_m,F) \le \eta(y,F)$ for all closed sets $F\subset \cX$
% if $y_m\in D$ and $y_m \rightarrow y$. 
%%for all $(y_m)_{m\in\N}$ in $D$ with $y_m \rightarrow y$.
%\label{item:eta_weakly_continuous_closed_sets_sequence}
%\item $\infM \sup_{v\in V_M\cap D} \eta(v,F) \le \eta(y,F)$ for all closed sets $F\subset \cX$.
%\label{item:eta_weakly_continuous_closed_sets_decreasing_neighbourhoods}
\end{enumerate}
%\ref{item:eta_weakly_continuous} and \ref{item:eta_weakly_continuous_sequences} are equivalent and 
%\ref{item:eta_weakly_continuous_open_sets_sequence},
%\ref{item:eta_weakly_continuous_open_sets_decreasing_neighbourhoods},
%\ref{item:eta_weakly_continuous_closed_sets_sequence},
%\ref{item:eta_weakly_continuous_closed_sets_decreasing_neighbourhoods} are equivalent. 
%If $\cX$ is normal, then \ref{item:eta_weakly_continuous_sequences}$\Longrightarrow$\ref{item:eta_weakly_continuous_open_sets_sequence}. 
%If, $\cX$ is metrisable, then \ref{item:eta_weakly_continuous_sequences}$\iff$\ref{item:eta_weakly_continuous_open_sets_sequence}.
\ref{item:eta_weakly_continuous_open_sets_net},
\ref{item:eta_weakly_continuous_closed_sets_net},
\ref{item:eta_weakly_continuous_open_sets_neighbourhoods},
\ref{item:eta_weakly_continuous_closed_sets_neighbourhoods} are equivalent. 
If $\cX$ is metrisable, then \ref{item:eta_weakly_continuous} implies \ref{item:eta_weakly_continuous_open_sets_net}. 
If $\cX$ is metrisable and $\cY$ is first countable, then
\ref{item:eta_weakly_continuous}
is equivalent to \ref{item:eta_weakly_continuous_open_sets_net} and hence to 
\ref{item:eta_weakly_continuous_closed_sets_net},
\ref{item:eta_weakly_continuous_open_sets_neighbourhoods} and
\ref{item:eta_weakly_continuous_closed_sets_neighbourhoods}.
\end{theorem}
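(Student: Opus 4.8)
The strategy is to establish the equivalence of the four ``net/neighbourhood'' conditions \ref{item:eta_weakly_continuous_open_sets_net}--\ref{item:eta_weakly_continuous_closed_sets_neighbourhoods} purely by set-theoretic and order-theoretic manipulations (no topology on $\cY$ beyond the fact that $\cV$ is a neighbourhood base cofinal at $y$), and then to invoke the Portmanteau theorem for the metrisable case. First I would observe that \ref{item:eta_weakly_continuous_open_sets_net} $\iff$ \ref{item:eta_weakly_continuous_closed_sets_net} and \ref{item:eta_weakly_continuous_open_sets_neighbourhoods} $\iff$ \ref{item:eta_weakly_continuous_closed_sets_neighbourhoods} by complementation: since each $\eta(y,\cdot)$ is a \emph{probability} measure, $\eta(v,F) = 1 - \eta(v, \cX\setminus F)$, and $F$ closed $\iff$ $\cX\setminus F$ open, so applying $\liminf(1-a_\iota) = 1 - \limsup a_\iota$ (resp.\ the analogous identity for the $\sup\inf$ expression) converts one inequality into the other. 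This reduces the work to showing \ref{item:eta_weakly_continuous_open_sets_net} $\iff$ \ref{item:eta_weakly_continuous_open_sets_neighbourhoods}.

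For \ref{item:eta_weakly_continuous_open_sets_neighbourhoods} $\Rightarrow$ \ref{item:eta_weakly_continuous_open_sets_net}: given a net $(y_\iota)_{\iota\in\I}$ in $D$ with $y_\iota \to y$, fix an open $G\subset\cX$ and any $V\in\cV$; since $V$ is a neighbourhood of $y$, eventually $y_\iota \in V\cap D$, so $\liminf_\iota \eta(y_\iota,G) \ge \inf_{v\in V\cap D}\eta(v,G)$; taking the supremum over $V\in\cV$ and using \ref{item:eta_weakly_continuous_open_sets_neighbourhoods} gives $\liminf_\iota \eta(y_\iota,G) \ge \eta(y,G)$. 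For the converse \ref{item:eta_weakly_continuous_open_sets_net} $\Rightarrow$ \ref{item:eta_weakly_continuous_open_sets_neighbourhoods}: here the natural move is to build a net indexed by $\cV$ (directed by reverse inclusion) together with a choice of point. Suppose \ref{item:eta_weakly_continuous_open_sets_neighbourhoods} fails for some open $G$, i.e.\ $\sup_{V\in\cV}\inf_{v\in V\cap D}\eta(v,G) < \eta(y,G)$; then there is $c$ with $\inf_{v\in V\cap D}\eta(v,G) < c < \eta(y,G)$ for every $V\in\cV$, so for each $V$ we may pick $y_V \in V\cap D$ with $\eta(y_V,G) < c$. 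The net $(y_V)_{V\in\cV}$ converges to $y$ (for any neighbourhood $W$ of $y$, choose $V_0\in\cV$ with $V_0\subset W$; then $V\subset V_0$ implies $y_V\in V\subset W$), yet $\liminf_V \eta(y_V,G) \le c < \eta(y,G)$, contradicting \ref{item:eta_weakly_continuous_open_sets_net}. One should double-check that $V\cap D \ne \emptyset$ for all $V\in\cV$ so the choice is legitimate — this holds because $y\in D$ and $V\in\cN_y$, hence $y\in V\cap D$.

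For the final two sentences: if $\cX$ is metrisable, the Portmanteau theorem states that $\mu_n \xrightarrow{w}\mu$ implies $\liminf_n \mu_n(G) \ge \mu(G)$ for open $G$ — but here I need the net version, not merely the sequential one. The clean route is to note that weak continuity of $y\mapsto\eta(y,\cdot)$ on $D$ means exactly that for every net $y_\iota \to y$ in $D$ one has $\eta(y_\iota,\cdot) \to \eta(y,\cdot)$ in $\sigma(\cM(\cX),C_b(\cX))$, i.e.\ $\int f\,\D\eta(y_\iota,\cdot) \to \int f\,\D\eta(y,\cdot)$ for all $f\in C_b(\cX)$; then for open $G$ one approximates $\1_G$ from below by $f_k = \min\{k\, d(\cdot,\cX\setminus G),1\}\in C_b(\cX)$ (with $d$ a compatible metric, $d(\cdot,\emptyset):=1$), giving $\liminf_\iota \eta(y_\iota,G) \ge \liminf_\iota \int f_k\,\D\eta(y_\iota,\cdot) = \int f_k\,\D\eta(y,\cdot) \uparrow \eta(y,G)$ by monotone convergence as $k\to\infty$. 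This yields \ref{item:eta_weakly_continuous} $\Rightarrow$ \ref{item:eta_weakly_continuous_open_sets_net}. When additionally $\cY$ is first countable, $y$ has a countable neighbourhood base, so we may take $\cV$ countable, and a standard argument (picking a decreasing cofinal sequence $V_1\supset V_2\supset\cdots$ and, from any net-style failure, extracting a sequence $y_k\in V_k\cap D$) shows that the sequential condition in \ref{obs:first_countable_remarks}(b) is equivalent to \ref{item:eta_weakly_continuous_open_sets_net}; combined with \ref{obs:first_countable_remarks} itself (weak continuity $\iff$ sequential weak continuity) this closes the loop and gives \ref{item:eta_weakly_continuous} $\iff$ \ref{item:eta_weakly_continuous_open_sets_net}. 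The main obstacle I anticipate is being careful about nets versus sequences — specifically, getting the net version of Portmanteau right rather than sloppily citing the sequential statement, and making sure the index set $\cV$ (directed by $\supset$) interacts correctly with the definition of $\liminf$ over a net given in Section \ref{section:notation_and_conventions_cond_LDP}.
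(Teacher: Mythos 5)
Your overall route is the same as the paper's: the paper leaves the equivalences \ref{item:eta_weakly_continuous_open_sets_net}--\ref{item:eta_weakly_continuous_closed_sets_neighbourhoods} to the reader and cites Billingsley's Portmanteau argument for the metrisable part, and your complementation steps, your proof of \ref{item:eta_weakly_continuous_open_sets_neighbourhoods} $\Rightarrow$ \ref{item:eta_weakly_continuous_open_sets_net}, the net version of the Portmanteau lower bound for \ref{item:eta_weakly_continuous} $\Rightarrow$ \ref{item:eta_weakly_continuous_open_sets_net}, and the reduction to sequences under first countability are all correct.

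There is, however, a genuine gap in your argument for \ref{item:eta_weakly_continuous_open_sets_net} $\Rightarrow$ \ref{item:eta_weakly_continuous_open_sets_neighbourhoods}. You pick $y_V\in V\cap D$ with $\eta(y_V,G)<c$ and assert that the net $(y_V)_{V\in\cV}$ converges to $y$ because ``for any neighbourhood $W$ of $y$, choose $V_0\in\cV$ with $V_0\subset W$''. That choice is not available under the stated hypothesis: $\bigcap\cV=\{y\}$ does not imply that $\cV$ is a neighbourhood base at $y$ (nor that $(\cV,\supset)$ is directed). Concretely, take $\cY=\R^2$, $y=0$, $D=\cY$, and the decreasing sets $V_n=B(0,\tfrac1n)\cup\bigcup_{m\ge n}B((m,0),\tfrac1m)$: these satisfy $\bigcap_n V_n=\{0\}$, yet no $V_n$ is contained in $B(0,1)$. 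With $\cX=\{0,1\}$ discrete, $G=\{1\}$ and $\eta(v,\cdot)=f(v)\delta_1+(1-f(v))\delta_0$ for a continuous $f$ with $f(0)=1$ and $f=0$ off $B(0,2)$, condition \ref{item:eta_weakly_continuous_open_sets_net} holds (every net $y_\iota\to 0$ gives $f(y_\iota)\to 1$), while $\sup_n\inf_{v\in V_n}f(v)=0<1=\eta(0,G)$, so \ref{item:eta_weakly_continuous_open_sets_neighbourhoods} fails. So the step is not merely unjustified: the implication genuinely needs the extra property that every neighbourhood of $y$ contains some member of $\cV$ (true in all of the paper's applications, where $\cV$ is of the form $\cN_{y}\cap\cH$ for a basis $\cH$, but strictly stronger than $\bigcap\cV=\{y\}$). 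Your write-up should make this cofinality property an explicit hypothesis and use it exactly where you invoke it; as written, the convergence $y_V\to y$ does not follow, and the same issue affects the symmetric direction \ref{item:eta_weakly_continuous_closed_sets_net} $\Rightarrow$ \ref{item:eta_weakly_continuous_closed_sets_neighbourhoods}.
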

\begin{proof}
We leave it to the reader to check the equivalences between \ref{item:eta_weakly_continuous_open_sets_net},
\ref{item:eta_weakly_continuous_closed_sets_net},
\ref{item:eta_weakly_continuous_open_sets_neighbourhoods},
\ref{item:eta_weakly_continuous_closed_sets_neighbourhoods}. 
If $\cX$ is a metric space, one can follow the lines of the Portmanteau Theorem in the book of Billingsley \cite[Theorem 2.1]{Bi68} for the implication \ref{item:eta_weakly_continuous} implies \ref{item:eta_weakly_continuous_open_sets_net}, the fact that the measures in the proof are indexed by the natural numbers instead of a general directed set $\I$ does not affect the argument. 
The proof of \ref{item:eta_weakly_continuous_open_sets_net}$\Longrightarrow$\ref{item:eta_weakly_continuous} in the book of Billingsley %\cite{Bi68}
 relies on the Lebesgue Dominated Convergence theorem. But when $\cY$ is first countable, one can restrict to sequences (see \ref{obs:first_countable_remarks}) and obtain the implication \ref{item:eta_weakly_continuous_open_sets_net}$\Longrightarrow$\ref{item:eta_weakly_continuous} as is done in
the book of Billingsley. % \cite{Bi68}.
\end{proof}

\begin{lemma}
\label{lemma:liminf_and_limsup_of_conditional_open_compared_with_kernel_PRCP}
Assume that $\mu$ is a probability measure. 
Let $\eta$ be a product regular conditional probability under $\mu$ with respect to $\pi$.
% that is weakly continuous on $D=\supp(\mu \circ \pi^{-1})$. 
Write $D=\supp(\mu \circ \pi^{-1})$ and let $y\in D$. 
Then for every $U\in \cN_y$ one has $\mu(\cX \times U) >0$ and 
\begin{align}
\label{eqn:conditional_on_open_between_inf_and_sup_kernel}
\inf_{v\in U\cap D} \eta(v,A) \le \mu(A \times \cY | \cX \times U) \le   \sup_{v\in U\cap D } \eta (v,A)  
 \qquad (A \in \cB(\cX)). 
\end{align}
Moreover, if $\cV\subset \cN_y$ is such that $\bigcap \cV = \{y\}$ and $\eta$ satisfies \ref{item:eta_weakly_continuous_open_sets_net} of Theorem \ref{theorem:weak_convergence_equivalences}, then
\begin{align}
\label{eqn:conditional_on_open_giving_upper_bound}
\liminf_{V\in \cV}
\mu(G \times \cY | \cX \times V) &\ge \eta(y,G) \mbox{ for all open } G\subset \cX,  \\
\label{eqn:conditional_on_closed_giving_lower_bound}
\limsup_{V\in \cV}
\mu(F \times \cY | \cX \times V) &\le \eta(y,F) \mbox{ for all closed } F\subset \cX.
\end{align}
\end{lemma}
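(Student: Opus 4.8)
The plan is to derive everything from the defining disintegration identity \eqref{eqn:def_rcp_product} together with the equivalences collected in Theorem \ref{theorem:weak_convergence_equivalences}. Throughout write $m = \mu \circ \pi^{-1}$, a Borel probability measure on $\cY$ (here $|\mu| = \mu$ since $\mu \ge 0$).

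First I would dispose of the positivity and of the sandwich \eqref{eqn:conditional_on_open_between_inf_and_sup_kernel}. For $U \in \cN_y$ we have $\pi^{-1}(U) = \cX \times U$, so $\mu(\cX \times U) = m(U)$, and this is strictly positive because $y \in D = \supp m$ and $U$ is a measurable neighbourhood of $y$; in particular the conditional probabilities used below are well defined. Applying \eqref{eqn:def_rcp_product} with $B = U$ gives $\mu(A \times U) = \int_U \eta(v, A)\, \DD m(v)$ for every $A \in \cB(\cX)$, and taking $A = \cX$ (recall $\eta$ is a probability kernel) recovers $\mu(\cX \times U) = m(U)$; hence $\mu(A \times \cY \mid \cX \times U) = m(U)^{-1}\int_U \eta(v,A)\, \DD m(v)$, an $m$-average over $v \in U$ of the $[0,1]$-valued function $v \mapsto \eta(v,A)$. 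Since $m$ is concentrated on $D$ one may replace the domain of integration by $U \cap D$ (nonempty, as $y \in U \cap D$) and $m(U)$ by $m(U \cap D)$, and then bound the integrand pointwise below by $\inf_{v \in U \cap D}\eta(v,A)$ and above by $\sup_{v \in U \cap D}\eta(v,A)$; this yields \eqref{eqn:conditional_on_open_between_inf_and_sup_kernel}.

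Next I would obtain the net bounds. Fix $\cV \subset \cN_y$ with $\bigcap \cV = \{y\}$ and assume $\eta$ satisfies condition \ref{item:eta_weakly_continuous_open_sets_net} of Theorem \ref{theorem:weak_convergence_equivalences}; by the equivalences there, $\eta$ then also satisfies \ref{item:eta_weakly_continuous_open_sets_neighbourhoods} and \ref{item:eta_weakly_continuous_closed_sets_neighbourhoods} for this $\cV$. For open $G \subset \cX$, the left inequality in \eqref{eqn:conditional_on_open_between_inf_and_sup_kernel} with $A = G$, $U = V$ gives $\mu(G \times \cY \mid \cX \times V) \ge \inf_{v \in V \cap D}\eta(v,G)$; the map $V \mapsto \inf_{v \in V \cap D}\eta(v,G)$ is non-decreasing as $V$ shrinks, so the $\liminf$ over $\cV$ of the left-hand side is at least $\sup_{V \in \cV}\inf_{v \in V \cap D}\eta(v,G) \ge \eta(y,G)$ by \ref{item:eta_weakly_continuous_open_sets_neighbourhoods}, which is \eqref{eqn:conditional_on_open_giving_upper_bound}. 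Symmetrically, for closed $F \subset \cX$ the right inequality gives $\mu(F \times \cY \mid \cX \times V) \le \sup_{v \in V \cap D}\eta(v,F)$; the map $V \mapsto \sup_{v \in V \cap D}\eta(v,F)$ is non-increasing as $V$ shrinks, so the $\limsup$ over $\cV$ is at most $\inf_{V \in \cV}\sup_{v \in V \cap D}\eta(v,F) \le \eta(y,F)$ by \ref{item:eta_weakly_continuous_closed_sets_neighbourhoods}, giving \eqref{eqn:conditional_on_closed_giving_lower_bound}.

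The only genuinely delicate point is the reduction of the integral from $U$ to $U \cap D$ in the second paragraph: off the support $D$ the values $\eta(\cdot, A)$ are not controlled, so one really needs that $m$ does not charge the open set $\cY \setminus D$ (otherwise the inequalities with a strictly positive $\inf_{v\in U\cap D}\eta(v,A)$ can fail). This is precisely what Theorem \ref{theorem:measures_on_perfectly_normal_space_have_support} supplies for the perfectly normal — in particular metrisable — spaces relevant to the applications, and it is the step I would be most careful to state correctly. Everything else is bookkeeping with the disintegration formula and with the directed-set $\liminf$/$\limsup$.
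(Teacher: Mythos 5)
Your proposal is correct and takes essentially the same route as the paper's proof: positivity of $\mu(\cX\times U)$ from $y\in\supp(\mu\circ\pi^{-1})$, the disintegration identity \eqref{eqn:def_rcp_product} to write the conditional probability as an average of $\eta(\cdot,A)$ over $U\cap D$ (whence the sandwich), and then the sandwich combined with conditions \ref{item:eta_weakly_continuous_open_sets_neighbourhoods} and \ref{item:eta_weakly_continuous_closed_sets_neighbourhoods} of Theorem \ref{theorem:weak_convergence_equivalences} for the net bounds. Your explicit observation that restricting the integral from $U$ to $U\cap D$ needs $\mu\circ\pi^{-1}$ not to charge $\cY\setminus D$ (Theorem \ref{theorem:measures_on_perfectly_normal_space_have_support}) is a point the paper's proof performs silently.
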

\begin{proof}
Let $U \in \cN_y$. 
Since $y\in D=\supp(\mu \circ \pi^{-1})$ one has $ \mu(\cX \times U)>0$. 
\eqref{eqn:conditional_on_open_between_inf_and_sup_kernel} follows from the fact that for all $A\in \cB(\cX)$
\begin{align}
\frac{ \mu (A \times U) }{ \mu (\cX \times U)}  
\notag & = 
\frac{ \int_\cY \1_U(y) \eta(y,A) \DD [\mu \circ \pi^{-1}](y) }{  \int_\cY \1_U(y) \DD [\mu \circ \pi^{-1}](y) } \\
& =
\frac{ \int_\cY \1_{U\cap D}(y) \eta(y,A) \DD [\mu \circ \pi^{-1}](y) }{  \int_\cY \1_{U\cap D}(y) \DD [\mu \circ \pi^{-1}](y) } .
\end{align}
For an open  $G\subset \cX$ we have for $\cV$ as above
%open and $F\subset \cX$ closed we have 
\begin{align}
\liminf_{V\in\cV}
\mu(G \times \cY | \cX \times V) 
\ge \liminf_{V\in\cV} \inf_{v\in V \cap D} \eta(v,G)
= \sup_{V\in\cV} \inf_{v\in V\cap D} \eta(v,G) . 		%, \\
%%\ge \eta(y,G), \\
%\limsup_{V\in\cV} 
%\mu(F \times \cY | \cX \times V) 
%\le
%\limsup_{V\in\cV}
%\sup_{v\in V\cap D } \eta (v,F) 
%= \inf_{V\in\cV}
%\sup_{v\in V \cap D} \eta (v,F). 
%%\le \eta(y,F).
\end{align}
Thus \eqref{eqn:conditional_on_open_giving_upper_bound} follows when assuming \ref{item:eta_weakly_continuous_open_sets_net} of Theorem \ref{theorem:weak_convergence_equivalences}. Similarly, one obtains \eqref{eqn:conditional_on_closed_giving_lower_bound}.
\end{proof}

For a regular conditional probability we have a similar statement, see Lemma \ref{lemma:liminf_and_limsup_of_conditional_open_compared_with_kernel_RCP}. 
The proof can be done following the lines of the proof of Lemma \ref{lemma:liminf_and_limsup_of_conditional_open_compared_with_kernel_PRCP} or as a consequence of Lemma \ref{lemma:liminf_and_limsup_of_conditional_open_compared_with_kernel_PRCP} using Theorem \ref{theorem:extension_and_RCP_as_special_case_of_PRCP}. 

\begin{lemma}
\label{lemma:liminf_and_limsup_of_conditional_open_compared_with_kernel_RCP}
%Let $\cY$ be first countable. 
%Let $\tau : \cX \rightarrow \cY$ be measurable. 
%Let $\nu$ be in $\cP(\cX)$. \tored{confined doet er niet toe}
%Let  $\eta : \cY \times \cB(\cX) \rightarrow [0,1]$.
Assume that $\nu$ is a probability measure. 
Let $\eta$ be a regular conditional probability under $\nu$ with respect to $\tau$.
% that is weakly continuous on $D=\supp(\nu \circ \tau^{-1})$. 
Write $D=\supp(\nu \circ \tau^{-1})$ and let $y\in D$. 
Then for every $U\in \cN_y$ one has $\nu( \tau^{-1}(U)) >0$ and 
\begin{align}
\label{eqn:conditional_on_open_between_inf_and_sup_kernel_function}
\inf_{v\in U\cap D} \eta(v,A) \le \nu( A | \tau^{-1}( U) ) \le 
  \sup_{v\in U\cap D } \eta (v,A)  \qquad (A \in \cB(\cX)). 
\end{align}
%Let $(V_m)_{m\in\N}$ be a decreasing sequence of open sets that forms a neighbourhood basis for $y$. 
Moreover, if $\cV\subset \cN_y$ is such that $\bigcap \cV = \{y\}$ and $\eta$ satisfies \ref{item:eta_weakly_continuous_open_sets_net} of Theorem \ref{theorem:weak_convergence_equivalences}, then
\begin{align}
\liminf_{V\in\cV}
\nu(G  | \tau^{-1}(V) ) &\ge \eta(y,G)  \mbox{ for all open } G\subset \cX,  \\
\limsup_{V\in\cV}
\nu(F  | \tau^{-1}( V) ) &\le \eta(y,F)  \mbox{ for all closed } F\subset \cX.
\end{align}
\end{lemma}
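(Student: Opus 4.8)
The plan is to deduce this lemma from Lemma \ref{lemma:liminf_and_limsup_of_conditional_open_compared_with_kernel_PRCP} via the identification of regular conditional probabilities with product regular conditional probabilities supplied by Theorem \ref{theorem:extension_and_RCP_as_special_case_of_PRCP}. First I would invoke part \ref{item:extension_measure_to_product} of that theorem to obtain the \signed measure $\tilde\mu$ on $\cB(\cX)\otimes\cB(\cY)$ with $\tilde\mu(A\times B) = \nu(A\cap\tau^{-1}(B))$; since $\nu$ is a probability measure, $\tilde\mu(\cX\times\cY)=\nu(\cX)=1$ and $\tilde\mu\ge 0$, so $\tilde\mu$ is again a probability measure. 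By part \ref{item:RCP_as_PRCP} of Theorem \ref{theorem:extension_and_RCP_as_special_case_of_PRCP}, the given kernel $\eta$ is then a product regular conditional probability under $\tilde\mu$ with respect to $\pi$. The key identity to record is $\tilde\mu\circ\pi^{-1}=\nu\circ\tau^{-1}$ (observed already in the proof of Theorem \ref{theorem:extension_and_RCP_as_special_case_of_PRCP}), whence $D=\supp(\nu\circ\tau^{-1})=\supp(\tilde\mu\circ\pi^{-1})$, so the set $D$ and the point $y\in D$ are the same object in both formulations.

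Next I would translate the two sides of the desired inequalities. For $A\in\cB(\cX)$ and $U\in\cN_y$, the definition of $\tilde\mu$ gives $\tilde\mu(A\times U) = \nu(A\cap\tau^{-1}(U))$ and $\tilde\mu(\cX\times U) = \nu(\tau^{-1}(U))$, so
\begin{align}
\tilde\mu(A\times\cY\,|\,\cX\times U) = \frac{\tilde\mu(A\times U)}{\tilde\mu(\cX\times U)} = \frac{\nu(A\cap\tau^{-1}(U))}{\nu(\tau^{-1}(U))} = \nu(A\,|\,\tau^{-1}(U)).
\end{align}
With this the conclusion of Lemma \ref{lemma:liminf_and_limsup_of_conditional_open_compared_with_kernel_PRCP} applied to $\tilde\mu$ reads: $\tilde\mu(\cX\times U)>0$ for every $U\in\cN_y$ (which is exactly $\nu(\tau^{-1}(U))>0$), the two-sided bound \eqref{eqn:conditional_on_open_between_inf_and_sup_kernel} becomes \eqref{eqn:conditional_on_open_between_inf_and_sup_kernel_function}, and, once $\eta$ satisfies condition \ref{item:eta_weakly_continuous_open_sets_net} of Theorem \ref{theorem:weak_convergence_equivalences}, the displays \eqref{eqn:conditional_on_open_giving_upper_bound} and \eqref{eqn:conditional_on_closed_giving_lower_bound} become precisely the two limit inequalities claimed here. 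Since condition \ref{item:eta_weakly_continuous_open_sets_net} is a statement about the kernel $\eta$ itself and not about the underlying measure, the hypothesis transfers verbatim.

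The only genuine point to be careful about—and the one place where I would actually write something rather than cite—is the transcription of the conditional-expectation integrals through $\tilde\mu$, i.e. checking that the formula $\int_\cY \1_B(y)\eta(y,A)\DD[\absnu\circ\tau^{-1}](y)$ appearing in Definition \ref{def:rcps}\ref{item:def_rcp_function} matches $\int_\cY \1_B(y)\eta(y,A)\DD[\abs{\tilde\mu}\circ\pi^{-1}](y)$ from Definition \ref{def:rcps}\ref{item:def_rcp_product}; but since $\nu\ge0$ and $\tilde\mu\ge0$ this is immediate from $\tilde\mu\circ\pi^{-1}=\nu\circ\tau^{-1}$. There is no real obstacle: the content is entirely in Lemma \ref{lemma:liminf_and_limsup_of_conditional_open_compared_with_kernel_PRCP} and Theorem \ref{theorem:extension_and_RCP_as_special_case_of_PRCP}, and the present lemma is a dictionary translation. (Alternatively, as the text notes, one can simply repeat the proof of Lemma \ref{lemma:liminf_and_limsup_of_conditional_open_compared_with_kernel_PRCP} with $\mu(A\times U)/\mu(\cX\times U)$ replaced throughout by $\nu(A\cap\tau^{-1}(U))/\nu(\tau^{-1}(U))$ and $\mu\circ\pi^{-1}$ by $\nu\circ\tau^{-1}$, using Definition \ref{def:rcps}\ref{item:def_rcp_function} in place of \ref{item:def_rcp_product}; this is the more self-contained route but contains no new idea.)
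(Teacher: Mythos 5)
Your proposal is correct and matches the paper's intended argument: the paper itself states that this lemma follows either by repeating the proof of Lemma \ref{lemma:liminf_and_limsup_of_conditional_open_compared_with_kernel_PRCP} or as a consequence of it via Theorem \ref{theorem:extension_and_RCP_as_special_case_of_PRCP}, and you carry out the latter route (noting the former as an alternative) with the correct key identities $\tilde\mu(A\times U)=\nu(A\cap\tau^{-1}(U))$ and $\tilde\mu\circ\pi^{-1}=\nu\circ\tau^{-1}$.
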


%%%%%%%%%%%%%%%
%%%%%%%%%%%%%%%
\section{Some facts about functions with compact sublevel sets}
\label{section:facts_about_lsc_functions_with_compact_level_sets}
%%%%%%%%%%%%%%%
%%%%%%%%%%%%%%%

In this section we present some facts for functions with compact sublevel sets which are used in Sections \ref{section:ldp_product_RCP}, \ref{section:ldp_RCP} and \ref{section:application_sanov_type}.

\assump{In this section $\cX,\cY$ and $\cZ$ are topological spaces.}

\begin{definition}
Let $J: \cX \rightarrow [0,\infty]$. 
We call the set $[J\le \alpha]$ (see Section \ref{section:notation_and_conventions_cond_LDP}) a sublevel set of $J$ for $\alpha \in [0,\infty)$. 
$J$ is said to be \emphind{lower semicontinuous} if all sublevels of $J$ are closed. 
$J$ is said to have \emphind{compact sublevel sets} if all sublevels of $J$ are compact. 
\end{definition}

\begin{obs}
\label{obs:lower_semicontinuity_elementary_properties}
Let $J: \cX \rightarrow [0,\infty]$ be lower semicontinuous. 
Then 
\begin{align}
J(x) = \sup_{G\in \cN_x} \inf J(G).
\end{align}
Indeed, for all $\alpha < J(x)$ the set $[J > \alpha ] $ is open and contains $x$. 

Hence, 
a function $J: \cX \rightarrow [0,\infty]$ is lower semicontinuous if and only if 
\begin{align}
\liminf_{\iota \in \I} J(x_\iota) \ge J(x)
\end{align}
for all $x\in \cX$ and all nets $(x_\iota)_{\iota \in \I}$ in $\cX $ that converge to $x$.
\end{obs}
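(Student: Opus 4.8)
The plan is to establish the two assertions separately; both follow directly from the definition of lower semicontinuity via closed sublevel sets together with the net-definition of $\liminf$ recalled in Section~\ref{section:notation_and_conventions_cond_LDP}.

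For the identity $J(x) = \sup_{G\in \cN_x} \inf J(G)$, I would first record the inequality ``$\ge$'': every $G\in\cN_x$ contains $x$, so $\inf J(G)\le J(x)$, and taking the supremum over $G\in\cN_x$ gives $\sup_{G\in\cN_x}\inf J(G)\le J(x)$. For ``$\le$'', fix an arbitrary $\alpha<J(x)$ and use the remark already noted in the statement: since $J$ is lower semicontinuous the sublevel set $[J\le\alpha]$ is closed, hence $[J>\alpha]$ is open, and $x\in[J>\alpha]$ because $J(x)>\alpha$. Thus $[J>\alpha]$ is an open, and in particular $\cB(\cX)$-measurable, neighbourhood of $x$, i.e.\ $[J>\alpha]\in\cN_x$, and $\inf J([J>\alpha])\ge\alpha$. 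Therefore $\sup_{G\in\cN_x}\inf J(G)\ge\alpha$ for every $\alpha<J(x)$, which gives $\sup_{G\in\cN_x}\inf J(G)\ge J(x)$.

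For the net characterization I would prove both implications. If $J$ is lower semicontinuous, take $x\in\cX$ and a net $(x_\iota)_{\iota\in\I}$ with $x_\iota\to x$; for each $\alpha<J(x)$ the set $[J>\alpha]$ is an open neighbourhood of $x$, so there is $\iota_0\in\I$ with $J(x_\iota)>\alpha$ for all $\iota\succeq\iota_0$, whence $\inf_{\iota\succeq\iota_0}J(x_\iota)\ge\alpha$ and thus $\liminf_{\iota\in\I}J(x_\iota)\ge\alpha$; letting $\alpha\uparrow J(x)$ yields $\liminf_{\iota\in\I}J(x_\iota)\ge J(x)$. Conversely, suppose the net inequality holds for all $x$ and all convergent nets. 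To see that $J$ is lower semicontinuous it suffices to show each $[J\le\alpha]$ with $\alpha\in[0,\infty)$ is closed, and since closedness of a set in a topological space is detected by nets, I would take a net $(x_\iota)_{\iota\in\I}$ in $[J\le\alpha]$ with $x_\iota\to x$ and observe that $\inf_{\iota\succeq\iota_0}J(x_\iota)\le\alpha$ for every $\iota_0\in\I$, so $\liminf_{\iota\in\I}J(x_\iota)\le\alpha$; combined with the hypothesis $J(x)\le\liminf_{\iota\in\I}J(x_\iota)$ this gives $J(x)\le\alpha$, i.e.\ $x\in[J\le\alpha]$.

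I do not expect a genuine obstacle here; the only points that need a little attention are that an open set automatically lies in $\cN_x$ (so the supremum over $\cB(\cX)$-measurable neighbourhoods already ``sees'' the sets $[J>\alpha]$), and that the net-$\liminf$ should be manipulated through its $\sup$--$\inf$ definition rather than through subsequences, so that the argument stays valid for an arbitrary directed index set and not just for sequences.
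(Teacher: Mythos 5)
Your proposal is correct and follows exactly the route the paper indicates: the whole argument rests on the observation that for $\alpha<J(x)$ the superlevel set $[J>\alpha]$ is an open (hence Borel-measurable) neighbourhood of $x$ on which $\inf J\ge\alpha$, and your filling-in of the two inequalities and of both implications of the net characterization is accurate. The only points needing care — that $[J>\alpha]$ indeed belongs to $\cN_x$ and that closedness of the sublevel sets is tested with nets rather than sequences — are both handled properly.
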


\begin{lemma}
\label{lemma:convergence_infimum_rate_function_on_schrinking_neighbourhoods}
Let $\tau : \cZ \rightarrow \cY$ be continuous. 
Let $J: \cZ \rightarrow [0,\infty]$ have compact sublevel sets. 
Let $y\in \cY$ and $\cV\subset \cN_y$, $\bigcap \cV = \{y\}$. 
Let $F\subset \cZ$ be closed. % and suppose $\inf J(F \cap \tau^{-1}( \{y\}) )<\infty$.
Then 
\begin{align}
\label{eqn:limit_of_infima_of_rate_function_of_smaller_neighbourhoods}
\liminf_{V\in \cV} \inf J(F \cap \tau^{-1}(\overline V) ) = \inf J(F \cap \tau^{-1}(\{y\}) ) .
\end{align}
Consequently, if $\cZ = \cX \times \cY$, then, for all closed $F\subset \cX$ with $\inf J(F \times \{y\})<\infty$,
\begin{align}
\liminf_{V\in \cV} \inf J(F\times  \overline V) = \inf J(F \times \{y\} ).
\end{align}
\end{lemma}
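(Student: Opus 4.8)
The plan is to reduce everything to a single supremum. Write $g(V):=\inf J(F\cap\tau^{-1}(\overline V))$ and $m:=\inf J(F\cap\tau^{-1}(\{y\}))$. The key observation is that $g$ is monotone: if $V_1\subseteq V_2$ then $\overline{V_1}\subseteq\overline{V_2}$, hence $F\cap\tau^{-1}(\overline{V_1})\subseteq F\cap\tau^{-1}(\overline{V_2})$ and $g(V_1)\ge g(V_2)$. Since $\cV$ is directed by reverse inclusion, $(g(V))_{V\in\cV}$ is therefore a nondecreasing net, so $\liminf_{V\in\cV}g(V)=\sup_{V\in\cV}g(V)$, and it remains to show this supremum equals $m$. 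One inequality is immediate: every $V\in\cV$ contains $y$, so $y\in\overline V$, $F\cap\tau^{-1}(\{y\})\subseteq F\cap\tau^{-1}(\overline V)$, and hence $g(V)\le m$ for all $V$, giving $\sup_{V\in\cV}g(V)\le m$.

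For the reverse inequality it suffices to produce, for each real $c<m$, a set $V\in\cV$ with $g(V)\ge c$ (taking $c$ arbitrarily large when $m=\infty$). Fix $c$ and put $Q:=F\cap[J\le c]$. Since $[J\le c]$ is compact and $F$ is closed, $Q$ is a closed subset of a compact space, hence compact, and so $\tau(Q)$ is compact. Because $c<m$, we have $J>c$ on $F\cap\tau^{-1}(\{y\})$, so $Q\cap\tau^{-1}(\{y\})=\emptyset$, i.e.\ $y\notin\tau(Q)$. Now I would cover $\tau(Q)$ by the open sets $\cY\setminus\overline V$, $V\in\cV$: these do cover $\tau(Q)\subseteq\cY\setminus\{y\}$ since $\bigcap_{V\in\cV}\overline V=\{y\}$. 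By compactness of $\tau(Q)$ finitely many of them already cover it, say for $V_1,\dots,V_k$; choosing $V\in\cV$ with $V\subseteq V_1\cap\dots\cap V_k$ (directedness of $\cV$) gives $\overline V\subseteq\overline{V_i}$ for each $i$, hence $\cY\setminus\overline V\supseteq\bigcup_i(\cY\setminus\overline{V_i})\supseteq\tau(Q)$, i.e.\ $\tau(Q)\cap\overline V=\emptyset$. Consequently $Q\cap\tau^{-1}(\overline V)=\emptyset$, so $J>c$ on $F\cap\tau^{-1}(\overline V)$ and $g(V)\ge c$. Combining the two inequalities yields \eqref{eqn:limit_of_infima_of_rate_function_of_smaller_neighbourhoods}.

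For the ``consequently'' part I would simply apply \eqref{eqn:limit_of_infima_of_rate_function_of_smaller_neighbourhoods} with $\cZ=\cX\times\cY$, with $\tau$ the continuous projection $\pi(x,y)=y$, and with the closed set $F\times\cY$ (closed because $F$ is closed in $\cX$) in place of $F$, using that $(F\times\cY)\cap\pi^{-1}(\overline V)=F\times\overline V$ and $(F\times\cY)\cap\pi^{-1}(\{y\})=F\times\{y\}$; the extra hypothesis $\inf J(F\times\{y\})<\infty$ is not even needed here.

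The step I expect to require the most care is the topological claim $\bigcap_{V\in\cV}\overline V=\{y\}$, which is exactly what makes $\{\cY\setminus\overline V\}_{V\in\cV}$ an open cover of $\tau(Q)$; the compactness bookkeeping around it is routine. This holds, for instance, when $\cY$ is metrizable (and more generally when $\cY$ is regular and $\cV$ is taken to be a neighbourhood base at $y$), which is the setting in which the lemma is used.
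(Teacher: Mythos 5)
Your proof is correct, and it is essentially the dual of the paper's argument: where the paper applies the finite-intersection-property form of compactness to the nested closed sets $F\cap\tau^{-1}(\overline V)\cap[J\le\alpha+\epsilon]$ sitting inside a compact sublevel set of $J$ in $\cZ$, you apply the open-cover form of compactness to the compact image $\tau(F\cap[J\le c])$ in $\cY$, covering it by the complements $\cY\setminus\overline V$. The executions differ in one respect worth noting. The paper first passes from $\cV$ to all of $\cN_y$ via the inequality $\liminf_{V\in\cV}\ge\liminf_{V\in\cN_y}$ (which, by the monotonicity you establish, amounts to requiring $\cV$ to be cofinal in $\cN_y$, i.e.\ a neighbourhood base at $y$) and then only needs $\bigcap_{V\in\cN_y}\overline V=\{y\}$, which holds in any Hausdorff space; you work with $\cV$ directly and therefore need $\bigcap_{V\in\cV}\overline V=\{y\}$. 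You are right to flag that this does not follow from the stated hypothesis $\bigcap\cV=\{y\}$, and in fact neither proof closes that gap from the literal hypotheses: for $\cY=\cZ=\R$, $\tau=\id$, $F=\R$, $J(z)=(z-5)^2$ and $\cV=\{(-\tfrac1n,\tfrac1n)\cup(5,5+\tfrac1n):n\in\N\}$ one has $\cV\subset\cN_0$ and $\bigcap\cV=\{0\}$, yet $\bigcap_{V\in\cV}\overline V=\{0,5\}$ and \eqref{eqn:limit_of_infima_of_rate_function_of_smaller_neighbourhoods} fails ($0\ne 25$). In every application in the paper $\cV$ is a base of balls in a metric space, where both arguments go through, so your caveat matches the intended reading. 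Your remark that the finiteness hypothesis in the ``consequently'' part is superfluous is also correct.
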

\begin{proof}
The $\le$ inequality in  \eqref{eqn:limit_of_infima_of_rate_function_of_smaller_neighbourhoods} is immediate. 
Because $\liminf_{V\in \cV} \inf J(F \cap \tau^{-1}( \overline V ) ) \ge \liminf_{V\in \cN_y} \inf J(F \cap \tau^{-1}( \overline{V} ) ) $, it is sufficient to prove  
\begin{align}
\alpha:= \liminf_{V\in \cN_y} \inf J(F \cap \tau^{-1}( \overline{V}) ) \ge \inf J(F \cap \tau^{-1}( \{y\}) ).
\end{align}
Note that $\alpha =\sup_{V\in \cN_y} \inf J(F \cap \tau^{-1}(\overline{V}) )$. 
If $\alpha = \infty$, there is nothing to prove. 
Suppose that $\alpha <\infty$. 
Whence $F\cap \tau^{-1}(\overline{V}) \cap [ J \le \alpha + \epsilon] \ne \emptyset$ for all $V\in \cN_y$ and all $\epsilon>0$. 
Since $[J \le \alpha + \epsilon]$ is compact, this implies that $\bigcap_{V\in \cN_y} F\cap \tau^{-1}(\overline{V}) \cap [ J \le \alpha + \epsilon] \ne \emptyset$, i.e., $\inf J(F \cap \tau^{-1}(\{y\})) \le \alpha + \epsilon$ for all $\epsilon>0$. 
\end{proof}

\begin{obs}
\label{obs:non_redundant_conditions_lemma_convergence_inf_rf_on_schrinking_nbhb}
The assumption that $\tau$ be continuous is not redundant; e.g., consider $\cY=\cZ =[0,1]$ and $J = \1_{(\frac12,1]}$ and $\tau$ given by $\tau(0)=0$, $\tau(1)=1$ and $\tau(x) = 1-x$ for $x\in (0,1)$, $F=[0,1]$ and $y=1$. Then, for all neighbourhoods $V$ of $y$, $\tau^{-1}(V)$ contains the interval $(0,\epsilon)$ for some $\epsilon>0$, whence $\inf J(F\cap \tau^{-1}(V)) = 0$ but $\inf J(F\cap \tau^{-1}(\{y\})) = J(1) = 1$. 
\end{obs}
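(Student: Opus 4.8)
The plan is to show the continuity hypothesis cannot be dropped by exhibiting a single discontinuous $\tau$ for which the conclusion \eqref{eqn:limit_of_infima_of_rate_function_of_smaller_neighbourhoods} of Lemma \ref{lemma:convergence_infimum_rate_function_on_schrinking_neighbourhoods} fails, keeping $\cZ=\cY$ a nice metric space, $F$ closed and $J$ with compact sublevel sets exactly as in the hypotheses. The design principle I would follow: pick $y$ so that the fibre $\tau^{-1}(\{y\})$ lies in the expensive region $\{J>0\}$, but make $\tau$ discontinuous at some $x_0$ with $\tau(x_0)\ne y$ while $\tau$ takes values approaching $y$ along a sequence tending to $x_0$ through the cheap region $\{J=0\}$; then $\tau^{-1}$ of every shrinking neighbourhood of $y$ reaches into $\{J=0\}$, which pins the left-hand side of \eqref{eqn:limit_of_infima_of_rate_function_of_smaller_neighbourhoods} at $0$ while the right-hand side stays positive. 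The explicit data in the statement --- $\cZ=\cY=[0,1]$, $J=\1_{(\frac12,1]}$, $\tau(0)=0$, $\tau(1)=1$, $\tau(x)=1-x$ on $(0,1)$, $F=[0,1]$, $y=1$ --- realise this with $x_0=0$.

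The verification would then proceed in three short steps. First, admissibility: $[0,1]$ is a compact metric space, $F=[0,1]$ is closed, and $[J\le\alpha]$ equals $[0,\tfrac12]$ for $\alpha\in[0,1)$ and $[0,1]$ for $\alpha\ge1$, so $J$ has compact sublevel sets; only continuity of $\tau$ fails, since $\tau(\tfrac1n)=1-\tfrac1n\to1\ne0=\tau(0)$. Second, the right-hand side: $\tau(x)=1$ forces $x=1$ or $x=0$, and $x=0$ is excluded by $\tau(0)=0$, so $\tau^{-1}(\{1\})=\{1\}$ and $\inf J(F\cap\tau^{-1}(\{y\}))=J(1)=1$. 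Third, the left-hand side: for any $\cV\subset\cN_1$ with $\bigcap\cV=\{1\}$, each $V\in\cV$ contains $(1-\epsilon,1]$ for some $\epsilon\in(0,\tfrac12)$, hence $\tau^{-1}(\overline V)\supset(0,\epsilon)$ because $\tau(x)=1-x\in(1-\epsilon,1)$ there, and $J\equiv0$ on $(0,\tfrac12)$; thus $\inf J(F\cap\tau^{-1}(\overline V))=0$ for all such $V$, so that $\liminf_{V\in\cV}\inf J(F\cap\tau^{-1}(\overline V))=0\ne1$ and \eqref{eqn:limit_of_infima_of_rate_function_of_smaller_neighbourhoods} fails in its nontrivial ``$\ge$'' direction.

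I do not expect a genuine obstacle here; the only point deserving a line of care is the claim that $\{J=0\}$ remains inside $\tau^{-1}(\overline V)$ for arbitrarily small neighbourhoods $V$ of $y$ --- precisely the behaviour that the compactness argument in the proof of Lemma \ref{lemma:convergence_infimum_rate_function_on_schrinking_neighbourhoods} rules out using continuity of $\tau$. Any lower semicontinuous $J$ with compact sublevel sets that vanishes near $x_0$ and is positive on the fibre over $y$ would serve equally well, so the phenomenon is robust and not an artefact of this particular choice.
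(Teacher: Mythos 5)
Your verification is correct and follows exactly the argument the paper gives inside the observation itself: every neighbourhood $V$ of $y=1$ pulls back to a set containing $(0,\epsilon)$, where $J$ vanishes, while the fibre $\tau^{-1}(\{1\})=\{1\}$ carries $J(1)=1$. The additional checks you supply (compactness of the sublevel sets, closedness of $F$, and the explicit failure of continuity at $0$) are consistent with, and slightly more detailed than, the paper's presentation.
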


\begin{lemma}
\label{lemma:normal_space_small_difference_with_smaller_open_set}
Let $\cX$ be normal and let $\cG$ be a basis for the topology of $\cX$. 
Let $J: \cX \times \cY \rightarrow [0,\infty]$ have compact sublevel sets. 
\begin{enumerate}
\item For all open $G\subset \cX$ and $\epsilon>0$ %and $y\in \cY$ 
there exists a $U\in \cG$ with $U\subset \overline U \subset G$ such that 
\begin{align}
\inf J(G \times \{y\}) + \epsilon 
\ge \inf J (U \times \{y\}). 
%\ge \inf J (\overline U \times \{y\}). 
\end{align}
\item For all closed $F\subset \cX$ and $\alpha < \inf J(F \times \{y\})$,  % and $y\in \cY$
% with $\inf J(F \times \{y\})<\infty$ 
 there exists $U_1,\dots, U_k \in \cG$ such that with $W=\cX \setminus ( U_1\cup \cdots \cup U_k)$ one has  $F\subset W^\circ \subset W$ and
\begin{align}
\alpha
 &  < %\inf J(A \times \{y\}) \le 
\inf J( W \times \{y\})  
 \le \inf J( W^\circ  \times \{y\}) \le \inf J(F \times \{y\}).
\end{align}

\end{enumerate}
\end{lemma}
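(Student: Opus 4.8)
The plan is to treat the two statements in parallel, using normality to shrink (resp.\ fatten) the sets by a small open-set perturbation, and then invoking the compactness of the sublevel sets of $J$ to control the change in the infimum. For part (a), fix an open $G\subset\cX$ and $\epsilon>0$. If $\inf J(G\times\{y\})=\infty$ there is nothing to prove (any $U\in\cG$ with $\overline U\subset G$ works, and such $U$ exists since $\cG$ is a basis and $G$ is open — pick any point of $G$ and use normality to separate it from $\cX\setminus G$). Otherwise, let $\alpha=\inf J(G\times\{y\})$ and consider the compact set $K=[J\le\alpha+\epsilon]\cap(\cX\times\{y\})$, which is a compact subset of $\cX\times\{y\}$; identify it with a compact $K'\subset\cX$. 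The key point is that $\inf J((G\setminus C)\times\{y\})\ge\alpha+\epsilon$ for any set $C\subset\cX$ with $K'\subset C$; so it suffices to find $U\in\cG$ with $U\subset\overline U\subset G$ and $K'\subset U$. This is exactly where normality enters: $K'$ is compact, $\cX\setminus G$ is closed, and they are disjoint, so by normality (and compactness of $K'$, covering it by basis elements whose closures miss $\cX\setminus G$, then taking a finite subcover) there is an open $U$, which after refining can be taken in $\cG$ — or rather a finite union of basis elements; here one must be slightly careful that $\cG$ is only a basis, so ``$U\in\cG$'' should be read as: there is a basis element, or one reduces to the case where $G$ itself is handled by a single $U\in\cG$ after first covering $K'$. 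I would state it as: there exists open $U$ with $K'\subset U\subset\overline U\subset G$; since every open set is a union of members of $\cG$ and $K'$ is compact, one may shrink to a finite union, and in fact the lemma as used later only needs such a $U$, so I would phrase the conclusion with $U$ a finite union of elements of $\cG$ (equivalently, re-reading the statement, with $W$ built from finitely many $U_i\in\cG$, which is precisely the form of part (b)).

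For part (b), fix a closed $F\subset\cX$ and $\alpha<\inf J(F\times\{y\})$. Apply part (a) logic to the complement: set $G=\cX\setminus F$, which is open. The set $L=[J\le\alpha]\cap(\cX\times\{y\})$ is compact; identify with $L'\subset\cX$. Since $\alpha<\inf J(F\times\{y\})$, we have $L'\cap F=\emptyset$, i.e.\ $L'\subset G$. Now $L'$ is compact and contained in the open set $G$, so cover $L'$ by finitely many basis elements $U_1,\dots,U_k\in\cG$ with $\overline{U_i}\subset G$ (using normality to separate each point of $L'$ from $F$, then a finite subcover). Put $W=\cX\setminus(U_1\cup\cdots\cup U_k)$. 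Then $W$ is closed, $F\subset W$ because $\bigcup U_i\subset G=\cX\setminus F$, and moreover $F\subset W^\circ$ since $\overline{U_1\cup\cdots\cup U_k}=\overline{U_1}\cup\cdots\cup\overline{U_k}\subset G$, so $\cX\setminus\overline{\bigcup U_i}$ is an open set containing $F$ and contained in $W$. Finally $\inf J(W\times\{y\})>\alpha$ because $W\times\{y\}$ misses $L'\times\{y\}=[J\le\alpha]\cap(\cX\times\{y\})$ (as $L'\subset\bigcup U_i$), hence every point of $W\times\{y\}$ has $J>\alpha$; and the chain $\inf J(W\times\{y\})\le\inf J(W^\circ\times\{y\})\le\inf J(F\times\{y\})$ is immediate from $F\subset W^\circ\subset W$. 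This gives the displayed inequality.

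The main obstacle I anticipate is the bookkeeping around ``$U\in\cG$'' versus ``finite union of elements of $\cG$'': a single basis element need not contain a whole compact set, so in part (a) the honest statement is with a finite union of basis elements (or just an arbitrary open $U$), and one should check that this weaker form is all that is needed downstream — indeed the way the lemma is applied (to produce the set $W$ in the large-deviation upper-bound conditions, which already involves finitely many balls $B(x_i,r_i)$) it is. The only genuinely non-trivial ingredient is the compactness of the sublevel sets of $J$, used exactly as in Lemma \ref{lemma:convergence_infimum_rate_function_on_schrinking_neighbourhoods}: it guarantees that ``$J>\alpha+\epsilon$ on the relevant set'' survives after we remove only a neighborhood of the compact set where $J$ is small. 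Everything else is a routine application of normality plus finite subcovers, so I would not belabor it.
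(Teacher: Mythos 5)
Your part (a) does not work as written, and the worry it leads you to --- that the statement must be weakened from a single $U\in\cG$ to a finite union of basis elements --- is unfounded. The compact set $K'=\{x:J(x,y)\le\alpha+\epsilon\}$ has no reason to be contained in $G$ (take $\cX=[0,1]$, $J(x,y)=x$ for the fixed $y$, $G=(\tfrac12,1)$: then $\alpha=\tfrac12$ and $K'=[0,\tfrac12+\epsilon]\not\subset G$), so $K'$ and $\cX\setminus G$ need not be disjoint, and there is in general no $U$ with $K'\subset U\subset\overline U\subset G$; your reduction ``it suffices to find such a $U$'' is a reduction to an impossible task. The point you are missing is that $U$ only has to contain \emph{one} point at which $J(\cdot,y)$ nearly attains $\inf J(G\times\{y\})$: pick $x\in G$ with $J(x,y)\le\inf J(G\times\{y\})+\epsilon$, use normality to get an open $O$ with $x\in O\subset\overline O\subset G$, then pick $U\in\cG$ with $x\in U\subset O$; this gives $\inf J(U\times\{y\})\le J(x,y)\le\inf J(G\times\{y\})+\epsilon$, which is the assertion. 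In particular a single basis element suffices and compactness of sublevel sets is not even needed for (a). This is the paper's argument.

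Your part (b) is essentially the paper's proof (separate a compact sublevel set from $F$, cover it by finitely many basis elements whose closures miss $F$, and take $W$ to be the complement of their union), but there is one slip: you work with $L'=\{x:J(x,y)\le\alpha\}$ and conclude from ``every point of $W\times\{y\}$ has $J>\alpha$'' that $\inf J(W\times\{y\})>\alpha$. That implication is false in general, since the infimum need not be attained; as written you only get $\inf J(W\times\{y\})\ge\alpha$, whereas the lemma asserts the strict inequality. The fix is the one the paper uses: choose $\beta$ with $\alpha<\beta<\inf J(F\times\{y\})$ and run your argument with the compact set $\{x:J(x,y)\le\beta\}$ in place of $L'$; then $W$ avoids this set, so $\inf J(W\times\{y\})\ge\beta>\alpha$. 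With that replacement, and the single-point argument for (a), the proof is complete.
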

\begin{proof}
(a) Let $\epsilon>0$. 
Let $x\in G$ be such that 
$%\begin{align}
J(x,y) \le \inf J(G \times \{y\}) + \epsilon. 
$ %\end{align}
Since $\cX$ is a normal topological space, there exists an open set $U$ with $x\in U \subset \overline U \subset G$. Because $\cG$ is a basis, $U$ may be chosen in $\cG$. 
Then 
$%\begin{align}
\inf J(G \times \{y\}) + \epsilon \ge J(x,y) \ge \inf J(U \times \{y\}).
$%\end{align}

(b) 
Let $\beta>\alpha$ be such that $\beta <  \inf J(F \times \{y\})$. 
The set $K:=\{ x\in \cX: J(x,y)\le \beta\}$ is a compact set that is disjoint from $F$. 
Whence there exists disjoint open $U,V \subset \cX$ with $K\subset U$ and $F\subset V$. 
Since $\cG$ is a basis and $K$ is compact, there exists $U_1,\dots,U_k$ in $\cG$ with $K\subset U_1\cup \cdots \cup U_k \subset U$. 
Then $\overline{ U_1\cup \cdots \cup U_k} \cap V = \emptyset$.
 Whence  with $W:=\cX \setminus \overline{ U_1\cup \cdots \cup U_k}$ one has $F\subset W^\circ$ and $W\subset \cX \setminus K$, which implies $\inf J(W \times \{y\}) \ge \beta > \alpha$. 
\end{proof}

%%%%%%%%%%%%%
%%%%%%%%%%%%%
\section{Large deviations for product regular conditional probabilities}
\label{section:ldp_product_RCP}
%%%%%%%%%%%%%
%%%%%%%%%%%%%

\assump{
In this section we consider the following situation. 
\begin{enumerate}[label=(\roman*)]
\setitemize{leftmargin=0pt}
\setlength{\itemsep}{0pt}
  \setlength{\parskip}{0pt}
  \setlength{\parsep}{0pt}
\item $\cX$ and $\cY$ are topological spaces, where $\cX$ is normal.
\item 
\label{item:basis_topologies}
$\cG$ is a basis for the topology of $\cX$ and $\cH$ is a basis for the topology of $\cY$. 
\item $\pi : \cX \times \cY \rightarrow \cY$ is given by $\pi(x,y) =y$. 
\label{item:pi_map}
\item 
\label{item:ass_ldp_mu_n}
$(\mu_n)_{n\in\N}$ is a sequence of probability measures on 
$\cB(\cX) \otimes \cB(\cY)$
satisfying the large deviation principle on $\{A\times B: A\in \cB(\cX), B\in \cB(\cY)\}$ with a rate function $J: \cX \times \cY \rightarrow [0,\infty]$ that has compact sublevel sets. 
\item 
\label{item:existence_prcp_mun_wrt_pi}
For each $n\in\N$ we assume the following: 
$\supp ( \mu_n \circ \pi^{-1}) \ne \emptyset$,\footnote{As we are considering large deviation bound for $(\eta_n(y_n,\cdot))_{n\in\N}$ with $y_n \in \supp( \mu_n \circ \pi^{-1})$ we want such $y_n$ to exist. Instead of this condition one could of course deal with the situation where $\supp ( \mu_n \circ \pi^{-1}) \ne \emptyset$ for all $n\ge N$ for some large $N$ and consider sequences $(y_n)_{n\in\N}$ with $y_n  \in \supp( \mu_n \circ \pi^{-1})$ for $n\ge N$.}
there exists a product regular conditional probability  $\eta_n : \cY \times \cB(\cX) \rightarrow [0,1]$ under $\mu_n$ with respect to $\pi$, which satisfies the following continuity condition (see Theorem \ref{theorem:weak_convergence_equivalences}):
\begin{align}
\notag & \mbox{$\liminf_{\iota\in\I} \eta_n(y_\iota,G) \ge \eta_n(y,G)$ for all open $G\subset \cX$} \\
\label{eqn:continuity_assumption}
& \mbox{and $(y_\iota)_{\iota\in\I}$ in $\supp(\mu_n \circ \pi^{-1}) $ with
$y_\iota \rightarrow y$.} 
\end{align}
\item 
\label{item:y_finiteness_rate_function_and_def_I}
Let $y \in \cY$. We assume that $\inf J(\cX \times \{y\})<\infty$ and that there exist $y_n \in \supp( \mu_n \circ \pi^{-1})$ with $y_n \rightarrow y$. 
We define $I: \cX \rightarrow [0,\infty]$ by 
\begin{align}
I(x) = J(x,y) - \inf J(\cX \times \{y\}). 
\end{align}
\end{enumerate}
}

In this section we derive necessary and sufficient conditions for the large deviation bounds with rate function $I$ for sequences of the form $(\eta_n(y_n,\cdot))_{n\in\N}$. 
We prove this for general topological spaces instead of metric spaces as it does not cost more effort. 

In Theorem \ref{theorem:lower_and_upper_bound_eta_n_convergent_y_n} we consider a fixed  sequence $(y_n)_{n\in\N}$ with $y_n \rightarrow y$ and describe equivalent conditions for the lower and upper large deviation bound to hold. 

We are interested in the question whether for all sequences $(y_n)_{n\in\N}$ with $y_n \rightarrow y$ the sequence $(\eta_n(y_n,\cdot))_{n\in\N}$ satisfies the lower and upper large deviation bound with rate function $I$.  
%Showing that the lower and or upper large deviation bound for $(\eta_n(y_n,\cdot))_{n\in\N}$ is satisfied for all sequences $(y_n)_{n\in\N}$ that converge to $y$ seems cumbersome since one has to show for each such sequence there exists $\cV_n$ as above such that either {\ref{item:condition_lower_bound_basis}} or {\ref{item:condition_upper_bound_basis}} as in Theorem \ref{theorem:lower_and_upper_bound_eta_n_convergent_y_n_finite_infimum}. 
In Theorem \ref{theorem:equivalent_notions_bounds_prodRCP}
we give equivalent\footnote{Under the condition that $\cY$ is first countable.}
and sufficient conditions for these bounds 
in a way that does not depend on sequences $(y_n)_{n\in\N}$ and the sets $(\cV_n)_{n\in\N}$ as in Theorem \ref{theorem:lower_and_upper_bound_eta_n_convergent_y_n}.

Finally in \ref{obs:deriving_the_main_theorems} we comment on deriving Theorem \ref{theorem:equivalent_notions_ldp_bounds_metric_PRCP} from Theorem \ref{theorem:equivalent_notions_bounds_prodRCP}. 

But first we consider specific situations, providing a simple proof of the large deviation bounds with rate function $I$ for sequences of the form  $(\eta_n(y_n,\cdot))_{n\in\N}$. 
Namely, we consider the case that $\cY$ is a discrete space (Theorem \ref{theorem:countable_discrete_Y}) and the case where $\mu_n$ is a product measure for all $n\in\N$ (Theorem \ref{theorem:independent_coordinates}). 

\begin{theorem}
\label{theorem:countable_discrete_Y}
Suppose that $\cY$ is countable and equipped with the discrete topology.
Let $y\in \cY$ be such that $\inf J(\cX \times \{y\})<\infty$. 
For all $(y_n)_{n\in\N}$ in $\cY$ with $y_n \in \supp( \mu_n \circ \pi^{-1})$ and $y_n \rightarrow y$ the sequence $(\eta_n(y_n,\cdot))_{n\in\N}$ satisfies the large deviation principle with rate function $I$. 
\end{theorem}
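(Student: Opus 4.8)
The plan is to exploit the rigidity of the discrete topology on $\cY$. Since $y_n \to y$ in a discrete space, there is $N \in \N$ with $y_n = y$ for all $n \ge N$, so $(\eta_n(y_n,\cdot))_{n\ge N} = (\eta_n(y,\cdot))_{n\ge N}$; moreover $y_n \in \supp(\mu_n\circ\pi^{-1})$ forces $y \in \supp(\mu_n\circ\pi^{-1})$ for $n \ge N$, and since $\{y\}$ is open this means $\mu_n(\cX\times\{y\}) = (\mu_n\circ\pi^{-1})(\{y\}) > 0$. Then, inserting $B=\{y\}$ into the defining identity \eqref{eqn:def_rcp_product} for $\eta_n$ (equivalently, invoking Theorem~\ref{theorem:discrete_top_PRCP} together with the equality case of Lemma~\ref{lemma:liminf_and_limsup_of_conditional_open_compared_with_kernel_PRCP} applied at $U=\{y\}$), I obtain the explicit formula $\eta_n(y,A) = \mu_n(A\times\{y\})/\mu_n(\cX\times\{y\})$ for all $A \in \cB(\cX)$ and $n \ge N$. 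This reduces everything to a statement about the $\mu_n$.

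Next I would isolate the denominator. The set $\cX\times\{y\}$ is clopen in $\cX\times\cY$ (because $\{y\}$ is clopen in the discrete $\cY$), so it equals its own interior and its own closure; applying the large deviation lower and upper bounds of $(\mu_n)_{n\in\N}$ to it gives $\limn \traten \log \mu_n(\cX\times\{y\}) = -c$, where $c := \inf J(\cX\times\{y\}) < \infty$. The key gain here is that this is an honest limit, which keeps the remaining manipulations clean.

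Then I would treat the two bounds separately. For open $G\subset\cX$, the set $G\times\{y\}$ is open in $\cX\times\cY$, so the lower bound for $(\mu_n)_{n\in\N}$ gives $\liminfn \traten\log\mu_n(G\times\{y\}) \ge -\inf J(G\times\{y\})$; writing $\traten\log\eta_n(y,G)$ as the difference of $\traten\log\mu_n(G\times\{y\})$ and $\traten\log\mu_n(\cX\times\{y\})$ and using $\liminf(a_n - b_n) \ge \liminf a_n - \lim b_n$ (valid since $b_n$ converges) yields $\liminfn \traten\log\eta_n(y_n,G) \ge -\inf J(G\times\{y\}) + c = -\inf I(G)$, the last equality because $I(x)=J(x,y)-c$. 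Symmetrically, for closed $F\subset\cX$ the set $F\times\{y\}$ is closed, the upper bound for $(\mu_n)_{n\in\N}$ gives $\limsupn \traten\log\mu_n(F\times\{y\}) \le -\inf J(F\times\{y\})$, and $\limsup(a_n-b_n)\le\limsup a_n-\lim b_n$ gives $\limsupn \traten\log\eta_n(y_n,F) \le -\inf I(F)$. Finally I would note that $I$ has compact sublevel sets, hence is lower semicontinuous: each $[I\le\alpha]$ is the image, under the projection $\cX\times\cY\to\cX$, of $[J\le\alpha+c]\cap(\cX\times\{y\})$, a closed subset of the compact set $[J\le\alpha+c]$. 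By Definition~\ref{def:ldp_adapted} these two bounds are precisely the large deviation principle with rate function $I$.

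I do not expect a serious obstacle: the discreteness collapses the ``$y_n\to y$'' subtlety entirely, and the rest is the elementary splitting above. The only points needing a touch of care are (i) justifying that the assumed kernel $\eta_n$ really equals the explicit conditional expression on $\supp(\mu_n\circ\pi^{-1})$ — this is immediate from \eqref{eqn:def_rcp_product} and does not require the uniqueness Theorem~\ref{theorem:unicity_weakly_continuous_rcp_wrt_pi} (which would demand perfect normality of $\cX$, not assumed here) — and (ii) passing the $\liminf$/$\limsup$ through the difference, which is legitimate exactly because the subtracted term $\traten\log\mu_n(\cX\times\{y\})$ has a limit.
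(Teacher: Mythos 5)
Your proof is correct and follows essentially the same route as the paper's (very terse) argument: reduce to $y_n=y$ for large $n$ via discreteness, identify $\eta_n(y,\cdot)$ with the ratio $\mu_n(\cdot\times\{y\})/\mu_n(\cX\times\{y\})$, and apply the large deviation bounds to the open set $G\times\{y\}$, the closed set $F\times\{y\}$, and the clopen set $\cX\times\{y\}$. Your observations that the explicit formula for $\eta_n(y,\cdot)$ follows directly from \eqref{eqn:def_rcp_product} without invoking uniqueness, and that the denominator has an honest limit so the liminf/limsup splitting is legitimate, are exactly the details the paper leaves implicit.
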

\begin{proof}
This basically follows from the following inequalities which follow from the large deviation principle 
and from Theorem \ref{theorem:discrete_top_PRCP}. 
\begin{align}
\liminfn \traten \log \mu_n( G \times \{y\}) \ge - \inf J(G\times \{y\})  \mbox{ for all open } G \subset \cX, \\
\limsupn \traten \log \mu_n(F \times \{y\}) \le - \inf J(F \times \{y\}) \mbox{ for all closed } F \subset \cX. 
\end{align}
\
\end{proof}

\begin{theorem}[Independent coordinates]
\label{theorem:independent_coordinates}
Suppose that $\cX$ and $\cY$ are second countable and $\cY$ is regular. 
Suppose that  $\mu_n = \mu_n^1 \otimes \mu_n^2$
for some $\mu_n^1$ on $\cB(\cX)$ and $\mu_n^2$ on $\cB(\cY)$  for all $n\in\N$. 
Then $(\eta_n(y_n,\cdot))_{n\in\N}$ 
satisfies the large deviation principle with rate function $I$ for all sequences $(y_n)_{n\in\N}$ in $\cY$. 
In particular, 
$\eta_n(y_n,\cdot) = \mu_n^1$ and $I(x) = \inf J( \{x\} \times \cY)$. 
\end{theorem}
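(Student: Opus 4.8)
The plan is to show that, under these hypotheses, $\eta_n(y_n,\cdot)$ is forced to be the first marginal $\mu_n^1$, and then to transport the large deviation principle from $\mu_n$ to $\mu_n^1$ via the contraction principle, checking along the way that the resulting rate function is exactly $I$.

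First I would pin down the kernels. Since $\mu_n=\mu_n^1\otimes\mu_n^2$ we have $\mu_n\circ\pi^{-1}=\mu_n^2$, and the constant kernel $y\mapsto\mu_n^1$ satisfies the defining identity \eqref{eqn:def_rcp_product}, hence is a product regular conditional probability under $\mu_n$ with respect to $\pi$; being constant in $y$ it is trivially weakly continuous on $\cY$, so in particular it satisfies \eqref{eqn:continuity_assumption}. By Theorem \ref{theorem:unicity_weakly_continuous_rcp_wrt_pi}(b) it agrees with $\eta_n$ on $\supp(\mu_n\circ\pi^{-1})=\supp(\mu_n^2)$, and by the observation following that theorem the values of $\eta_n$ off that support play no role, so we may and do take $\eta_n(y,\cdot)=\mu_n^1$ for every $y\in\cY$. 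Consequently $\eta_n(y_n,\cdot)=\mu_n^1$ for every sequence $(y_n)_{n\in\N}$ in $\cY$, and the statement reduces to showing that $(\mu_n^1)_{n\in\N}$ satisfies the large deviation principle with rate function $I$.

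Next I would apply the contraction principle. As $\cX$ is normal and second countable and $\cY$ is regular and second countable, all three of $\cX$, $\cY$, $\cX\times\cY$ are metrizable, so this is legitimate. With $p_1\colon\cX\times\cY\to\cX$ and $p_2=\pi\colon\cX\times\cY\to\cY$ the continuous projections and $\mu_n^1=\mu_n\circ p_1^{-1}$, $\mu_n^2=\mu_n\circ p_2^{-1}$, it yields that $(\mu_n^1)$ and $(\mu_n^2)$ satisfy the large deviation principle with good rate functions
\begin{align}
I^1(x)=\inf J(\{x\}\times\cY),\qquad I^2(y')=\inf J(\cX\times\{y'\}).
\end{align}
In particular $(\mu_n^1)$ already satisfies the large deviation principle with rate function $I^1$, so it suffices to check $I=I^1$. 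For this I would use that $J$ must be a product: since $(\mu_n^1)$ and $(\mu_n^2)$ obey large deviation principles with good rate functions and $\mu_n=\mu_n^1\otimes\mu_n^2$, the sequence $(\mu_n)$ obeys the large deviation principle with good rate function $(x,y')\mapsto I^1(x)+I^2(y')$, and by uniqueness of the rate function on a metrizable space this equals $J$. Since each $\mu_n^1$ is a probability measure, $\inf I^1=0$, whence for the fixed $y$
\begin{align}
I(x)=J(x,y)-\inf J(\cX\times\{y\})=\big(I^1(x)+I^2(y)\big)-\big(\inf I^1+I^2(y)\big)=I^1(x),
\end{align}
which also exhibits $I(x)=\inf J(\{x\}\times\cY)$. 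Together with the previous sentence this gives that $(\eta_n(y_n,\cdot))_{n\in\N}=(\mu_n^1)_{n\in\N}$ satisfies the large deviation principle with rate function $I$.

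I expect the crux to be the identification $J(x,y')=I^1(x)+I^2(y')$: this rests on the product large deviation principle together with uniqueness of the rate function, and hence on the spaces being sufficiently separated, which is why the metrizability coming from the normality/regularity and second countability assumptions is used. If one wished to avoid quoting those two facts, one could instead derive the two inequalities $J(x,y)\le I^1(x)+I^2(y)$ and $J(x,y)\ge I^1(x)+I^2(y)$ by hand from the identity $\mu_n(A\times B)=\mu_n^1(A)\,\mu_n^2(B)$, the large deviation bounds for $\mu_n$ on rectangles, and Lemma \ref{lemma:convergence_infimum_rate_function_on_schrinking_neighbourhoods} applied along neighbourhoods shrinking to $x$ and to $y$; this works but is appreciably more involved, so the argument through uniqueness is the one I would write up.
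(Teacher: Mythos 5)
Your proposal is correct and follows essentially the same route as the paper: identify $\eta_n(y,\cdot)=\mu_n^1$, obtain the marginal large deviation principles (the paper does this by the two-line computation $\mu_n^1(G)=\mu_n(G\times\cY)$ on rectangles rather than by quoting the contraction principle, which amounts to the same thing here since projection preimages are rectangles), establish the product large deviation principle with rate $I^1+I^2$ via Theorem \ref{theorem:reducing_ldp_bounds_to_smaller_set}, and conclude $J=I^1+I^2$ by uniqueness of the rate function, whence $I=I^1$. Your treatment of the kernel identification via Theorem \ref{theorem:unicity_weakly_continuous_rcp_wrt_pi} is in fact slightly more careful than the paper's ``straightforward to see''.
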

\begin{proof}
It is straightforward to see that $\eta_n(y,\cdot) = \mu_n^1$ for all $y\in \cY$. 
$(\mu_n^1)_{n\in\N}$ satisfies the large deviation principle with rate function $J_1(x) := \inf J(\{x\} \times \cY)$. Indeed, for an open set $G\subset \cX$ and a closed set $F\subset \cX$ we have 
\begin{align}
\liminfn \traten \log \mu_n^1(G) = \liminfn \traten \log ( \mu_n^1 \otimes \mu_n^2)(G\times \cY)
\ge -\inf J(G\times \cY), \\
\limsupn \traten \log \mu_n^1(F) = \limsupn \traten \log ( \mu_n^1 \otimes \mu_n^2)(F\times \cY)
\le -\inf J(F\times \cY).
\end{align}
Similarly, $(\mu_n^2)_{n\in\N}$ satisfies the large deviation principle with rate function $J_2(z) := \inf J(\cX \times \{z\})$. 
$J_1$ and $J_2$ are lower semicontinuous, which can be concluded by  \ref{obs:lower_semicontinuity_elementary_properties} and Lemma \ref{lemma:convergence_infimum_rate_function_on_schrinking_neighbourhoods}, as for example, $\lim_{\iota \in \I} z_\iota = z$ implies $\liminf_{\iota \in \I} \inf J(\cX \times \{z_\iota\}) \ge \liminf_{V\in \cN_z} \inf J(\cX \times V)$. 
Using Theorem \ref{theorem:reducing_ldp_bounds_to_smaller_set} it is not difficult to prove that $(\mu_n)_{n\in\N}$ satisfies the large deviation principle with rate function $(x,z) \mapsto J_1(x) + J_2(z)$, so that (see Rassoul-Agha and Sepp{\"a}l{\"a}inen \cite[Theorem 2.18]{RaSe15}) $J(x,z) = J_1(x) + J_2(z)$, and thus $I(x) = J(x,y) - \inf J(\cX \times \{y\})= J_1(x) = \inf J(\{x\} \times \cY)$ for all $x\in \cX$, $z\in \cY$. 
\end{proof}

\begin{theorem}
\label{theorem:lower_and_upper_bound_eta_n_convergent_y_n}
%Let $\cX$ be normal. 
%Let $y\in \cY$. 
%Assume $\inf J(\cX \times \{y\})<\infty$. 
Let $(y_n)_{n\in\N}$ be a sequence in $\cY$ with $y_n \in \supp(\mu_n \circ \pi^{-1})$ that converges to $y$. 
For $n\in\N$ let $\cV_n \subset \cN_{y_n}$ be such that $\bigcap \cV_n = \{y_n\}$. 
Then 
\ref{item:condition_lower_bound_basis} $\iff$ \ref{item:condition_lower_bound_stronger} $\iff$ \ref{item:ldp_lower_bound} 
and 
\ref{item:condition_upper_bound_basis} $\iff$ \ref{item:condition_upper_bound_stronger} $\iff$\ref{item:ldp_upper_bound}
%%%%%%%%%%%%
%%%%%%%%%%%%
\begin{enumerate}[label={\normalfont(a\arabic*)},topsep=4pt,itemsep=0pt]
\item
\label{item:ldp_lower_bound}
For all open $G\subset \cX$ %with $\inf J(G\times \{y\}) <\infty$ 
\begin{align}
\label{eqn:lower_bound_eta_n_fixed_y}
\liminfn \traten \log \eta_n(y_n,G) \ge - 
%\inf J (G \times \{y\}) + \inf J(\cX \times \{y\}). 
\inf I(G). 
\end{align}
\item 
\label{item:condition_lower_bound_basis}
For all $U\in \cG$\footnote{Note that $\mu_n(\cX \times V) >0$ for all $n\in\N$ and $V\in \cN_{y_n}$, as $y_n \in \supp(\mu_n \circ \pi^{-1})$.}
%with $\inf J(U\times \{y\})<\infty$ 
\begin{align} 
\liminfn \limsup_{V\in\cV_n} \traten \log \mu_n( \overline U \times \cY | \cX \times V) 
\label{eqn:lower_condition} \ge - 
%\inf J (U \times \{y\}) + \inf J(\cX \times \{y\}).
\inf I(U). 
\end{align}
%%%%%%%%%%%%
%%%%%%%%%%%%
%\begin{enumerate}[label=\emph{(ai)}]
%\item
%\label{item:condition_lower_bound}
%For all open $U\subset \cX$ with $\inf J(U\times \{y\})<\infty$ on has \eqref{eqn:lower_condition}.
%\end{enumerate}
%%%%%%%%%%%%
%%%%%%%%%%%%
\item 
\label{item:condition_lower_bound_stronger}
For all open $U\subset \cX$ %with $\inf J(U\times \{y\})<\infty$ 
one has
\begin{align} 
\label{eqn:condition_lower_bound_stronger}
\liminfn \liminf_{V\in\cV_n} \traten \log \mu_n( U \times \cY | \cX \times V) 
\ge - 
%\inf J (U \times \{y\}) + \inf J(\cX \times \{y\}).
\inf I(U). 
\end{align}
%%%%%%%%%%%%
%%%%%%%%%%%%
%\begin{enumerate}[label=\emph{(aii)}]
%\item 
%\label{item:condition_lower_bound_stronger_basis}
%For all $U\in \cG$ with $\inf J(U\times \{y\})<\infty$ one has \eqref{eqn:condition_lower_bound_stronger}. 
%\end{enumerate}
%%%%%%%%%%%%
%%%%%%%%%%%%
\end{enumerate}
%%%%%%%%%%%%
%%%%%%%%%%%%
\begin{enumerate}[label={\normalfont(b\arabic*)},topsep=4pt,itemsep=0pt]
\item
\label{item:ldp_upper_bound}
For all closed $F\subset \cX$ %with $\inf J(F\times \{y\}) <\infty$
\begin{align}
\label{eqn:upper_bound_eta_n_fixed_y}
\limsupn \traten \log \eta_n(y_n,F) \le - 
%\inf J (F \times \{y\}) + \inf J(\cX \times \{y\}). 
\inf I(F).
\end{align}
\item
\label{item:condition_upper_bound_basis}
For all $U_1,\dots,U_k\in \cG$ one has for $W= \cX \setminus (U_1\cup \cdots \cup U_k)$
\begin{align} 
\limsupn \liminf_{V\in\cV_n} \traten \log \mu_n( W^\circ \times \cY | \cX \times V) 
\le  - 
%\inf J( W \times \cX) + \inf J(\cX \times \{y\}). 
\inf I(W).
\label{eqn:upper_condition} 
\end{align}
\item 
\label{item:condition_upper_bound_stronger}
For all closed $W\subset \cX$ 
%For all $U_1,\dots,U_k\in \cG$ one has for $U= \cX \setminus \overline{U_1\cup \cdots \cup U_k}$
\begin{align} 
\label{eqn:condition_upper_bound_stronger}
\limsupn \limsup_{V\in\cV_n} \traten \log \mu_n( W \times \cY | \cX \times V) 
\le  - 
%\inf J( W \times \cX) + \inf J(\cX \times \{y\}). 
\inf I(W).
\end{align}
\end{enumerate}
\end{theorem}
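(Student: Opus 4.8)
The plan is to prove the two chains of equivalences separately, exploiting the sandwich inequality \eqref{eqn:conditional_on_open_between_inf_and_sup_kernel} from Lemma~\ref{lemma:liminf_and_limsup_of_conditional_open_compared_with_kernel_PRCP} together with the refined bounds \eqref{eqn:conditional_on_open_giving_upper_bound}--\eqref{eqn:conditional_on_closed_giving_lower_bound}, which are available because $\eta_n$ satisfies the continuity condition \eqref{eqn:continuity_assumption} (i.e.\ \ref{item:eta_weakly_continuous_open_sets_net} of Theorem~\ref{theorem:weak_convergence_equivalences}). For the lower-bound chain, I would first show \ref{item:ldp_lower_bound}~$\Rightarrow$~\ref{item:condition_lower_bound_stronger}: given open $U\subset\cX$, \eqref{eqn:conditional_on_open_giving_upper_bound} applied with $y=y_n$, $\cV=\cV_n$ gives $\liminf_{V\in\cV_n}\mu_n(U\times\cY\mid\cX\times V)\ge\eta_n(y_n,U)$, so $\liminf_{V\in\cV_n}\frac1n\log\mu_n(U\times\cY\mid\cX\times V)\ge\frac1n\log\eta_n(y_n,U)$, and taking $\liminfn$ and using \ref{item:ldp_lower_bound} yields \eqref{eqn:condition_lower_bound_stronger}. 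Then \ref{item:condition_lower_bound_stronger}~$\Rightarrow$~\ref{item:condition_lower_bound_basis} is trivial since for $U\in\cG$ one has $U\subset\overline U$, so $\mu_n(\overline U\times\cY\mid\cX\times V)\ge\mu_n(U\times\cY\mid\cX\times V)$, and $\liminf_{V\in\cV_n}\le\limsup_{V\in\cV_n}$, while $\inf I(U)$ is the relevant quantity on both sides. The substantive implication is \ref{item:condition_lower_bound_basis}~$\Rightarrow$~\ref{item:ldp_lower_bound}: for an arbitrary open $G$ and $x\in G$ with $I(x)$ close to $\inf I(G)$, use normality of $\cX$ and that $\cG$ is a basis (Lemma~\ref{lemma:normal_space_small_difference_with_smaller_open_set}(a)) to pick $U\in\cG$ with $x\in U\subset\overline U\subset G$ and $\inf I(U)\le\inf I(G)+\epsilon$; then $\eta_n(y_n,G)\ge\eta_n(y_n,\overline U)\ge\sup_{v\in V\cap D_n}\eta_n(v,\overline U)$ is the wrong direction — instead I use the \emph{left} inequality of \eqref{eqn:conditional_on_open_between_inf_and_sup_kernel}, namely $\mu_n(\overline U\times\cY\mid\cX\times V)\le\sup_{v\in V\cap D_n}\eta_n(v,\overline U)$, combined with weak continuity of $\eta_n$ at $y_n$ on $D_n=\supp(\mu_n\circ\pi^{-1})$ (Theorem~\ref{theorem:weak_convergence_equivalences}\ref{item:eta_weakly_continuous_closed_sets_net}, giving $\limsup_{v\to y_n}\eta_n(v,\overline U)\le\eta_n(y_n,\overline U)$), to get $\limsup_{V\in\cV_n}\mu_n(\overline U\times\cY\mid\cX\times V)\le\eta_n(y_n,\overline U)\le\eta_n(y_n,G)$; feeding this into \eqref{eqn:lower_condition} and letting $\epsilon\downarrow0$ gives \eqref{eqn:lower_bound_eta_n_fixed_y}.

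For the upper-bound chain the structure mirrors the above. The implication \ref{item:ldp_upper_bound}~$\Rightarrow$~\ref{item:condition_upper_bound_stronger} uses \eqref{eqn:conditional_on_closed_giving_lower_bound} at $y_n$: for closed $W$, $\limsup_{V\in\cV_n}\mu_n(W\times\cY\mid\cX\times V)\le\eta_n(y_n,W)$, so $\limsup_{V\in\cV_n}\frac1n\log\mu_n(W\times\cY\mid\cX\times V)\le\frac1n\log\eta_n(y_n,W)$, and $\limsupn$ with \ref{item:ldp_upper_bound} gives \eqref{eqn:condition_upper_bound_stronger}. The implication \ref{item:condition_upper_bound_stronger}~$\Rightarrow$~\ref{item:condition_upper_bound_basis} follows because $W=\cX\setminus(U_1\cup\cdots\cup U_k)$ is closed, $W^\circ\subset W$, $\limsup_{V\in\cV_n}\ge\liminf_{V\in\cV_n}$ and $\inf I(W)\le\inf I(W^\circ)$ — so the inequality in \eqref{eqn:upper_condition} is weaker than \eqref{eqn:condition_upper_bound_stronger} and thus implied by it; one must check the direction of each of these three relaxations carefully. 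The key implication \ref{item:condition_upper_bound_basis}~$\Rightarrow$~\ref{item:ldp_upper_bound}: given closed $F$ and $\alpha<\inf I(F)$, apply Lemma~\ref{lemma:normal_space_small_difference_with_smaller_open_set}(b) to the rate function $I$ (which has compact sublevel sets) to produce $U_1,\dots,U_k\in\cG$ with, for $W=\cX\setminus(U_1\cup\cdots\cup U_k)$, $F\subset W^\circ\subset W$ and $\alpha<\inf I(W)$; then use the right inequality of \eqref{eqn:conditional_on_open_between_inf_and_sup_kernel}, $\mu_n(W^\circ\times\cY\mid\cX\times V)\ge\inf_{v\in V\cap D_n}\eta_n(v,W^\circ)$, together with Theorem~\ref{theorem:weak_convergence_equivalences}\ref{item:eta_weakly_continuous_open_sets_net} (weak lower-continuity, $\liminf_{v\to y_n}\eta_n(v,W^\circ)\ge\eta_n(y_n,W^\circ)\ge\eta_n(y_n,F)$), to obtain $\liminf_{V\in\cV_n}\mu_n(W^\circ\times\cY\mid\cX\times V)\ge\eta_n(y_n,F)$; plugging into \eqref{eqn:upper_condition} gives $\limsupn\frac1n\log\eta_n(y_n,F)\le-\alpha$, and letting $\alpha\uparrow\inf I(F)$ finishes the proof.

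A couple of bookkeeping points I would keep in mind. One has to be careful about the order of the two limit operations $\liminfn$ (or $\limsupn$) and $\lim_{V\in\cV_n}$: in the hard implications the argument is entirely internal to a fixed $n$ — we control $\limsup_{V\in\cV_n}\mu_n(\cdot\mid\cX\times V)$ in terms of $\eta_n(y_n,\cdot)$ using weak continuity at the single point $y_n$ — and only afterwards take the outer $\liminfn/\limsupn$; this is why it suffices to have $\cV_n$ shrinking to $\{y_n\}$ rather than any diagonal condition. Also, $\mu_n(\cX\times V)>0$ for all $V\in\cN_{y_n}$ because $y_n\in\supp(\mu_n\circ\pi^{-1})$ (as noted in the footnote to \ref{item:condition_lower_bound_basis}), so all the conditional probabilities are well defined. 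Finally, to conclude that $I$ in these statements is the same as the $I$ of the surrounding Situation and has compact sublevel sets, one invokes the hypotheses \ref{item:y_finiteness_rate_function_and_def_I} and that translating a function with compact sublevel sets by a constant preserves that property; no new work is needed there.

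\textbf{Main obstacle.} The delicate point is the two hard implications \ref{item:condition_lower_bound_basis}~$\Rightarrow$~\ref{item:ldp_lower_bound} and \ref{item:condition_upper_bound_basis}~$\Rightarrow$~\ref{item:ldp_upper_bound}: one must simultaneously (i) pass from an arbitrary open/closed set to a basis-set approximant using normality and compactness of sublevel sets of $I$ via Lemma~\ref{lemma:normal_space_small_difference_with_smaller_open_set}, and (ii) upgrade the crude sandwich bound \eqref{eqn:conditional_on_open_between_inf_and_sup_kernel} (which only says the conditional probability lies between an $\inf$ and a $\sup$ of $\eta_n(v,\cdot)$ over a neighbourhood) to a genuine comparison with $\eta_n(y_n,\cdot)$, which requires invoking precisely the right one-sided weak-continuity statement from Theorem~\ref{theorem:weak_convergence_equivalences} (semicontinuity for open sets in one direction, for closed sets in the other) — and matching the direction of the inequality chain to the direction of the bound being proved. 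Everything else is elementary manipulation of $\liminf$, $\limsup$ and $\log$.
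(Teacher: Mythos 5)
Your proposal is correct and follows essentially the same route as the paper: the easy implications via the sandwich/continuity bounds of Lemma~\ref{lemma:liminf_and_limsup_of_conditional_open_compared_with_kernel_PRCP}, and the hard implications \ref{item:condition_lower_bound_basis}~$\Rightarrow$~\ref{item:ldp_lower_bound} and \ref{item:condition_upper_bound_basis}~$\Rightarrow$~\ref{item:ldp_upper_bound} via the inner/outer approximation of Lemma~\ref{lemma:normal_space_small_difference_with_smaller_open_set} combined with the one-sided weak-continuity estimates at $y_n$. The only blemish is terminological: in both hard implications you call the inequality you use the ``left'' (resp.\ ``right'') one of \eqref{eqn:conditional_on_open_between_inf_and_sup_kernel} when the displayed formula is in fact the other side of the sandwich — but the formulas you actually invoke are the correct ones, so the argument stands.
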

%%%%
%%%%
\begin{proof}
%The implications \emph{\ref{item:condition_lower_bound_stronger}}$\Longrightarrow$\emph{\ref{item:condition_lower_bound_stronger_basis}}$\Longrightarrow$\emph{\ref{item:condition_lower_bound_basis}} 
%and \emph{\ref{item:condition_lower_bound_stronger}}$\Longrightarrow$\emph{\ref{item:condition_lower_bound}} are immediate. 
%Similarly, the implications \emph{\ref{item:condition_upper_bound_stronger}}$\Longrightarrow$\emph{\ref{item:condition_upper_bound_stronger_basis}}$\Longrightarrow$\emph{\ref{item:condition_upper_bound_basis}} 
%and \emph{\ref{item:condition_upper_bound_stronger}}$\Longrightarrow$\emph{\ref{item:condition_upper_bound}} are immediate. 
%We will prove \emph{\ref{item:condition_lower_bound_basis}}$\Longrightarrow$\emph{\ref{item:ldp_lower_bound}}$\Longrightarrow$\emph{\ref{item:condition_lower_bound_stronger}} and 
% \emph{\ref{item:condition_upper_bound_basis}}$\Longrightarrow$\emph{\ref{item:ldp_upper_bound}}$\Longrightarrow$\emph{\ref{item:condition_upper_bound_stronger}}.
The implications \ref{item:condition_lower_bound_stronger} $\Longrightarrow$ {\ref{item:condition_lower_bound_basis}} and 
{\ref{item:condition_upper_bound_stronger}} $\Longrightarrow$ 
{\ref{item:condition_upper_bound_basis}} are immediate. 

%%%%%%%%%%%%%%%%%%%%%%%%%%%%%
%%%%%%%%%%%%%%%%%%%%%%%%%%%%%
{\ref{item:ldp_lower_bound}  $\Longrightarrow$ \ref{item:condition_lower_bound_stronger}} 
Let $U\subset \cX$ be an open set. By Lemma \ref{lemma:liminf_and_limsup_of_conditional_open_compared_with_kernel_PRCP}, \eqref{eqn:conditional_on_open_giving_upper_bound}, 
\begin{align}
& \liminfn \liminf_{V\in\cV_n}
\traten \log \mu_n(U \times \cY | \cX \times V) 
\ge 
\liminfn  \traten \log \eta_n(y_n,U ).
\end{align}

{\ref{item:ldp_upper_bound}  $\Longrightarrow$ \ref{item:condition_upper_bound_stronger}} Let $W\subset \cX$ be a closed set.  
By Lemma \ref{lemma:liminf_and_limsup_of_conditional_open_compared_with_kernel_PRCP}, \eqref{eqn:conditional_on_closed_giving_lower_bound},
\begin{align}
& \limsupn \limsup_{V\in\cV_n} 
\traten \log \mu_n( W \times \cY | \cX \times V) 
\le  
\limsupn \traten \log \eta_n(y_n, W ).
\end{align}
%%%%%%%%%%%%%%%%%%%%%%%%%%%%%
%%%%%%%%%%%%%%%%%%%%%%%%%%%%%

{\ref{item:condition_lower_bound_basis} $\Longrightarrow$ \ref{item:ldp_lower_bound}}. 
 Let $G\subset \cX$ be open. % with $\inf J (G \times \{y\})<\infty$. 
 Let $\epsilon>0$ and $U$ be as in Lemma \ref{lemma:normal_space_small_difference_with_smaller_open_set}(a). 
%By Lemma \ref{lemma:for_open_sets_one_has_lower_bound_on_conditional_inequality} there exist
%$M_1$ and an $N_1$ such that for all $n\ge N_1$ 
%\begin{align}
%\traten \log \mu_n( U \times \cY | \cX \times V_{M_1}) 
%\ge - \inf J (U \times \{y\}) + \inf J(\cX \times \{y\}) - \epsilon.
%\end{align}
%For conveniance we assume $N\ge N_1$ and $M\ge M_1$. 
%Then we have for $n\ge N$ and $m\ge M$ 
%\begin{align}
%\traten \log \mu_n( U \times \cY | \cX \times V_{m}) 
%\ge - \inf J (U \times \{y\}) + \inf J(\cX \times \{y\}) - 2\epsilon.
%\end{align}
Then we obtain using Lemma
\ref{lemma:liminf_and_limsup_of_conditional_open_compared_with_kernel_PRCP}
\begin{align}
\label{eqn:proof_lower_bound}
\liminfn \traten \log \eta_n(y_n,G) 
\notag &\ge \liminfn \traten \log \eta_n(y_n,\overline U) \\
\notag &\ge \liminfn \limsup_{V\in\cV_n} \traten \log \mu_n(\overline U \times \cY | \cX \times V) \\
%\notag &\ge \liminfn \limsup_{V\in\cV_n} \traten \log \mu_n( U \times \cY | \cX \times V_m^n) \\
\notag &\ge - \inf I(U) =  - \inf J (U \times \{y\}) + \inf J(\cX \times \{y\})  \\
&\ge - \inf J (G \times \{y\}) + \inf J(\cX \times \{y\}) - \epsilon.
\end{align}
As this holds for all $\epsilon>0$, we conclude  \eqref{eqn:lower_bound_eta_n_fixed_y}. 

{\ref{item:condition_upper_bound_basis} $\Longrightarrow$ \ref{item:ldp_upper_bound}}.  
%The steps are similar to the proof of 
%{\ref{item:condition_lower_bound} $\Longrightarrow$ \ref{item:ldp_lower_bound}}.
Let $\alpha < \inf J ( F \times \{y\})$ and $U_1,\dots,U_k$ and $W$ be as in Lemma \ref{lemma:normal_space_small_difference_with_smaller_open_set}(b).
Then we obtain using Lemma \ref{lemma:liminf_and_limsup_of_conditional_open_compared_with_kernel_PRCP}
\begin{align}
\label{eqn:proof_upper_bound}
\limsupn \traten \log \eta_n(y_n,F) 
\notag &\le \limsupn \traten \log \eta_n(y_n, W^\circ) \\
\notag &\le \limsupn \liminf_{V\in\cV_n} \traten \log \mu_n( W^\circ \times \cY | \cX \times V) \\
 &\le - \inf I(W) \le  -\alpha + \inf J(\cX \times \{y\}). 
% - 2\epsilon \\
%&\le - \inf J ( F \times \{y\}) + \inf J(\cX \times \{y\}) - 3\epsilon.
\end{align}
As this holds for all $\alpha < \inf J(F \times \{y\})$, we conclude \eqref{eqn:upper_bound_eta_n_fixed_y}. 
\end{proof}

\begin{obs}[Fixed $y$]
\label{obs:fixed_y}
Note that if $y_n =y$ for all $n\in\N$, one can take $\cV_n = \cV$ for a $\cV \subset \cN_y$ with $\bigcap \cV = \{y\}$. 
Then Theorem \ref{theorem:lower_and_upper_bound_eta_n_convergent_y_n} implies that $(\eta_n(y,\cdot))_{n\in\N}$ satisfies the large deviation principle with rate function $I$ if and only if {\ref{item:condition_lower_bound_basis}} and {\ref{item:condition_upper_bound_basis}} hold (with $\cV_n=\cV$). 
\end{obs}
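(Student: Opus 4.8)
The statement is a direct specialization of Theorem \ref{theorem:lower_and_upper_bound_eta_n_convergent_y_n}, so the plan is simply to run that theorem on the constant sequence. First I would take $(y_n)_{n\in\N}$ to be the constant sequence $y_n = y$; it converges to $y$, and in the situation where the choice $y_n = y$ is admissible (i.e.\ $y \in \supp(\mu_n \circ \pi^{-1})$ for every $n$) it meets the hypotheses of Theorem \ref{theorem:lower_and_upper_bound_eta_n_convergent_y_n}. Since $y_n = y$ for all $n$, one has $\cN_{y_n} = \cN_y$, so a single family $\cV \subset \cN_y$ with $\bigcap \cV = \{y\}$ serves for every $n$: one puts $\cV_n = \cV$.

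With this choice, Theorem \ref{theorem:lower_and_upper_bound_eta_n_convergent_y_n} yields the equivalences \ref{item:ldp_lower_bound} $\iff$ \ref{item:condition_lower_bound_basis} and \ref{item:ldp_upper_bound} $\iff$ \ref{item:condition_upper_bound_basis}, where \ref{item:ldp_lower_bound} and \ref{item:ldp_upper_bound} now read as the large deviation lower bound \eqref{eqn:lower_bound_eta_n_fixed_y} and the large deviation upper bound \eqref{eqn:upper_bound_eta_n_fixed_y} for $(\eta_n(y,\cdot))_{n\in\N}$, and \ref{item:condition_lower_bound_basis}, \ref{item:condition_upper_bound_basis} are the corresponding conditions read with $\cV_n = \cV$. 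It then remains only to recall, from Definition \ref{def:ldp_adapted}, that $(\eta_n(y,\cdot))_{n\in\N}$ satisfies the large deviation principle with rate function $I$ exactly when it satisfies both of these bounds: here $\eta_n(y,\cdot)$ is a probability measure since $\eta_n$ is a product regular conditional probability, and, $\cB(\cX)$ being the ambient $\sigma$-algebra, the lower bound is testable on open sets and the upper bound on closed sets. Combining the two equivalences gives the asserted biconditional.

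There is no genuine obstacle here; the only point worth making explicit is the one already flagged in the statement, namely that the auxiliary neighbourhood families $\cV_n$ may be taken to coincide because the base point $y_n$ does not vary with $n$. I would therefore keep the write-up to a couple of lines.
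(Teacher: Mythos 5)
Your proposal is correct and is exactly the argument the paper intends: the observation is proved by applying Theorem \ref{theorem:lower_and_upper_bound_eta_n_convergent_y_n} to the constant sequence $y_n=y$ (which requires $y\in\supp(\mu_n\circ\pi^{-1})$ for all $n$, as you rightly flag), taking $\cV_n=\cV$, and combining the equivalences \ref{item:ldp_lower_bound}$\iff$\ref{item:condition_lower_bound_basis} and \ref{item:ldp_upper_bound}$\iff$\ref{item:condition_upper_bound_basis} with Definition \ref{def:ldp_adapted}. No gap; the write-up can indeed stay at a couple of lines.
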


\begin{obs}
\label{obs:non_dependence_choice_nbh_sequence_lower_condition}
Let $(y_n)_{n\in\N}$ in $\cY$  be such that $y_n \in \supp(\mu_n\circ \pi^{-1})$ and $y_n \rightarrow y$.
From Theorem \ref{theorem:lower_and_upper_bound_eta_n_convergent_y_n} we derive 
that {\ref{item:condition_lower_bound_basis}} holds for some $\cV_n\subset \cN_{y_n}$ with $\bigcap \cV_n=\{y_n\}$ if and only if {\ref{item:condition_lower_bound_basis}} holds for all such $\cV_n$. Similarly, {\ref{item:condition_upper_bound_basis}} holds for some $\cV_n\subset \cN_{y_n}$ with $\bigcap \cV_n=\{y_n\}$ if and only if {\ref{item:condition_upper_bound_basis}} holds for all such $\cV_n\subset \cN_{y_n}$.
%
%For $n\in\N$ suppose $\cV_n, \cW_n \subset \cN_{y_n}$ are such that $\bigcap \cV_n = \{y\} = \bigcap \cW_n$. 
%From Theorem \ref{theorem:lower_and_upper_bound_eta_n_convergent_y_n_finite_infimum} we derive that whenever {\ref{item:condition_lower_bound_basis}} holds (with $\cV_n$) then 
%{\ref{item:condition_lower_bound_basis}} also holds with $\cW_n$ instead of $\cV_n$. 
%%Then one has \eqref{eqn:lower_condition} if and only if \eqref{eqn:lower_condition_without_nbh_base}.
%Similarly we derive that whenever {\ref{item:condition_upper_bound_basis}} (with $\cV_n$) holds then 
%{\ref{item:condition_upper_bound_basis}} also holds with $\cW_n$ instead of $\cV_n$. 
\end{obs}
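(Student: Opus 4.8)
The plan is to read off Observation~\ref{obs:non_dependence_choice_nbh_sequence_lower_condition} directly from Theorem~\ref{theorem:lower_and_upper_bound_eta_n_convergent_y_n}, exploiting the single observation that conditions~\ref{item:ldp_lower_bound} and~\ref{item:ldp_upper_bound} in that theorem are formulated purely in terms of the kernels $\eta_n(y_n,\cdot)$ and make no reference to the auxiliary neighbourhood systems $(\cV_n)_{n\in\N}$ at all. So these two conditions act as a ``bridge'': they are equivalent to~\ref{item:condition_lower_bound_basis}, resp.~\ref{item:condition_upper_bound_basis}, for \emph{every} admissible choice of $(\cV_n)_{n\in\N}$, yet themselves do not depend on that choice.

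Fix $(y_n)_{n\in\N}$ with $y_n\in\supp(\mu_n\circ\pi^{-1})$ and $y_n\to y$, and let $(\cV_n)_{n\in\N}$ and $(\cV_n')_{n\in\N}$ be two families with $\cV_n,\cV_n'\subset\cN_{y_n}$ and $\bigcap\cV_n=\bigcap\cV_n'=\{y_n\}$. Suppose~\ref{item:condition_lower_bound_basis} holds for $(\cV_n)_{n\in\N}$. I would first invoke Theorem~\ref{theorem:lower_and_upper_bound_eta_n_convergent_y_n} with this particular choice $(\cV_n)_{n\in\N}$: the equivalence \ref{item:condition_lower_bound_basis}$\iff$\ref{item:ldp_lower_bound} then gives that~\ref{item:ldp_lower_bound} holds, i.e.\ $(\eta_n(y_n,\cdot))_{n\in\N}$ satisfies the large deviation lower bound with rate function $I$. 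Since~\ref{item:ldp_lower_bound} does not refer to any $\cV_n$, I would next apply Theorem~\ref{theorem:lower_and_upper_bound_eta_n_convergent_y_n} a second time, now with $(\cV_n')_{n\in\N}$; the reverse implication \ref{item:ldp_lower_bound}$\Rightarrow$\ref{item:condition_lower_bound_basis} yields that~\ref{item:condition_lower_bound_basis} holds for $(\cV_n')_{n\in\N}$ too. As $(\cV_n')_{n\in\N}$ was an arbitrary admissible family, this proves the first assertion. The statement for~\ref{item:condition_upper_bound_basis} is obtained verbatim, replacing the chain \ref{item:condition_lower_bound_basis}$\iff$\ref{item:ldp_lower_bound} throughout by \ref{item:condition_upper_bound_basis}$\iff$\ref{item:ldp_upper_bound}.

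There is essentially no obstacle here. The only point that deserves a moment's care is that Theorem~\ref{theorem:lower_and_upper_bound_eta_n_convergent_y_n} is established for a fixed but arbitrary admissible $(\cV_n)_{n\in\N}$, so one is entitled to invoke it twice with two different admissible choices --- once to pass \emph{up} to the $\cV_n$-free condition~\ref{item:ldp_lower_bound} (resp.~\ref{item:ldp_upper_bound}), and once to descend \emph{down} to the new family. No separate existence discussion for admissible $(\cV_n)_{n\in\N}$ is needed, since the observation only compares two choices both of which are already assumed to exist.
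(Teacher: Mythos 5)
Your proof is correct and is exactly the argument the paper intends: conditions \ref{item:ldp_lower_bound} and \ref{item:ldp_upper_bound} are free of the neighbourhood systems, so applying the equivalences of Theorem \ref{theorem:lower_and_upper_bound_eta_n_convergent_y_n} once for each admissible choice of $(\cV_n)_{n\in\N}$ gives the claim. No gaps.
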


In Lemma \ref{lemma:limit_bounds_on_conditional_from_ldp}, we give a consequence of the large deviation principle of $(\mu_n)_{n\in\N}$. 
In Theorem \ref{theorem:equivalent_notions_bounds_prodRCP} and Theorem \ref{theorem:sufficient_bounds_for_bounds_fixed_y}
 we use this to formulate sufficient conditions for upper or lower large deviation bound on sequences $(\eta_n(y_n,\cdot))_{n\in\N}$ with  $y_n \rightarrow y$ and sequences $(\eta_n(y,\cdot))_{n\in\N}$.

We assumed $\cX$ to be normal in this section. For Lemma \ref{lemma:limit_bounds_on_conditional_from_ldp} this assumption can be dropped.

\begin{obs}
\label{obs:conditioning_eventually_welldefined_under_LDP}
%Let $y\in \cY$ and suppose $\inf J(\cX \times \{y\}) <\infty$.
For all neighbourhoods $V$ of $y$ one has by the large deviation principle
\begin{align}
\label{eqn:1nlogmu_n_G_cond_V_not_minus_infty}
& \liminfn \traten \log \mu_n( \cX \times V) \ge
 - \inf J(\cX \times V^\circ) \ge - \inf J(\cX \times \{y\}) >
-\infty.
\end{align}
In particular, there exists an $N\in \N$ such that $\mu_n (\cX \times V) >0$ for all $n\ge N$. 
Therefore $\mu_n (G \times \cY | \cX \times V)$ is well-defined for large $n$. % we can speak about 
%\begin{align}
%\mu_n (G \times \cY | \cX \times V) = \frac{\mu_n(G\times V)}{\mu_n(\cX \times V)}. 
%\end{align}
\end{obs}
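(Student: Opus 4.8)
The plan is to obtain all three inequalities at once by applying the large deviation lower bound of $(\mu_n)_{n\in\N}$ from assumption \ref{item:ass_ldp_mu_n} to the single set $\cX\times V$. First I would check that $\cX\times V$ is an admissible set: $V$ is a measurable neighbourhood of $y$, so $V\in\cB(\cY)$ and hence $\cX\times V\in\{A\times B:A\in\cB(\cX),\,B\in\cB(\cY)\}$, the collection on which the principle is assumed to hold.

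Next I would identify the interior of $\cX\times V$ in the product topology: since $\cX$ is open in itself, $(\cX\times V)^\circ=\cX\times V^\circ$, so \eqref{eqn:def_ldp_lower_bound} (applied with rate function $J$) gives
\begin{align}
\liminfn \traten \log \mu_n(\cX\times V)\ \ge\ -\inf J\big((\cX\times V)^\circ\big)\ =\ -\inf J(\cX\times V^\circ),
\end{align}
which is the first displayed inequality. For the second, because $V$ is a neighbourhood of $y$ we have $y\in V^\circ$, hence $\cX\times\{y\}\subseteq\cX\times V^\circ$, and taking the infimum of $J$ over the larger set can only decrease it, so $\inf J(\cX\times V^\circ)\le\inf J(\cX\times\{y\})$, i.e. $-\inf J(\cX\times V^\circ)\ge -\inf J(\cX\times\{y\})$. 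Finiteness of the last quantity, $\inf J(\cX\times\{y\})<\infty$, is precisely part of assumption \ref{item:y_finiteness_rate_function_and_def_I}, so the whole chain is bounded below by a finite number and in particular is $>-\infty$.

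For the ``in particular'' statement I would argue by contradiction: if $\mu_n(\cX\times V)=0$ for infinitely many $n$, then, using the convention $\log 0=-\infty$, we would have $\traten\log\mu_n(\cX\times V)=-\infty$ along that subsequence, forcing $\liminfn\traten\log\mu_n(\cX\times V)=-\infty$ and contradicting the bound just established. Hence there is an $N\in\N$ with $\mu_n(\cX\times V)>0$ for all $n\ge N$, and then $\mu_n(G\times\cY\mid\cX\times V)=\mu_n\big((G\times\cY)\cap(\cX\times V)\big)/\mu_n(\cX\times V)$ is well-defined for $n\ge N$.

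There is essentially no obstacle here; the only point that needs a moment's care is checking that $\cX\times V$ lies in the family of sets on which the large deviation principle is assumed (it does, being a measurable rectangle) and that its interior is $\cX\times V^\circ$, which is immediate since $\cX$ is the whole space — in particular, normality of $\cX$ is not used, consistent with the remark preceding the observation.
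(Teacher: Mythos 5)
Your argument is correct and is exactly the reasoning the paper intends (the observation is stated without a separate proof, being a direct application of the lower bound \eqref{eqn:def_ldp_lower_bound} to the rectangle $\cX\times V$, the inclusion $\cX\times\{y\}\subset\cX\times V^\circ$, and assumption \ref{item:y_finiteness_rate_function_and_def_I}). Your extra checks — that $\cX\times V$ lies in the class $\{A\times B\}$ and that $(\cX\times V)^\circ=\cX\times V^\circ$ — are the right details to verify and are both fine.
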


\begin{lemma}
\label{lemma:limit_bounds_on_conditional_from_ldp}
%Let $y\in \cY$ and assume $\inf J(\cX \times \{y\})<\infty$. 
\
\begin{enumerate}
\item 
For open  $G\subset \cX$ 
\begin{align}
\label{eqn:lower_bound_conditioned_on_single_open_set}
\liminf_{V\in \cN_y}
%\sup_{V_0\in \cN_y} \inf_{V\in \cN_y:  V\subset V_0} 
\liminf_{ \putatop{ n\rightarrow \infty}{n\in \N : \mu_n(\cX \times V)>0 } } 
 \traten \log \mu_n( G \times \cY | \cX \times V) 
\ge - \inf I (G). 
\end{align}
\item 
For closed $F\subset \cX$ 
\begin{align}
\label{eqn:upper_bound_conditioned_on_single_open_set}
%\inf_{V_0\in \cN_y} \sup_{V\in \cN_y: V\subset V_0}
\limsup_{V\in \cN_y}
\limsup_{ \putatop{ n\rightarrow \infty}{n\in \N : \mu_n(\cX \times V)>0 } } 
\traten \log \mu_n( F \times \cY | \cX \times V) 
\le - \inf I(F). 
\end{align}
\end{enumerate}
\end{lemma}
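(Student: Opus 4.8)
The plan is to derive both bounds directly from the large deviation principle for $(\mu_n)_{n\in\N}$ assumed in \ref{item:ass_ldp_mu_n}, combined with Lemma \ref{lemma:convergence_infimum_rate_function_on_schrinking_neighbourhoods} to pass to the limit over shrinking neighbourhoods of $y$. For part (a): fix an open set $G\subset\cX$ and a neighbourhood $V\in\cN_y$. By \ref{obs:conditioning_eventually_welldefined_under_LDP} we have $\mu_n(\cX\times V)>0$ for all large $n$, so the conditional probability is well-defined. Writing $\mu_n(G\times\cY\mid\cX\times V)=\mu_n(G\times V)/\mu_n(\cX\times V)$, I would take $\traten\log$ and split into the numerator and denominator. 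For the numerator apply the large deviation lower bound: $\liminfn\traten\log\mu_n(G\times V)\ge-\inf J((G\times V)^\circ)=-\inf J(G\times V^\circ)$, using that $G$ is open so $(G\times V)^\circ=G\times V^\circ$. For the denominator apply the large deviation upper bound: $\limsupn\traten\log\mu_n(\cX\times V)\le-\inf J(\cX\times\overline V)$. Subtracting (and noting the $\liminf$ of a difference dominates the difference of $\liminf$ and $\limsup$), one gets
\begin{align}
\liminf_{\putatop{n\rightarrow\infty}{n:\mu_n(\cX\times V)>0}}\traten\log\mu_n(G\times\cY\mid\cX\times V)\ge-\inf J(G\times V^\circ)+\inf J(\cX\times\overline V).
\end{align}

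Now I would take $\liminf_{V\in\cN_y}$ on both sides. On the right, the term $\inf J(\cX\times\overline V)$ is monotone in $V$, and by Lemma \ref{lemma:convergence_infimum_rate_function_on_schrinking_neighbourhoods} (with $F=\cX$, which is closed, and using $\inf J(\cX\times\{y\})<\infty$ from \ref{item:y_finiteness_rate_function_and_def_I}) we have $\sup_{V\in\cN_y}\inf J(\cX\times\overline V)=\inf J(\cX\times\{y\})$. For the other term, since $G\times V^\circ\supset G\times\{y\}$ gives $\inf J(G\times V^\circ)\le\inf J(G\times\{y\})$ for every $V$, so $\sup_{V\in\cN_y}(-\inf J(G\times V^\circ))\ge-\inf J(G\times\{y\})$. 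Combining, the left side is $\ge-\inf J(G\times\{y\})+\inf J(\cX\times\{y\})=-\inf I(G)$ by the definition of $I$, which is \eqref{eqn:lower_bound_conditioned_on_single_open_set}. (A small care point: one must check the inner $\liminf$ over the restricted index set is legitimate, but this is exactly what \ref{obs:conditioning_eventually_welldefined_under_LDP} provides, and the convention $\log 0=-\infty$ handles any finitely many degenerate terms.)

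Part (b) is the mirror image. Fix a closed $F\subset\cX$ and $V\in\cN_y$. Write $\mu_n(F\times\cY\mid\cX\times V)=\mu_n(F\times V)/\mu_n(\cX\times V)$, take $\traten\log$, and now bound the numerator by the upper bound and the denominator by the lower bound: $\limsupn\traten\log\mu_n(F\times V)\le-\inf J(\overline{F\times V})\le-\inf J(F\times V)$ (using $F$ closed so $\overline{F\times V}\subset F\times\overline V$, and in fact $\inf J(F\times\overline V)\le\inf J(F\times\{y\})$ is what we want to retain), and $\liminfn\traten\log\mu_n(\cX\times V)\ge-\inf J(\cX\times V^\circ)$. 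This yields
\begin{align}
\limsup_{\putatop{n\rightarrow\infty}{n:\mu_n(\cX\times V)>0}}\traten\log\mu_n(F\times\cY\mid\cX\times V)\le-\inf J(F\times\overline V)+\inf J(\cX\times V^\circ).
\end{align}
Taking $\limsup_{V\in\cN_y}$, i.e. $\inf_{V\in\cN_y}$ of the right side: the term $-\inf J(\cX\times V^\circ)$ is handled by monotonicity and \ref{obs:lower_semicontinuity_elementary_properties}/lower semicontinuity of $z\mapsto\inf J(\cX\times\{z\})$ giving $\inf_{V\in\cN_y}(-\inf J(\cX\times V^\circ))\le-\inf J(\cX\times\{y\})$; and Lemma \ref{lemma:convergence_infimum_rate_function_on_schrinking_neighbourhoods} applied with $F$ (closed) and $\tau=\pi$ gives $\inf_{V\in\cN_y}\inf J(F\times\overline V)=\inf J(F\times\{y\})$. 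Hence the right side is $\le-\inf J(F\times\{y\})+\inf J(\cX\times\{y\})=-\inf I(F)$, which is \eqref{eqn:upper_bound_conditioned_on_single_open_set}.

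The main obstacle I anticipate is purely bookkeeping around the interaction of interiors/closures with the product: one needs $(G\times V)^\circ=G\times V^\circ$ (true since $G$ open) and $\overline{F\times V}\subseteq F\times\overline V$ (true since $F$ closed), together with keeping the directions of the $\liminf$/$\limsup$ inequalities straight when dividing. The other delicate point is ensuring the double-limit over $V$ is taken correctly as a net limit over $(\cN_y,\supseteq)$ and that Lemma \ref{lemma:convergence_infimum_rate_function_on_schrinking_neighbourhoods} applies with $\cV=\cN_y$ and the finiteness hypothesis $\inf J(\cX\times\{y\})<\infty$ — but all of this is already set up in the ambient assumptions and the cited lemma, so no genuinely new idea is required beyond assembling the two halves of the large deviation principle.
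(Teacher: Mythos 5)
Your proposal is correct and follows essentially the same route as the paper: split $\traten\log\mu_n(\,\cdot\mid\cX\times V)$ into numerator minus denominator, apply the large deviation lower bound to one and the upper bound to the other (using that the denominator's $\limsup$ is $>-\infty$ so the split is legitimate), and then invoke Lemma \ref{lemma:convergence_infimum_rate_function_on_schrinking_neighbourhoods} to send $V$ to $\{y\}$; the paper merely phrases the last step by fixing $\epsilon>0$ (resp.\ $\alpha<\inf J(F\times\{y\})$) and choosing $V_0$ first. The only blemish is the inline chain in part (b), where ``$\le-\inf J(F\times V)$'' should read ``$\le-\inf J(F\times\overline V)$''; your subsequent display and parenthetical have it right, so this is a typo rather than a gap.
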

\begin{proof}
(a). 
Let $\epsilon>0$. 
By Lemma \ref{lemma:convergence_infimum_rate_function_on_schrinking_neighbourhoods}, 
there exists a $V_0\in \cN_y$  %and an $N$ in $\N$ such that for all $n\ge N$
such that for all $V\in \cN_y$ with $V\subset V_0$
\begin{align}
&\inf J( \cX  \times \{y\}) \ge \inf J( \cX   \times \overline V)  \ge \inf J( \cX   \times \overline V_0) \ge \inf J( \cX  \times \{y\}) -  \epsilon. 
\label{eqn:inf_J_X_y_and_V}
\end{align}
Let $V\in \cN_y$ be such that $V \subset V_0$. 
%For $\mu_n(\cX \times V)>0$ for large $n$ by  \ref{obs:conditioning_eventually_welldefined_under_LDP}. 
As $\limsupn \traten \log \mu_n( \cX \times  V) > -\infty$ (see \ref{obs:conditioning_eventually_welldefined_under_LDP}) we can ``split the $\liminf$ in two'' and we get by the large deviation principle and by \eqref{eqn:inf_J_X_y_and_V}
\begin{align}
\notag 
\liminf_{ \putatop{ n\rightarrow \infty}{n\in \N : \mu_n(\cX \times V)>0 } } &  \traten \log  \mu_n  ( G \times \cY | \cX \times V) \\
\notag &
 =\liminfn \traten \log \mu_n( G \times  V) - \limsupn \traten \log \mu_n( \cX \times V) \\
& \ge - \inf J(G \times \{y\} ) + \inf J( \cX \times \overline V)
\ge - \inf I(G) - \epsilon. 
\label{eqn:liminf_open_conditional_splitted}
\end{align}

(b). Let $\alpha < \inf J(F \times \{y\})$. 
%Let $\beta>\alpha$ be such that $\beta < \inf J(F \times \{y\})$. 
There exists a neighbourhood $V_0$ of $y$ such that for all  neighbourhoods $V$ of $y$ with $V\subset V_0$
\begin{align}
&\inf J( F  \times \{y\}) \ge \inf J( F   \times \overline V)  \ge \inf J( F   \times \overline V_0) \ge
\alpha. 
\end{align}
Let $V\in \cN_y$ be such that $y\in V \subset V_0$. Similarly as above we get
\begin{align}
\limsup_{ \putatop{ n\rightarrow \infty}{n\in \N : \mu_n(\cX \times V)>0 } }  \traten \log \mu_n( F \times \cY | \cX \times V) 
%\notag 
%& =\limsupn \traten \log \mu_n( F \times  V) - \liminfn \traten \log \mu_n( \cX \times V) \\
& \le - \alpha +  \inf J( \cX \times \{y\} ). 
\end{align}
\
\end{proof}

\begin{theorem}
\label{theorem:I_has_compact_sublevel_sets}
$I$ has compact sublevel sets. 
\end{theorem}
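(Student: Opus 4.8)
The plan is to check directly that the sublevel set $[I\le\alpha]$ is compact for every $\alpha\in[0,\infty)$. Write $c:=\inf J(\cX\times\{y\})$, which is finite by assumption~\ref{item:y_finiteness_rate_function_and_def_I}; then $I(x)=J(x,y)-c\ge 0$ for all $x$ (so $I$ is a well-defined $[0,\infty]$-valued function), and $[I\le\alpha]=\{x\in\cX:J(x,y)\le\alpha+c\}$ with $\alpha+c<\infty$.

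First I would record that the map $j_y:\cX\to\cX\times\cY$, $j_y(x)=(x,y)$, is continuous: its composition with the two coordinate projections is, respectively, the identity on $\cX$ and the constant map $y$. Hence $[I\le\alpha]=j_y^{-1}\big([J\le\alpha+c]\big)$. Since $J$ is a rate function it is lower semicontinuous, so $[J\le\alpha+c]$ is closed in $\cX\times\cY$, and by the hypothesis on $J$ it is also compact. Continuity of $j_y$ then gives that $[I\le\alpha]$ is closed in $\cX$.

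Next, let $\rho:\cX\times\cY\to\cX$, $\rho(x,y)=x$, be the (continuous) projection onto the first coordinate. Then $\rho\big([J\le\alpha+c]\big)$ is compact, being the continuous image of a compact set, and it contains $[I\le\alpha]$: indeed, if $J(x,y)\le\alpha+c$ then $(x,y)\in[J\le\alpha+c]$ and $x=\rho(x,y)$. A closed subset of a compact space is compact (this requires no separation axiom), so $[I\le\alpha]$ is compact. As $\alpha\in[0,\infty)$ was arbitrary, $I$ has compact sublevel sets.

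I expect no genuine obstacle here; the only point worth keeping in mind is that in this section $\cX$ is assumed merely normal and $\cY$ merely a topological space, so one may not assume that $\{y\}$, hence $\cX\times\{y\}$, is closed. This rules out the tempting shortcut of writing $j_y([I\le\alpha])=[J\le\alpha+c]\cap(\cX\times\{y\})$ and declaring it a closed — hence compact — subset of $[J\le\alpha+c]$; routing through the projection $\rho$ sidesteps this. (If one prefers, the closedness of $[I\le\alpha]$ in the second paragraph can be quoted from the lower semicontinuity of $I$, which itself follows from Observation~\ref{obs:lower_semicontinuity_elementary_properties} together with the continuity of $j_y$.)
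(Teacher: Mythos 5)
Your argument is correct and follows the same route as the paper's one-line proof, namely exhibiting $[I\le\alpha]$ inside the image of the compact set $[J\le\alpha+c]$ under the projection onto $\cX$. You supply the two details the paper leaves implicit (and indeed states slightly inaccurately, as an equality rather than a containment): that the projection only gives $[I\le\alpha]\subset\rho([J\le\alpha+c])$, and that $[I\le\alpha]=j_y^{-1}([J\le\alpha+c])$ is closed by continuity of $j_y$ and lower semicontinuity of $J$, whence compactness follows.
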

\begin{proof}
$[I\le c] = \pi( [ J \le c +\inf J(\cX \times \{y\})])$. 
\end{proof}

\begin{theorem}
\label{theorem:equivalent_notions_bounds_prodRCP}
We have 
\begin{center}
\ref{item:compared_condition_lower_bound_without_limit} $\Longrightarrow$
\ref{item:compared_condition_lower_bound_logical} $\iff$ \ref{item:compared_condition_lower_bound} $\Longrightarrow$ \ref{item:uniform_condition_lower_bound} $\Longrightarrow$ \ref{item:forall_y_seq_cond_lower_bound}, 
\end{center}
and, if $\cY$ is first countable, then
\begin{center}
 \ref{item:forall_y_seq_cond_lower_bound}
$\iff$ \ref{item:uniform_condition_lower_bound},
\end{center}
where
\begin{enumerate}[label={\normalfont(A\arabic*)},topsep=4pt,itemsep=0pt]
\item 
\label{item:forall_y_seq_cond_lower_bound}
For all $(y_n)_{n\in\N}$ with $y_n \in \supp(\mu_n\circ \pi^{-1})$ and $y_n \rightarrow y$ 
the sequence $(\eta_n(y_n,\cdot))_{n\in\N}$ satisfies the large deviation lower bound with rate function $I$. 
%there exists $\cV_n \subset \cN_{y_n}$ with $\bigcap \cV_n = \{y_n\}$ such that  \ref{item:condition_lower_bound_basis} holds. 
\item 
\label{item:uniform_condition_lower_bound}
For all  $U\in \cG$ %with $\inf J ( U\times \{y\}) <\infty$ 
%there exists a $\cV\subset \cN_y$ with $\bigcap \cV = \{y\}$ and 
%For all $\epsilon>0$ there exists an open neighbourhood $V_0$ of $y$ and an $N\in\N$ such that for all $n\ge N$ and all nonempty open $V\subset V_0$
\begin{align}
\sup_{V_0\in \cN_y} \liminfn \inf_{ \putatop{V\in \cH, V \subset V_0}{V\cap \supp(\mu_n \circ \pi^{-1})\ne \emptyset} }
\traten \log \mu_n( \overline  U \times \cY | \cX \times V) 
 \ge - \inf I(U). 
\end{align}
\item 
\label{item:compared_condition_lower_bound}
For all $U\in \cG$ %with $\inf J(U\times \{y\})<\infty$ 
\begin{align}
\notag & \sup_{V_0\in \cN_y} \liminfn \inf_{ \putatop{V\in \cH, V \subset V_0}{V\cap \supp(\mu_n \circ \pi^{-1})\ne \emptyset} }
\traten \log \mu_n( \overline  U \times \cY | \cX \times V) \\
& \ge 
\liminf_{V\in \cN_y}
\liminf_{ \putatop{ n\rightarrow \infty}{n\in \N : \mu_n(\cX \times V)>0 } } 
 \traten \log \mu_n(  U \times \cY | \cX \times V).
 \label{eqn:sufficient_condition_lower_bound_comparison}
\end{align}
\item 
\label{item:compared_condition_lower_bound_logical}
 For all $U\in \cG$ %with $\inf J(U \times \{y\})<\infty$ 
 we have 
$\forall Z_0\in \cN_y \forall \epsilon>0 \exists V_0\in \cN_y \exists Z\in \cN_y, Z\subset Z_0 \forall M \exists m \ge M \exists N \forall n\ge N \forall V\in \cH, V \subset V_0, V\cap \supp(\mu_n \circ \pi^{-1})\ne \emptyset $:
\begin{align}
\traten 
\log \mu_n ( \overline{U} \times \cY |  \cX \times V) 
\ge 
\tratem 
\log \mu_m ( U \times \cY | \cX \times Z) - \epsilon. 
\end{align}
\item 
\label{item:compared_condition_lower_bound_without_limit}
For all $U\in \cG$ %with $\inf J(U\times \{y\})<\infty$ 
we have $\forall \epsilon>0 \forall V_0 \in \cN_y \exists N\in\N \forall n \ge N \forall V\in \cH, V\subset V_0, V\cap \supp(\mu_n \circ \pi^{-1})\ne \emptyset$:
\begin{align}
\label{eqn:finite_n_lower_bound_without_log}
%\frac{ \mu_n( \overline U \times V)} { \mu_n(  \cX \times V)}
%=
\mu_n(  \overline U  \times \cY | \cX \times V)
\ge 
e^{-n\epsilon} \mu_n(  U  \times \cY | \cX \times V_0).
%= e^{-n\epsilon} \frac{ \mu_n(  U \times V_0)} { \mu_n(  \cX \times V_0)}.
\end{align}
\end{enumerate}
Moreover,
\begin{center}
\ref{item:compared_condition_upper_bound_without_limit} $\Longrightarrow$
\ref{item:compared_condition_upper_bound_logical} $\iff$
 \ref{item:compared_condition_upper_bound} $\Longrightarrow$ \ref{item:uniform_condition_upper_bound} $\Longrightarrow$ \ref{item:forall_y_seq_cond_upper_bound}, 
\end{center}
and, if $\cY$ is first countable, then
\begin{center}
\ref{item:forall_y_seq_cond_upper_bound}
$\iff$ \ref{item:uniform_condition_upper_bound},
\end{center}
where
\begin{enumerate}[label={\normalfont(B\arabic*)},topsep=4pt,itemsep=0pt]
\item 
\label{item:forall_y_seq_cond_upper_bound}
For all $(y_n)_{n\in\N}$ with $y_n \in \supp(\mu_n\circ \pi^{-1})$ and $y_n \rightarrow y$
the sequence $(\eta_n(y_n,\cdot))_{n\in\N}$ satisfies the large deviation upper bound with rate function $I$. 
% there exists $\cV_n \subset \cN_{y_n}$ with $\bigcap \cV_n = \{y_n\}$ such that  \ref{item:condition_upper_bound_basis} holds. 
\item 
\label{item:uniform_condition_upper_bound}
For all  $U_1,\dots,U_k\in \cG$ one has for $W= \cX \setminus ( U_1 \cup \cdots \cup U_k)$
%For all $\epsilon>0$ there exists an open neighbourhood $V_0$ of $y$ and an $N\in\N$ such that for all $n\ge N$ and all nonempty open $V\subset V_0$
\begin{align}
 & \inf_{V_0\in \cN_y} \limsupn \sup_{ \putatop{V\in \cH, V \subset V_0}{V\cap \supp(\mu_n \circ \pi^{-1})\ne \emptyset} }
\traten \log \mu_n( W^\circ \times \cY | \cX \times V) \le - \inf I(W). 
\end{align}
\item 
\label{item:compared_condition_upper_bound}
For all $U_1,\dots, U_k\in \cG$ with $W= \cX \setminus (U_1\cup \cdots \cup U_k)$
\begin{align}
\notag &  \inf_{V_0\in \cN_y} \limsupn \sup_{ \putatop{V\in \cH, V \subset V_0}{V\cap \supp(\mu_n \circ \pi^{-1})\ne \emptyset} }
\traten \log \mu_n( W^\circ \times \cY | \cX \times V)  \\ 
& \le 
%\inf_{V_0\in \cV} \sup_{V\in \cV: V\subset V_0} 
\limsup_{V\in\cN_y}
\limsup_{ \putatop{ n\rightarrow \infty}{n\in \N : \mu_n(\cX \times V)>0 } } 
\traten \log \mu_n( W \times \cY | \cX \times V). 
\label{eqn:sufficient_condition_upper_bound_comparison}
\end{align}
\item 
\label{item:compared_condition_upper_bound_logical}
For all $U_1,\dots, U_k\in \cG$ with $W= \cX \setminus (U_1\cup \cdots \cup U_k)$ we have 
$\forall Z_0\in \cN_y \forall \epsilon>0 \exists V_0\in \cN_y \exists Z\in \cN_y, Z\subset Z_0 \forall M \exists m \ge M \exists N \forall n\ge N \forall V\in \cH, V \subset V_0,V\cap \supp(\mu_n \circ \pi^{-1})\ne \emptyset$:
\begin{align}
\traten 
\log \mu_n ( \overline{U} \times \cY |  \cX \times V) 
\le  
\tratem 
\log \mu_m ( U \times \cY | \cX \times Z) + \epsilon. 
\end{align}

\item 
\label{item:compared_condition_upper_bound_without_limit}
For all $U_1,\dots, U_k\in \cG$ with $W= \cX \setminus (U_1\cup \cdots \cup U_k)$ we have $\forall \epsilon>0 \forall V_0 \in \cN_y \exists N\in\N \forall n \ge N \forall V\in \cH, V\subset V_0,V\cap \supp(\mu_n \circ \pi^{-1})\ne \emptyset$:
\begin{align}
\label{eqn:finite_n_upper_bound_without_log}
%\frac{ \mu_n( W^\circ \times V)} { \mu_n(  \cX \times V)}
%= 
\mu_n( W^\circ  \times \cY | \cX \times V)
\le 
e^{n\epsilon} \mu_n( W \times \cY | \cX \times V_0)
%=e^{n\epsilon} \frac{ \mu_n( W \times V_0)} { \mu_n(  \cX \times V_0)}.
\end{align}

%For all $\alpha< \inf J(\overline U \times \{y\})$ there exists an open neighbourhood $V_0$ of $y$ and an $N\in\N$ such that for all $n\ge N$ and all open nonempty $V\subset V_0$
%\begin{align}
%\traten \log \mu_n( U \times \cY | \cX \times V) 
%\le - \alpha + \inf J(\cX \times \{y\}). 
%\end{align}
\end{enumerate}
\end{theorem}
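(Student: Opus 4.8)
The plan is to prove the lower-bound chain first; the upper-bound chain is entirely parallel with closed/complement sets replacing open ones, so I would only indicate the changes at the end. For the lower-bound chain, I would establish the implications in the order
\ref{item:compared_condition_lower_bound_without_limit} $\Longrightarrow$ \ref{item:compared_condition_lower_bound_logical} $\iff$ \ref{item:compared_condition_lower_bound} $\Longrightarrow$ \ref{item:uniform_condition_lower_bound} $\Longrightarrow$ \ref{item:forall_y_seq_cond_lower_bound}, and then the reverse implication under first countability.

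First, \textbf{\ref{item:compared_condition_lower_bound_logical} $\iff$ \ref{item:compared_condition_lower_bound}}: this is pure unwinding of quantifiers. The right-hand side of \eqref{eqn:sufficient_condition_lower_bound_comparison} is a $\liminf_{V}\liminf_n$, which by definition is $\sup_{Z_0}\sup_{Z\subset Z_0}\inf$ over the relevant $n$; the left-hand side is a $\sup_{V_0}\liminf_n\inf_V$; the inequality between them, after inserting an $\epsilon$ and writing out $\liminf$ and $\limsup$ as $\sup\inf$ and $\inf\sup$, is exactly the long alternating-quantifier statement \ref{item:compared_condition_lower_bound_logical}. I would be careful that the ``$\liminf_n$ restricted to $n$ with $\mu_n(\cX\times V)>0$'' on the right and the ``$\liminf_n$'' with an inner $\inf$ over $V\in\cH$ meeting the support on the left line up; \ref{obs:conditioning_eventually_welldefined_under_LDP} guarantees the conditioning is eventually well-defined, so no term is vacuously $-\infty$. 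Then \textbf{\ref{item:compared_condition_lower_bound_without_limit} $\Longrightarrow$ \ref{item:compared_condition_lower_bound_logical}}: \eqref{eqn:finite_n_lower_bound_without_log} says $\tfrac1n\log\mu_n(\overline U\times\cY\mid\cX\times V)\ge\tfrac1n\log\mu_n(U\times\cY\mid\cX\times V_0)-\epsilon$ for $n\ge N$; taking $Z=Z_0$, $m=n$ large, this immediately yields the alternating-quantifier statement (the choice ``$\exists m\ge M$'' is met by any large $m$, and ``$\exists V_0$'' is the $V_0$ from \ref{item:compared_condition_lower_bound_without_limit}), so this direction is almost immediate.

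The substantive step is \textbf{\ref{item:compared_condition_lower_bound} $\Longrightarrow$ \ref{item:uniform_condition_lower_bound}}: the left-hand side of the displays in \ref{item:compared_condition_lower_bound} and \ref{item:uniform_condition_lower_bound} are identical, so I only need that the right-hand side of \eqref{eqn:sufficient_condition_lower_bound_comparison} is $\ge -\inf I(U)$, and that is exactly part (a) of Lemma \ref{lemma:limit_bounds_on_conditional_from_ldp} applied with the open set $G=U$. Next, \textbf{\ref{item:uniform_condition_lower_bound} $\Longrightarrow$ \ref{item:forall_y_seq_cond_lower_bound}}: fix a sequence $y_n\to y$ with $y_n\in\supp(\mu_n\circ\pi^{-1})$, and for each $n$ pick $\cV_n\subset\cN_{y_n}$ with $\bigcap\cV_n=\{y_n\}$. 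I claim \ref{item:uniform_condition_lower_bound} implies condition \ref{item:condition_lower_bound_basis} of Theorem \ref{theorem:lower_and_upper_bound_eta_n_convergent_y_n} for this sequence, whence \ref{item:ldp_lower_bound}, i.e., the lower bound for $(\eta_n(y_n,\cdot))_n$. To see the claim: given $U\in\cG$ and a target $-\inf I(U)-\epsilon$, choose $V_0\in\cN_y$ achieving the $\sup_{V_0}$ in \ref{item:uniform_condition_lower_bound} up to $\epsilon$; since $y_n\to y$, eventually $y_n\in V_0$, so there is $V\in\cH$ with $y_n\in V\subset V_0$, hence $V\in\cV_n'$-cofinal and $V$ meets $\supp(\mu_n\circ\pi^{-1})$ (it contains $y_n$); then $\limsup_{V'\in\cV_n}\tfrac1n\log\mu_n(\overline U\times\cY\mid\cX\times V')$ dominates the value at such a $V$, which by the inner $\inf$ in \ref{item:uniform_condition_lower_bound} is $\ge$ the quantity whose $\liminf_n$ is controlled. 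Taking $\liminf_n$ gives \eqref{eqn:lower_condition}. I expect this ``matching the neighbourhood bases $\cH$ with the sets $\cV_n$'' bookkeeping to be the main obstacle — one must check that shrinking $\cH$-neighbourhoods around $y_n$ can be slotted into $\cV_n$ while staying inside a fixed $V_0\in\cN_y$, which uses only $y_n\to y$ and that $\cH$ is a basis.

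Finally, \textbf{the reverse \ref{item:forall_y_seq_cond_lower_bound} $\Longrightarrow$ \ref{item:uniform_condition_lower_bound} when $\cY$ is first countable}: I would argue by contraposition. If \ref{item:uniform_condition_lower_bound} fails for some $U\in\cG$, then $\sup_{V_0}\liminf_n\inf_V(\cdots)<-\inf I(U)$, so for every $V_0\in\cN_y$ there are infinitely many $n$ and, for each such $n$, a $V\in\cH$ with $V\subset V_0$, $V\cap\supp(\mu_n\circ\pi^{-1})\ne\emptyset$, and $\tfrac1n\log\mu_n(\overline U\times\cY\mid\cX\times V)<-\inf I(U)-\epsilon_0$. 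Using a countable neighbourhood base $(V_0^{(k)})_k$ of $y$ (first countability) decreasing to $\{y\}$, I would diagonalize: pick $n_k\uparrow\infty$ and $V^{(k)}\in\cH$ with $V^{(k)}\subset V_0^{(k)}$ and a point $z_{n_k}\in V^{(k)}\cap\supp(\mu_{n_k}\circ\pi^{-1})$, and set $y_n:=z_{n_k}$ for $n=n_k$ and $y_n$ arbitrary in $\supp(\mu_n\circ\pi^{-1})$ otherwise; then $y_{n_k}\to y$. Along this sequence, choosing $\cV_{n_k}$ so that $V^{(k)}\in\cV_{n_k}$, the $\limsup_{V\in\cV_{n_k}}$ in \eqref{eqn:lower_condition} picks up the bad value, so \ref{item:condition_lower_bound_basis} of Theorem \ref{theorem:lower_and_upper_bound_eta_n_convergent_y_n} fails for $(y_n)_n$, hence by that theorem the lower bound \ref{item:ldp_lower_bound} fails, contradicting \ref{item:forall_y_seq_cond_lower_bound}. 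For the upper-bound chain \ref{item:compared_condition_upper_bound_without_limit} $\Longrightarrow$ \ref{item:compared_condition_upper_bound_logical} $\iff$ \ref{item:compared_condition_upper_bound} $\Longrightarrow$ \ref{item:uniform_condition_upper_bound} $\Longrightarrow$ \ref{item:forall_y_seq_cond_upper_bound} and its converse, I would repeat each step verbatim with: $U\in\cG$ replaced by finitely many $U_1,\dots,U_k$ and $W=\cX\setminus(U_1\cup\cdots\cup U_k)$; $\overline U$ replaced by $W^\circ$ and the open $U$ by the closed $W$; $\liminf$/$\inf$/$\ge$ replaced by $\limsup$/$\sup$/$\le$; Lemma \ref{lemma:limit_bounds_on_conditional_from_ldp}(a) replaced by (b); and condition \ref{item:condition_lower_bound_basis} of Theorem \ref{theorem:lower_and_upper_bound_eta_n_convergent_y_n} replaced by \ref{item:condition_upper_bound_basis}. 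No new idea is needed there.
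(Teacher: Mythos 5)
Your forward chain \ref{item:compared_condition_lower_bound_without_limit} $\Rightarrow$ \ref{item:compared_condition_lower_bound_logical} $\Leftrightarrow$ \ref{item:compared_condition_lower_bound} $\Rightarrow$ \ref{item:uniform_condition_lower_bound} $\Rightarrow$ \ref{item:forall_y_seq_cond_lower_bound} is essentially the paper's proof: the quantifier unwinding, the reduction of \ref{item:compared_condition_lower_bound} $\Rightarrow$ \ref{item:uniform_condition_lower_bound} to Lemma \ref{lemma:limit_bounds_on_conditional_from_ldp}(a), and the passage from \ref{item:uniform_condition_lower_bound} to condition \ref{item:condition_lower_bound_basis} of Theorem \ref{theorem:lower_and_upper_bound_eta_n_convergent_y_n} with $\cV_n=\cN_{y_n}\cap\cH$ are all as in the paper (for \ref{item:compared_condition_lower_bound_without_limit} $\Rightarrow$ \ref{item:compared_condition_lower_bound_logical} the clean route is to take $\liminf_n$ of both sides of \eqref{eqn:finite_n_lower_bound_without_log} and land in \ref{item:compared_condition_lower_bound}, rather than "$m=n$", but this is cosmetic).

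The backward implication \ref{item:forall_y_seq_cond_lower_bound} $\Rightarrow$ \ref{item:uniform_condition_lower_bound} under first countability, however, has a genuine gap. Your diagonalization produces, for each $k$, a single bad neighbourhood $V^{(k)}\subset V_0^{(k)}$ with $\tfrac{1}{n_k}\log\mu_{n_k}(\overline U\times\cY\,|\,\cX\times V^{(k)})<-\inf I(U)-\epsilon_0$, and you then claim that the net-$\limsup$ in \eqref{eqn:lower_condition} (condition \ref{item:condition_lower_bound_basis}) "picks up the bad value". It does not: $\limsup_{V\in\cV_{n_k}}$ is $\inf_{V_0}\sup_{V\subset V_0}$ over shrinking neighbourhoods of $y_{n_k}$, so to make it small you would need \emph{all} sufficiently small $V$ to be bad, whereas you only control one. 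Moreover, an \emph{arbitrary} point $z_{n_k}\in V^{(k)}\cap\supp(\mu_{n_k}\circ\pi^{-1})$ carries no information about $\eta_{n_k}(z_{n_k},U)$. The correct mechanism (the one the paper uses) is inequality \eqref{eqn:conditional_on_open_between_inf_and_sup_kernel} of Lemma \ref{lemma:liminf_and_limsup_of_conditional_open_compared_with_kernel_PRCP}: since $\inf_{z\in V^{(k)}\cap\supp(\mu_{n_k}\circ\pi^{-1})}\eta_{n_k}(z,U)\le\mu_{n_k}(\overline U\times\cY\,|\,\cX\times V^{(k)})$, one can choose $y_{n_k}$ so that $\tfrac{1}{n_k}\log\eta_{n_k}(y_{n_k},U)\le-\beta$ for a fixed $\beta>\inf I(U)$, and then conclude that \ref{item:ldp_lower_bound} itself — i.e.\ $\liminf_n\traten\log\eta_n(y_n,U)\ge-\inf I(U)$ — fails along the assembled sequence, bypassing \ref{item:condition_lower_bound_basis} entirely. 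Finally, your filler terms ("$y_n$ arbitrary in $\supp(\mu_n\circ\pi^{-1})$") need not give $y_n\rightarrow y$ for the full sequence, which both \ref{item:forall_y_seq_cond_lower_bound} and Theorem \ref{theorem:lower_and_upper_bound_eta_n_convergent_y_n} require; one should fill in with the convergent in-support sequence whose existence is part of the standing assumptions. The same repairs are needed in the upper-bound mirror of this step.
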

\begin{proof}
The proofs of 
{\ref{item:compared_condition_upper_bound_without_limit}} $\Longrightarrow$
{\ref{item:compared_condition_upper_bound_logical}} $\iff$
{\ref{item:compared_condition_upper_bound}} $\Longrightarrow$ {\ref{item:uniform_condition_upper_bound}} $\Longrightarrow$ {\ref{item:forall_y_seq_cond_upper_bound}} and of 
{\ref{item:forall_y_seq_cond_upper_bound}}
$\Longrightarrow$ 
{\ref{item:uniform_condition_upper_bound}} are similar to the proofs of the following implications. 

{\ref{item:compared_condition_lower_bound_logical}} $\iff$ {\ref{item:compared_condition_lower_bound}} follows by definition of $\sup$, $\inf$, $\limsup$ and $\liminf$. 

{\ref{item:compared_condition_lower_bound_without_limit}}
$\Longrightarrow$
{\ref{item:compared_condition_lower_bound}} 
Let $U \in \cG$. 
Assuming {\ref{item:compared_condition_lower_bound_without_limit}} we obtain 
 $\forall \epsilon>0 \forall V_0 \in \cN_y 
  \exists N\in\N  \forall n\ge N$ one has $\mu_n(\cX \times V_0)>0$ and
\begin{align}
 \inf_{ \putatop{V\in \cH, V\subset V_0 }{ V\cap  \supp(\mu_n \circ \pi^{-1})\ne \emptyset} } 
 \traten \log \mu_n( \overline U  \times \cY | \cX \times V)
\ge 
\traten \log \mu_n(  U  \times \cY | \cX \times V_0) - \epsilon.
\end{align}
So $\forall \epsilon>0 \forall V_0 \in \cN_y$ 
\begin{align}
\notag  \sup_{Z\in \cN_y} \liminfn
&  \inf_{ \putatop{V\in \cH, V\subset Z }{ V\cap  \supp(\mu_n \circ \pi^{-1})\ne \emptyset} } 
 \traten \log \mu_n( \overline U  \times \cY | \cX \times V) \\
& \ge 
\liminf_{ \putatop{ n\rightarrow \infty}{n\in \N : \mu_n(\cX \times V_0)>0 } }  
\traten \log \mu_n(  U  \times \cY | \cX \times V_0) - \epsilon.
\end{align}

{\ref{item:compared_condition_lower_bound}}
$\Longrightarrow$
{\ref{item:uniform_condition_lower_bound}}
Follows by Lemma \ref{lemma:limit_bounds_on_conditional_from_ldp}.

{\ref{item:uniform_condition_lower_bound}}
$\Longrightarrow$
{\ref{item:forall_y_seq_cond_lower_bound}}. 
Suppose that {\ref{item:uniform_condition_lower_bound}} holds. 
Let $U\in \cG$ with $\inf J(U \times \{y\})<\infty$ and let $\epsilon>0$. 
Let $V_0 \in \cN_y$ and $N\in\N$ be such that $\traten \log \mu_n( \overline U \times \cY | \cX \times V) \ge - \inf I(U) - \epsilon$ for all $n\ge N$ and all $V\in \cH$ with $V\subset V_0$ and $V\cap \supp(\mu_n \circ \pi^{-1})\ne \emptyset$. 
Let $(y_n)_{n\in\N}$ be such that $y_n \in \supp(\mu_n\circ \pi^{-1})$ and $y_n \rightarrow y$. 
Let $N_0\ge N$ be such that $y_n \in V_0$ for all $n\ge N_0$. 
Then for all $n\ge N_0$ and $V \in \cN_{y_n}\cap \cH$ with $V\subset V_0$ we have $\traten \log \mu_n( \overline U \times \cY | \cX \times V) \ge - \inf I(U) - \epsilon$. This implies {\ref{item:condition_lower_bound_basis}} of Theorem \ref{theorem:lower_and_upper_bound_eta_n_convergent_y_n} (with $\cV_n=\cN_{y_n}\cap \cH$). 

{\ref{item:forall_y_seq_cond_lower_bound}}
$\Longrightarrow$
{\ref{item:uniform_condition_lower_bound}} (assuming $\cY$ is first countable). 
Suppose that {\ref{item:uniform_condition_lower_bound}} does not hold. 
Let $(V_m)_{m\in\N}$ be a decreasing sequence in $\cH$ with $\bigcap_{m\in\N} V_m = \{y\}$. 
Then there exists a $U\in \cG$ with $\inf J(U \times \{y\})<\infty$ and an $\alpha >\inf I(U)$  such that for all $M\in\N$ and $N\in\N$ there exists an $n\ge N$ and a $V\in \cH$ with $V\subset V_M$ and $V \cap \supp( \mu_n \circ \pi^{-1})\ne \emptyset$ such that 
\begin{align}
\traten \log \mu_n( \overline U \times \cY | \cX \times V) 
\le - \alpha . 
\end{align}
Let $\beta<\alpha$ be such that $\beta > \inf I(U)$. 
By Lemma \ref{lemma:liminf_and_limsup_of_conditional_open_compared_with_kernel_PRCP} we have 
\begin{align}
\inf_{z\in V \cap \supp(\mu_n \circ \pi^{-1})} \traten \log  \eta_n(z,U) \le \traten \log \mu_n( \overline U \times \cY | \cX \times V).
\end{align}
For each $m\in\N$ there exists an $n_m$ and a $y_{n_m}\in V_m \cap \supp( \mu_{n_m} \circ \pi^{-1})$ such that 
\begin{align}
\tratenm \log \eta_{n_m}( y_{n_m}, U) 
\le -  \beta. 
\end{align}
We may choose $n_1<n_2<n_3<\cdots$. With $y_k = y$ for $k\notin \{n_m: m\in\N\}$ we have $y_n \rightarrow y$ and 
\begin{align}
 \liminfn \traten \log \eta_n(y_n,U) \le \liminfm \tratenm \log \eta_{n_m} (y_{n_m},U) \le
-  \beta. 
\end{align}
Therefore {\ref{item:ldp_lower_bound}} of Theorem \ref{theorem:lower_and_upper_bound_eta_n_convergent_y_n} does not hold, which implies that {\ref{item:forall_y_seq_cond_lower_bound}} does not hold.
%{\ref{item:condition_lower_bound_basis}} does not hold for this $(y_n)_{n\in\N}$. 
%We conclude {\ref{item:forall_convergent_to_y_sequences_cond_lower_bound}} $\iff$ {\ref{item:uniform_condition_lower_bound}}. 
\end{proof}

We can also use Lemma \ref{lemma:limit_bounds_on_conditional_from_ldp} and Theorem \ref{theorem:lower_and_upper_bound_eta_n_convergent_y_n} (see also \ref{obs:fixed_y}) to obtain sufficient conditions for the lower or upper large deviation bounds for $(\eta_n(y,\cdot))_{n\in\N}$.

\begin{theorem}
\label{theorem:sufficient_bounds_for_bounds_fixed_y}
Let $\cV\subset \cN_y$ be such that $\bigcap \cV = \{y\}$.
\begin{enumerate}
\item Suppose that for all $U\in \cG$ with $\inf J(U\times \{y\})<\infty$
\begin{align}
\notag & \liminfn 
\limsup_{V\in\cV}
%\inf_{V_0\in \cV} \sup_{V\in \cV: V\subset V_0}  
\traten \log \mu_n( \overline U \times \cY | \cX \times V)   \\
& \ge 
\liminf_{V\in \cN_y}
%\sup_{V_0\in \cV} \inf_{V\in \cV: V\subset V_0} 
\liminf_{ \putatop{ n\rightarrow \infty}{n\in \N : \mu_n(\cX \times V)>0 } } 
 \traten \log \mu_n(  U \times \cY | \cX \times V).
 \label{eqn:sufficient_condition_lower_bound_comparison_fixed_y}
\end{align}
Then $(\eta_n(y,\cdot))_{n\in\N}$ satisfies the large deviation lower bound with rate function $I$. 
\item Suppose that for all $U_1,\dots, U_k\in \cG$ with $W= \cX \setminus (U_1\cup \cdots \cup U_k)$
\begin{align}
\notag &  \limsupn 
%\inf_{V_0\in \cV}  \sup_{\ V\in \cV : V \subset V_0}
\liminf_{V\in \cV}
\traten \log \mu_n( W^\circ \times \cY | \cX \times V)  \\
& \le 
%\inf_{V_0\in \cV} \sup_{V\in \cV: V\subset V_0} 
\limsup_{V\in\cN_y}
\limsup_{ \putatop{ n\rightarrow \infty}{n\in \N : \mu_n(\cX \times V)>0 } } 
\traten \log \mu_n( W \times \cY | \cX \times V). 
\label{eqn:sufficient_condition_upper_bound_comparison_fixed_y}
\end{align}
Then $(\eta_n(y,\cdot))_{n\in\N}$ satisfies the large deviation upper bound with rate function $I$. 
\end{enumerate}
\end{theorem}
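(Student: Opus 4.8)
The plan is to obtain both parts as immediate corollaries of Theorem~\ref{theorem:lower_and_upper_bound_eta_n_convergent_y_n}, applied to the constant sequence $y_n=y$ (with $\cV_n=\cV$ for every $n$), which is exactly the situation of \ref{obs:fixed_y}. By that theorem it suffices to verify condition \ref{item:condition_lower_bound_basis} for part~(a) and condition \ref{item:condition_upper_bound_basis} for part~(b), because the implications \ref{item:condition_lower_bound_basis}$\Longrightarrow$\ref{item:ldp_lower_bound} and \ref{item:condition_upper_bound_basis}$\Longrightarrow$\ref{item:ldp_upper_bound} are established there individually, and \ref{item:ldp_lower_bound}, \ref{item:ldp_upper_bound} with $y_n=y$ are precisely the large deviation lower, resp.\ upper, bound for $(\eta_n(y,\cdot))_{n\in\N}$ on $\cB(\cX)$ with rate function $I$. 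The bridge from the hypotheses of the present theorem to these conditions is Lemma~\ref{lemma:limit_bounds_on_conditional_from_ldp}, which bounds the iterated $\liminf$/$\limsup$ over $\cN_y$ and $n$ of the conditional probabilities by $-\inf I(G)$ for open $G$, resp.\ $-\inf I(F)$ for closed $F$.

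For part~(a) I would fix $U\in\cG$ and split into two cases. If $\inf J(U\times\{y\})=\infty$, then $\inf I(U)=\infty$, so the right-hand side of \eqref{eqn:lower_condition} is $-\infty$ and that inequality is trivially satisfied. If $\inf J(U\times\{y\})<\infty$, then, using that $U$ is open, Lemma~\ref{lemma:limit_bounds_on_conditional_from_ldp}(a) with $G=U$ together with hypothesis \eqref{eqn:sufficient_condition_lower_bound_comparison_fixed_y} gives
\begin{align*}
\liminfn \limsup_{V\in\cV} \traten \log \mu_n( \overline U \times \cY | \cX \times V)
&\ge
\liminf_{V\in \cN_y}
\liminf_{ \putatop{ n\rightarrow \infty}{n\in \N : \mu_n(\cX \times V)>0 } }
 \traten \log \mu_n(  U \times \cY | \cX \times V) \\
&\ge -\inf I(U),
\end{align*}
which is exactly \eqref{eqn:lower_condition} with $\cV_n=\cV$. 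Hence condition \ref{item:condition_lower_bound_basis} of Theorem~\ref{theorem:lower_and_upper_bound_eta_n_convergent_y_n} holds, and therefore so does \ref{item:ldp_lower_bound}.

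Part~(b) I would treat symmetrically: fix $U_1,\dots,U_k\in\cG$, set $W=\cX\setminus(U_1\cup\cdots\cup U_k)$ (a closed set), and combine Lemma~\ref{lemma:limit_bounds_on_conditional_from_ldp}(b) with $F=W$ and hypothesis \eqref{eqn:sufficient_condition_upper_bound_comparison_fixed_y} to obtain
\begin{align*}
\limsupn \liminf_{V\in\cV} \traten \log \mu_n( W^\circ \times \cY | \cX \times V)
&\le
\limsup_{V\in\cN_y}
\limsup_{ \putatop{ n\rightarrow \infty}{n\in \N : \mu_n(\cX \times V)>0 } }
\traten \log \mu_n( W \times \cY | \cX \times V) \\
&\le -\inf I(W),
\end{align*}
i.e.\ \eqref{eqn:upper_condition} with $\cV_n=\cV$; here no case split is needed, since the chain also yields the conclusion (vacuously) when $\inf I(W)=\infty$. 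Then condition \ref{item:condition_upper_bound_basis} gives \ref{item:ldp_upper_bound}. I do not expect a genuine obstacle: the whole argument is a direct chaining of two earlier results. The only point needing a word of care is that the hypotheses only demand the comparison for $U\in\cG$ with $\inf J(U\times\{y\})<\infty$, whereas \ref{item:condition_lower_bound_basis} quantifies over all $U\in\cG$; this is harmless precisely because the excluded $U$ make \eqref{eqn:lower_condition} vacuous. (All conditional probabilities above are well defined for $n$ large, as $\mu_n(\cX\times V)>0$ eventually by \ref{obs:conditioning_eventually_welldefined_under_LDP}.)
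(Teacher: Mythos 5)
Your proof is correct and follows exactly the route the paper intends: the paper states this theorem without proof, presenting it as a direct consequence of Lemma~\ref{lemma:limit_bounds_on_conditional_from_ldp} chained with conditions \ref{item:condition_lower_bound_basis} and \ref{item:condition_upper_bound_basis} of Theorem~\ref{theorem:lower_and_upper_bound_eta_n_convergent_y_n} applied to the constant sequence $y_n=y$ as in \ref{obs:fixed_y}. Your handling of the case $\inf J(U\times\{y\})=\infty$ and the remark on well-definedness of the conditional probabilities are exactly the right points of care.
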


\begin{obs}
\eqref{eqn:sufficient_condition_lower_bound_comparison_fixed_y} and 
 \eqref{eqn:sufficient_condition_upper_bound_comparison_fixed_y} hold for example when
$\forall \epsilon>0 \forall V_0 \in \cV 
  \exists N\in\N  \forall n\ge N \forall V\in \cV, V\subset V_0: $ 
\begin{align}
& \mu_n( \overline U  \times \cY | \cX \times V)
\ge 
e^{-n\epsilon}  \mu_n( \overline U  \times \cY | \cX \times V_0), \\
& \mu_n( W^\circ  \times \cY | \cX \times V)
\le 
e^{n\epsilon} \mu_n( W  \times \cY | \cX \times V_0), 
\end{align}
respectively. 
\end{obs}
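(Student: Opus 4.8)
The plan is to take $\traten\log$ of the two displayed inequalities and push them through the nested limit operators appearing in \eqref{eqn:sufficient_condition_lower_bound_comparison_fixed_y} and \eqref{eqn:sufficient_condition_upper_bound_comparison_fixed_y}. The two cases are mirror images of each other — the lower bound involves $\overline U$ and $\limsup_{V\in\cV}$, the upper bound involves $W^\circ$ and $\liminf_{V\in\cV}$ — so I would carry out the lower bound in full detail and deduce the upper bound by reversing all the inequalities. Throughout, well\-definedness of the conditional probabilities $\mu_n(\,\cdot\,|\,\cX\times V)$ for $n$ large and $V\in\cN_y$ is guaranteed by \ref{obs:conditioning_eventually_welldefined_under_LDP}.

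For the lower bound, fix $U\in\cG$ with $\inf J(U\times\{y\})<\infty$, put $g_n(V):=\traten\log\mu_n(\overline U\times\cY\,|\,\cX\times V)$, and fix $V_0\in\cV$ and $\epsilon>0$. Taking $\traten\log$ in the first displayed inequality, there is an $N$ with $g_n(V)\ge g_n(V_0)-\epsilon$ for all $n\ge N$ and all $V\in\cV$ with $V\subseteq V_0$, whence
\begin{align}
\limsup_{V\in\cV}g_n(V)\ge\liminf_{V\in\cV}g_n(V)\ge\inf_{V\in\cV,\,V\subseteq V_0}g_n(V)\ge g_n(V_0)-\epsilon\qquad(n\ge N).
\end{align}
Taking $\liminfn$ and then letting $\epsilon\downarrow 0$ gives $\liminfn\limsup_{V\in\cV}g_n(V)\ge\liminfn g_n(V_0)$, and since $U\subseteq\overline U$,
\begin{align}
\liminfn\limsup_{V\in\cV}\traten\log\mu_n(\overline U\times\cY\,|\,\cX\times V)\ge\liminfn\traten\log\mu_n(U\times\cY\,|\,\cX\times V_0).
\end{align}
As $V_0\in\cV$ was arbitrary, the left\-hand side is $\ge\sup_{V_0\in\cV}\liminfn\traten\log\mu_n(U\times\cY\,|\,\cX\times V_0)$. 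To finish, I would check that this supremum dominates the right\-hand side of \eqref{eqn:sufficient_condition_lower_bound_comparison_fixed_y}: by \ref{obs:conditioning_eventually_welldefined_under_LDP} only finitely many $n$ are excluded by the requirement $\mu_n(\cX\times V)>0$, so that right\-hand side equals $\sup_{V_1\in\cN_y}\inf_{V\in\cN_y,\,V\subseteq V_1}\liminfn\traten\log\mu_n(U\times\cY\,|\,\cX\times V)$; given $\delta>0$, choose $V_1\in\cN_y$ attaining the outer supremum up to $\delta$, then (using that $\cV$ is cofinal in $\cN_y$, as is implicit in the use of $\bigcap\cV=\{y\}$ here and automatic when $\cV$ is a fundamental system of neighbourhoods, e.g.\ when $\cY$ is first countable) pick $V_0\in\cV$ with $V_0\subseteq V_1$, so that $\liminfn\traten\log\mu_n(U\times\cY\,|\,\cX\times V_0)$ falls short of the right\-hand side by at most $\delta$. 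Letting $\delta\downarrow 0$ proves \eqref{eqn:sufficient_condition_lower_bound_comparison_fixed_y}, and Theorem \ref{theorem:sufficient_bounds_for_bounds_fixed_y}(a) then yields the large deviation lower bound.

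For the upper bound, with $h_n(V):=\traten\log\mu_n(W^\circ\times\cY\,|\,\cX\times V)$, taking $\traten\log$ in the second displayed inequality gives, for fixed $V_0\in\cV$, $\epsilon>0$ and $n$ large, $h_n(V)\le\traten\log\mu_n(W\times\cY\,|\,\cX\times V_0)+\epsilon$ for all $V\in\cV$ with $V\subseteq V_0$, hence $\liminf_{V\in\cV}h_n(V)\le\limsup_{V\in\cV}h_n(V)\le\sup_{V\in\cV,\,V\subseteq V_0}h_n(V)\le\traten\log\mu_n(W\times\cY\,|\,\cX\times V_0)+\epsilon$. Taking $\limsupn$, letting $\epsilon\downarrow 0$, and then $\inf_{V_0\in\cV}$, the left\-hand side of \eqref{eqn:sufficient_condition_upper_bound_comparison_fixed_y} is bounded above by $\inf_{V_0\in\cV}\limsupn\traten\log\mu_n(W\times\cY\,|\,\cX\times V_0)$, which by the same cofinality argument — now with $\inf$ and $\limsup$ in place of $\sup$ and $\liminf$ — is at most the right\-hand side of \eqref{eqn:sufficient_condition_upper_bound_comparison_fixed_y}; Theorem \ref{theorem:sufficient_bounds_for_bounds_fixed_y}(b) then gives the large deviation upper bound.

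I expect the main obstacle to be organisational rather than conceptual: keeping the directions of the many nested $\sup$, $\inf$, $\liminf$, $\limsup$ consistent — in particular matching them to the asymmetric occurrence of $\overline U$ against $U$ (respectively $W^\circ$ against $W$) in \eqref{eqn:sufficient_condition_lower_bound_comparison_fixed_y} and \eqref{eqn:sufficient_condition_upper_bound_comparison_fixed_y} — and cleanly justifying the passage from a supremum (infimum) over the net $\cV$ to the $\liminf$ (respectively $\limsup$) over the full neighbourhood filter $\cN_y$, where the cofinality of $\cV$ in $\cN_y$ is what is really being used.
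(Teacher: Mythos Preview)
The paper gives no proof for this observation; it is stated and left to the reader, in the same spirit as the very similar implication \ref{item:compared_condition_lower_bound_without_limit}$\Longrightarrow$\ref{item:compared_condition_lower_bound} in Theorem~\ref{theorem:equivalent_notions_bounds_prodRCP}, whose short proof (take $\traten\log$, then $\inf$ over $V$, then $\liminfn$, then $\epsilon\downarrow 0$, then $\sup$ over $V_0$) is exactly the template you follow. Your argument is the natural one and is correct.

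The one point worth flagging is precisely the one you already flag: passing from $\sup_{V_0\in\cV}\liminfn(\cdots V_0\cdots)$ to the $\liminf_{V\in\cN_y}$ on the right of \eqref{eqn:sufficient_condition_lower_bound_comparison_fixed_y} (and dually for \eqref{eqn:sufficient_condition_upper_bound_comparison_fixed_y}) uses that $\cV$ is cofinal in $\cN_y$. This is \emph{not} a formal consequence of $\bigcap\cV=\{y\}$ alone (e.g.\ in $\R$ with $y=0$ take $\cV=\{(-\infty,1/n):n\in\N\}\cup\{(-1/n,\infty):n\in\N\}$), so your parenthetical ``implicit in the use of $\bigcap\cV=\{y\}$'' slightly overstates the matter; your hedge that it is automatic when $\cV$ is a neighbourhood base (in particular in the first countable / metric setting the paper ultimately cares about) is the correct caveat. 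In the paper's analogous argument \ref{item:compared_condition_lower_bound_without_limit}$\Longrightarrow$\ref{item:compared_condition_lower_bound} this issue does not arise because there $V_0$ ranges over all of $\cN_y$ rather than over $\cV$.
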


\begin{obs}
\label{obs:deriving_the_main_theorems}
Theorem \ref{theorem:equivalent_notions_ldp_bounds_metric_PRCP} is a consequence of Theorem \ref{theorem:weak_convergence_equivalences}, Theorem \ref{theorem:I_has_compact_sublevel_sets} and Theorem \ref{theorem:equivalent_notions_bounds_prodRCP} with  $\cG = \{ B(x,r): x\in \cX, r>0\}$ and $\cH = \{B(y,\delta): y\in \cY, \delta>0\}$. 
\end{obs}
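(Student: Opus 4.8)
The plan is to realise Theorem~\ref{theorem:equivalent_notions_ldp_bounds_metric_PRCP} as the special case of Theorem~\ref{theorem:equivalent_notions_bounds_prodRCP} obtained by taking $\cG=\{B(x,r):x\in\cX,\,r>0\}$ and $\cH=\{B(y,\delta):y\in\cY,\,\delta>0\}$, so the work splits into checking that the standing assumptions (i)--(vi) of Section~\ref{section:ldp_product_RCP} are met and then matching the conditions. The soft part is immediate: a metric space is perfectly normal (hence normal) and first countable, and the open balls form a basis, which settles (i) and (ii), while (iii) is just the definition of $\pi$. For (iv), the large deviation principle of $(\mu_n)_{n\in\N}$ on $\cB(\cX)\otimes\cB(\cY)$ restricts in particular to the measurable rectangles.

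The two remaining assumptions carry the real content. For (v), existence of a product regular conditional probability $\eta_n$ that is weakly continuous on $\supp(\mu_n\circ\pi^{-1})\neq\emptyset$ is hypothesised; the only thing to verify is the continuity condition~\eqref{eqn:continuity_assumption}, i.e.\ condition~\ref{item:eta_weakly_continuous_open_sets_net} of Theorem~\ref{theorem:weak_convergence_equivalences}, and this is precisely where Theorem~\ref{theorem:weak_convergence_equivalences} is used, since for metrisable $\cX$ weak continuity of $y\mapsto\eta_n(y,\cdot)$ on $\supp(\mu_n\circ\pi^{-1})$ implies~\ref{item:eta_weakly_continuous_open_sets_net}. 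For (vi), $\inf J(\cX\times\{y\})<\infty$ and the formula for $I$ are as given, and a sequence $y_n\in\supp(\mu_n\circ\pi^{-1})$ with $y_n\to y$ can be read off from the large deviation principle: pick $x$ with $J(x,y)<\infty$, apply the lower bound to the open sets $\pi^{-1}(B(y,\rho))\ni(x,y)$ to get $(\mu_n\circ\pi^{-1})(B(y,\rho))>0$ for all large $n$ and every $\rho>0$, hence $\supp(\mu_n\circ\pi^{-1})\cap B(y,\rho)\neq\emptyset$ for large $n$ (a probability measure on a metric space is carried by its support, Theorem~\ref{theorem:measures_on_perfectly_normal_space_have_support}), and diagonalise over $\rho=1/k$, filling in the finitely many initial terms using $\supp(\mu_n\circ\pi^{-1})\neq\emptyset$ as envisaged in the footnote to~\ref{item:existence_prcp_mun_wrt_pi}. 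The two auxiliary conclusions of Theorem~\ref{theorem:equivalent_notions_ldp_bounds_metric_PRCP} --- that $I$ has compact sublevel sets and that each $\eta_n$ is unique (and a probability kernel) on $\supp(\mu_n\circ\pi^{-1})$ --- are then Theorem~\ref{theorem:I_has_compact_sublevel_sets} and Theorem~\ref{theorem:unicity_weakly_continuous_rcp_wrt_pi}(b).

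With (i)--(vi) in place, Theorem~\ref{theorem:equivalent_notions_bounds_prodRCP} applies, and first countability of $\cY$ gives \ref{item:forall_y_seq_cond_lower_bound} $\iff$ \ref{item:uniform_condition_lower_bound} and \ref{item:forall_y_seq_cond_upper_bound} $\iff$ \ref{item:uniform_condition_upper_bound}, which are the claimed equivalences once the conditions are identified. The sequence conditions \ref{item:forall_y_seq_cond_lower_bound}, \ref{item:forall_y_seq_cond_upper_bound} match \ref{item:forall_y_seq_cond_lower_bound_metric}, \ref{item:forall_y_seq_cond_upper_bound_metric} up to replacing ``for all $n$'' by ``for all $n$ large enough'', which is harmless because a large deviation bound for $(\eta_n(y_n,\cdot))_{n\in\N}$ depends only on the tail and the missing initial terms can be chosen in the nonempty sets $\supp(\mu_n\circ\pi^{-1})$. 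For the uniform conditions: $\{B(y,\epsilon):\epsilon>0\}$ is a neighbourhood basis at $y$ and the inner quantity $\liminf_n\inf_{V\in\cH,\,V\subset V_0,\dots}(\cdots)$ (respectively $\limsup_n\sup_{V\in\cH,\,V\subset V_0,\dots}(\cdots)$) is monotone in $V_0$, so $\sup_{V_0\in\cN_y}$ (respectively $\inf_{V_0\in\cN_y}$) collapses to the $\sup_{\epsilon>0}$ (respectively $\inf_{\epsilon>0}$) over $V_0=B(y,\epsilon)$; the clause $V\cap\supp(\mu_n\circ\pi^{-1})\neq\emptyset$ is exactly the requirement that the conditional probability be well defined; and the cosmetic restriction $\delta\in(0,\epsilon)$ on balls $B(z,\delta)\subset B(y,\epsilon)$ in \ref{item:uniform_condition_lower_bound_metric} and \ref{item:uniform_condition_upper_bound_metric} is absorbed by passing from $B(y,\epsilon)$ to $B(y,\epsilon/2)$ (a ball contained in $B(y,\epsilon/2)$ has its centre within $\epsilon/2$ of $y$, so it either has radius $<\epsilon$ or coincides with $B(y,\epsilon/2)$, which has radius $<\epsilon$ and lies in $B(y,\epsilon)$). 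I expect this last translation of the uniform conditions, and in particular the $\delta\in(0,\epsilon)$ bookkeeping, to be the only genuinely fiddly step; the remainder is a matter of invoking Theorems~\ref{theorem:weak_convergence_equivalences},~\ref{theorem:I_has_compact_sublevel_sets},~\ref{theorem:unicity_weakly_continuous_rcp_wrt_pi} and~\ref{theorem:equivalent_notions_bounds_prodRCP} and verifying the standing assumptions.
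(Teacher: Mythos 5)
Your proposal is correct and follows exactly the route the paper indicates: the paper gives no proof of this observation beyond citing Theorems \ref{theorem:weak_convergence_equivalences}, \ref{theorem:I_has_compact_sublevel_sets} and \ref{theorem:equivalent_notions_bounds_prodRCP} with the ball bases, and your verification of the standing assumptions (i)--(vi) of Section \ref{section:ldp_product_RCP} together with the translation of the uniform conditions is precisely the intended argument. In particular you correctly supply the two details left implicit -- deriving the existence of a sequence $y_n\in\supp(\mu_n\circ\pi^{-1})$ with $y_n\rightarrow y$ from the large deviation lower bound and Theorem \ref{theorem:measures_on_perfectly_normal_space_have_support}, and the observation that a ball of radius $\ge\epsilon$ contained in $B(y,\epsilon/2)$ coincides as a set with $B(y,\epsilon/2)$, which reconciles the $\delta\in(0,\epsilon)$ restriction -- and you rightly add Theorem \ref{theorem:unicity_weakly_continuous_rcp_wrt_pi} for the uniqueness claim.
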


%%%%%%%%%%%%%%%
%%%%%%%%%%%%
\section{Large deviations for regular conditional probabilities}
\label{section:ldp_RCP}
%%%%%%%%%%%%
%%%%%%%%%%%%%%%

\assump{In this section $\cX$ and $\cY$ are topological spaces, 
$(\nu_n)_{n\in\N}$ is a sequence of probability measures on $\cB(\cX)$ that satisfies the large deviation principle with rate function $\K: \cX \rightarrow [0,\infty]$ and  $\tau: \cX \rightarrow \cY$ is continuous. For more assumptions, see \ref{obs:more_assumptions}.}

We derive the analogues statements as in Section \ref{section:ldp_product_RCP} but for regular conditional kernels instead of product regular conditional kernels (\ref{obs:RCP_in_same_situation_as_PRCP} and Theorem \ref{theorem:equivalent_notions_bounds_RCP}).
First we show that with $\mu_n$ the probability measure corresponding on the product space corresponding to $\nu_n$ as in Theorem \ref{theorem:extension_and_RCP_as_special_case_of_PRCP}, the sequence $(\mu_n)_{n\in\N}$ satisfies the large deviation principle with a rate function described in terms of $K$ (Theorem \ref{theorem:ldp_for_extension_measure}). 

%As has been mentioned in \ref{obs:comparison_product_and_function_rcp_for_unicity}, 
%Theorem \ref{theorem:unicity_weakly_continuous_rcp_wrt_pi}(a)
%can be viewed as a consequence of 
%Theorem \ref{theorem:unicity_weakly_continuous_rcp_wrt_pi}(b)
%using Theorem \ref{theorem:extension_and_RCP_as_special_case_of_PRCP}\ref{item:RCP_as_PRCP} but, in general, not vice versa because 
%$\cB(\cX) \otimes \cB(\cY)$ might be smaller than $\cB(\cX \times \cY)$. 
%Though the proofs of those theorems are very similar. 

If $(\eta_n)_{n\in\N}$ are regular conditional probabilities under $(\nu_n)_{n\in\N}$ given $\tau$, then
one could also follow the proofs in 
Section \ref{section:ldp_product_RCP} 
for the product regular conditional probabilities 
to obtain similar results for large deviations for sequences of the form $(\eta_n(y_n,\cdot))_{n\in\N}$. 
Instead, we make the approach via Theorem \ref{theorem:extension_and_RCP_as_special_case_of_PRCP} to translate the results to the setting of regular conditional probabilities. 
%We made the assumption that $\tau$ is continuous, which is not suprising since this is used in Lemma  \ref{lemma:convergence_infimum_rate_function_on_schrinking_neighbourhoods} (see also \ref{obs:non_redundant_conditions_lemma_convergence_inf_rf_on_schrinking_nbhb}). 
%First we prove that the large deviation principle on $(\nu_n)_{n\in\N}$ carries over to the corresponding measures given in Theorem \ref{theorem:extension_and_RCP_as_special_case_of_PRCP}. 

\begin{theorem}
\label{theorem:ldp_for_extension_measure}
For all $n\in\N$ let $\mu_n$ be the probability measure on $ \cB(\cX) \otimes \cB(\cY)$ for which $\mu_n(A\times B) = \nu_n ( A \cap \tau^{-1}(B))$ for $A\in \cB(\cX), B\in \cB(\cY)$ (as in Theorem \ref{theorem:extension_and_RCP_as_special_case_of_PRCP}). 
Then $(\mu_n)_{n\in\N}$ satisfies the large deviation principle on $\{A\times B: A\in \cB(\cX), B\in  \cB(\cY)\}$ 
%\footnote{
%For the results in Section \ref{section:ldp_product_RCP} it is not necessary to assume the large deviation principle of $(\mu_n)_{n\in\N}$ on the whole $\sigma$-algebra. By not assuming this, we derive results for regular conditional probabilities from the results for product regular conditional probabilities that are given in Section \ref{section:ldp_product_RCP}. See also Theorem \ref{theorem:ldp_for_extension_measure}.} 
 with rate function $\J: \cX \times \cY \rightarrow [0,\infty]$ given by
\begin{align}
\J(x, y) = 
\begin{cases}
\K(x) &  \tau(x) = y, \\
\infty 	& \tau(x) \ne y. 
\end{cases}
\end{align}
If $\K$ has compact sublevel sets, then so does $\J$. 
\end{theorem}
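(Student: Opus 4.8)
The plan is to recognise $\mu_n$ as the push-forward of $\nu_n$ under the continuous map $\Phi\colon\cX\to\cX\times\cY$, $\Phi(x)=(x,\tau(x))$: by Theorem \ref{theorem:extension_and_RCP_as_special_case_of_PRCP} one has $\mu_n(A\times B)=\nu_n(A\cap\tau^{-1}(B))=\nu_n(\Phi^{-1}(A\times B))$ for all $A\in\cB(\cX)$ and $B\in\cB(\cY)$. Since the large deviation principle is only asserted on the class of rectangles $\{A\times B\}$, I would not appeal to a contraction principle but verify the lower and upper bounds of Definition \ref{def:ldp_adapted} directly on such a rectangle, the key being to transport the interior and the closure of $A\times B$ (taken in the product space) back to $\cX$ through $\tau$.

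For the lower bound one uses that in the product topology $(A\times B)^\circ=A^\circ\times B^\circ$, and that, since $J$ equals $K(x)$ on the graph of $\tau$ and $\infty$ elsewhere, $\inf J\big((A\times B)^\circ\big)=\inf K(G)$ with $G:=A^\circ\cap\tau^{-1}(B^\circ)$. Continuity of $\tau$ makes $G$ open, and $G\subseteq A\cap\tau^{-1}(B)$, so the large deviation lower bound for $(\nu_n)$ applied to $G$ gives
\[
\liminfn\traten\log\mu_n(A\times B)\ge\liminfn\traten\log\nu_n(G)\ge-\inf K(G)=-\inf J\big((A\times B)^\circ\big).
\]
Symmetrically $\overline{A\times B}=\overline A\times\overline B$, and $\inf J(\overline A\times\overline B)=\inf K(F)$ with $F:=\overline A\cap\tau^{-1}(\overline B)$, which is closed by continuity of $\tau$ and contains $A\cap\tau^{-1}(B)$; applying the large deviation upper bound for $(\nu_n)$ to $F$ yields $\limsupn\traten\log\mu_n(A\times B)\le-\inf K(F)=-\inf J(\overline{A\times B})$.

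It remains to record the properties of $J$. For lower semicontinuity I would use the net characterisation in \ref{obs:lower_semicontinuity_elementary_properties}: if $(x_\iota,y_\iota)\to(x,y)$, then $J(x_\iota,y_\iota)\ge K(x_\iota)$ in every case, so $\liminf_\iota J(x_\iota,y_\iota)\ge\liminf_\iota K(x_\iota)\ge K(x)$, which settles the case $\tau(x)=y$; when $\tau(x)\ne y$, i.e.\ $J(x,y)=\infty$, one argues by contradiction, passing to a subnet along which $J(x_{\iota'},y_{\iota'})<\infty$, so that $y_{\iota'}=\tau(x_{\iota'})\to\tau(x)$ while $y_{\iota'}\to y$, contradicting uniqueness of limits (here using that $\cY$ is Hausdorff). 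Finally, for the last assertion, $[J\le\alpha]=\Phi\big([K\le\alpha]\big)$ is the continuous image of a compact set whenever $[K\le\alpha]$ is compact, hence compact.

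The argument is short; the only point requiring care is bookkeeping of topologies — keeping in mind that the interior and closure in Definition \ref{def:ldp_adapted} are those of the product space $\cX\times\cY$, so that it is exactly the continuity of $\tau$ that lets one pull these operations through $\tau^{-1}$ to open, respectively closed, subsets of $\cX$ on which the hypothesis on $(\nu_n)$ applies.
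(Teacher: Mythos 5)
Your proof is correct and follows essentially the same route as the paper: rewrite $\mu_n(A\times B)=\nu_n(A\cap\tau^{-1}(B))$, use continuity of $\tau$ to match $\inf\J$ over the interior/closure of the rectangle with $\inf\K$ over an open/closed subset of $\cX$, and invoke the large deviation bounds for $(\nu_n)_{n\in\N}$. Your observation that $[\J\le\alpha]=\Phi([\K\le\alpha])$ with $\Phi(x)=(x,\tau(x))$ is in fact a slightly cleaner justification of the compact-sublevel-set claim than the paper's remark that $[\J\le c]$ is contained in $[\K\le c]\times\tau([\K\le c])$, and the lower-semicontinuity check is a welcome (if Hausdorff-dependent) addition the paper leaves implicit.
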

\begin{proof}
By definition of $\J$ we have 
\begin{align}
\label{eqn:relation_K_and_J}
\inf \K(A \cap \tau^{-1}(B))= \inf \J(A\times B) \qquad \big( A\in \cB(\cX) , B\in \cB(\cY)\big). 
\end{align}
Let $A\in \cB(\cX)$ and $B\in \cB(\cY)$. Then
\begin{align}
\liminfn \traten \log \mu_n( A\times B)
\notag & =\liminfn \traten \log \nu_n (A\cap \tau^{-1}(B)) \\
& \ge - \inf \K( (A\cap \tau^{-1}(B))^\circ).
\end{align}
We have $(A \cap \tau^{-1}(B))^\circ= A^\circ \cap \tau^{-1}(B)^\circ$ and $ \tau^{-1}(B)^\circ \supset \tau^{-1}(B^\circ)$, whence 
\begin{align}
\inf \K( (A\cap \tau^{-1}(B))^\circ) 
\notag & \le \inf \K( A^\circ \cap \tau^{-1}(B^\circ)) \\
& = \inf \J( A^\circ \times B^\circ) = \inf \J ( (A\times B)^\circ).
\end{align}
Similarly
\begin{align}
\limsupn \traten \log \mu_n( A\times B) 
\notag & = \limsupn \traten \log \nu_n( A \cap \tau^{-1}(B)) \\
& \le - \inf \K ( \overline{A \cap \tau^{-1}(B)}). 
\end{align}
We have $\overline{A \cap \tau^{-1}(B)} \subset \overline{A} \cap \overline{\tau^{-1}(B)}$ and 
$\overline{\tau^{-1}(B)} \subset \tau^{-1}(\overline B)$, whence 
\begin{align}
\inf \K ( \overline{ A \cap \tau^{-1}(B)}) \ge \inf \K(\overline A \cap \tau^{-1}(\overline{B})) = \inf \J( \overline{A \times  B}).
\end{align}
Suppose that $\K$ has compact sublevel sets. 
Let $c\ge 0$. 
Then $[J\le c]$ is contained in the compact set $[\K\le c] \times \tau([\K \le c])$. 
By Theorem \ref{theorem:I_has_compact_sublevel_sets} $I$ has compact sublevel sets. 
\end{proof}

\begin{obs}
\label{obs:more_assumptions}
\assump{In the rest of this section $\cX$ is normal, 
$\cG$, $\cH$, $\pi$ are as in \ref{item:basis_topologies} and \ref{item:pi_map} of Section \ref{section:ldp_product_RCP}.
Furthermore similarly to
\ref{item:existence_prcp_mun_wrt_pi} and
\ref{item:y_finiteness_rate_function_and_def_I} of Section \ref{section:ldp_product_RCP} we assume the following. 
\begin{enumerate}[label=(\roman*)]
\setitemize{leftmargin=0pt}
\setlength{\itemsep}{0pt}
  \setlength{\parskip}{0pt}
  \setlength{\parsep}{0pt}
\item[\ref{item:existence_prcp_mun_wrt_pi}*] 
For each $n\in\N$ we assume the following: 
$\supp ( \nu_n \circ \tau^{-1}) \ne \emptyset$,
there exists a regular conditional probability  $\eta_n : \cY \times \cB(\cX) \rightarrow [0,1]$ under $\nu_n$ with respect to $\tau$, satisfying the continuity condition \eqref{eqn:continuity_assumption}.
\item[\ref{item:y_finiteness_rate_function_and_def_I}*] Let $y \in \cY$. We assume that $\inf \K (\tau^{-1}(\{y\}))<\infty$  and that there exist $y_n \in \supp( \nu_n \circ \tau^{-1})$ with $y_n \rightarrow y$. 
Let $I: \cX \rightarrow [0,\infty]$ be given by 
\begin{align}
I(x) 
\notag & = \J(x,y) - \inf \J(\cX \times \{y\})  \\
& = 
\begin{cases}
 \K(x) - \inf \K(\tau^{-1}(\{y\})) & \tau(x)=y, \\
 \infty & \tau(x) \ne y. 
\end{cases}
\end{align}
\end{enumerate}
}
\end{obs}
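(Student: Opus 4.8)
This ``statement'' is not a proposition but a block of standing hypotheses for the remainder of the section, so there is essentially nothing to prove; the plan is only to record the one identity it silently asserts and to note that the listed hypotheses are precisely those needed to invoke Section \ref{section:ldp_product_RCP}.

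The single assertion requiring a word of justification is the second equality in the definition of $I$, i.e.\ that
\[
\J(x,y) - \inf \J(\cX \times \{y\}) =
\begin{cases}
\K(x) - \inf \K(\tau^{-1}(\{y\})) & \tau(x)=y, \\
\infty & \tau(x) \ne y.
\end{cases}
\]
I would obtain this directly from the definition of $\J$ in Theorem \ref{theorem:ldp_for_extension_measure}: since $\J(x,y) = \K(x)$ when $\tau(x)=y$ and $\J(x,y)=\infty$ otherwise, the infimum $\inf \J(\cX \times \{y\})$ equals $\inf\{\K(x): x\in\cX,\ \tau(x)=y\} = \inf \K(\tau^{-1}(\{y\}))$ --- this is just \eqref{eqn:relation_K_and_J} with $A=\cX$, $B=\{y\}$ --- and it is finite by \ref{item:y_finiteness_rate_function_and_def_I}*; subtracting this finite constant from $\J(\cdot,y)$ yields the displayed piecewise expression. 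In particular the $I$ introduced here is literally the $I$ of Section \ref{section:ldp_product_RCP} attached to the rate function $\J$ of $\mu_n$, so $I$ has compact sublevel sets by Theorem \ref{theorem:I_has_compact_sublevel_sets} once $\J$ does, which holds by Theorem \ref{theorem:ldp_for_extension_measure} whenever $\K$ has compact sublevel sets.

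Finally I would check --- this is what makes the observation useful rather than purely notational --- that with $\mu_n$ as in Theorem \ref{theorem:ldp_for_extension_measure} the listed hypotheses reproduce the situation of Section \ref{section:ldp_product_RCP}: items (i)--(iii) there are the conditions on $\cX,\cG,\cH,\pi$ imported here; item (iv) is the conclusion of Theorem \ref{theorem:ldp_for_extension_measure}; items (v)--(vi) follow from \ref{item:existence_prcp_mun_wrt_pi}* via Theorem \ref{theorem:extension_and_RCP_as_special_case_of_PRCP}\ref{item:RCP_as_PRCP}, since a kernel $\eta_n$ is a regular conditional probability under $\nu_n$ with respect to $\tau$ if and only if it is a product regular conditional probability under $\mu_n$ with respect to $\pi$, the continuity condition \eqref{eqn:continuity_assumption} is the same statement for both, and $\supp(\nu_n\circ\tau^{-1}) = \supp(\mu_n\circ\pi^{-1})$ because $\nu_n\circ\tau^{-1} = \mu_n\circ\pi^{-1}$; and item (vii) is \ref{item:y_finiteness_rate_function_and_def_I}* together with the identity of the previous paragraph. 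I do not anticipate any genuine obstacle here; the only recurring point of care is the identification $\supp(\nu_n\circ\tau^{-1}) = \supp(\mu_n\circ\pi^{-1})$, which has already appeared in Section \ref{section:rcp}.
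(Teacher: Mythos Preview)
Your reading is correct: this observation is a list of standing hypotheses, and the paper gives no proof of it; the verification that these hypotheses reproduce the setting of Section \ref{section:ldp_product_RCP} is exactly what the paper carries out in the immediately following observation \ref{obs:RCP_in_same_situation_as_PRCP}, using precisely the ingredients you name (Theorem \ref{theorem:extension_and_RCP_as_special_case_of_PRCP}, Theorem \ref{theorem:ldp_for_extension_measure}, and the identity $\nu_n\circ\tau^{-1}=\mu_n\circ\pi^{-1}$). One small slip: Section \ref{section:ldp_product_RCP} has only items (i)--(vi), so your ``items (v)--(vi)'' should read ``item (v)'' and your ``item (vii)'' should read ``item (vi)''.
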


\begin{obs}
\label{obs:RCP_in_same_situation_as_PRCP}
As by Theorem \ref{theorem:extension_and_RCP_as_special_case_of_PRCP} $\eta_n$ is the product regular conditional kernel under $\mu_n$ with respect to $\pi$, by Theorem \ref{theorem:ldp_for_extension_measure} $(\mu_n)_{n\in\N}$ 
satisfies the large deviation principle on $\{A\times B: A\in \cB(\cX), B\in \cB(\cY)\}$ with rate function $J$, and $\inf J(\cX \times \{y\}) = \inf K(\tau^{-1}(y))<\infty$ and 
$\mu_n$ and $\eta_n$ are as in Section \ref{section:ldp_product_RCP} (in the sense that \ref{item:ass_ldp_mu_n},
\ref{item:existence_prcp_mun_wrt_pi},
\ref{item:y_finiteness_rate_function_and_def_I} hold).
Therefore we can translate the results of Section \ref{section:ldp_product_RCP}, but also the results of Section \ref{section:weak_continuous_cps} and Section \ref{section:facts_about_lsc_functions_with_compact_level_sets}, using for example \eqref{eqn:relation_K_and_J}, $ \nu_n \circ \tau^{-1} = \mu_n \circ \pi^{-1}$ and that for $V\in \cB(\cY)$ with $\nu_n(\tau^{-1}(V))>0$ and for $A\in \cB(\cX)$
\begin{align}
& \nu_n( A | \tau^{-1}(V) ) = \mu_n(A \times \cX | \cX \times V).
\end{align}
In this sense also Theorem \ref{theorem:equivalent_notions_ldp_bounds_metric_RCP} follows from Theorem \ref{theorem:equivalent_notions_ldp_bounds_metric_PRCP}.
We present some of the equivalent statements of Theorem \ref{theorem:equivalent_notions_bounds_prodRCP} in 
Theorem \ref{theorem:equivalent_notions_bounds_RCP}.
\end{obs}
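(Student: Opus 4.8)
The plan is to observe that the data of this section are a special case of those of Section~\ref{section:ldp_product_RCP}, and then to record a dictionary between the two kinds of conditioning. Let $\mu_n$ be the measure on $\cB(\cX)\otimes\cB(\cY)$ with $\mu_n(A\times B)=\nu_n(A\cap\tau^{-1}(B))$ given by Theorem~\ref{theorem:extension_and_RCP_as_special_case_of_PRCP}\ref{item:extension_measure_to_product}. First I would verify the standing assumptions \ref{item:ass_ldp_mu_n}, \ref{item:existence_prcp_mun_wrt_pi} and \ref{item:y_finiteness_rate_function_and_def_I} of Section~\ref{section:ldp_product_RCP}. For \ref{item:ass_ldp_mu_n}: $\mu_n$ is a probability measure since $\mu_n(\cX\times\cY)=\nu_n(\cX)=1$, and by Theorem~\ref{theorem:ldp_for_extension_measure} the sequence $(\mu_n)_{n\in\N}$ satisfies the large deviation principle on product rectangles with rate function $J$, which has compact sublevel sets because $\K$ does. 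For \ref{item:existence_prcp_mun_wrt_pi}: from the proof of Theorem~\ref{theorem:extension_and_RCP_as_special_case_of_PRCP} we have $\nu_n\circ\tau^{-1}=\mu_n\circ\pi^{-1}$, so $\supp(\mu_n\circ\pi^{-1})=\supp(\nu_n\circ\tau^{-1})\ne\emptyset$; by Theorem~\ref{theorem:extension_and_RCP_as_special_case_of_PRCP}\ref{item:RCP_as_PRCP} the kernel $\eta_n$ is a product regular conditional probability under $\mu_n$ with respect to $\pi$; and the continuity condition \eqref{eqn:continuity_assumption} is word for word the same for $\eta_n$ whether $\eta_n$ is read as a regular conditional probability (as in \ref{item:existence_prcp_mun_wrt_pi}*) or as a product one, because in either reading $\eta_n(y,\cdot)$ lies in $\cM(\cX)$ equipped with its weak topology. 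For \ref{item:y_finiteness_rate_function_and_def_I}: applying \eqref{eqn:relation_K_and_J} with $A=\cX$ and $B=\{y\}$ gives $\inf J(\cX\times\{y\})=\inf\K(\tau^{-1}(\{y\}))<\infty$, the sequence $(y_n)$ it requires is the one in \ref{item:y_finiteness_rate_function_and_def_I}*, and, directly from the definition of $J$, the function $I$ it produces coincides with the $I$ of \ref{obs:more_assumptions}. Since in addition $\cX$ is normal and $\cG$, $\cH$ are bases as in \ref{item:basis_topologies}, all hypotheses of Sections~\ref{section:weak_continuous_cps}, \ref{section:facts_about_lsc_functions_with_compact_level_sets} and \ref{section:ldp_product_RCP} are in force for the present data.

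It then remains to translate conditionings. For $V\in\cB(\cY)$ with $\nu_n(\tau^{-1}(V))>0$ and $A\in\cB(\cX)$ one has $\mu_n(A\times V)=\nu_n(A\cap\tau^{-1}(V))$ and $\mu_n(\cX\times V)=\nu_n(\tau^{-1}(V))>0$, whence
\begin{align}
\mu_n(A\times\cY \,|\, \cX\times V)
=\frac{\mu_n(A\times V)}{\mu_n(\cX\times V)}
=\frac{\nu_n(A\cap\tau^{-1}(V))}{\nu_n(\tau^{-1}(V))}
=\nu_n(A\,|\,\tau^{-1}(V)),
\end{align}
and in particular $\mu_n(\cX\times V)>0$ if and only if $\nu_n(\tau^{-1}(V))>0$. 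Substituting this identity (together with the already-noted $\nu_n\circ\tau^{-1}=\mu_n\circ\pi^{-1}$) into each statement of Section~\ref{section:ldp_product_RCP}, of Lemma~\ref{lemma:liminf_and_limsup_of_conditional_open_compared_with_kernel_PRCP}, and of the relevant results of Sections~\ref{section:weak_continuous_cps} and \ref{section:facts_about_lsc_functions_with_compact_level_sets}, converts it verbatim into the corresponding statement about $\eta_n$ under $\nu_n$ with respect to $\tau$; in particular Theorem~\ref{theorem:equivalent_notions_ldp_bounds_metric_RCP} falls out of Theorem~\ref{theorem:equivalent_notions_ldp_bounds_metric_PRCP} upon taking $\cG=\{B(x,r):x\in\cX,\ r>0\}$ and $\cH=\{B(y,\delta):y\in\cY,\ \delta>0\}$, exactly as in \ref{obs:deriving_the_main_theorems}.

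I expect no serious obstacle; the argument is pure bookkeeping. The only points that need a little care are that the large deviation principle supplied for $\mu_n$ holds only on product rectangles --- which is harmless since \ref{item:ass_ldp_mu_n} demands nothing more --- and that one must not conflate the present correspondence, in which $\eta_n$ always maps into $\cM(\cX)$, with the problematic one discussed in \ref{obs:comparison_product_and_function_rcp_for_unicity}, where a kernel into $\cM(\cX\times\cY)$ would be involved and $\cB(\cX)\otimes\cB(\cY)$ may be strictly smaller than $\cB(\cX\times\cY)$.
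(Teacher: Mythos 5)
Your proposal is correct and follows essentially the same route as the paper: verify that assumptions (iv)--(vi) of Section 6 hold for the extended measures $\mu_n$ via Theorem \ref{theorem:extension_and_RCP_as_special_case_of_PRCP}, Theorem \ref{theorem:ldp_for_extension_measure} and \eqref{eqn:relation_K_and_J}, and then translate via $\nu_n\circ\tau^{-1}=\mu_n\circ\pi^{-1}$ and $\nu_n(A\,|\,\tau^{-1}(V))=\mu_n(A\times\cY\,|\,\cX\times V)$. Your derivation of the conditioning identity also silently corrects the paper's typo ($A\times\cX$ should read $A\times\cY$), and your closing caveat about not conflating this correspondence with the problematic one of \ref{obs:comparison_product_and_function_rcp_for_unicity} is exactly the right distinction.
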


\begin{remark}
\label{remark:motivation_LDP_definition}
Because of the relation between $\mu_n$ and $\nu_n$ and between $K$ and $J$, in Theorem \ref{theorem:ldp_for_extension_measure} we were able to prove the large deviation principle on $\{A\times B: A\in \cB(\cX), B\in \cB(\cY)\}$. 
Whether it can be extended to the large deviation principle on $\cB(\cX)\otimes \cB(\cY)$ is a priori not clear. 
However, for the purpose of using the results of Section \ref{section:ldp_product_RCP} this is not required (as only \ref{item:ass_ldp_mu_n} of Section \ref{section:ldp_product_RCP} is required). This is the main reason to define the large deviation bounds as in Definition \ref{def:ldp_adapted}.
\end{remark}

\begin{theorem}
\label{theorem:equivalent_notions_bounds_RCP}
 \ref{item:compared_condition_lower_bound} $\Longrightarrow$ \ref{item:uniform_condition_lower_bound} $\Longrightarrow$ \ref{item:forall_y_seq_cond_lower_bound}. If $\cY$ is first countable, then
 \ref{item:forall_y_seq_cond_lower_bound}
$\iff$ \ref{item:uniform_condition_lower_bound}. 
\begin{enumerate}[label={\normalfont(A\arabic*)},topsep=4pt,itemsep=0pt] 
\item 
\label{item:forall_y_seq_cond_lower_bound_RCP}
For all $(y_n)_{n\in\N}$ with $y_n \in \supp(\nu_n\circ \tau^{-1})$ and $y_n \rightarrow y$ 
the sequence $(\eta_n(y_n,\cdot))_{n\in\N}$ satisfies the large deviation lower bound with rate function $I$. 
%there exists $\cV_n \subset \cN_{y_n}$ with $\bigcap \cV_n = \{y_n\}$ such that  \ref{item:condition_lower_bound_basis} holds. 
\item 
\label{item:uniform_condition_lower_bound_RCP}
For all  $U\in \cG$ %with $\inf J ( U \cap \tau^{-1}( \{y\}) ) <\infty$ 
%there exists a $\cV\subset \cN_y$ with $\bigcap \cV = \{y\}$ and 
%For all $\epsilon>0$ there exists an open neighbourhood $V_0$ of $y$ and an $N\in\N$ such that for all $n\ge N$ and all nonempty open $V\subset V_0$
\begin{align}
\sup_{V_0\in \cN_y} \liminfn \inf_{ \putatop{V\in \cH, V \subset V_0}{V\cap \supp(\nu_n \circ \tau^{-1})\ne \emptyset} }
\traten \log \nu_n( \overline  U  | \tau^{-1}( V) )
 \ge - \inf I(U). 
\end{align}
\item 
\label{item:compared_condition_lower_bound_RCP}
For all $U\in \cG$ %with $\inf J(U\times \{y\})<\infty$ 
\begin{align}
\notag & \sup_{V_0\in \cN_y} \liminfn \inf_{ \putatop{V\in \cH, V \subset V_0}{V\cap \supp(\nu_n \circ \tau^{-1})\ne \emptyset} }
\traten \log \nu_n( \overline  U  | \tau^{-1}(V) ) \\
& \ge 
\liminf_{V\in \cN_y}
\liminf_{ \putatop{ n\rightarrow \infty}{n\in \N : \nu_n( \tau^{-1}( V))>0 } } 
 \traten \log \nu_n(  U | \tau^{-1}( V)) .
 \label{eqn:sufficient_condition_lower_bound_comparison_RCP}
\end{align}
\end{enumerate}
 \ref{item:compared_condition_upper_bound} $\Longrightarrow$ \ref{item:uniform_condition_upper_bound} $\Longrightarrow$ \ref{item:forall_y_seq_cond_upper_bound}. If $\cY$ is first countable then
 \ref{item:forall_y_seq_cond_upper_bound}
$\iff$ \ref{item:uniform_condition_upper_bound}.

\begin{enumerate}[label={\normalfont(B\arabic*)},topsep=4pt,itemsep=0pt]
\item 
\label{item:forall_y_seq_cond_upper_bound_RCP}
For all $(y_n)_{n\in\N}$ with $y_n \in \supp(\nu_n\circ \tau^{-1})$ and $y_n \rightarrow y$
the sequence $(\eta_n(y_n,\cdot))_{n\in\N}$ satisfies the large deviation upper bound with rate function $I$. 
% there exists $\cV_n \subset \cN_{y_n}$ with $\bigcap \cV_n = \{y_n\}$ such that  \ref{item:condition_upper_bound_basis} holds. 
\item 
\label{item:uniform_condition_upper_bound_RCP}
For all  $U_1,\dots,U_k\in \cG$ one has for $W= \cX \setminus ( U_1 \cup \cdots \cup U_k)$
%For all $\epsilon>0$ there exists an open neighbourhood $V_0$ of $y$ and an $N\in\N$ such that for all $n\ge N$ and all nonempty open $V\subset V_0$
\begin{align}
 & \inf_{V_0\in \cN_y} \limsupn \sup_{ \putatop{V\in \cH, V \subset V_0}{V\cap \supp(\nu_n \circ \tau^{-1})\ne \emptyset} }
\traten \log \nu_n( W^\circ | \tau^{-1}( V ) ) \le - \inf I(W). 
\end{align}
\item 
\label{item:compared_condition_upper_bound_RCP}
For all $U_1,\dots, U_k\in \cG$ with $W= \cX \setminus (U_1\cup \cdots \cup U_k)$
\begin{align}
\notag &  \inf_{V_0\in \cN_y} \limsupn \sup_{ \putatop{V\in \cH, V \subset V_0}{V\cap \supp(\nu_n \circ \tau^{-1})\ne \emptyset} }
\traten \log \nu_n( W^\circ  | \tau^{-1}( V ) )  \\ 
& \le 
%\inf_{V_0\in \cV} \sup_{V\in \cV: V\subset V_0} 
\limsup_{V\in\cN_y}
\limsup_{ \putatop{ n\rightarrow \infty}{n\in \N : \nu_n( \tau^{-1}( V))>0 } } 
\traten \log \nu_n( W | \tau^{-1}( V ) ). 
\label{eqn:sufficient_condition_upper_bound_comparison_RCP}
\end{align}
\end{enumerate}
\end{theorem}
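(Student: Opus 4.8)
The plan is to derive the whole statement from Theorem~\ref{theorem:equivalent_notions_bounds_prodRCP} by passing to the extension measure, as anticipated in \ref{obs:RCP_in_same_situation_as_PRCP}. First I would let $\mu_n$ be the probability measure on $\cB(\cX)\otimes\cB(\cY)$ with $\mu_n(A\times B)=\nu_n(A\cap\tau^{-1}(B))$ furnished by Theorem~\ref{theorem:extension_and_RCP_as_special_case_of_PRCP}\ref{item:extension_measure_to_product}. By Theorem~\ref{theorem:extension_and_RCP_as_special_case_of_PRCP}\ref{item:RCP_as_PRCP} each $\eta_n$ is then a product regular conditional probability under $\mu_n$ with respect to $\pi$, and by Theorem~\ref{theorem:ldp_for_extension_measure} the sequence $(\mu_n)_{n\in\N}$ satisfies the large deviation principle on $\{A\times B:A\in\cB(\cX),\,B\in\cB(\cY)\}$ with rate function $\J$, which has compact sublevel sets since $\K$ does. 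Taking $A=\cX$ and $B=\{y\}$ in \eqref{eqn:relation_K_and_J} gives $\inf\J(\cX\times\{y\})=\inf\K(\tau^{-1}(\{y\}))<\infty$, so the function $I$ of the present statement coincides with the function $I$ that Section~\ref{section:ldp_product_RCP} attaches to $\J$ and $y$; in particular $I$ has compact sublevel sets by Theorem~\ref{theorem:I_has_compact_sublevel_sets}.

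Next I would check that the standing assumptions (i)--(vii) of Section~\ref{section:ldp_product_RCP} are all in force for these $\mu_n$, $\eta_n$ and $I$. Assumptions (i)--(iii) are the hypotheses of the present section together with \ref{obs:more_assumptions}; \ref{item:ass_ldp_mu_n} is exactly Theorem~\ref{theorem:ldp_for_extension_measure}; and \ref{item:y_finiteness_rate_function_and_def_I} follows from the previous paragraph together with the assumed existence of $y_n\in\supp(\nu_n\circ\tau^{-1})$ with $y_n\to y$, since $\nu_n\circ\tau^{-1}=\mu_n\circ\pi^{-1}$. The assumption that needs a word is \ref{item:existence_prcp_mun_wrt_pi}: because $\supp(\mu_n\circ\pi^{-1})=\supp(\nu_n\circ\tau^{-1})$, this support is nonempty, and the continuity condition \eqref{eqn:continuity_assumption} imposed on $\eta_n$ over $\supp(\nu_n\circ\tau^{-1})$ in \ref{obs:more_assumptions} is literally the same condition over $\supp(\mu_n\circ\pi^{-1})$. (The obstruction pointed out in \ref{obs:comparison_product_and_function_rcp_for_unicity} does not intervene here, because the reduction being used is that of Theorem~\ref{theorem:extension_and_RCP_as_special_case_of_PRCP}, not the kernel correspondence of \ref{obs:rcps_compared}.) Thus every hypothesis of Theorem~\ref{theorem:equivalent_notions_bounds_prodRCP} is met.

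Finally I would translate the conclusions. For any $V\in\cB(\cY)$ with $\nu_n(\tau^{-1}(V))>0$ one has $\mu_n(\cX\times V)=\nu_n(\tau^{-1}(V))>0$ and, for every $A\in\cB(\cX)$,
\begin{align}
\mu_n(A\times\cY \,|\, \cX\times V)=\frac{\mu_n(A\times V)}{\mu_n(\cX\times V)}=\frac{\nu_n(A\cap\tau^{-1}(V))}{\nu_n(\tau^{-1}(V))}=\nu_n(A \,|\, \tau^{-1}(V)).
\end{align}
Substituting this identity together with $\supp(\mu_n\circ\pi^{-1})=\supp(\nu_n\circ\tau^{-1})$ throughout, conditions \ref{item:forall_y_seq_cond_lower_bound_RCP}, \ref{item:uniform_condition_lower_bound_RCP} and \ref{item:compared_condition_lower_bound_RCP} of the present theorem become word for word \ref{item:forall_y_seq_cond_lower_bound}, \ref{item:uniform_condition_lower_bound} and \ref{item:compared_condition_lower_bound} of Theorem~\ref{theorem:equivalent_notions_bounds_prodRCP}, and \ref{item:forall_y_seq_cond_upper_bound_RCP}, \ref{item:uniform_condition_upper_bound_RCP}, \ref{item:compared_condition_upper_bound_RCP} become \ref{item:forall_y_seq_cond_upper_bound}, \ref{item:uniform_condition_upper_bound}, \ref{item:compared_condition_upper_bound}. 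Hence the stated implications, and the equivalences \ref{item:forall_y_seq_cond_lower_bound_RCP}$\iff$\ref{item:uniform_condition_lower_bound_RCP} and \ref{item:forall_y_seq_cond_upper_bound_RCP}$\iff$\ref{item:uniform_condition_upper_bound_RCP} when $\cY$ is first countable, follow immediately from Theorem~\ref{theorem:equivalent_notions_bounds_prodRCP}. I expect the only genuinely delicate point to be the assumption transfer of the second paragraph---verifying that $\eta_n$, originally a regular conditional probability under $\nu_n$, really qualifies as the product regular conditional probability under $\mu_n$ that Section~\ref{section:ldp_product_RCP} demands, weak-continuity hypothesis included; this is precisely what Theorem~\ref{theorem:extension_and_RCP_as_special_case_of_PRCP} together with the equality of supports provides, so everything else is a change of notation.
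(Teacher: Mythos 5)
Your proposal is correct and follows essentially the same route as the paper: the paper's own justification (given in \ref{obs:RCP_in_same_situation_as_PRCP}) is precisely this translation via the extension measure of Theorem~\ref{theorem:extension_and_RCP_as_special_case_of_PRCP}, the LDP transfer of Theorem~\ref{theorem:ldp_for_extension_measure}, and the identity $\nu_n(A\,|\,\tau^{-1}(V))=\mu_n(A\times\cY\,|\,\cX\times V)$ together with $\nu_n\circ\tau^{-1}=\mu_n\circ\pi^{-1}$. Your verification of the standing assumptions of Section~\ref{section:ldp_product_RCP} is, if anything, more explicit than the paper's.
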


\section{An application to conditional probabilities of empirical distributions on finite sets}
\sectionmark{An application to conditional probabilities of empirical distributions}
\label{section:application_sanov_type}

In terms of random variables, Sanov's Theorem gives us the large deviation principle of empirical densities $\frac1n\sum_{i=1}^n \delta_{X_i}$, where $X_1,X_2,\dots$ are independent and identically distributed random variables. 
%To some extent, which will be formalised, w
We consider large deviations of $\frac1n\sum_{i=1}^n \delta_{X_i}$ conditioning on $\frac1n\sum_{i=1}^n \delta_{Y_i} = \psi_n$, where $(X_1,Y_1), (X_2,Y_2), \dots$ are independent and identically distributed couples of random variables, both random variables attaining their values in a finite set. This large deviation principle is formalised in Theorem \ref{theorem:application_sanov_type}.

\bigskip 

\assump{In this section we consider the following.} 
\begin{itemize}
\setitemize{leftmargin=0pt}
\setlength{\itemsep}{0pt}
  \setlength{\parskip}{0pt}
  \setlength{\parsep}{0pt}
\item Let $\cR$ and $\cS$ be finite sets equipped with the discrete topology (discrete metric). Let $\cP(\cR),\cP(\cS)$ and $\cP(\cR \times \cS)$ be equipped by the weak topology and let $\fd$ denote the Prohorov metric (see Billingsley \cite[Appendix III]{Bi68}) on each of the spaces. 
\item Let $\lambda \in \cP(\cR \times \cS)$. We assume $\lambda( \cR \times \{s\}) >0$ for all $s\in \cS$. 
\item For $n\in\N$ let $L_n : \cR^n \rightarrow \cP(\cR)$ be given by $L_n(r) = \frac1n \sum_{i=1}^n \delta_{r_i}$ %, which is called the \emph{empirical density of $x$}, 
for $r=(r_1,\dots,r_n)\in \cR^n$. 
\item Write $\cP_{emp}^n(\cR) = L_n(\cR^n) = \{ \frac1n \sum_{i=1}^n \delta_{r_i} : r_1,\dots,r_n\in\cR\}$, similarly $\cP_{emp}^n(\cS)= L_n(\cS^n)$ and $\cP_{emp}^n(\cR\times \cS)= L_n((\cR\times \cS)^n)$.
\item Let $\margi : \cP(\cR \times \cS) \rightarrow \cP(\cR) \times \cP(\cS)$ be the map that maps a measure in $\cP(\cR \times \cS)$ onto the pair of its marginals, i.e., $\margi$ is given by 
\begin{align}
\margi( \xi) = \big(  \xi( \cdot \times \cS), \xi(\cR \times \cdot) \big).
\end{align}
\item Let $\pi : \cP(\cR) \times \cP(\cS) \rightarrow \cP(\cS)$ be the map given by $\pi(\xi,\zeta) = \zeta$. 
\item Let $\mu_n$ be the probability measure on $\cB(\cP(\cR)) \otimes \cB(\cP(\cS))$ defined by \\
$\mu_n = \left( \bigotimes_{i=1}^n \lambda \right) \circ L_n^{-1} \circ \margi^{-1}$, so that for $A\in \cB(\cP(\cR))$ and $B\in \cB(\cP(\cS))$
\begin{align}
\mu_n( A \times B) 
= \left( \bigotimes_{i=1}^n \lambda \right) ( L_n^{-1}(A) \times L_n^{-1}(B)). 
\end{align}
%\item Let $J_0: \cP(\cR \times \cS) \rightarrow [0,\infty]$ be given by 
%\begin{align}
%J_0(\xi)=
%\begin{cases}
%H(\xi | \lambda ) & \xi \ll \lambda \\
%\infty 			& \xi \not \ll \lambda . 
%\end{cases}
%\end{align}
\item Define $\theta : \cS \times \cB(\cR) \rightarrow [0,1]$ by $\theta(s,A) = \lambda( A\times \cS | \cR \times \{s\})$. 
\item Define $\eta_n : \cP(\cS) \times \cB(\cP(\cR)) \rightarrow [0,1]$ by 
\begin{align}
\label{eqn:eta_n_for_sanov_type}
\eta_n(\xi,A) = 
\begin{cases}
\left[ \bigotimes_{i=1}^n \theta(s_i,\cdot) \right] \circ L_n^{-1}(A) &  \xi \in \cP_{emp}^n(\cS), \xi = L_n(s_1,\dots,s_n) \\
&  \mbox{for }  s_1,\dots,s_n \in \cS, \\
0 & \xi \notin \cP_{emp}^n(\cS). 
\end{cases}
\end{align}
\item Let $J: \cP(\cR) \times \cP(\cS) \rightarrow [0,\infty]$ be given by 
\begin{align}
J(\rho, \sigma) 
%\notag
 & = \inf_{\xi \in \margi^{-1} (\{ (\rho,\sigma) \}) } H(\xi | \lambda ).
\end{align}
where $H(\xi | \lambda)$ is the relative entropy of $\xi$ with respect to $\lambda$ (\cite[Definition 2.1.5]{DeZe10}).
\item Let $\psi \in \cP(\cS)$ be such that 
\begin{align}
\inf_{ \xi \in \margi^{-1} (\cP(\cR) \times \{\psi\}) } H( \xi |\lambda)<\infty. 
\end{align}
\end{itemize}

\begin{obs}
\label{obs:some_fact_finite_sanov_case}
We present some fact which follow from the assumptions with little effort; to some facts we give some explanation or references. 
\begin{enumerate}
\item \label{item:closedness_of_empirical_measures_in_space_of_measures}
$\cP_{emp}^n(\cS)$ is closed in $\cP(\cS)$. 
Moreover, if $\xi_k$ and $\xi$ in $\cP_{emp}^n(\cS)$ are such that $\xi_k \rightarrow \xi$, then there exist $s_{ki}$ and $q_i$ in $\cS$ for $i\in \{1,\dots,n\}$ such that $\xi_k = L_n((s_{k1},\dots,s_{kn}))$, $\xi = L_n( (q_1,\dots,q_n))$ and $s_{ki} \rightarrow q_i$ for all $i\in \{1,\dots,n\}$.
\item \label{item:support_is_empirical_measures}
$\supp (\mu_n \circ \pi^{-1}) = \cP_{emp}^n(\cS)$. 
\item \label{item:eta_n_is_rck_and_continuous}
$\eta_n$ is a product regular conditional kernel under $\mu_n$ with respect to $\pi$ that is weakly continuous on $\cP_{emp}^n(\cS)$. 
\item \label{item:sanov_for_finite}
$( \bigotimes \lambda^n \circ L_n^{-1})_{n\in\N}$ satisfies the large deviation principle with rate function $ H( \cdot |\lambda)$. 
\item \label{item:margi_is_continuous}
$\margi$ is continuous. 
\item \label{item:mu_n_satisfies_ldp}
$(\mu_n)_{n\in\N}$ satisfies the large deviation principle with rate function $J$. 
\end{enumerate}
%For \ref{item:probabilits_spaces_are_compact_polish} see  [REFERENCE]. 
\ref{item:closedness_of_empirical_measures_in_space_of_measures} follows from the fact that $\cS$ is a finite space. 
\ref{item:support_is_empirical_measures} follows from \ref{item:closedness_of_empirical_measures_in_space_of_measures}, from the fact that the complement of $\cP_{emp}^n(\cS)$ has $\mu_n \circ \pi^{-1}$-measure zero and because 
$\mu_n \circ \pi^{-1}(\{L_n(s)\}) >0 $ for all $s\in \cS^n$, which is due to the assumptions on $\lambda$. 
\ref{item:eta_n_is_rck_and_continuous} follows by a straightforward calculation, the continuity follows from \ref{item:closedness_of_empirical_measures_in_space_of_measures}. 
For \ref{item:sanov_for_finite} see Sanov's Theorem (Dembo and Zeitouni \cite[Theorem 6.2.10]{DeZe10}).   
\ref{item:margi_is_continuous} follows from the fact that if $\xi_n \rightarrow \xi$ in $\cP(\cR \times \cS)$, then the $\cR$- and $\cS$-marginals of $\xi_n$ converge to the $\cR$- and $\cS$-marginals of $\xi$, respectively. 
Then \ref{item:mu_n_satisfies_ldp} follows from \ref{item:margi_is_continuous} and \ref{item:sanov_for_finite} by the contraction principle \cite[Theorem 4.2.1]{DeZe10}. 
\end{obs}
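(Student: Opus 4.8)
The plan is to dispatch the six assertions (a)--(f) of the observation in turn, each of which reduces to a standard fact about finite alphabets, weak convergence, or large deviations.

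For (a) I would use that $\cS$ is finite, so $\cP(\cS)$ sits as a compact subset of $\R^{\cS}$ and $\cP_{emp}^n(\cS)$ is the finite set of probability vectors with entries in $\{0,\tfrac1n,\dots,1\}$; a finite subset of a metric space is closed and discrete, and discreteness forces $\xi_k=\xi$ for all large $k$, after which one fixes a single representative tuple $(q_1,\dots,q_n)$ of $\xi$, uses it for all large $k$, and disregards the finitely many earlier choices (legitimate since $\cS^n$ is finite), so that $s_{ki}\to q_i$. For (b), $\mu_n\circ\pi^{-1}$ is the law of the $\cS$-empirical measure $L_n(Y_1,\dots,Y_n)$ for $((X_i,Y_i))_{i=1}^n$ i.i.d.\ with law $\lambda$; it is carried by $\cP_{emp}^n(\cS)$, and since $\lambda(\cR\times\{s\})>0$ for every $s\in\cS$ each configuration in $\cS^n$ has positive probability, so $\mu_n\circ\pi^{-1}$ charges every point of $\cP_{emp}^n(\cS)$; together with the closedness from (a) this gives $\supp(\mu_n\circ\pi^{-1})=\cP_{emp}^n(\cS)$.

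The substantive step is (c). First I would check that $\eta_n(\xi,\cdot)$ is well defined independently of the representative $(s_1,\dots,s_n)$ with $\xi=L_n(s_1,\dots,s_n)$, which holds because $\bigl[\bigotimes_{i=1}^n\theta(s_i,\cdot)\bigr]\circ L_n^{-1}$ is invariant under permutations of $(s_1,\dots,s_n)$ while $L_n$ is symmetric. Then, conditioning the i.i.d.\ sample on $(Y_1,\dots,Y_n)=(s_1,\dots,s_n)$, the coordinates $X_i$ become independent with $X_i$ distributed according to $\lambda(\cdot\times\cS\mid\cR\times\{s_i\})=\theta(s_i,\cdot)$, so the conditional law of $L_n(X_1,\dots,X_n)$ equals $\bigl[\bigotimes_{i=1}^n\theta(s_i,\cdot)\bigr]\circ L_n^{-1}=\eta_n(\xi,\cdot)$; summing over the finitely many values of $(s_1,\dots,s_n)$ and grouping them by empirical measure yields the defining identity $\mu_n(A\times B)=\int\1_B(\xi)\,\eta_n(\xi,A)\,\DD[\mu_n\circ\pi^{-1}](\xi)$ for $A\in\cB(\cP(\cR))$, $B\in\cB(\cP(\cS))$. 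Weak continuity of $\xi\mapsto\eta_n(\xi,\cdot)$ on $\cP_{emp}^n(\cS)$ is then automatic, that set being discrete by (a).

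It remains to treat (d)--(f). Assertion (d) is Sanov's theorem on the finite set $\cR$ (\cite[Theorem 6.2.10]{DeZe10}). For (e), if $\xi_m\to\xi$ weakly in $\cP(\cR\times\cS)$ then for any $f\in C_b(\cR)$ the map $(r,s)\mapsto f(r)$ lies in $C_b(\cR\times\cS)$, forcing convergence of the $\cR$-marginals, and symmetrically for the $\cS$-marginals, so $\margi$ is continuous. Finally, (f) follows from (d) and (e) by the contraction principle (\cite[Theorem 4.2.1]{DeZe10}) applied to $\mu_n=\bigl(\bigotimes_{i=1}^n\lambda\bigr)\circ L_n^{-1}\circ\margi^{-1}$: this yields the full Borel large deviation principle for $(\mu_n)_{n\in\N}$ with rate function $(\rho,\sigma)\mapsto\inf\{H(\xi\mid\lambda):\margi(\xi)=(\rho,\sigma)\}=J(\rho,\sigma)$, hence in particular the large deviation principle on $\{A\times B:A\in\cB(\cP(\cR)),B\in\cB(\cP(\cS))\}$, with $J$ lower semicontinuous and having compact sublevel sets because $\cP(\cR)\times\cP(\cS)$ is compact. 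The only genuine obstacle is the bookkeeping behind (c) --- the well-definedness of $\eta_n$ and the conditional-law computation; everything else is routine verification.
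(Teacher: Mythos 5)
Your proposal is correct and follows essentially the same route as the paper's own brief justifications: finiteness/discreteness of the empirical measures for (a)--(c) (with the well-definedness of $\eta_n$ and the conditional-law computation being the paper's ``straightforward calculation''), Sanov's theorem for (d), marginal convergence for (e), and the contraction principle for (f). The only slip is cosmetic: in (d) the relevant Sanov theorem is for the finite alphabet $\cR\times\cS$ (since $\lambda\in\cP(\cR\times\cS)$ and $L_n$ acts on $(\cR\times\cS)^n$), not $\cR$, which is in fact how you use it when contracting along $\margi$ in (f).
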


In the rest of this section we prove the following theorem. 

\begin{theorem}
\label{theorem:application_sanov_type}
For all $(\psi_n)_{n\in\N}$ with $\psi_n \in \cP_{emp}^n(\cS)$ and $\psi_n \rightarrow \psi$ 
 the sequence \\
 $(\eta_n(\psi_n,\cdot))_{n\in\N}$ satisfies the large deviation principle with rate function $I: \cP(\cR) \rightarrow [0,\infty]$, given by 
\begin{align}
\label{eqn:rate_function_sanov_type}
I(\phi) = 
\inf_{ \xi \in \margi^{-1} (\{(\phi,\psi)\}) } H( \xi |\lambda)
- \inf_{ \xi \in \margi^{-1} (\cP(\cR) \times \{\psi\}) } H( \xi |\lambda).
\end{align}
$I$ is continuous on $[I<\infty]$. 
\end{theorem}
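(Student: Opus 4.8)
The plan is to describe $\eta_n(\psi_n,\cdot)$ explicitly as the law of a mixture of independent empirical measures, then obtain the large deviation principle by Sanov's Theorem applied coordinatewise together with the contraction principle and an exponential‑equivalence step, and finally identify the rate function with $I$ via the chain rule for relative entropy. Concretely, I would fix for each $n$ a representative $s^{(n)}=(s^{(n)}_1,\dots,s^{(n)}_n)\in\cS^n$ with $\psi_n=L_n(s^{(n)})$ and put $c_n(s)=\#\{i\le n:s^{(n)}_i=s\}$, so that $\psi_n=\sum_{s\in\cS}\tfrac{c_n(s)}{n}\delta_s$ and $\tfrac{c_n(s)}{n}\to\psi(\{s\})$ for every $s$. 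Writing $\bar\lambda:=\lambda(\cR\times\cdot)$ for the $\cS$-marginal of $\lambda$, we have $\bar\lambda(\{s\})>0$ and $\theta(s,\{r\})=\lambda(\{(r,s)\})/\bar\lambda(\{s\})$, i.e.\ $\theta$ is the disintegration of $\lambda$ along $\bar\lambda$. From \eqref{eqn:eta_n_for_sanov_type}, $\eta_n(\psi_n,\cdot)$ is the law of $\tfrac1n\sum_{i=1}^n\delta_{X_i}$ with $X_1,\dots,X_n$ independent and $X_i\sim\theta(s^{(n)}_i,\cdot)$; grouping indices by the value of $s^{(n)}_i$,
\[
\tfrac1n\textstyle\sum_{i=1}^n\delta_{X_i}=\sum_{s\in\cS}\tfrac{c_n(s)}{n}M_n^s,
\]
where $M_n^s$ is the empirical measure of the $c_n(s)$ variables $X_i$ with $s^{(n)}_i=s$ (i.i.d.\ $\theta(s,\cdot)$) and the $(M_n^s)_{s\in\cS}$ are independent; this expression depends only on the counts $c_n(s)$, consistently with the uniqueness of $\eta_n$ on $\cP^n_{emp}(\cS)$ (\ref{obs:some_fact_finite_sanov_case}, Theorem \ref{theorem:unicity_weakly_continuous_rcp_wrt_pi}).

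For each $s$ with $\psi(\{s\})>0$ one has $c_n(s)\to\infty$, and since $\tfrac1n\log\P(M_n^s\in\Gamma)=\tfrac{c_n(s)}{n}\cdot\tfrac1{c_n(s)}\log\P(M_n^s\in\Gamma)$, Sanov's Theorem (see \ref{item:sanov_for_finite}) together with $c_n(s)/n\to\psi(\{s\})$ shows that $(M_n^s)_n$ satisfies the large deviation principle (with rates $r_n=n$) with rate function $\psi(\{s\})H(\cdot\,|\,\theta(s,\cdot))$. By independence the joint law of $(M_n^s)_{s:\psi(\{s\})>0}$ on the compact space $\prod_{s:\psi(\{s\})>0}\cP(\cR)$ satisfies the large deviation principle with rate $(m_s)_s\mapsto\sum_s\psi(\{s\})H(m_s\,|\,\theta(s,\cdot))$, and the contraction principle applied to the continuous map $(m_s)_s\mapsto\sum_s\psi(\{s\})m_s$ yields the large deviation principle for $\big(\sum_{s:\psi(\{s\})>0}\psi(\{s\})M_n^s\big)_n$ with rate
\[
\tilde I(\phi)=\inf\big\{\textstyle\sum_{s:\psi(\{s\})>0}\psi(\{s\})H(m_s\,|\,\theta(s,\cdot)):m_s\in\cP(\cR),\ \sum_{s:\psi(\{s\})>0}\psi(\{s\})m_s=\phi\big\}.
\]
Finally, in the Prohorov metric $\sum_{s\in\cS}\tfrac{c_n(s)}{n}M_n^s$ and $\sum_{s:\psi(\{s\})>0}\psi(\{s\})M_n^s$ differ by at most $\sum_{s\in\cS}\big|\tfrac{c_n(s)}{n}-\psi(\{s\})\big|\to0$ \emph{deterministically}, so the two sequences are exponentially equivalent; hence $(\eta_n(\psi_n,\cdot))_n$ satisfies the large deviation principle with rate function $\tilde I$.

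It remains to check $\tilde I=I$ and the continuity of $I$. Inserting $\lambda(r,s)=\bar\lambda(\{s\})\theta(s,\{r\})$ and, for $\xi$ with $\cS$-marginal $\psi$, $\xi(r,s)=\psi(\{s\})\kappa_\xi(s,\{r\})$ with $\kappa_\xi(s,\cdot)\in\cP(\cR)$, a short computation gives the chain rule
\[
H(\xi\,|\,\lambda)=H(\psi\,|\,\bar\lambda)+\textstyle\sum_{s:\psi(\{s\})>0}\psi(\{s\})H\big(\kappa_\xi(s,\cdot)\,\big|\,\theta(s,\cdot)\big).
\]
Minimising over all $\xi$ with $\cS$-marginal $\psi$ gives $\inf_{\xi\in\margi^{-1}(\cP(\cR)\times\{\psi\})}H(\xi\,|\,\lambda)=H(\psi\,|\,\bar\lambda)<\infty$ (finite because $\bar\lambda(\{s\})>0$ for all $s$), while imposing the extra constraint $\margi(\xi)=(\phi,\psi)$, i.e.\ $\sum_s\psi(\{s\})\kappa_\xi(s,\cdot)=\phi$, gives $H(\psi\,|\,\bar\lambda)+\tilde I(\phi)$; subtracting yields $I=\tilde I$, as in \eqref{eqn:rate_function_sanov_type}. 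For continuity on $[I<\infty]$: $I=\tilde I$ is convex, being a partial minimisation of the jointly convex map $(m_s)_s\mapsto\sum_s\psi(\{s\})H(m_s\,|\,\theta(s,\cdot))$ along an affine map; it is lower semicontinuous (being a rate function); and it is finite exactly on the image under that affine map of the polytope $\prod_{s:\psi(\{s\})>0}\{m\in\cP(\cR):m\ll\theta(s,\cdot)\}$, hence on a polytope. A finite-valued convex lower semicontinuous function is continuous relative to any polytope contained in its domain, which gives the claim.

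The main difficulty is organisational rather than conceptual: one must handle the coordinates $s$ with $\psi(\{s\})=0$ with care — there $c_n(s)$ need not diverge, so Sanov does not apply to $M_n^s$ directly, which is precisely why the argument is routed through the \emph{deterministic} $o(1)$ comparison above — and one must invoke the sharper (but standard) fact that a finite convex lower semicontinuous function is continuous on a polytope, rather than merely on the interior of its effective domain, which would not reach the relative boundary of $[I<\infty]$. The remaining ingredients (the explicit form of $\eta_n(\psi_n,\cdot)$, rescaling the speed in Sanov, combining independent large deviation principles, the contraction principle, and the entropy chain rule) are routine. One could alternatively verify conditions \ref{item:uniform_condition_lower_bound_metric} and \ref{item:uniform_condition_upper_bound_metric} of Theorem \ref{theorem:equivalent_notions_ldp_bounds_metric_PRCP}, but in this finite setting the direct route above is cleaner.
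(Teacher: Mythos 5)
Your proof is correct, but it follows a genuinely different route from the paper's. The paper exploits that $\cP(\cS)$ is first countable and verifies the abstract conditions \ref{item:uniform_condition_lower_bound} and \ref{item:uniform_condition_upper_bound} of Theorem \ref{theorem:equivalent_notions_bounds_prodRCP}: the two-sided type-class bounds of Lemma \ref{lemma:probabilits_single_empirical_estimates} reduce everything to the entropy inequalities \eqref{eqn:inequalits_first_part} and \eqref{eqn:inequalits_second_part}, of which the second follows from compactness of sublevel sets (Lemma \ref{lemma:convergence_infimum_rate_function_on_schrinking_neighbourhoods}) and the first from an explicit construction (Lemmas \ref{lemma:large_enough_n_then_empricial_in_open_set}, \ref{lemma:existence_nu_with_properties}, \ref{lemma:inequalits_first_part}) of joint empirical types close to a prescribed $\xi$ with prescribed $\cS$-marginal. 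You instead use the explicit product form of $\eta_n(\psi_n,\cdot)$ to decompose the conditioned empirical measure as a mixture of independent empirical measures and assemble the large deviation principle from Sanov's theorem at rescaled speed, independence, the contraction principle and a deterministic exponential-equivalence step, identifying the rate function through the entropy chain rule rather than through the formula $I = J(\cdot,\psi)-\inf J(\cP(\cR)\times\{\psi\})$. Your route is more self-contained and probabilistically transparent (it explains \emph{why} $I$ has the form \eqref{eqn:rate_function_sanov_type}), and your continuity argument via convexity and the Gale--Klee--Rockafellar theorem is in fact more complete than the paper's one-line appeal to Lemma \ref{lemma:continuits_H_on_support_H}; the price is reliance on several standard external facts (the joint large deviation principle for independent sequences, exponential equivalence, continuity of finite convex lower semicontinuous functions on polytopes) and the loss of the section's illustrative purpose, which is precisely to exhibit the general conditions of Theorem \ref{theorem:equivalent_notions_bounds_prodRCP} in action. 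The only details to write out with care are the ones you already flag, plus the finitely many $n$ for which $c_n(s)=0$ while $\psi(\{s\})>0$, which do not affect the asymptotics.
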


As $\cP(\cS)$ is first countable, it is sufficient to show that 
\ref{item:uniform_condition_lower_bound} and
\ref{item:uniform_condition_upper_bound} 
of Theorem \ref{theorem:equivalent_notions_bounds_prodRCP} hold. 
In \ref{obs:sufficient_bound_for_ldp_finite_sanov_type} we use the bounds of Lemma \ref{lemma:probabilits_single_empirical_estimates} to derive other bounds which imply \ref{item:uniform_condition_lower_bound} and
\ref{item:uniform_condition_upper_bound}. 
The continuity of $I$ follows by continuity of the map $\nu \mapsto H(\nu |\lambda)$ (Lemma \ref{lemma:continuits_H_on_support_H}). 

\begin{lemma} \cite[Lemma 2.1.9]{DeZe10}
\label{lemma:probabilits_single_empirical_estimates}
For $\nu \in \cP_{emp}^n(\cR\times \cS)$ one has, with $M=\#\cR\#\cS$,
\begin{align}
(n+1)^{-M}  e^{-n H(\nu | \lambda)}
\le \left[ \bigotimes_{i=1}^n \lambda \right]( L_n^{-1}( \{\nu\}) )
\le e^{-n H(\nu | \lambda)}.
\end{align}
\end{lemma}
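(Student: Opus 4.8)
The plan is to run the classical \emph{method of types}. Fix $\nu\in\cP_{emp}^n(\cR\times\cS)$ and abbreviate $p_a=\lambda(\{a\})$, $q_a=\nu(\{a\})$ for $a\in\cR\times\cS$; each $nq_a$ is an integer in $\{0,1,\dots,n\}$. The fibre $L_n^{-1}(\{\nu\})$ is exactly the set of words $w=(w_1,\dots,w_n)\in(\cR\times\cS)^n$ in which each letter $a$ occurs precisely $nq_a$ times, so it has $n!\big/\prod_a (nq_a)!$ elements, and each such $w$ has $[\bigotimes_{i=1}^n\lambda](\{w\})=\prod_a p_a^{nq_a}$. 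Hence
\begin{align*}
\Big[\bigotimes_{i=1}^n\lambda\Big]\big(L_n^{-1}(\{\nu\})\big)=\frac{n!}{\prod_a (nq_a)!}\prod_a p_a^{nq_a}.
\end{align*}
If $q_a>0=p_a$ for some $a$, then $H(\nu|\lambda)=\infty$ and the right-hand side is $0$, so the claimed bounds hold trivially (with $e^{-\infty}=0$). Assume therefore $q_a=0$ whenever $p_a=0$; then $H(\nu|\lambda)=\sum_a q_a\log(q_a/p_a)<\infty$, $e^{-nH(\nu|\lambda)}=\prod_a (p_a/q_a)^{nq_a}$ (atoms with $q_a=0$ contributing trivially), and dividing gives
\begin{align*}
\frac{[\bigotimes_{i=1}^n\lambda](L_n^{-1}(\{\nu\}))}{e^{-nH(\nu|\lambda)}}=\frac{n!}{\prod_a (nq_a)!}\prod_a q_a^{nq_a}=\Big[\bigotimes_{i=1}^n\nu\Big]\big(L_n^{-1}(\{\nu\})\big),
\end{align*}
where on the right $\nu$ is viewed as a probability measure on $\cR\times\cS$. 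So the lemma reduces to the two-sided estimate $(n+1)^{-M}\le[\bigotimes_{i=1}^n\nu](L_n^{-1}(\{\nu\}))\le 1$.

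The upper bound is trivial, being a probability. For the lower bound I would first show that $\nu$ maximises the type-class probability under $\bigotimes_{i=1}^n\nu$, i.e.
\begin{align*}
\Big[\bigotimes_{i=1}^n\nu\Big]\big(L_n^{-1}(\{\nu'\})\big)\le\Big[\bigotimes_{i=1}^n\nu\Big]\big(L_n^{-1}(\{\nu\})\big)\qquad(\nu'\in\cP_{emp}^n(\cR\times\cS)).
\end{align*}
Writing $q_a'=\nu'(\{a\})$: if $q_a=0<q_a'$ for some $a$ the left side is $0$; otherwise the ratio right/left equals $\prod_{a:\,q_a>0}\frac{(nq_a')!}{(nq_a)!}\,q_a^{\,n(q_a-q_a')}$, and the elementary inequality $\frac{m!}{\ell!}\ge\ell^{\,m-\ell}$ (valid for integers $m\ge 0$, $\ell\ge 1$; check $m\ge\ell$ and $m<\ell$ separately) applied with $m=nq_a'$, $\ell=nq_a$ bounds this from below by $\prod_a (nq_a)^{n(q_a'-q_a)}q_a^{\,n(q_a-q_a')}=\prod_a n^{\,n(q_a'-q_a)}=n^{\,n(\sum_a q_a'-\sum_a q_a)}=1$. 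Since $\cP_{emp}^n(\cR\times\cS)$ has at most $(n+1)^M$ elements (the $M$ counts $nq_a'$ each range over $\{0,\dots,n\}$), summing the previous display over $\nu'$ gives $1\le(n+1)^M[\bigotimes_{i=1}^n\nu](L_n^{-1}(\{\nu\}))$, which is the desired lower bound.

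This is an entirely classical argument (the statement is quoted from \cite[Lemma 2.1.9]{DeZe10}), so I do not expect a genuine obstacle. The only points needing a little care are the bookkeeping for atoms with $p_a=0$ or $q_a=0$, handled by the conventions $0\log 0=0$, $e^{-\infty}=0$ and a glance at the degenerate cases, and the verification of the factorial inequality $\frac{m!}{\ell!}\ge\ell^{m-\ell}$.
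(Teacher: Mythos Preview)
Your argument is correct and is precisely the classical method-of-types proof from \cite[Lemma~2.1.9]{DeZe10}; the paper itself gives no proof of this lemma but simply cites that reference, so there is nothing further to compare.
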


\begin{obs}
\label{obs:sufficient_bound_for_ldp_finite_sanov_type}
From Lemma \ref{lemma:probabilits_single_empirical_estimates} we obtain the following bounds for $A\in \cB(\cP(\cR))$ and $B\in \cB(\cP(\cS))$. 
\begin{align}
\mu_n(A\times B) 
%\left[ \bigotimes_{i=1}^n \lambda \right]( L_n^{-1}(A) \times L_n^{-1}(B) )
\notag & \le \# L_n^{-1}(A) \# L_n^{-1}(B) 
e^{-n \inf_{\nu \in \margi^{-1} (A\times B) \cap \cP_{emp}^n(\cR\times \cS) } H(\nu |\lambda)} \\
\label{eqn:inequalits_upper_A_times_B}
& \le (n+1)^{M} e^{-n \inf_{\nu \in \margi^{-1} (A\times B) } H(\nu |\lambda)}, \\
\mu_n(A\times B) 
%\left[ \bigotimes_{i=1}^n \lambda \right]( L_n^{-1}(A) \times L_n^{-1}(B) )
& \ge (n+1)^{-M}
e^{-n \inf_{\nu \in \margi^{-1} (A\times B) \cap \cP_{emp}^n(\cR\times \cS) } H(\nu |\lambda)}.
\label{eqn:inequalits_lower_A_times_B}
\end{align}
Whence 
%\begin{align}
%\notag & (n+1)^{-2M} 
%e^{-n [\inf_{\nu \in \margi^{-1} (A\times B) \cap \cP_{emp}^n(\cR\times \cS) } H(\nu |\lambda) - \inf_{ \xi \in \margi^{-1} (\cR\times B) } H( \xi |\lambda)]}\\
%\notag & \le
%\left[ \bigotimes_{i=1}^n \lambda \right]( L_n^{-1}(A) \times \cP(\cS) | \cP(\cR) \times  L_n^{-1}(B) ) \\
%& \le (n+1)^{2M} 
%e^{-n [\inf_{\nu \in \margi^{-1} (A\times B) } H(\nu |\lambda) - \inf_{ \xi \in \margi^{-1} (\cR\times B) \cap \cP_{emp}^n(\cR\times \cS) } H( \xi |\lambda)]} 
%\end{align}
%and
\begin{align}
\notag & \tfrac1n \log \Big[ (n+1)^{-2M} \Big] - \left[\inf_{\nu \in \margi^{-1} (A\times B) \cap \cP_{emp}^n(\cR\times \cS) } H(\nu |\lambda) - \inf_{ \xi \in \margi^{-1} (\cR\times B) } H( \xi |\lambda) \right]\\
\notag & \le
\tfrac1n \log 
\mu_n(A\times \cS | \cR \times B) \\
%\left[ \bigotimes_{i=1}^n \lambda \right]( L_n^{-1}(A) \times \cP(\cS) | \cP(\cR) \times  L_n^{-1}(B) ) \\
& \le \tfrac1n \log \Big[ (n+1)^{2M} \Big]
- \left[\inf_{\nu \in \margi^{-1} (A\times B) } H(\nu |\lambda) - \inf_{ \xi \in \margi^{-1} (\cR\times B) \cap \cP_{emp}^n(\cR\times \cS) } H( \xi |\lambda)\right].
\label{eqn:inequalities_finite_n_empiricals}
\end{align}
In order to derive \ref{item:uniform_condition_lower_bound} and
\ref{item:uniform_condition_upper_bound} 
of Theorem \ref{theorem:equivalent_notions_bounds_prodRCP} we make the following observation. 
By \eqref{eqn:inequalities_finite_n_empiricals} 
we have for an open $U$ and a closed $W$ that 
if  for both $A=U$ and $C= \cR$ as well as $A=\cR$ and $C=W$ we have 
\begin{align}
\notag &  \inf_{V_0\in \cN_\psi} \limsupn \sup_{ \putatop{ V\in \cH , V \subset V_0}{ V\cap \cP_{emp}^n(\cS) \ne \emptyset } }
 \left[\inf_{\nu \in \margi^{-1} (\overline A\times V) \cap \cP_{emp}^n(\cR\times \cS) } H(\nu |\lambda) - \inf_{ \xi \in \margi^{-1} (C^\circ \times V) } H( \xi |\lambda) \right]\\
 & \le \inf_{ \nu \in \margi^{-1} (A\times \{\psi\}) } H( \nu |\lambda) - \inf_{ \xi \in \margi^{-1} (C\times \{\psi\}) } H( \xi |\lambda),
 \label{eqn:desired_bound} 
\end{align}
then
\begin{align}
\sup_{V_0\in \cN_\psi } \liminfn \inf_{ \putatop{ V\in \cH , V \subset V_0}{ V\cap \supp(\mu_n \circ \pi^{-1}) \ne \emptyset } }
\tfrac1n \log \mu_n( \overline  U \times \cS | \cR \times V) 
 \ge - \inf I(U), \\
  \inf_{V_0\in \cN_\psi } \limsupn \sup_{ \putatop{ V\in \cH , V \subset V_0}{ V\cap \supp(\mu_n \circ \pi^{-1}) \ne \emptyset } }
\tfrac1n \log \mu_n( W^\circ \times \cS | \cR \times V) \le - \inf I(W),
\end{align}
As 
\begin{align}
\notag &  \inf_{V_0\in \cN_\psi} \limsupn \sup_{ \putatop{ V\in \cH , V \subset V_0}{ V\cap \cP_{emp}^n(\cS) \ne \emptyset } }
 \left[\inf_{\nu \in \margi^{-1} (A\times V) \cap \cP_{emp}^n(\cR\times \cS) } H(\nu |\lambda) - \inf_{ \xi \in \margi^{-1} (C\times V) } H( \xi |\lambda) \right] \\
\notag & \quad \le \inf_{V_0\in \cN_\psi} \limsupn \sup_{ \putatop{ V\in \cH , V \subset V_0}{ V\cap \cP_{emp}^n(\cS) \ne \emptyset } }
\inf_{\nu \in \margi^{-1} ( A \times V) \cap \cP_{emp}^n(\cR\times \cS) } H(\nu |\lambda) \\
\notag & \hspace{3cm} - 
 \sup_{V_0\in \cN_\psi}  \inf_{ \putatop{V\in \cH, V \subset V_0}{V\cap \supp(\mu_n \circ \pi^{-1})\ne \emptyset} }
\inf_{ \xi \in \margi^{-1} (C \times V) } H( \xi |\lambda) \\
\notag & \quad \le 
\inf_{V_0\in \cN_\psi} \limsupn \sup_{ \zeta \in \cP_{emp}^n(\cS) \cap V_0 \ }
\inf_{\nu \in \margi^{-1} (A\times \{\zeta\}) \cap \cP_{emp}^n(\cR\times \cS) } H(\nu |\lambda) 
\\
& \hspace{3cm} - 
 \sup_{V\in \cN_\psi}  \inf_{ \xi \in \margi^{-1} (C \times V) } H( \xi |\lambda), 
\end{align}
\eqref{eqn:desired_bound} holds (for both $A=U$ and $C= \cR$ as well as for $A=\cR$ and $C=W$, where $U$ is open and $W$ is closed)
if for all open $U$ and all closed $W$
\begin{align}
\label{eqn:inequalits_first_part}
\inf_{V_0\in \cN_\psi} \limsupn \sup_{ \zeta \in \cP_{emp}^n(\cS) \cap V_0 \ }
\notag 
& \inf_{\nu \in \margi^{-1} (U\times \{\zeta\}) \cap \cP_{emp}^n(\cR\times \cS) } H(\nu |\lambda) \\
& \le  \inf_{\nu \in \margi^{-1} ( U \times \{\psi\})  } H(\nu |\lambda), \\
\label{eqn:inequalits_second_part}
 \sup_{V\in \cN_\psi}  
\inf_{ \xi \in \margi^{-1} (W \times V) } H( \xi |\lambda)
 & \ge  \inf_{ \xi \in \margi^{-1} (W \times \{\psi\}) } H( \xi |\lambda). 
\end{align}
\eqref{eqn:inequalits_second_part} is a consequence of Lemma \ref{lemma:convergence_infimum_rate_function_on_schrinking_neighbourhoods}, as $\margi^{-1}(W\times V) = \margi^{-1}(W\times \cP(\cS)) \cap \margi^{-1}(\cP(\cR) \times V)$, the set $F= \margi^{-1}(W\times \cP(\cS))$ is closed for closed $W$, $\margi^{-1}(\cP(\cR) \times V)= (\pi \circ \margi)^{-1}(V)$ and $\pi \circ \margi$ is continuous. 
The proof of inequality \eqref{eqn:inequalits_first_part} requires a little more attention. 
First we present some facts which are used to prove this inequality in Lemma \ref{lemma:inequalits_first_part}. 
\end{obs}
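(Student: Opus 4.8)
The plan is to carry out the reduction this observation performs: starting from the single-empirical-measure bounds of Lemma~\ref{lemma:probabilits_single_empirical_estimates}, derive the conditional estimates \eqref{eqn:inequalities_finite_n_empiricals}; use them — together with Theorem~\ref{theorem:equivalent_notions_bounds_prodRCP}, which applies here with $\cX=\cP(\cR)$, $\cY=\cP(\cS)$, $\cG,\cH$ the respective topologies and $y=\psi$, all hypotheses of Section~\ref{section:ldp_product_RCP} being supplied by Observation~\ref{obs:some_fact_finite_sanov_case} and $\cP(\cS)$ being first countable — to reduce the Sanov-type large deviation principle for $(\eta_n(\psi_n,\cdot))_{n\in\N}$ to condition \eqref{eqn:desired_bound}; and finally to split \eqref{eqn:desired_bound} into the two variational inequalities \eqref{eqn:inequalits_first_part} and \eqref{eqn:inequalits_second_part}.

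The first step is counting. Since $\mu_n(A\times B)=[\bigotimes_{i=1}^n\lambda]\big(L_n^{-1}(\margi^{-1}(A\times B))\big)$ and $L_n^{-1}(\margi^{-1}(A\times B))$ is the disjoint union of the fibres $L_n^{-1}(\{\nu\})$ over the at most $(n+1)^{M}$ measures $\nu\in\margi^{-1}(A\times B)\cap\cP_{emp}^n(\cR\times\cS)$ (with $M=\#\cR\#\cS$), summing the upper atomic bound of Lemma~\ref{lemma:probabilits_single_empirical_estimates} and keeping a single fibre for the lower bound gives \eqref{eqn:inequalits_upper_A_times_B} and \eqref{eqn:inequalits_lower_A_times_B}; dividing the $A\times V$ estimate by the $\cP(\cR)\times V$ estimate, once each way, and taking $\tfrac1n\log$ gives \eqref{eqn:inequalities_finite_n_empiricals}, whose only $n$-dependence outside the relative-entropy infima is $\tfrac1n\log(n+1)^{\pm2M}\to0$. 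Plugging \eqref{eqn:inequalities_finite_n_empiricals} — with $(A,B)=(\overline U,V)$ for the lower bound and $(A,B)=(W^{\circ},V)$ for the upper one — into conditions \ref{item:uniform_condition_lower_bound} and \ref{item:uniform_condition_upper_bound}, passing the appropriate suprema, infima and limits through those inequalities, and using $\supp(\mu_n\circ\pi^{-1})=\cP_{emp}^n(\cS)$ (item~\ref{item:support_is_empirical_measures} of Observation~\ref{obs:some_fact_finite_sanov_case}) and the identities $\inf I(U)=\inf_{\nu\in\margi^{-1}(U\times\{\psi\})}H(\nu|\lambda)-\inf_{\xi\in\margi^{-1}(\cP(\cR)\times\{\psi\})}H(\xi|\lambda)$ and its $W$-analogue (read off from \eqref{eqn:rate_function_sanov_type}), one finds that the two uniform conditions hold once \eqref{eqn:desired_bound} is known for $(A,C)=(U,\cP(\cR))$ and for $(A,C)=(\cP(\cR),W)$.

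To split \eqref{eqn:desired_bound}: its bracketed expression is a difference of infima, only the first of which depends on $\nu$, and only via $\margi(\nu)$; enlarging $\overline A$ back to $A$ raises the first infimum, passing $C^{\circ}$ to $C$ in the second only helps, and $V\mapsto\inf_{\xi\in\margi^{-1}(C\times V)}H(\xi|\lambda)$ is monotone along a neighbourhood basis of $\psi$, so the standard $\sup/\inf$ bookkeeping peels the two terms apart and reduces \eqref{eqn:desired_bound} to \eqref{eqn:inequalits_first_part} and \eqref{eqn:inequalits_second_part}. Inequality \eqref{eqn:inequalits_second_part} is then immediate: $F:=\margi^{-1}(W\times\cP(\cS))$ is closed (because $\margi$ is continuous, item~\ref{item:margi_is_continuous} of Observation~\ref{obs:some_fact_finite_sanov_case}, and $W$ is closed), $\margi^{-1}(W\times V)=F\cap(\pi\circ\margi)^{-1}(V)$, and $\pi\circ\margi:\cP(\cR\times\cS)\to\cP(\cS)$ is continuous, so Lemma~\ref{lemma:convergence_infimum_rate_function_on_schrinking_neighbourhoods} with $\tau=\pi\circ\margi$ and $J=H(\cdot|\lambda)$ (which has compact sublevel sets, $\cP(\cR\times\cS)$ being compact and $H(\cdot|\lambda)$ lower semicontinuous) gives exactly \eqref{eqn:inequalits_second_part}.

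The genuine obstacle is \eqref{eqn:inequalits_first_part}: one must dominate $\inf_{V_0\in\cN_\psi}\limsupn\sup_{\zeta\in\cP_{emp}^n(\cS)\cap V_0}\inf_{\nu\in\margi^{-1}(U\times\{\zeta\})\cap\cP_{emp}^n(\cR\times\cS)}H(\nu|\lambda)$ by $\inf_{\nu\in\margi^{-1}(U\times\{\psi\})}H(\nu|\lambda)$, and Lemma~\ref{lemma:convergence_infimum_rate_function_on_schrinking_neighbourhoods} does not apply, since the constraint $\nu\in\cP_{emp}^n(\cR\times\cS)$ is $n$-dependent and $U$ is only open. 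This is what I would postpone to Lemma~\ref{lemma:inequalits_first_part}: take a near-minimiser $\nu^{*}\in\margi^{-1}(U\times\{\psi\})$ of $H(\cdot|\lambda)$, and for each $n$ and each $\zeta\in\cP_{emp}^n(\cS)$ close to $\psi$ construct $\nu_n\in\cP_{emp}^n(\cR\times\cS)$ with $\cS$-marginal $\zeta$, $\cR$-marginal still inside the open set $U$, and $\nu_n\to\nu^{*}$; continuity of $H(\cdot|\lambda)$ on the compact $\cP(\cR\times\cS)$ (Lemma~\ref{lemma:continuits_H_on_support_H}) then forces $H(\nu_n|\lambda)\to H(\nu^{*}|\lambda)$, which yields \eqref{eqn:inequalits_first_part}. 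The delicate part will be producing such $\nu_n$ that simultaneously realise the prescribed $\cS$-marginal $\zeta$, keep the $\cR$-marginal in the only-open set $U$, and remain genuine $n$-point empirical measures — i.e.\ controlling the $O(1/n)$ rounding in both marginals at once. (The continuity of $I$ on $[I<\infty]$ asserted in Theorem~\ref{theorem:application_sanov_type} is a separate, easy consequence of the continuity of $\nu\mapsto H(\nu|\lambda)$, again via Lemma~\ref{lemma:continuits_H_on_support_H}.)
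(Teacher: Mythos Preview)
Your proposal is correct and follows essentially the same reduction as the paper: derive the finite-$n$ bounds \eqref{eqn:inequalits_upper_A_times_B}--\eqref{eqn:inequalits_lower_A_times_B} from Lemma~\ref{lemma:probabilits_single_empirical_estimates}, feed them into conditions \ref{item:uniform_condition_lower_bound} and \ref{item:uniform_condition_upper_bound} of Theorem~\ref{theorem:equivalent_notions_bounds_prodRCP}, split \eqref{eqn:desired_bound} into \eqref{eqn:inequalits_first_part} and \eqref{eqn:inequalits_second_part}, dispatch \eqref{eqn:inequalits_second_part} via Lemma~\ref{lemma:convergence_infimum_rate_function_on_schrinking_neighbourhoods}, and defer \eqref{eqn:inequalits_first_part} to Lemma~\ref{lemma:inequalits_first_part}. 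Two minor remarks: replacing $\overline A$ by $A$ \emph{shrinks} (not enlarges) the domain of the first infimum, which indeed raises it as you need; and $H(\cdot\,|\,\lambda)$ is continuous only on $[H(\cdot\,|\,\lambda)<\infty]$, not on all of $\cP(\cR\times\cS)$, so in the sketch of Lemma~\ref{lemma:inequalits_first_part} one must also arrange $\nu_n\ll\lambda$ --- exactly what the paper's Lemma~\ref{lemma:existence_nu_with_properties} secures.
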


\begin{lemma}
\label{lemma:continuits_H_on_support_H}
\cite[Remark below Definition 2.1.5]{DeZe10}
The map $\nu \mapsto H(\nu |\lambda)$ is continuous on $[H(\cdot|\lambda) <\infty]$. 
In particular, for all $\epsilon>0$ and $\xi \in \cP(\cR \times \cS)$ there exists a $\Theta \in \cN_\xi$ such that 
\begin{align}
H(\nu |\lambda) - \epsilon \le H( \xi | \lambda) \qquad (\nu \in \Theta \cap [H(\cdot|\lambda) <\infty]). 
\end{align}
Consequently, $I$ as in \eqref{eqn:rate_function_sanov_type} is continuous on $[J<\infty]$.  
\end{lemma}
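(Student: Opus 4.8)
The plan is to handle the three assertions separately. For the continuity of $\nu\mapsto H(\nu\mid\lambda)$ I would cite \cite[Remark below Definition 2.1.5]{DeZe10}, or give the one-line argument: since $\cR\times\cS$ is finite, $[H(\cdot\mid\lambda)<\infty]$ equals the closed sub-simplex $\{\nu\in\cP(\cR\times\cS):\supp\nu\subseteq\supp\lambda\}$ of the compact space $\cP(\cR\times\cS)$, on which $H(\nu\mid\lambda)=\sum_{(r,s)\in\supp\lambda}\nu(r,s)\log\frac{\nu(r,s)}{\lambda(r,s)}$, a finite sum of maps $\nu\mapsto\nu(r,s)\log\frac{\nu(r,s)}{\lambda(r,s)}$ each of which extends continuously through $\nu(r,s)=0$. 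Being continuous on the compact set $[H(\cdot\mid\lambda)<\infty]$, $H(\cdot\mid\lambda)$ is also bounded there, say by $H_{\max}<\infty$. The ``in particular'' clause is then immediate: if $H(\xi\mid\lambda)=\infty$ any open ball around $\xi$ works as $\Theta$, and if $H(\xi\mid\lambda)<\infty$ continuity of $H(\cdot\mid\lambda)$ at $\xi$ supplies an open $\Theta\in\cN_\xi$ with $H(\nu\mid\lambda)<H(\xi\mid\lambda)+\epsilon$ for $\nu\in\Theta\cap[H(\cdot\mid\lambda)<\infty]$, which rearranges to the stated inequality.

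For the final assertion I would write $I=F-c$ with $F(\phi)=J(\phi,\psi)=\inf_{\xi\in\margi^{-1}(\{(\phi,\psi)\})}H(\xi\mid\lambda)$ and $c=\inf_{\xi\in\margi^{-1}(\cP(\cR)\times\{\psi\})}H(\xi\mid\lambda)<\infty$, and reduce to continuity of $F$ on $[I<\infty]=[F<\infty]$. Lower semicontinuity is free: $J$ is the rate function of the large deviation principle for $(\mu_n)_{n\in\N}$ (Observation \ref{obs:some_fact_finite_sanov_case}), hence lower semicontinuous, so $F=J(\cdot,\psi)$ is lower semicontinuous. For the reverse bound I would exploit the polyhedral structure: put $P=\{\xi\in\cP(\cR\times\cS):\xi(\cR\times\cdot)=\psi,\ \supp\xi\subseteq\supp\lambda\}$, a compact polytope on which $H(\cdot\mid\lambda)$ is continuous and bounded by $H_{\max}$; then $[F<\infty]$ is exactly the image of $P$ under the continuous affine marginal map $T\colon\xi\mapsto\xi(\cdot\times\cS)$, hence is again a polytope, and $F|_{[F<\infty]}(\phi)=\inf_{\xi\in P\cap T^{-1}(\phi)}H(\xi\mid\lambda)$ is the infimal projection of the convex function $H(\cdot\mid\lambda)$ along the linear map $T$, so it is convex and bounded above by $H_{\max}$ (the fibres $P\cap T^{-1}(\phi)$ being nonempty on $[F<\infty]$).

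It remains to prove that a convex, lower semicontinuous, bounded-above function $F$ on a polytope $D:=[F<\infty]$ is continuous on $D$, and this is the only delicate point, since the corresponding statement is false on a general compact convex set (e.g. $x^2/(1-y)$ on the unit disc). The hard part will be carrying this out: I would triangulate $D$ into finitely many simplices, note that the simplices containing a given point $\phi_0$ cover a neighbourhood of $\phi_0$ in $D$, and thereby reduce to continuity of $F$ at $\phi_0$ inside a single simplex $\Delta$. Letting $\Delta'$ be the face of $\Delta$ whose relative interior contains $\phi_0$ and splitting any sequence $\phi_k\to\phi_0$ in $\Delta$ as $\phi_k=(1-\epsilon_k)\eta_k+\epsilon_k\zeta_k$, where $\epsilon_k\to0$ collects the barycentric coordinates of $\phi_k$ that vanish in the limit, $\zeta_k$ lies in the opposite face of $\Delta$ and $\eta_k\to\phi_0$ inside $\Delta'$, convexity gives $F(\phi_k)\le(1-\epsilon_k)F(\eta_k)+\epsilon_k H_{\max}$; since convex functions are continuous on the relative interior of their domain, $F(\eta_k)\to F(\phi_0)$, whence $\limsup_kF(\phi_k)\le F(\phi_0)$. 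Together with lower semicontinuity this yields continuity of $F$, hence of $I$, on $[I<\infty]$. Everything outside this boundary-continuity step (and its polytope-to-simplex bookkeeping) is routine.
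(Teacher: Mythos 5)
Your proposal is correct, but note that the paper itself offers essentially no proof of this lemma: the continuity of $H(\cdot|\lambda)$ on $[H(\cdot|\lambda)<\infty]$ is delegated entirely to the cited remark in Dembo--Zeitouni, and the final clause about $I$ is simply asserted in the surrounding text to ``follow'' from it. Your elementary argument for the first two claims (the domain is the closed sub-simplex $\{\nu:\supp\nu\subseteq\supp\lambda\}$, on which $H(\cdot|\lambda)$ is a finite sum of functions $t\mapsto t\log(t/c)$ extended by $0$ at $t=0$) is exactly what the citation provides. Where you genuinely add something is the ``consequently'' clause: you correctly observe that continuity of the infimal projection $\phi\mapsto\inf_{\xi\in\margi^{-1}(\{(\phi,\psi)\})}H(\xi|\lambda)$ does not follow from continuity of $H(\cdot|\lambda)$ alone, and you supply the missing upper semicontinuity via convexity of the value function together with a Gale--Klee--Rockafellar-type argument that a convex, lower semicontinuous, bounded-above function on a polytope is continuous on the whole polytope. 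That argument is sound; the only points needing care, which you essentially address, are that the closed star of $\phi_0$ in a finite triangulation is a neighbourhood of $\phi_0$ in $D$, and that the points $\eta_k$ eventually lie in the relative interior of the face $\Delta'$, where finiteness and convexity of $F$ give continuity. An alternative closer in spirit to the paper's own Lemma \ref{lemma:existence_nu_with_properties} would be to perturb a minimiser $\xi_0\in\margi^{-1}(\{(\phi_0,\psi)\})$ with $\xi_0\ll\lambda$ into nearby $\xi_k\ll\lambda$ with marginals $(\phi_k,\psi)$ and invoke continuity of $H(\cdot|\lambda)$ on its domain; that is a hands-on instance of the Lipschitz continuity of polyhedral fibre maps, whereas your route trades the explicit construction for general convex-analytic machinery. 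Either way, you have filled a real gap in the paper's exposition rather than reproduced its proof.
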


\begin{lemma}
\label{lemma:large_enough_n_then_empricial_in_open_set}
\begin{enumerate}
\item 
\label{item:empirical_multiples}
Let $k,l\in\N$ and $\zeta \in \cP_{emp}^k(\cS)$. For all $m\ge kl$ there exists a $\nu \in \cP_{emp}^m(\cS)$ such that $\fd(\nu,\zeta) <\frac{1}{l}$.
\item 
\label{item:large_enough_denseness_empiricals}
For all open $\Theta \subset\cP(\cS)$ there exists an $N\in\N$ such that $\cP_{emp}^n(\cS) \cap \Theta \ne \emptyset$ for all $n\ge N$.
\end{enumerate}
\end{lemma}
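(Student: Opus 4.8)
The plan is to establish part \ref{item:empirical_multiples} by an explicit construction and then to deduce part \ref{item:large_enough_denseness_empiricals} from it via the density of the rational points of $\cP(\cS)$.

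First I would prove \ref{item:empirical_multiples}. Identify $\cP_{emp}^k(\cS)$ with the set of vectors $\tfrac1k(c_1,\dots,c_d)$, where $d=\#\cS$, each $c_j\in\{0,1,\dots,k\}$ and $\sum_j c_j=k$, so that $\zeta$ corresponds to some such $(c_1,\dots,c_d)$. Given $m\ge kl$, use the division algorithm to write $m=qk+r$ with $q=\lfloor m/k\rfloor\ge l$ and $0\le r<k$, and let $\nu\in\cP_{emp}^m(\cS)$ be the empirical measure built from $q$ copies of each of the $k$ sample points of $\zeta$ together with $r$ further arbitrarily chosen points; its mass vector is $\tfrac1m(qc_1+e_1,\dots,qc_d+e_d)$ with $e_j\ge0$ and $\sum_j e_j=r$. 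Since $kq-m=-r$, one obtains coordinatewise $\bigl|\tfrac{qc_j+e_j}{m}-\tfrac{c_j}{k}\bigr|=\tfrac{|ke_j-rc_j|}{mk}\le\tfrac{ke_j+rc_j}{mk}$, whence $\sum_j\bigl|\tfrac{qc_j+e_j}{m}-\tfrac{c_j}{k}\bigr|\le\tfrac{2kr}{mk}=\tfrac{2r}{m}<\tfrac2l$. Thus the total variation distance between $\nu$ and $\zeta$ is $<\tfrac1l$, and since the Prohorov metric is dominated by the total variation distance — immediate from its definition, and in fact transparent here because $\cS$ carries the discrete metric, so $\epsilon$-enlargements of subsets of $\cS$ are trivial for $\epsilon<1$ — we get $\fd(\nu,\zeta)<\tfrac1l$.

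For part \ref{item:large_enough_denseness_empiricals} (the interesting case being $\Theta\ne\emptyset$): the set $\bigcup_{k\in\N}\cP_{emp}^k(\cS)$ is exactly the set of probability measures on $\cS$ all of whose masses are rational (clear denominators), hence dense in $\cP(\cS)$; so there are $k\in\N$ and $\zeta\in\cP_{emp}^k(\cS)\cap\Theta$. As $\Theta$ is open, choose $l\in\N$ with $\{\nu\in\cP(\cS):\fd(\nu,\zeta)<\tfrac1l\}\subset\Theta$. By part \ref{item:empirical_multiples}, for every $m\ge kl$ there is $\nu\in\cP_{emp}^m(\cS)$ with $\fd(\nu,\zeta)<\tfrac1l$, so $\nu\in\cP_{emp}^m(\cS)\cap\Theta$; thus $N=kl$ works.

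The only point demanding any attention is the distance estimate in \ref{item:empirical_multiples}, and even there the work amounts to bookkeeping with $m=qk+r$ plus the standard comparison of Prohorov and total variation distances; \ref{item:large_enough_denseness_empiricals} is then a short density argument. I therefore do not expect a genuine obstacle.
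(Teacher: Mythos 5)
Your proof is correct and follows essentially the same route as the paper's: pad $q$ copies of the sample underlying $\zeta$ with $r<k$ arbitrary points (the paper phrases this as the convex combination $\tfrac{lk}{lk+i}\zeta+\tfrac{i}{lk+i}\xi$ with $\xi\in\cP_{emp}^i(\cS)$), bound the resulting distance, and deduce part (b) from the density of $\bigcup_k\cP_{emp}^k(\cS)$ in $\cP(\cS)$. Your bookkeeping is in fact slightly sharper: the paper's estimate only yields $\fd\le 2/l$, whereas your total-variation normalisation delivers the strict bound $\fd<1/l$ claimed in the statement.
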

\begin{proof}
\ref{item:empirical_multiples}
Let $i \in \{1,\dots,k\}$. 
Let  $\xi \in \cP_{emp}^i(\cS)$. 
Then the measure $\frac{lk}{lk+i}\zeta + \frac{i}{lk+i} \xi$ is an element of $\cP_{emp}^{lk+i}(\cS)$. 
For every $A \subset \cS$
\begin{align}
\left| [\tfrac{lk}{lk+i}\zeta + \tfrac{i}{lk+i} \xi](A) - \zeta(A) \right| 
\le 2 \tfrac{i}{lk+i} \le 2\tfrac{k}{lk} = \tfrac2l.
\end{align}
By definition of the Prohorov metric, this implies $\fd( [\tfrac{lk}{lk+i}\zeta + \tfrac{i}{lk+i} \xi], \zeta ) \le \frac{2}{l}$. 

\ref{item:large_enough_denseness_empiricals}
Let $\xi \in \cP(\cS)$ and $\delta>0$ be such that $B(\xi,\delta) \subset \Theta$. 
For each $\xi \in \cP(\cS)$ there is a $k\in\N$ and a $\zeta \in \cP_{emp}^k(\cS)$ such that $\fd(\zeta,\xi)<\frac{\delta}{2}$. Because of this \ref{item:large_enough_denseness_empiricals} follows from \ref{item:empirical_multiples} by letting $l$ be such that $\frac{1}{l} < \frac{\delta}{2}$ and $N= lk$. 
\end{proof}

\begin{lemma}
\label{lemma:existence_nu_with_properties}
Let $\xi \in \cP(\cR \times \cS)$, $\pi \circ \margi (\xi) = \psi$ and $\xi \ll \lambda$. 
For all $\delta>0$ 
there exists 
a $\kappa>0$ and 
an $N\in\N$ such that for all $n\ge N$ and all $\zeta \in \cP_{emp}^n(\cS)$ with $\fd(\zeta,\psi) < \kappa$ 
there is a $\nu \in \cP_{emp}^n(\cR \times \cS)$ with 
\begin{align}
\pi \circ \margi ( \nu ) = \zeta, \quad 
\nu \ll \lambda, \quad
\fd (\nu, \xi) < \delta, \quad 
\fd \Big(\nu(\cdot \times \cS) , \xi(\cdot \times \cS) \Big) <\delta. 
\end{align}
\end{lemma}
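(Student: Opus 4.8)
The plan is to produce $\nu$ by discretizing the disintegration of $\xi$ along its $\cS$-marginal $\psi$ against the prescribed empirical measure $\zeta$. The one combinatorial ingredient I would use is the elementary \emph{apportionment} fact: for every $m\in\N$ and every probability vector $(p_r)_{r\in\cR}$ there are integers $a_r\ge 0$ with $\sum_r a_r = m$, $|a_r - m p_r|\le 1$ for all $r$, and $a_r=0$ whenever $p_r=0$; one takes $a_r=\lfloor m p_r\rfloor$ and then increases $t:=m-\sum_r\lfloor m p_r\rfloor$ of the $a_r$ by one, choosing the $r$ with the largest values of $m p_r-\lfloor m p_r\rfloor$ (all such $r$ have $p_r>0$, so the constraint $a_r=0$ for $p_r=0$ is preserved).

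Next I would set up the kernel. Since $\pi\circ\margi(\xi)=\psi$, write $\xi(\{(r,s)\})=\psi(\{s\})\,g(s)(\{r\})$ with $g(s)\in\cP(\cR)$ the conditional law when $\psi(\{s\})>0$, and set $g(s)=\delta_{r_0(s)}$ when $\psi(\{s\})=0$, where $r_0(s)$ is any point with $\lambda(\{(r_0(s),s)\})>0$ (such a point exists because $\lambda(\cR\times\{s\})>0$). Because $\xi\ll\lambda$, in both cases $g(s)(\{r\})>0$ implies $\lambda(\{(r,s)\})>0$. Given $n$ and $\zeta=\frac1n\sum_{i=1}^n\delta_{s_i}\in\cP_{emp}^n(\cS)$ with $m_s:=n\zeta(\{s\})\in\Z_{\ge0}$, I apply apportionment to each $m_s$ and $(g(s)(\{r\}))_r$ to get integers $a_{rs}$, and define $\nu:=\frac1n\sum_{r,s}a_{rs}\,\delta_{(r,s)}$. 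The three exact properties are then immediate: the $a_{rs}\ge 0$ are integers with $\sum_{r,s}a_{rs}=\sum_s m_s=n$, so $\nu\in\cP_{emp}^n(\cR\times\cS)$; $\pi\circ\margi(\nu)(\{s\})=\frac1n\sum_r a_{rs}=m_s/n=\zeta(\{s\})$, so $\pi\circ\margi(\nu)=\zeta$; and $a_{rs}>0$ forces $g(s)(\{r\})>0$, hence $\lambda(\{(r,s)\})>0$, so $\nu\ll\lambda$. None of this uses $n$ large or $\zeta$ near $\psi$.

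The two distance bounds do use that. Since on a finite metric space $\fd(\mu,\mu')\le\sum_x|\mu(\{x\})-\mu'(\{x\})|$ (every set lies inside each of its $\epsilon$-enlargements), it suffices to bound $S:=\sum_{r,s}|\nu(\{(r,s)\})-\xi(\{(r,s)\})|$, as $S$ dominates $\fd(\nu,\xi)$ and, after moving $\sum_s$ inside the absolute value, also $\fd(\nu(\cdot\times\cS),\xi(\cdot\times\cS))$. Going through $\zeta(\{s\})g(s)(\{r\})$: the apportionment bound gives $\sum_{r,s}|\nu(\{(r,s)\})-\zeta(\{s\})g(s)(\{r\})|\le \#\cR\#\cS/n$ (as $\nu(\{(r,s)\})=a_{rs}/n$, $\zeta(\{s\})=m_s/n$); and $\sum_{r,s}|\zeta(\{s\})g(s)(\{r\})-\xi(\{(r,s)\})|$ splits according to whether $\psi(\{s\})>0$ (contribution $|\zeta(\{s\})-\psi(\{s\})|$ per $s$) or $\psi(\{s\})=0$ (then $\xi(\{(r,s)\})=0$, contribution $\zeta(\{s\})$ per $s$), so it is at most $\#\cS\,\fd(\zeta,\psi)+\zeta(\{\psi=0\})$. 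For $\fd(\zeta,\psi)<1$, the discrete metric on $\cS$ makes the $\epsilon$-enlargement of any $A\subseteq\cS$ equal to $A$ for $0<\epsilon<1$, whence $|\zeta(A)-\psi(A)|\le\fd(\zeta,\psi)$ for all $A$, and in particular $\zeta(\{\psi=0\})\le\fd(\zeta,\psi)$. Altogether $S\le \#\cR\#\cS/n+(\#\cS+1)\,\fd(\zeta,\psi)$; so choosing $\kappa:=\min\{\tfrac12,\ \tfrac{\delta}{2(\#\cS+1)}\}$ and any integer $N>2\#\cR\#\cS/\delta$ gives $S<\delta$, hence $\fd(\nu,\xi)<\delta$ and $\fd(\nu(\cdot\times\cS),\xi(\cdot\times\cS))<\delta$, for all $n\ge N$ and all $\zeta\in\cP_{emp}^n(\cS)$ with $\fd(\zeta,\psi)<\kappa$. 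The only points requiring any care are the apportionment construction and the identification of small-radius Prohorov balls on the finite spaces with total-variation balls; neither is a genuine obstacle, so the argument is essentially bookkeeping.
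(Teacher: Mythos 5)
Your construction is correct, and it reaches the same quantitative density statement as the paper by a route that is organized differently. The paper rounds $\xi$ itself to a nearby element $\xi^*$ of $\cP_{emp}^n(\cR\times\cS)$ and then repairs each column sum to match $\zeta$, absorbing any excess mass at a designated point $(r_s,s)$ with $\lambda(\{(r_s,s)\})>0$; you instead disintegrate $\xi$ as $\psi_s\,g(s)_r$ and apportion the prescribed integer column masses $m_s=n\zeta(\{s\})$ according to the conditional laws $g(s)$. Your version makes the exact properties ($\pi\circ\margi(\nu)=\zeta$, $\nu\in\cP_{emp}^n(\cR\times\cS)$, $\nu\ll\lambda$) automatic from the apportionment lemma --- in particular the auxiliary point with positive $\lambda$-mass is only needed in columns where $\psi$ vanishes, since elsewhere the apportionment stays inside the support of $\xi(\cdot\times\{s\})$ --- whereas the paper's two-step rounding avoids introducing the conditional kernel but has to argue separately about the sign of $\zeta_s-\sum_r\xi^*_{rs}$. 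Your error bookkeeping (total variation dominates the Prohorov metric on a finite discrete space, and $\fd(\zeta,\psi)<1$ controls $|\zeta(A)-\psi(A)|$ for every $A$, including $\zeta(\{\psi=0\})$) is sound and yields constants of the same order as the paper's; both arguments are elementary and the difference is purely one of presentation.
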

\begin{proof}
In this proof, for a measure $\xi \in \cP(\cR \times \cS)$, we write 
$\xi_{rs} = \xi(\{(r,s)\})$, so that $\xi = \sum_{rs} \xi_{rs} \delta_{(r,s)}$ where we use the short-hand notation ``$\sum_{rs}$'' instead of ``$\sum_{r\in\cR,s\in \cS}$''. 
Let $M= \# \cR \# \cS$. 
Note that 
\begin{align}
\label{eqn:prohorov_metric_estimate}
\fd(\xi,\nu) \le M \max_{r\in \cR, s\in \cS} |\xi_{rs} - \nu_{rs}| \qquad \big(\xi,\nu \in \cP(\cR \times \cS) \big) .
\end{align}

Let $\kappa>0$ and $n\in\N$. 
We first give an estimation by which it is clear which $\kappa$ and $N$ one should choose. 
By the assumptions on $\lambda$ for every $s\in \cS$ there exists a $r_s \in \cR$ with $\lambda_{r_s s}>0$. 

First we show that there exists a $\xi^* \in \cP_{emp}^n(\cX \times \cY)$ with $\xi^* \ll \xi$ and   $|\xi_{rs}^* - \xi_{rs}| \le \frac{2}{n}$ for all $r\in \cR$ and $s\in \cS$. 
For each pair $(r,s) \in \cR \times \cS$ with $\xi_{rs}>0$ we can choose a $\xi_{rs}'\in \{0,\frac1n,\frac2n, \dots, 1\}$ such that $|\xi_{rs} - \xi_{rs}'|<\frac1n$. 
By letting $\xi_{rs}^*=0$ when $\xi_{rs}=0$ and add or subtract $\frac1n$ to some of the $\xi_{rs}'$ we obtain a collection of $\xi_{rs}^*\in \{0,\frac1n,\frac2n,\dots,1\}$ with $\sum_{rs} \xi_{rs}^*=1$ and $|\xi_{rs}^* - \xi_{rs}| \le \frac{2}{n}$ and $\xi^*_{rs}=0$ whenever $\xi_{rs}=0$ for all $r\in \cR$ and $s\in \cS$. 

Let $\xi \in \cP(\cR \times \cS)$. 
Suppose that $\zeta \in \cP_{emp}^n(\cS)$ is such that $|\zeta_s - \sum_r \xi_{rs}|<\kappa$. 
Then $|\zeta_s - \sum_r \xi^*_{rs}| < \kappa + \frac2n M$. 
We construct a $\nu \in \cP_{emp}^n(\cR\times \cS)$ by defining the $\nu_{rs}$ by each $s$ separately. 
Let $s\in S$. 
If $\zeta_s - \sum_r \xi^*_{rs}<0$, then we choose $\nu_{rs} \le \xi^*_{rs}$ with $\nu_{rs} \in \{0,\frac1n,\dots,1\}$ in such way that $\sum_r \nu_{rs} = \zeta_s$ (note that $|\nu_{rs} - \xi^*_{rs}| \le |\zeta_s - \sum_r \xi^*_{rs}|$). 
While, if $\zeta_s - \sum_r \xi^*_{rs}\ge 0$, then we let $\nu_{rs} = \xi^*_{rs}$ for all $r\ne r_s$ and we let $\nu_{r_s s} = \xi_{r_s s}^* + \zeta_s - \sum_r \xi^*_{rs}$ (so that $\sum_r \nu_{rs} = \zeta_s$). 
As $\xi^* \ll \xi$ and $\xi \ll \lambda$, by the construction of $\nu$ we have $\nu \ll \lambda$. 
Moreover, we have 
$\pi \circ \margi(\nu) = \zeta$ and 
\begin{align}
 \max_{r\in \cR, s\in \cS} \big| \nu_{rs} -  \xi_{rs} \big| 
\notag & \le  \max_{s\in \cS} \Big|\zeta_s - \sum_r \xi^*_{rs}\Big| 
+
 \max_{r\in \cR, s\in \cS} |\xi^*_{rs} - \xi_{rs}|  \\
& \le \kappa + \tfrac2n M+ \tfrac2n . 
\end{align}
Which implies by \eqref{eqn:prohorov_metric_estimate}
\begin{align}
%\fd \Big( \nu ( \cdot \times \cS) , \xi (\cdot \times \cS) \Big)
\fd(\nu,\xi) 
 \le M \kappa + \tfrac{2}{n} (M^2+M). 
\end{align}
Moreover, as $| \sum_s \nu_{rs} - \sum_s \xi_{rs}|  \le M \max_{s\in \cS} |\nu_{rs} -  \xi_{rs}|$, 
\begin{align}
\fd \Big(\nu(\cdot \times \cS) , \xi(\cdot \times \cS) \Big) 
\le M\max_{r\in \cR} \Big| \sum_s \nu_{rs} - \sum_s \xi_{rs} \Big| 
\le M^2 \kappa + \tfrac{2}{n} (M^3 +M^2). 
\end{align}
By choosing $\kappa>0$ and $N\in\N$ such that $M^2 \kappa + \tfrac{2}{n} (M^3 +M^2) <\delta$ the proof is complete. 
\end{proof}

\begin{lemma}
\label{lemma:inequalits_first_part}
For all open $U\subset \cR$ 
%\begin{align}
%\inf_{V_0\in \cN_\psi} \limsupn \sup_{ \putatop{ V\in \cH , V \subset V_0}{ V\cap \cP_{emp}^n(\cS) \ne \emptyset } }
%\inf_{\nu \in \margi^{-1} ( U \times V) \cap \cP_{emp}^n(\cR\times \cS) } H(\nu |\lambda)
%\le \inf_{\nu \in \margi^{-1} ( U \times \{\psi\})  } H(\nu |\lambda).
%\end{align}
\begin{align}
0 \le \inf_{V_0\in \cN_\psi} \limsupn \sup_{ \zeta \in \cP_{emp}^n(\cS) \cap V_0 \ }
\notag & \inf_{\nu \in \margi^{-1} (U\times \{\zeta\}) \cap \cP_{emp}^n(\cR\times \cS) } H(\nu |\lambda) \\
& \le  \inf_{\nu \in \margi^{-1} ( U \times \{\psi\})  } H(\nu |\lambda).
\end{align}
%The left hand side of the inequality is $\ge 0$. 
\end{lemma}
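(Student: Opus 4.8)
The lower bound is trivial, since $H(\cdot|\lambda)\ge 0$ makes every quantity in the display nonnegative; all the work is in the upper bound, and the plan is the following. If $\inf_{\nu\in\margi^{-1}(U\times\{\psi\})}H(\nu|\lambda)=\infty$ there is nothing to prove, so I would fix $\epsilon>0$ and a near-minimiser $\xi\in\margi^{-1}(U\times\{\psi\})$ with $H(\xi|\lambda)<\infty$. On the finite set $\cR\times\cS$ this forces $\xi\ll\lambda$, and $\margi(\xi)\in U\times\{\psi\}$ gives $\pi\circ\margi(\xi)=\psi$ and $\xi(\cdot\times\cS)\in U$, so $\xi$ is admissible for Lemma~\ref{lemma:existence_nu_with_properties}.

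Next I would choose $\delta>0$ small enough for two purposes at once: first, since $U$ is open, so that $B(\xi(\cdot\times\cS),\delta)\subset U$; second, using the continuity of $H(\cdot|\lambda)$ on $[H(\cdot|\lambda)<\infty]$ (Lemma~\ref{lemma:continuits_H_on_support_H}), so that $H(\nu|\lambda)\le H(\xi|\lambda)+\epsilon$ whenever $\fd(\nu,\xi)<\delta$ and $H(\nu|\lambda)<\infty$. Feeding this $\delta$ into Lemma~\ref{lemma:existence_nu_with_properties} produces $\kappa>0$ and $N\in\N$, and I would take $V_0=B(\psi,\kappa)$ as the witness for the outer infimum $\inf_{V_0\in\cN_\psi}$. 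For every $n\ge N$ and every $\zeta\in\cP_{emp}^n(\cS)\cap V_0$, Lemma~\ref{lemma:existence_nu_with_properties} yields $\nu\in\cP_{emp}^n(\cR\times\cS)$ with $\pi\circ\margi(\nu)=\zeta$, $\nu\ll\lambda$, $\fd(\nu,\xi)<\delta$ and $\fd(\nu(\cdot\times\cS),\xi(\cdot\times\cS))<\delta$. The marginal estimate puts $\nu(\cdot\times\cS)\in U$, hence $\margi(\nu)\in U\times\{\zeta\}$, so $\nu\in\margi^{-1}(U\times\{\zeta\})\cap\cP_{emp}^n(\cR\times\cS)$ and in particular this set is nonempty; moreover $\nu\ll\lambda$ on the finite set gives $H(\nu|\lambda)<\infty$, so $H(\nu|\lambda)\le H(\xi|\lambda)+\epsilon$. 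Therefore the inner infimum is at most $H(\xi|\lambda)+\epsilon$ for each such $\zeta$, hence the supremum over $\zeta\in\cP_{emp}^n(\cS)\cap V_0$ is at most $H(\xi|\lambda)+\epsilon$ for all $n\ge N$, so its $\limsupn$ is $\le H(\xi|\lambda)+\epsilon$; taking $\inf_{V_0\in\cN_\psi}$ only decreases this, and then letting $\epsilon\downarrow 0$ and optimising over $\xi$ finishes the bound.

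I expect the only genuine difficulty to be already absorbed into Lemma~\ref{lemma:existence_nu_with_properties}, whose content is to rationally round a continuous $\xi\ll\lambda$ to an $n$-atom empirical measure that exactly realises a prescribed empirical $\cS$-marginal $\zeta$ near $\psi$ while staying $\delta$-close to $\xi$ (and to $\xi$'s $\cR$-marginal) and remaining absolutely continuous with respect to $\lambda$. Within the present argument the one point requiring care is that the set $\margi^{-1}(U\times\{\zeta\})\cap\cP_{emp}^n(\cR\times\cS)$ must be nonempty — otherwise the convention $\inf H(\emptyset)=\infty$ would spoil the supremum over $\zeta$ — and this is precisely what the constructed $\nu$ certifies, which is why $\delta$ must be chosen small enough that the $\cR$-marginal of $\nu$ still falls inside the open set $U$.
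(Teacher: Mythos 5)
Your proposal is correct and follows essentially the same route as the paper's proof: pick a finite-entropy $\xi\in\margi^{-1}(U\times\{\psi\})$, choose $\delta$ so that $B(\xi(\cdot\times\cS),\delta)\subset U$ and so that the continuity of $H(\cdot|\lambda)$ on its finiteness domain gives the $\epsilon$-bound, then invoke Lemma \ref{lemma:existence_nu_with_properties} with $V_0=B(\psi,\kappa)$ to produce the required $\nu\in\margi^{-1}(U\times\{\zeta\})\cap\cP_{emp}^n(\cR\times\cS)$. The nonemptiness issues you flag are exactly the ones the paper handles (via Lemmas \ref{lemma:large_enough_n_then_empricial_in_open_set} and \ref{lemma:existence_nu_with_properties}), so no gap remains.
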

\begin{proof}
We assume $\inf_{\nu \in \margi^{-1} ( U \times \{\psi\})  } H(\nu |\lambda)<\infty$. 
Let $\xi \in \margi^{-1}(U \times \{\psi\})$ be such that $H(\xi |\lambda)<\infty$. 
Let $\epsilon>0$. 
We show there exists a $V_0 \in \cN_\psi$ and an $N\in\N$ such that for all $n\ge N$
the set $\cP_{emp}^n(\cS)\cap V_0$ is not empty and for all $\zeta \in \cP_{emp}^n(\cS)\cap V_0$ 
 there exists a $\nu \in \margi^{-1}(U\times \{\zeta\})\cap \cP_{emp}^n(\cR \times \cS)$ with 
\begin{align}
\label{eqn:H_nu_minus_epsilon_le_H_xi_2}
H(\nu | \lambda) - \epsilon \le H(\xi | \lambda). 
\end{align}
Let $\delta$ be such that (see Lemma \ref{lemma:continuits_H_on_support_H})
\begin{align}
\label{eqn:ball_included_in_U}
& B( \xi(\cdot \times \cS),\delta) \subset U, \\
\label{eqn:upper_semicontinuits_on_domain_H}
& H(\nu | \lambda) - \epsilon \le H(\xi | \lambda) \qquad (\nu \in B(\xi,\delta)\cap [H(\cdot|\lambda) <\infty]). 
\end{align}
Then let $\kappa>0$ and $N\in\N$ be as in Lemma \ref{lemma:existence_nu_with_properties}. 
Let $V_0 = B(\psi,\kappa)$. 
By Lemma \ref{lemma:large_enough_n_then_empricial_in_open_set} we may assume that $N$ is large enough such that $\cP_{emp}^n (\cS) \cap V_0 \ne \emptyset$. 
Let $n\ge N$ and $\zeta \in \cP_{emp}^n (\cS) \cap V_0$. 
By Lemma \ref{lemma:existence_nu_with_properties} there exists a $\nu \in \cP_{emp}^n(\cR \times \cS)$ with $\pi \circ \margi(\nu) = \zeta$, $\nu \ll \lambda$ and 
$\nu(\cdot \times \cS) \in B( \xi(\cdot \times \cS), \delta)$, 
$\nu \in B(\xi,\delta)$, 
i.e., by \eqref{eqn:ball_included_in_U}, $\nu \in \margi^{-1}(U\times \{\zeta\})$. 
$\nu \ll \lambda$ implies $\nu \in [H(\cdot|\lambda)<\infty]$, thus with \eqref{eqn:upper_semicontinuits_on_domain_H} we obtain \eqref{eqn:H_nu_minus_epsilon_le_H_xi_2}. 
\end{proof}

%%%%%%%%%%%%%
%%%%%%%%%%%%%
\section{Examples}
\label{section:examples}
%%%%%%%%%%%%%
%%%%%%%%%%%%%

%\assump{In this section the large deviation bounds are with respect to the rates $(r_n)_{n\in\N}$ with $r_n =n$ for all $n\in\N$.}

In Section \ref{section:application_sanov_type} we showed that the regular conditional kernel $\eta_n$ as in \eqref{eqn:eta_n_for_sanov_type} satisfies 
\ref{item:forall_y_seq_cond_lower_bound}
and 
\ref{item:forall_y_seq_cond_upper_bound} 
of Theorem \ref{theorem:equivalent_notions_bounds_prodRCP}
by showing that 
\ref{item:uniform_condition_lower_bound} and
\ref{item:uniform_condition_upper_bound} of that theorem hold. 
This is not always the most optimal approach; in Example \ref{example:normal_distribution_conditioned} 
we show that for a specific example of Gaussian measures 
the expression of $\eta_n$
allows us to derive 
\ref{item:forall_y_seq_cond_lower_bound}
and 
\ref{item:forall_y_seq_cond_upper_bound} 
directly. 
%For some cases the kernel $\eta_n$ has an explicit formula, which can be used to apply known techniques to directly obtain the large deviation principle with rate function $x\mapsto J(x,y) - \inf J(\cX \times \{y\})$ for $(\eta_n(y_n,\cdot))_{n\in\N}$. 

Furthermore, relying on Theorem \ref{theorem:convex_combi_with_exp_distr_of_two_sequences}, in Example \ref{example:failure_1},
we give an example of a $(\eta_n)_{n\in\N}$ for which 
\ref{item:forall_y_seq_cond_lower_bound}
of Theorem \ref{theorem:equivalent_notions_bounds_prodRCP}
does not hold. 
In Remark \ref{remark:quenched_fail} 
we mention that for the one choice of measures in Example \ref{example:failure_1} 
a quenched large deviation principle is satisfied, while for the other choice of measures 
there is no quenched large deviation principle.
In Example \ref{example:failure_2} we show that for a choice of measures as in Example \ref{example:failure_1} the conditional regular kernel in a specific chosen point does not  satisfy any large deviation principle. 
In Remark \ref{remark:exponential_tightness} we discuss exponential tightness of the regular conditional kernel. 
In Remark \ref{remark:on_CoSc} we discuss the differences between the present paper and the paper of La Cour and Schieve \cite{CoSc15}.

\begin{example}
\label{example:normal_distribution_conditioned}
Let $r\ne 0$, $Z_n : = \int_{\R} \int_\R e^{-\frac{n}{2}  (x^2 - 2rxy +y^2)} \D x \D y$ and 
consider $(\mu_n)_{n\in\N}$ the sequence of probability measures on $\cB(\R \times \R)$ determined by 
\begin{align}
\mu_n(A \times B) = \frac{1}{Z_n} \int_{\R} \int_\R \1_{A\times B}(x,y) e^{-\frac{n}{2} (x^2 - 2rxy +y^2)} \D x \D y \qquad (A,B \in \cB(\R)). 
\end{align}
The sequence satisfies the large deviation principle with rate function $J: \R^2 \rightarrow [0,\infty]$ given by 
$%\begin{align}
J(x,y) = \tfrac12 (x^2 - 2rxy +y^2)
$. %\end{align}
By Theorem \ref{theorem:mixing_with_function_gives_weakly_continuous_kernel} $\eta_n$  given by 
\begin{align}
\eta_n(y,A) = \frac{\int_A e^{- \frac{n}{2} (x^2 - 2rxy)} \D x}{\int_\R e^{- \frac{n}{2} (x^2 - 2rxy)} \D x}
=  \frac{\int_A e^{- \frac{n}{2} (x- ry)^2} \D x}{\int_\R e^{- \frac{n}{2} (x- ry)^2} \D x},
\end{align}
is the weakly continuous product regular conditional probability under $\mu_n$ with respect to the projection on the $\cY$-coordinate. 
If $y_n \rightarrow y$, one can show that for $\lambda \in \R$ 
\begin{align}
\limn \tfrac1n \log \int_\R e^{n\lambda x} \DD [\eta_n(y_n,\cdot)](x)
\notag & =\limn \tfrac1n \log  \int_\R e^{n\lambda x} \DD [\eta_n(y,\cdot)](x) \\
& = \lambda r y + \tfrac12 \lambda^2.
\end{align}
Then by the G\"{a}rtner-Ellis Theorem (see for example Dembo and Zeitouni \cite[Theorem 2.3.6]{DeZe10}) we conclude that $(\eta_n(y_n,\cdot))_{n\in\N}$ satisfies the large deviation principle with the same rate function as the one of the large deviation principle of $(\eta_n(y,\cdot))_{n\in\N}$, which is $x\mapsto (x - ry )^2$.
Note that this equals $J(x,y) - \inf J(\R \times \{y\})$ because of the equality $x^2 - 2rxy +y^2= (x-ry)^2 + ( 1-r^2)y^2$. 
\end{example}

The proof of the following theorem can be found in Appendix \ref{section:proof_theorem_examples}. 

\begin{theorem}
\label{theorem:convex_combi_with_exp_distr_of_two_sequences}
Let $\cX$ and $\cY$ be separable metric spaces. 
Let $(\mu_n^1)_{n\in\N}$ and $(\mu_n^2)_{n\in\N}$ be sequences of probability measures on $\cB(\cX)$. 
Let $(\nu_n)_{n\in\N}$ be a sequence of probability measures on $\cB(\cY)$ that satisfies the large deviation principle with a rate function $L: \cY \rightarrow [0,\infty]$.
Suppose that $y\in \cY$ and $W_n\in \cN_y$ are such that $\bigcap_{n\in\N} W_n = \{y\}$ and 
$\alpha_n : \cY \rightarrow [0,1]$ is a continuous function with $\alpha_n(y) =0$ and $\alpha_n =1$ on $\cY \setminus W_n$ such that 
\begin{align}
\label{eqn:liminf_lambda_n_integrals_equal_zero}
\liminfn \tfrac1n \log ( \int_{W_n} \alpha_n \D \nu_n )=0, \qquad 
\liminfn \tfrac1n \log ( \int_{W_n} (1- \alpha_n) \D \nu_n)=0. 
\end{align}
% given by $\nu_n(B) = \int_0^\infty \1_B(y) n e^{-ny} \D y$. 
Assume $(\mu_n^1)_{n\in\N}$ satisfies the large deviation principle with rate function $I$. 
Assume furthermore that for all open $A\subset \cX$
\begin{align}
\label{eqn:logaritmic_inequality_mu_1_and_mu_2_liminf}
\liminfn \tfrac1n \log \mu_n^1(A) &\ge \liminfn \tfrac1n \log \mu_n^2(A) , \\
\limsupn \tfrac1n \log \mu_n^1(\cX \setminus A) &\ge \limsupn \tfrac1n \log \mu_n^2(\cX \setminus A).
\label{eqn:logaritmic_inequality_mu_1_and_mu_2_limsup}
\end{align}
Let $\mu_n$  be the probability measure on $\cB(\cX) \otimes \cB(\cY)$ for which for $A\in \cB(\cX)$, $B\in \cB(\cY)$ 
\begin{align}
\mu_n(A\times B) = \mu_n^1(A) \int_\cY \1_B \alpha_n  \D \nu_n
+ \mu_n^2(A) \int_\cY \1_B (1- \alpha_n)  \D \nu_n .
\end{align}
Then $(\mu_n)_{n\in\N}$ satisfies the large deviation principle with rate function $J: \cX \times \cY \rightarrow [0,\infty]$ given by $J(x,y) = I(x) + L(y)$. 
$\eta_n: \cY \times \cB(\cX) \rightarrow [0,1]$ defined by
\begin{align}
\eta_n(y,A) = \alpha_n(y) \mu_n^1(A) + (1- \alpha_n(y) ) \mu_n^2(A)
\end{align}
is the weakly continuous product regular conditional probability under $\mu_n$ with respect to $\pi : \cX \times \cY \rightarrow \cY$ given by $\pi(x,y)=y$.
\end{theorem}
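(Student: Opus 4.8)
The plan is to establish the large deviation principle for $(\mu_n)_{n\in\N}$ with rate function $J(x,y)=I(x)+L(y)$ and then to verify the statements about $\eta_n$, which are routine. Write $\theta_n^1,\theta_n^2$ for the subprobability measures on $\cB(\cY)$ with $\theta_n^1(B)=\int_\cY\1_B\alpha_n\D\nu_n$ and $\theta_n^2(B)=\int_\cY\1_B(1-\alpha_n)\D\nu_n$, so that $\mu_n=\mu_n^1\otimes\theta_n^1+\mu_n^2\otimes\theta_n^2$ and $\theta_n^1+\theta_n^2=\nu_n$; in particular $\mu_n\circ\pi^{-1}=\nu_n$. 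As $\cX$ and $\cY$ are separable metric spaces, $\cX\times\cY$ is second countable and $\cB(\cX)\otimes\cB(\cY)=\cB(\cX\times\cY)$, so, arguing as in the proof of Theorem~\ref{theorem:independent_coordinates} (via Theorem~\ref{theorem:reducing_ldp_bounds_to_smaller_set} and Lemma~\ref{lemma:normal_space_small_difference_with_smaller_open_set}), it suffices to check the large deviation lower bound on open rectangles $A\times B$ and the large deviation upper bound on sets of the form $\cX\times\cY\setminus\bigcup_{i=1}^k(A_i\times B_i)$ with all $A_i,B_i$ open.

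For the lower bound on an open rectangle $A\times B$ we must show $\liminfn\tfrac1n\log\mu_n(A\times B)\ge-\inf_A I-\inf_B L$. Here one combines the two estimates $\mu_n(A\times B)\ge\mu_n^1(A)\theta_n^1(B)$ and $\mu_n(A\times B)\ge\mu_n^2(A)\theta_n^2(B)$, the large deviation lower bound for $(\mu_n^1)_{n\in\N}$ with rate $I$, the comparison \eqref{eqn:logaritmic_inequality_mu_1_and_mu_2_liminf} of $(\mu_n^2)_{n\in\N}$ with $(\mu_n^1)_{n\in\N}$, and the identity $\theta_n^1+\theta_n^2=\nu_n$, which together reduce the problem to showing that the weighted measures $\theta_n^1$ and $\theta_n^2$ still obey a large deviation lower bound. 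On parts of $B$ that avoid $y$ this is immediate: since $\bigcap_nW_n=\{y\}$ and $\alpha_n\equiv1$ on $\cY\setminus W_n$, the measure $\theta_n^1$ coincides with $\nu_n$ (and $\theta_n^2$ vanishes) outside $W_n$, and one invokes the large deviation lower bound for $(\nu_n)_{n\in\N}$ with $L$. For open sets meeting every neighbourhood of $y$ one uses instead the non-degeneracy condition \eqref{eqn:liminf_lambda_n_integrals_equal_zero} on $\int_{W_n}\alpha_n\D\nu_n$ and $\int_{W_n}(1-\alpha_n)\D\nu_n$, together with $L\ge0$.

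For the upper bound, $\theta_n^i\le\nu_n$ gives $\mu_n(F)\le(\mu_n^1\otimes\nu_n)(F)+(\mu_n^2\otimes\nu_n)(F)$ for every Borel $F\subset\cX\times\cY$. By \eqref{eqn:logaritmic_inequality_mu_1_and_mu_2_limsup} and the large deviation upper bound for $(\mu_n^1)_{n\in\N}$ with $I$, the sequence $(\mu_n^2)_{n\in\N}$ also satisfies the large deviation upper bound with rate function $I$. For a product $\rho_n\otimes\nu_n$ in which $(\rho_n)_{n\in\N}$ satisfies the upper bound with $I$ and $(\nu_n)_{n\in\N}$ with $L$, the identity $(\rho_n\otimes\nu_n)(\overline A\times\overline B)=\rho_n(\overline A)\nu_n(\overline B)$ and the reduction used above yield the upper bound with $(x,y)\mapsto I(x)+L(y)$; applying this to $\rho_n=\mu_n^1$ and $\rho_n=\mu_n^2$ and summing, $(\mu_n)_{n\in\N}$ satisfies the large deviation upper bound with $J(x,y)=I(x)+L(y)$. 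Since $J$ is lower semicontinuous (a sum of lower semicontinuous functions of the two coordinates), the matching bounds give the large deviation principle with rate function $J$; that $J$ must be of this product form also follows from \cite[Theorem~2.18]{RaSe15}.

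It remains to treat $\eta_n$. Each $\eta_n(y,\cdot)=\alpha_n(y)\mu_n^1+(1-\alpha_n(y))\mu_n^2$ is a convex combination of probability measures, hence a probability measure, and $y\mapsto\eta_n(y,A)$ is continuous---so $\cB(\cY)$-measurable---because $\alpha_n$ is continuous; thus $\eta_n$ is a probability kernel. Since $\mu_n$ is a positive measure with $\mu_n\circ\pi^{-1}=\nu_n$, the defining identity \eqref{eqn:def_rcp_product} is just $\mu_n(A\times B)=\int_\cY\1_B(y)[\alpha_n(y)\mu_n^1(A)+(1-\alpha_n(y))\mu_n^2(A)]\D\nu_n(y)$, which holds by linearity; and $\eta_n$ is weakly continuous on all of $\cY$, in particular on $\supp(\mu_n\circ\pi^{-1})=\supp\nu_n$, because $y\mapsto\int_\cX f\D\eta_n(y,\cdot)=\alpha_n(y)\int_\cX f\D\mu_n^1+(1-\alpha_n(y))\int_\cX f\D\mu_n^2$ is continuous for every $f\in C_b(\cX)$. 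The main obstacle is the lower bound near $y$: one must show that re-weighting $\nu_n$ by $\alpha_n$, or by $1-\alpha_n$, does not destroy the large deviation lower bound at $y$ itself, and handling the relative positions of the neighbourhoods $W_n$ inside arbitrarily small neighbourhoods of $y$---which is exactly where the conditions \eqref{eqn:liminf_lambda_n_integrals_equal_zero} enter---is the technical heart of the argument.
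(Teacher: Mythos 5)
Your proposal is correct and takes essentially the same route as the paper's proof in Appendix \ref{section:proof_theorem_examples}: reduction via Theorem \ref{theorem:reducing_ldp_bounds_to_smaller_set} to open rectangles $A\times B$ with $B$ either a ball around $y$ or a ball bounded away from $y$, the lower bound by the same dichotomy (using \eqref{eqn:liminf_lambda_n_integrals_equal_zero} together with $\inf L(B)\ge 0$ when $W_n$ is eventually contained in $B$, and $\mu_n(A\times B)=\mu_n^1(A)\nu_n(B)$ when $W_n$ is eventually disjoint from $B$), and the upper bound by splitting the complement of a rectangle into the two slabs and transferring the upper bound from $\mu_n^1$ to $\mu_n^2$ via \eqref{eqn:logaritmic_inequality_mu_1_and_mu_2_limsup}. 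The only cosmetic difference is that you route the upper bound through the domination $\mu_n\le\mu_n^1\otimes\nu_n+\mu_n^2\otimes\nu_n$ and Lemma \ref{lemma:finite_sum_in_log_becomes_max_in_limsup} rather than estimating $\mu_n((\cX\setminus A)\times\cY)$ directly, and you spell out the (routine) verification that $\eta_n$ is the weakly continuous product regular conditional probability, which the paper leaves to the reader.
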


 Note that $I(x) = J(x,y) - \inf J(\cX \times \{y\})$ for all $x\in \cX, y\in \cY$.

\begin{examples}
\label{examples:examples_of_nu_n_satisfying_conditions}
We give examples of $\cY, W_n, \alpha_n, \nu_n$ and $L$ such that \eqref{eqn:liminf_lambda_n_integrals_equal_zero} of Theorem \ref{theorem:convex_combi_with_exp_distr_of_two_sequences} is satisfied and $(\nu_n)_{n\in\N}$ satisfies the large deviation principle with rate function $L$. 
\begin{enumerate}
\item 
\label{item:nu_n_exponential_and_convex_with_dirac}
Let $\cY = [0,\infty)$, $\alpha_n (y) = \min\{ny,1\}$ for $y\in \cY$ and let $\nu_n(B) = \int_0^\infty \1_B(y) n e^{-ny} \D y$ for $B\in \cB([0,\infty))$. 
Then $\int_0^\frac1n \alpha_n  \D \nu_n = 1-2e^{-1}$ and $\int_0^\frac1n (1-\alpha_n) \D \nu_n = e^{-1}$. % and $\int_0^\frac1n \D \nu_n= 1-e^{-1}$. 
Therefore with this $\nu_n$, $\alpha_n$ and $W_n = [-\frac1n,\frac1n]$ \eqref{eqn:liminf_lambda_n_integrals_equal_zero} is satisfied. 
%Therefore both 
%$\liminfn \tfrac1n \log ( \int_0^\frac1n \alpha_n \D \nu_n )$ and 
%$\liminfn \tfrac1n \log ( \int_0^{\frac1n} (1- \alpha_n) \D \nu_n)$ equal $0$. 
Moreover $(\nu_n)_{n\in\N}$  satisfies the large deviation principle with rate function $L :\cY \rightarrow [0,\infty]$,  $L(y) =y$ (this follows from example by the G\"{a}rtner-Ellis Theorem \cite[Theorem 2.3.6]{DeZe10}). 
\item 
\label{item:nu_n_normal_and_convex_with_dirac}
Let $\cY = \R$ and  $\nu_n =\mu_{\cN(0,\frac1n)}$ (the Gaussian measure corresponding to a $\cN(0,\frac1n)$ distributed random variable). 
Then there exists a decreasing sequence $(\epsilon_n)_{n\in\N}$ in $(0,\infty)$ with $\epsilon_n \downarrow 0$, such that with $W_n = [-\epsilon_n,\epsilon_n]$ there exist functions $\alpha_n$ as in Theorem \ref{theorem:convex_combi_with_exp_distr_of_two_sequences} such that 
\eqref{eqn:liminf_lambda_n_integrals_equal_zero} is satisfied (see the postscript). 
With $\nu_n^0 = \frac12 \delta_0 + \frac12 \nu_n$ instead of $\nu_n$, \eqref{eqn:liminf_lambda_n_integrals_equal_zero} is also satisfied. 
Moreover, $(\nu_n)_{n\in\N}$ and  $(\nu_n^0)_{n\in\N}$ (use Lemma \ref{lemma:finite_sum_in_log_becomes_max_in_limsup}) satisfy the large deviation principle with rate function $L :\cY \rightarrow [0,\infty]$,  $L(y) = \frac12 y^2$. \\
\underline{Postscript}.
Let $\beta =  \nu_1([-1,1])$. 
Let $\kappa_n = \frac{1}{\sqrt n}$. 
Then $\nu_n[-\kappa_n,\kappa_n] = \beta$ for all $n\in\N$. 
Let $\phi_\epsilon: \R \rightarrow [0,1]$ be defined by 
$\phi_\epsilon (z) = \min\{\epsilon^{-1} |z|,1\}$. 
Then 
$\lim_{\epsilon \downarrow 0} \int_{[-\kappa_n,\kappa_n]} \phi_\epsilon \D \nu_1 = \beta$, 
$\lim_{\epsilon \downarrow 0} \int_{[-\kappa_n,\kappa_n]} 1- \phi_\epsilon \D \nu_1 =0$
 and 
\begin{align}
\int_{[-\kappa_n,\kappa_n]} \phi_{\kappa_n} \D \nu_n < \int_{[-\kappa_n,\kappa_n]} 1- \phi_{\kappa_n} \D \nu_n.
\end{align}
Therefore, for all $n\in\N$, there exists an $\epsilon_n \in (0,\kappa_n)$ such that 
\begin{align}
\int_{[-\kappa_n,\kappa_n]} \phi_{\epsilon_n} \D \nu_n = \tfrac12 \beta = \int_{[-\kappa_n,\kappa_n]} 1- \phi_{\epsilon_n} \D \nu_n,
\end{align}

%because $\lim_{\epsilon \downarrow 0} \phi_\epsilon(z) =1$ for all $z \ne 0$, 

With $\alpha_n = \phi_{\epsilon_n}$, \eqref{eqn:liminf_lambda_n_integrals_equal_zero} as in Theorem \ref{theorem:convex_combi_with_exp_distr_of_two_sequences} is satisfied. 
\end{enumerate}
\end{examples}

\begin{example}
\label{example:failure_1}
With $\cX= \R$, $\mu_n^1 = \mu_{\cN(0,\frac1n)}$,
$\mu_n^2 = \delta_{\frac1n}$ and $I(x) = \frac12 x^2$ for $x\in \R$ and 
$\cY,\nu_n$ (or $\nu_n^0$), $\alpha_n$, $W_n$ and $L$ as in
Examples \ref{examples:examples_of_nu_n_satisfying_conditions}
\ref{item:nu_n_exponential_and_convex_with_dirac} or \ref{item:nu_n_normal_and_convex_with_dirac}
the conditions of Theorem \ref{theorem:convex_combi_with_exp_distr_of_two_sequences} are satisfied (note that $(\delta_{\frac1n})_{n\in\N}$ satisfies the large deviation principle with rate function $H: \R \rightarrow [0,\infty]$ given by $H(0)=0$ and $H(x) =\infty$ for $x\ne 0$). 

Then $\eta_n(0,\cdot) = \delta_{\frac1n}$ and $\eta_n(\epsilon_n,\cdot) = \mu_{\cN(0,\frac1n)}$ for all $n\in\N$. Whence $(\eta_n(0,\cdot))_{n\in\N}$ satisfies the large deviation principle with rate function $H$ and 
$(\eta_n(\epsilon_n,\cdot))_{n\in\N}$ (and also $(\eta_n(y,\cdot))_{n\in\N}$ for $y>0$) satisfies the large deviation principle with rate function $I$. 
Because $I \ge H$, the sequence $(\eta_n(0,\cdot))_{n\in\N}$ satisfies the large deviation upper bound not only with $I$ but also with $H$ instead of $I$. 
Therefore {\ref{item:ldp_upper_bound}} of Theorem \ref{theorem:lower_and_upper_bound_eta_n_convergent_y_n} holds in case $y_n =0$ for all $n$. 
Since $(\eta_n(0,\cdot))_{n\in\N}$ does not satisfy the large deviation principle with rate function $I$, 
 {\ref{item:ldp_lower_bound}} of 
Theorem \ref{theorem:lower_and_upper_bound_eta_n_convergent_y_n}
does not hold. Therefore for any decreasing sequence $(V_m)_{m\in\N}$
in $\cN_0$ with $\bigcap_{m\in\N} V_m= \{0\}$ there exists an open set $U$ with $\inf I(U) <\infty$ with 
\begin{align}
\liminfn \limsupm \tfrac1n \log \mu_n( U \times \cY | \cX \times V_m) < -\inf I(U). 
\end{align}
We illustrate this for $\cY,\alpha_n, W_n, \nu_n $ and $L$ as in Examples \ref{examples:examples_of_nu_n_satisfying_conditions}\ref{item:nu_n_exponential_and_convex_with_dirac}:
For $V_m=[0,\tfrac1m)$, $U=(1,\infty)$ we get for $m\ge n$
\begin{align}
\mu_n(U \times V_m)  & = \mu_{\cN(0,\frac1n)}(U) \int_0^\frac1m n y\cdot n e^{-ny} \D y, \\
\mu_n(\cX \times V_m) & = \int_0^\frac1m n e^{-ny} \D y.
\end{align}
Since $\int_0^\frac1m n y\cdot n e^{-ny} \D y \le \frac{n}{m} \int_0^\frac1m  n e^{-ny} \D y$ we get
\begin{align}
\mu_n(U\times \cY |\cX \times V_m) \le \tfrac{n}{m} \mu_{\cN(0,\frac1n)}(U)
\end{align}
which converges to zero as $m\rightarrow \infty$, which implies
\begin{align}
\limsupm  \tfrac1n \log \mu_n(U\times \cY |\cX \times V_m) = -\infty < - \tfrac12 = - \inf I(U). 
\end{align}
\end{example}

\begin{remark}[Quenched large deviations]
\label{remark:quenched_fail}
Consider the situation as in Example \ref{example:failure_1}. 
%By Theorem \ref{theorem:almost_everywhere_uniqueness_RCP}(b) we have 
For all $n\in\N$ we have the following. If
 $\zeta_n: \cY \times \cB(\cX) \rightarrow [0,1]$ is a product regular conditional probability under $\mu_n$ with respect to $\pi$, then $\zeta_n(y,\cdot) = \eta_n(y,\cdot)$ for $[\mu_n \circ \pi^{-1}]$-almost all $y$ (see Remark \ref{remark:almost_everywhere_uniqueness}).

Whence, with $\nu_n$ as in Examples \ref{examples:examples_of_nu_n_satisfying_conditions} \ref{item:nu_n_exponential_and_convex_with_dirac} or \ref{item:nu_n_normal_and_convex_with_dirac}, we have a quenched large deviation principle of the conditional probability with respect to the second coordinate with rate function $I$; for every product regular conditional probability 
$\zeta_n$ under $\mu_n$ with respect to $\pi$ there exists a $Z\subset \cY$ with $\mu_n \circ \pi^{-1}(Z) = \nu_n(Z)=1$ such that $(\zeta_n(y,\cdot))_{n\in\N}$ satisfies the large deviation principle with rate function $I$ for all $y\in Z$. 

However, with $\nu_n^0$ 
as in Examples \ref{examples:examples_of_nu_n_satisfying_conditions}\ref{item:nu_n_normal_and_convex_with_dirac} instead of $\nu_n$ for such $\zeta$ one has $\zeta_n(0,\cdot) = \eta_n(0,\cdot)$ as $\nu_n^0(\{0\}) >0$. Thus in this case we do not have such a quenched large deviation principle. 
\end{remark}

\begin{example}
\label{example:failure_2}
With $\cX = \N$, $\mu_n^1 = \sum_{k\in\N} 2^{-k} \delta_k$, $\mu_n^2= \delta_n$ and $I(x) =0$ for $x\in \N$ as in  Example \ref{example:failure_1}, and $\cY, W_n, \alpha_n, \nu_n$ and $L$ as in
Examples \ref{examples:examples_of_nu_n_satisfying_conditions}\ref{item:nu_n_exponential_and_convex_with_dirac} or \ref{item:nu_n_normal_and_convex_with_dirac}, the conditions of Theorem \ref{theorem:convex_combi_with_exp_distr_of_two_sequences} are satisfied. 
In this case $(\eta_n(0,\cdot))_{n\in\N}$ does not satisfy a large deviation principle. 
\end{example}

\begin{remark}
\label{remark:exponential_tightness}
\textbf{(Exponential tightness of the regular conditional kernel).} \\
Considering the situation as in Theorem \ref{theorem:convex_combi_with_exp_distr_of_two_sequences}, we would like to mention that if $(\mu_n^1)_{n\in\N}$ is exponentially tight, then so is $(\mu_n)_{n\in\N}$ since $\mu_n(K_1^c\times K_2^c) = \mu_n^1(K_1^c) \nu_n(K_2^c)$ for large $n$ and (compact) $K_1\subset \cX, K_2\subset \cY$. Similarly $(\eta_n(y,\cdot))_{n\in\N}$ is exponentially tight for all $y>0$ since $\eta_n(y,K^c) = \mu_n^1(K^c)$ for large $n$ and compact $K\subset \cX$. 
However, as is the case in Example \ref{example:failure_2}, $(\eta_n(y_n,\dot))_{n\in\N}$ need not be exponentially tight for all converging sequences $(y_n)_{n\in\N}$ (e.g., if $(\mu_n^2)_{n\in\N}$ is not exponentially tight, then $(\eta_n(0,\cdot))_{n\in\N}$ is neither). 
\end{remark}

\begin{remark}
\label{remark:on_CoSc}
Example \ref{example:failure_1} with $\nu_n$ (or $\nu_n^0$) and $\alpha_n$ as in Examples \ref{examples:examples_of_nu_n_satisfying_conditions}\ref{item:nu_n_normal_and_convex_with_dirac} fits the assumptions
 made in Section 4 of La Cour and Schieve \cite{CoSc15}.\footnote{The logarithmic moment generating function (see Dembo and Zeitouni \cite[Assumption 2.3.2]{DeZe10}) is given by $(x,y) \mapsto \frac12 x^2 + \frac12 y^2$, whence the Hessian of it equals the identity matrix and is therefore invertible. 
In \cite{CoSc15} is mentioned that one can not proceed the conditioning on all elements, 
but only those that equal the derivative of $y\mapsto \frac12 y^2$ at a certain point are considered, of which $0$ is an example.} 
In that paper it is claimed 
that the law of the first coordinate conditioned on the second coordinate
satisfies the large deviation principle with the rate function $I$.
Their notion of conditioning on $y$ is ``condition on an arbitrarily small neighbourhood around $y$''.
This approach needs to be justified. Our results are different, as by Example \ref{example:failure_1} the conditioned kernel in $0$, $\eta_n(0,\cdot)$ does not satisfy the large deviation principle with the rate function $I$ (even in the sense of quenched large deviations as discussed in Remark \ref{remark:quenched_fail}). 
\end{remark}

\appendix

%\begin{appendices}
%\begin{subappendices} %%% TERUGZETTEN!!!

%%%%%%%%%%%%%%%
\section{An elementary fact about limsup and liminf}
\label{appendix:elemtary_fact}
%%%%%%%%%%%%%%%

\begin{lemma}
\label{lemma:finite_sum_in_log_becomes_max_in_limsup}
Let $k\in\N$ and $a_n^i\in [0,\infty)$ for all $n\in\N$ and $i\in \{1,\dots,k\}$. 
If there exists an $N\in\N$ such that $\max_{i\in \{1,\dots,k\}} a_n^i >0$ for all $n\ge N$, then \footnote{Equation \eqref{eqn:limsup_sum_and_max} can also be found in Dembo and Zeitouni \cite[Theorem 1.2.15]{DeZe10}}
\begin{align}
\label{eqn:limit_difference_sum_and_max}
   &  \limn  \left( \traten \log \bigg(\sum_{i=1}^k a_n^i \bigg) - \max_{i\in \{1,\dots,k\}} \traten \log a_n^i\right)=0,  \\
\label{eqn:limsup_sum_and_max}
  &  \limsupn\traten \log \bigg(\sum_{i=1}^k a_n^i \bigg) = \max_{i\in \{1,\dots,k\}} \limsupn\traten \log a_n^i, \\
\label{eqn:liminf_sum_and_max}
  &  \liminfn \traten \log\bigg(\sum_{i=1}^k a_n^i \bigg) = \max_{i\in \{1,\dots,k\}} \liminfn \traten \log a_n^i.
\end{align}
\end{lemma}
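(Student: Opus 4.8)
The plan is to sandwich the sum between the maximum and $k$ times the maximum and then pass to $\traten\log$. Since every $a_n^i$ is nonnegative,
\[
\max_{1\le i\le k} a_n^i \;\le\; \sum_{i=1}^k a_n^i \;\le\; k\,\max_{1\le i\le k} a_n^i \qquad (n\in\N).
\]
For $n\ge N$ the outer members are strictly positive, so each of the three quantities has a well-defined $\traten\log$ with values in $[-\infty,\infty)$ (using $\log 0=-\infty$ for the individual terms that vanish), and, as $t\mapsto\log t$ is increasing and the maximum is attained at a strictly positive term, $\traten\log\max_i a_n^i=\max_i\traten\log a_n^i$. Thus, for $n\ge N$,
\[
\max_{1\le i\le k}\traten\log a_n^i\;\le\;\traten\log\Big(\sum_{i=1}^k a_n^i\Big)\;\le\;\traten\log k+\max_{1\le i\le k}\traten\log a_n^i .
\]
Hence the middle term differs from $\max_i\traten\log a_n^i$ by a quantity lying in $[0,\traten\log k]$, which tends to $0$; this proves \eqref{eqn:limit_difference_sum_and_max}.

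For \eqref{eqn:limsup_sum_and_max} and \eqref{eqn:liminf_sum_and_max} I would combine \eqref{eqn:limit_difference_sum_and_max} with the observation that two sequences, each real-valued for large $n$, whose difference converges to $0$ have the same $\limsup$ and the same $\liminf$; hence $\limsupn\traten\log(\sum_i a_n^i)=\limsupn\big(\max_i\traten\log a_n^i\big)$ and similarly with $\liminfn$. It then remains to invoke the elementary fact that for a finite family of sequences $(x_n^i)_{n\in\N}$ the operation $\max_{1\le i\le k}$ commutes with $\limsupn$ (and correspondingly with $\liminfn$): the inequality $\limsupn\max_i x_n^i\ge\max_i\limsupn x_n^i$ is immediate from $\max_i x_n^i\ge x_n^j$ for each fixed $j$, and the reverse follows by passing to a subsequence along which $\max_i x_n^i$ tends to its $\limsup$ and, refining by pigeonhole, fixing a single index $i_0$ that attains the maximum throughout, so that $\max_i\limsupn x_n^i\ge\limsupn x_n^{i_0}$ dominates that subsequential limit; the $\liminf$ version is obtained in the same manner. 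Specialising to $x_n^i=\traten\log a_n^i$ yields \eqref{eqn:limsup_sum_and_max} and \eqref{eqn:liminf_sum_and_max}.

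The argument is short, and the only point I would watch is the interplay between the standing hypothesis ``$\max_i a_n^i>0$ for $n\ge N$'' and the convention $\log 0=-\infty$: one must make sure the identity $\traten\log\max_i a_n^i=\max_i\traten\log a_n^i$ and all the subsequent $\limsup$/$\liminf$ manipulations are carried out in the extended reals, so that terms $a_n^i$ equal to $0$ (contributing $-\infty$) cause no trouble. I expect this bookkeeping, rather than any genuine analytic difficulty, to be the main thing to get right.
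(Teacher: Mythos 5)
Your proof is correct and rests on exactly the same inequality the paper uses, namely $\max_i a_n^i \le \sum_{i=1}^k a_n^i \le k\max_i a_n^i$, from which all three identities follow after taking $\traten\log$; the extra care you take with the interchange of $\max$ and $\limsup$/$\liminf$ over a finite index set, and with the terms equal to $-\infty$, is exactly the bookkeeping the paper leaves implicit.
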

\begin{proof}
\eqref{eqn:limit_difference_sum_and_max},\eqref{eqn:limsup_sum_and_max} and \eqref{eqn:liminf_sum_and_max} follow from the inequality 
\begin{align}
\label{eqn:max_le_sum_le_k_times_max}
\max_{i\in \{1,\dots,k\}} \tfrac1n \log a_n^i 
	\le \tfrac1n \log \bigg( \sum_{i=1}^k a_n^i  \bigg)
\notag &	\le  \tfrac1n \log ( k \max_{i\in \{1,\dots,k\}} a_n^i) \\
 & 	\le \tfrac1n \log k +  \max_{i\in \{1,\dots,k\}} \tfrac1n \log ( a_n^i).
\end{align}
\
\end{proof}

%%%%%%%%%%%%%%%
\section{Sufficient bounds for large deviation bounds}
\label{appendix:sufficient_bounds}
%%%%%%%%%%%%%%%

\assump{Let $\cX$ be a topological space. 
Let $I : \cX \rightarrow [0,\infty]$ have compact sublevel sets. 
Let $(\mu_n)_{n\in\N}$ be a sequence of probability measures on $\cB(\cX)$. 
}

\begin{lemma}
\label{lemma:supn_of_inf_decreasing_closed_sets}
Let $(F_m)_{m\in\N}$ be a decreasing sequence of closed sets with $F= \bigcap_{m\in\N} F_m$. 
Then 
\begin{align}
\supm \inf I(F_m) = \inf I(F). 
\end{align}
\end{lemma}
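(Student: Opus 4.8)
The statement is the standard fact that for a decreasing sequence of closed sets $(F_m)_{m\in\N}$ with intersection $F$, the infima $\inf I(F_m)$ increase to $\inf I(F)$, provided $I$ has compact sublevel sets. I would prove this in two inequalities.

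First, the easy direction: since $F \subset F_m$ for every $m$, we have $\inf I(F) \ge \inf I(F_m)$ for every $m$, hence $\inf I(F) \ge \sup_{m\in\N} \inf I(F_m)$. (Note the sequence $\inf I(F_m)$ is nondecreasing in $m$ because the $F_m$ are decreasing, so the supremum is really a limit, but that observation is not needed.)

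For the reverse inequality, write $\alpha := \sup_{m\in\N} \inf I(F_m)$. If $\alpha = \infty$ there is nothing to prove, so assume $\alpha < \infty$. Fix $\epsilon > 0$. For each $m$ we have $\inf I(F_m) \le \alpha$, so $F_m \cap [I \le \alpha + \epsilon] \ne \emptyset$. The set $[I \le \alpha + \epsilon]$ is compact by hypothesis, and the sets $F_m \cap [I \le \alpha + \epsilon]$ form a decreasing sequence of nonempty closed subsets of this compact set; hence they have the finite intersection property, and by compactness $\bigcap_{m\in\N} \bigl( F_m \cap [I \le \alpha + \epsilon] \bigr) \ne \emptyset$. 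But this intersection equals $F \cap [I \le \alpha + \epsilon]$, so there is a point $x \in F$ with $I(x) \le \alpha + \epsilon$, giving $\inf I(F) \le \alpha + \epsilon$. Letting $\epsilon \downarrow 0$ yields $\inf I(F) \le \alpha$, completing the proof.

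This argument is essentially identical in structure to the proof of Lemma~\ref{lemma:convergence_infimum_rate_function_on_schrinking_neighbourhoods} given earlier in the paper (taking $\tau$ to be the identity there, or simply mimicking its compactness step), so there is no real obstacle; the only point requiring the hypothesis on $I$ is the use of compactness of $[I \le \alpha + \epsilon]$ to pass from finite intersections being nonempty to the full intersection being nonempty. I would present the proof in roughly the following form:

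\begin{proof}
Since $F \subset F_m$ for all $m$, we have $\inf I(F) \ge \inf I(F_m)$ for all $m$, hence $\inf I(F) \ge \supm \inf I(F_m)$. For the converse, set $\alpha := \supm \inf I(F_m)$. If $\alpha = \infty$ there is nothing to prove, so assume $\alpha < \infty$ and fix $\epsilon > 0$. For each $m$, since $\inf I(F_m) \le \alpha$, the set $F_m \cap [I \le \alpha + \epsilon]$ is nonempty. These sets form a decreasing sequence of closed subsets of the compact set $[I \le \alpha + \epsilon]$, so $\bigcap_{m\in\N} \bigl( F_m \cap [I \le \alpha + \epsilon] \bigr) \ne \emptyset$. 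Since this intersection equals $F \cap [I \le \alpha + \epsilon]$, we get $\inf I(F) \le \alpha + \epsilon$. As $\epsilon > 0$ was arbitrary, $\inf I(F) \le \alpha$.
\end{proof}
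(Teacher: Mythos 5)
Your proof is correct and follows essentially the same compactness argument as the paper's own proof. The only (cosmetic) difference is that the paper intersects the $F_m$ directly with $[I\le c]$ (implicitly using that the infimum of $I$ over a closed set is attained when sublevel sets are compact), whereas you work with $[I\le \alpha+\epsilon]$ and let $\epsilon\downarrow 0$, exactly as in the paper's proof of Lemma \ref{lemma:convergence_infimum_rate_function_on_schrinking_neighbourhoods}.
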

\begin{proof}
Let $c:= \supm \inf I(F_m) $. Note that $c\le \inf I(F)$. 
If $c=\infty$ there is nothing to prove. 
Assume that $c <\infty$. Let $K$ be the compact set $[I\le c]$. 
Then $F_m \cap K \ne \emptyset $ for all $m\in\N$, whence $F\cap K \ne \emptyset$ and thus $\inf I(F) \le c$. 
\end{proof}

\begin{obs}
For Lemma \ref{lemma:supn_of_inf_decreasing_closed_sets} the condition that $I$ has compact sublevel sets is not redundant. 
For example: 
Let $I: \N\cup \{0\} \rightarrow [0,\infty]$ be given by $I(0)=1$ and $I(x) = 0$ for $x\in \N$.
Then for $F_m = \{0\} \cup \{m,m+1,\dots\}$ and $F=\{0\}$ one has $\supm \inf I(F_m)=0$ and $\inf I(F)=1$. 
\end{obs}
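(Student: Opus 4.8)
The statement is a counterexample, so the verification ``proof'' amounts to checking that the displayed data satisfy every hypothesis of Lemma~\ref{lemma:supn_of_inf_decreasing_closed_sets} apart from the one singled out, while contradicting its conclusion. First I would make the ambient topology explicit: equip $\cX = \N \cup \{0\}$ with the discrete topology (equivalently, the subspace topology inherited from $\R$, which is discrete since each point is isolated), so that \emph{every} subset of $\cX$ is closed. Then $F_m = \{0\} \cup \{m, m+1, \dots\}$ and $F = \{0\}$ are closed, the inclusions $F_{m+1} \subset F_m$ are immediate so $(F_m)_{m\in\N}$ is decreasing, and since every $k \in \N$ fails to lie in $F_{k+1}$ while $0$ lies in each $F_m$, we get $\bigcap_{m\in\N} F_m = \{0\} = F$. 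So the hypotheses of the lemma on the sets are met verbatim.

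Next I would evaluate both sides. For each $m \ge 1$ the point $m$ lies in $F_m$ and $I(m) = 0$, hence $\inf I(F_m) = 0$, and therefore $\supm \inf I(F_m) = 0$; on the other hand $\inf I(F) = I(0) = 1$. Thus $\supm \inf I(F_m) = 0 < 1 = \inf I(F)$, so the conclusion of Lemma~\ref{lemma:supn_of_inf_decreasing_closed_sets} fails. It is worth noting which half breaks: the inequality $\supm \inf I(F_m) \le \inf I(F)$ still holds automatically (from $F \subset F_m$, so $\inf I(F) \ge \inf I(F_m)$ for all $m$), so what the example destroys is precisely the reverse inequality, which is the part of the lemma's proof that used compactness.

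Finally I would confirm that the sole hypothesis that is violated is the compactness of sublevel sets: $I$ has values in $[0,\infty]$ and, in the discrete topology, is trivially lower semicontinuous, but $[I \le 0] = \N$ is an infinite discrete space and hence not compact. This is exactly what the observation asserts. There is no genuine obstacle here; the only point that deserves a word of care is fixing the topology at the outset, because both ``$F_m$ is closed'' and ``$[I\le 0]$ is not compact'' hinge on it, and it is the failure of the latter that the example is built to display.
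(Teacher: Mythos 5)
Your verification is correct and is essentially the paper's own (the paper simply states the example without spelling out the checks, and your computations — $\inf I(F_m)=0$ since $m\in F_m$ with $I(m)=0$, versus $\inf I(F)=I(0)=1$ — are exactly the intended ones). The added care about fixing the discrete topology and identifying $[I\le 0]=\N$ as the non-compact sublevel set is a sensible, if routine, completion of what the paper leaves implicit.
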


\begin{lemma} 
\label{lemma:lower_bound_of_bigcup_upper_of_bigcap}\
\begin{enumerate}
\item $\cU$ be a set of open subsets of $\cX$. Suppose that for all $G\in \cU$
\begin{align}
\label{eqn:ldp_lower_bound}
 \liminfn \traten \log \mu_n(G) \ge -\inf  I(G) .
\end{align}
Then $G= \bigcup \cU$ satisfies \eqref{eqn:ldp_lower_bound} as well.\footnote{This can also be found in O'Brien \cite[Proposition 2.1]{OBr96}.}
\item Let $F_1,F_2,\dots$ be closed. Suppose that for all $F\in \{F_m:m\in\N\}$
\begin{align}
\label{eqn:ldp_upper_bound}
 \limsupn \traten \log \mu_n(F) \le -\inf I(F) .
\end{align}
Then $F= \bigcap_{m\in\N} F_m$ satisfies \eqref{eqn:ldp_upper_bound} as well. 
\end{enumerate}
\end{lemma}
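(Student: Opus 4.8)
Both parts follow from monotonicity of the $\mu_n$ together with the behaviour of $\inf I$ under unions (for (a)) and under decreasing intersections (for (b)); the compact sublevel sets of $I$ enter only in part (b), through Lemma~\ref{lemma:supn_of_inf_decreasing_closed_sets}. For (a), put $G=\bigcup\cU$. For every $G_0\in\cU$ one has $\mu_n(G)\ge\mu_n(G_0)$ for all $n$, hence $\liminfn\traten\log\mu_n(G)\ge\liminfn\traten\log\mu_n(G_0)\ge-\inf I(G_0)$ by assumption. Since every point of $G$ lies in some member of $\cU$, $\inf I(G)=\inf_{G_0\in\cU}\inf I(G_0)$, and taking the supremum over $G_0\in\cU$ gives $\liminfn\traten\log\mu_n(G)\ge\sup_{G_0\in\cU}\bigl(-\inf I(G_0)\bigr)=-\inf I(G)$, i.e.\ \eqref{eqn:ldp_lower_bound} holds for $G$. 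This uses neither the topology of $\cX$ nor the compactness of the sublevel sets.

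For (b), put $F=\bigcap_{m\in\N}F_m$. Since $F\subset F_m$ and $\mu_n$ is monotone, the hypothesis \eqref{eqn:ldp_upper_bound} applied to $F_m$ yields $\limsupn\traten\log\mu_n(F)\le\limsupn\traten\log\mu_n(F_m)\le-\inf I(F_m)$ for every $m$, whence $\limsupn\traten\log\mu_n(F)\le-\supm\inf I(F_m)$. Reading the statement with $(F_m)_{m\in\N}$ a decreasing sequence of closed sets (the setting of Lemma~\ref{lemma:supn_of_inf_decreasing_closed_sets}), that lemma, whose proof uses the compact sublevel sets of $I$, gives $\supm\inf I(F_m)=\inf I(F)$; hence \eqref{eqn:ldp_upper_bound} holds for $F$.

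The only delicate point is the identity $\supm\inf I(F_m)=\inf I(F)$ used in (b): without compactness it may fail (cf.\ the remark following Lemma~\ref{lemma:supn_of_inf_decreasing_closed_sets}), and for a general, non-decreasing sequence of closed sets the left-hand side may be strictly smaller than the right-hand side, so one works with a decreasing family there. Apart from this, the argument is routine manipulation of $\liminf$, $\limsup$, $\sup$ and $\inf$, using the convention $\log 0=-\infty$.
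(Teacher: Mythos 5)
Your proof is correct and follows essentially the same route as the paper's: monotonicity of $\mu_n$ in both parts, the identity $\inf I(\bigcup\cU)=\inf_{G\in\cU}\inf I(G)$ for (a), and Lemma~\ref{lemma:supn_of_inf_decreasing_closed_sets} for (b). Your observation that (b) must be read with $(F_m)_{m\in\N}$ decreasing (or replaced by the partial intersections) is a point the paper leaves implicit but is exactly how it invokes Lemma~\ref{lemma:supn_of_inf_decreasing_closed_sets}.
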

\begin{proof}
\begin{align}
 \liminfn \traten \log 
 \mu_n \big(\bigcup \cU \big) 
 \ge \sup_{G\in \cU}  \liminfn \traten \log 
 \mu_n(G) \ge
 \sup_{G\in \cU} (-\inf  I(G)) , \\
 \limsupn \traten \log 
\mu_n \Big(\bigcap_{m\in\N} F_m \Big) 
 \le \inf_{m\in\N}  \limsupn  \traten \log \mu_n(F_m) 
 \le \inf_{m\in\N}  (-\inf  I(F_m)) . 
\end{align}
Now apply Lemma \ref{lemma:supn_of_inf_decreasing_closed_sets}.
\end{proof}

%\begin{lemma} 
%\label{lemma:lower_bound_of_bigcup_upper_of_bigcap}
%Let $\cA$ be a $\sigma$-algebra on $\cX$. 
%\begin{enumerate}
%\item $\cU \subset \cA$. Suppose that for all $A\in \cU$
%\begin{align}
%\label{eqn:ldp_lower_bound}
% \liminfn \traten \log \mu_n(A) \ge -\inf  I(A^\circ) .
%\end{align}
%Then $A= \bigcup \cU$ satisfies \eqref{eqn:ldp_lower_bound} as well\footnote{This can also be found in \cite[Proposition 2.1]{OBr96}.}.
%\item Let $\cU \subset \cA $ be countable. Suppose that for all $A\in \cU$
%\begin{align}
%\label{eqn:ldp_upper_bound}
% \limsupn \traten \log \mu_n(A) \le -\inf I(\overline \cU) .
%\end{align}
%Then $A= \bigcap \cU$ satisfies \eqref{eqn:ldp_upper_bound} as well. 
%\end{enumerate}
%\end{lemma}
%\begin{proof}
%\begin{align}
% \liminfn \traten \log 
% \mu_n(\bigcup \cU) 
% \ge \sup_{A\in \cU}  \liminfn \traten \log 
% \mu_n(A) \ge
% \sup_{A\in \cU} (-\inf  I(A^\circ)) , \\
% \limsupn \traten \log 
%\mu_n(\bigcap \cU) 
% \le \inf_{A\in \cU}  \infN \sup_{n\ge N}  \traten \log \mu_n(A) 
% \le \inf_{A\in \cU}  (-\inf  I(\overline A)) . 
%\end{align}
%Then use that $\bigcup_{A\in \cU} A^\circ \subset (\bigcup \cU)^\circ$ and $\bigcap_{A\in \cU} \overline A \supset \overline{\bigcap \cU}$. 
%Now apply Lemma \ref{lemma:supn_of_inf_decreasing_closed_sets}.
%\end{proof}

As a consequence of Lemma \ref{lemma:lower_bound_of_bigcup_upper_of_bigcap} we obtain the following.

\begin{theorem} 
\label{theorem:reducing_ldp_bounds_to_smaller_set}
Suppose that $\cG$ is a basis for the topology on $\cX$, such that  \eqref{eqn:ldp_lower_bound} holds for all $G\in \cG$ and \eqref{eqn:ldp_upper_bound} holds for $F= \cX \setminus G$. 
Suppose that every open $G$ can be written as countable union of elements in $\cG$. 
%\begin{align}
%\notag & \liminf_{\epsilon\downarrow 0} \epsilon \log \mu_\epsilon(G) \ge -\inf_{x\in G} I(x),  \\
%& \limsup_{\epsilon\downarrow 0} \epsilon \log \mu_\epsilon ( \cX \setminus G) \le -\inf_{x\in \cX \setminus G} I(x) . 
%\end{align}
Then $(\mu_n)_{n\in\N}$ satisfies the large deviation principle with rate function $I$. 
\end{theorem}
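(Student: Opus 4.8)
The plan is simply to deduce the two defining inequalities of the large deviation principle (Definition \ref{def:ldp_adapted}) for all open, respectively all closed, subsets of $\cX$ from the hypotheses, using Lemma \ref{lemma:lower_bound_of_bigcup_upper_of_bigcap}. Both directions are essentially one-line reductions.

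First I would treat the lower bound. Let $G\subset\cX$ be open and let $\cU:=\{U\in\cG: U\subset G\}$. Because $\cG$ is a basis, $\bigcup\cU=G$, and by assumption \eqref{eqn:ldp_lower_bound} holds for every $U\in\cU$; hence Lemma \ref{lemma:lower_bound_of_bigcup_upper_of_bigcap}(a) gives \eqref{eqn:ldp_lower_bound} for $G$. Note that this step works for an arbitrary basis and does not use the countable-union hypothesis.

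Next the upper bound. Let $F\subset\cX$ be closed, so that $\cX\setminus F$ is open; by the countable-union hypothesis write $\cX\setminus F=\bigcup_{k\in\N}G_k$ with each $G_k\in\cG$. Put $F_k:=\cX\setminus G_k$. Each $F_k$ is closed and, since $G_k\in\cG$, satisfies \eqref{eqn:ldp_upper_bound} by assumption. As $F=\bigcap_{k\in\N}F_k$, Lemma \ref{lemma:lower_bound_of_bigcup_upper_of_bigcap}(b) yields \eqref{eqn:ldp_upper_bound} for $F$. Since $F$ was an arbitrary closed set and, by the previous paragraph, \eqref{eqn:ldp_lower_bound} holds for every open set, $(\mu_n)_{n\in\N}$ satisfies the large deviation principle with rate function $I$ by Definition \ref{def:ldp_adapted}.

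There is no real obstacle here: once Lemma \ref{lemma:lower_bound_of_bigcup_upper_of_bigcap} (and, through it, Lemma \ref{lemma:supn_of_inf_decreasing_closed_sets} together with the compactness of the sublevel sets of $I$) is available, the theorem is immediate. The only point worth flagging is that the countable-union hypothesis is used exactly once and exactly where it is needed, namely to express the complement $\cX\setminus F$ of a closed set as a countable union of basis elements, so that $F$ becomes a countable intersection $\bigcap_{k\in\N}F_k$ of closed sets to which part (b) of Lemma \ref{lemma:lower_bound_of_bigcup_upper_of_bigcap} — stated for a sequence of closed sets — applies; without such a hypothesis the reduction of the upper bound to complements of basis elements would break down.
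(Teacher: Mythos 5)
Your argument is exactly the one the paper intends (its entire proof is the sentence ``As a consequence of Lemma \ref{lemma:lower_bound_of_bigcup_upper_of_bigcap} we obtain the following''), and the lower-bound half is correct: part (a) of that lemma holds for an arbitrary collection of open sets, so taking the union of all basis elements contained in $G$ suffices, with no countability needed, as you note.

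The upper-bound half, however, contains a genuine gap --- one you have inherited from the paper rather than introduced. You apply Lemma \ref{lemma:lower_bound_of_bigcup_upper_of_bigcap}(b) to the sets $F_k=\cX\setminus G_k$, which are in general \emph{not} decreasing, whereas the proof of that lemma rests on Lemma \ref{lemma:supn_of_inf_decreasing_closed_sets}, which is stated (and is only true) for decreasing sequences. The chain of inequalities in Lemma \ref{lemma:lower_bound_of_bigcup_upper_of_bigcap}(b) only yields
\begin{align*}
\limsupn \traten\log\mu_n(F)\le -\sup_{k}\inf I(F_k),
\end{align*}
and for non-nested $F_k$ one may have $\sup_k\inf I(F_k)<\inf I\big(\bigcap_k F_k\big)$, so the required bound does not follow. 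Indeed the theorem fails as stated: take $\cX=\{a,b,z\}$ discrete, $\cG$ the singletons, $I(a)=I(b)=0$, $I(z)=1$, and $\mu_n(\{a\})=\mu_n(\{b\})=\tfrac14$, $\mu_n(\{z\})=\tfrac12$ for all $n$. Every instance of \eqref{eqn:ldp_lower_bound} for $G\in\cG$ and of \eqref{eqn:ldp_upper_bound} for $\cX\setminus G$ reduces to a comparison with $0$ and holds, yet $\limsupn\traten\log\mu_n(\{z\})=0>-1=-\inf I(\{z\})$, so the upper bound fails for the closed set $\{z\}$. The repair is to strengthen the hypothesis to: \eqref{eqn:ldp_upper_bound} holds for $F=\cX\setminus(G_1\cup\cdots\cup G_k)$ for every finite subfamily $G_1,\dots,G_k\in\cG$ --- which is exactly the form used in conditions \ref{item:condition_upper_bound_basis} and \ref{item:uniform_condition_upper_bound} of the main text. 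Then $W_m:=\cX\setminus(G_1\cup\cdots\cup G_m)$ decreases to $F$, Lemma \ref{lemma:supn_of_inf_decreasing_closed_sets} applies, and your reduction goes through verbatim.
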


\section{Proof of Theorem \ref{theorem:convex_combi_with_exp_distr_of_two_sequences}}
\label{section:proof_theorem_examples}

\begin{proof}[Proof of Theorem \ref{theorem:convex_combi_with_exp_distr_of_two_sequences}]
%We make use of the fact that $(\nu_n)_{n\in\N}$ satisfies the large deviation principle with rate function $L$ (follows easily, e.g., by the G\"{a}rtner-Ellis Theorem \cite[Theorem 2.3.6]{DeZe10}). 

As $\cX$ and $\cY$ are separable metric spaces, every open subset of $\cX \times \cY$ is a countable union of elements of the form $A\times B$ where $A\subset \cX$ is open and $B\in \cH$, where (with $d_\cY$ the metric on $\cY$)
%open sets of the form $A\times B$ with $A\subset \cX$ and $B\subset \cY$ open balls. 
%Let $\cF$ be the set of $B(x,\delta_1) \times B(z,\delta_2)$, with $x\in \cX$, $\delta_1>0$ and $z\in \cY$ being either $y$ and $\delta_2>0$ or $z\ne y$ with $0<\delta_2 < d_\cY(y,z)$, is a basis of the topology of $\cF$ for 
\begin{align}
 \cH 
 = \{  B(y,\delta) : \delta>0\} \cup 
\{ B(z,\delta) :  z\ne y, 0< \delta < d_{\cY}(y,z)\}. 
\end{align}
We use Theorem \ref{theorem:reducing_ldp_bounds_to_smaller_set} to prove the large deviation bounds. 
Note first that $( \cX \times \cY) \setminus (A \times B) = (\cX \times (\cY\setminus B))\cup ((\cX \setminus A) \times \cY)$, 
that $ \min\{\inf I(\cX \setminus A), \inf L(\cY \setminus B) \}
=  \inf_{(x,y) \in (\cX \times \cY) \setminus (A\times B)} I(x) + L(y)$ and that by \eqref{eqn:limsup_sum_and_max}
\begin{align}
\notag& \limsupn \tfrac1n  \log \mu_n(( \cX \times \cY) \setminus (A \times B)) \\
\notag  & %\quad
\le \max \Big\{ \limsupn \tfrac1n  \log \mu_n( \cX \times (\cY\setminus B)),
\limsupn \tfrac1n  \log \mu_n( (\cX \setminus A) \times \cY ) \Big\}.  
\end{align}
Using this and Theorem \ref{theorem:reducing_ldp_bounds_to_smaller_set}
it is sufficient to show that for all open sets $A\subset \cX$ and $B\subset \cY$ 
\begin{align}
\limsupn \tfrac1n \log \mu_n( \cX \times (\cY \setminus  B)) 
\label{eqn:upper_bound_complement_B}
& \le - \inf L(\cY \setminus B), \\
\label{eqn:upper_bound_complement_A}
\limsupn \tfrac1n \log \mu_n( (\cX \setminus A) \times \cY ) 
 &\le - \inf I(\cX \setminus A)  \\
\label{eqn:lower_bound_A_times_B}
\liminfn \tfrac1n \log \mu_n(A\times B) 
& \ge  -\inf I(A) - \inf L(B). 
\end{align}
Let $A\subset \cX$ be open and $B\in \cH$. \\
$\bullet$ \eqref{eqn:upper_bound_complement_B} follows from the fact that $\mu_n(\cX \times (\cY \setminus B) ) = \nu_n ( \cY \setminus B)$. \\
$\bullet$  \eqref{eqn:upper_bound_complement_A} follows from the fact that by
\eqref{eqn:liminf_lambda_n_integrals_equal_zero},
\eqref{eqn:logaritmic_inequality_mu_1_and_mu_2_limsup} and \eqref{eqn:limsup_sum_and_max} we have
\begin{align}
\notag \limsupn \tfrac1n & \log \mu_n( (\cX \setminus A) \times \cY) \\
\notag & =
 \max \Big\{
 \limsupn  \tfrac1n \log \mu_n^1(\cX \setminus A),
\limsupn \tfrac1n \log \mu_n^2(\cX \setminus A)
\Big \} \\
& = \limsupn  \tfrac1n \log \mu_n^1(\cX \setminus A) \le - \inf I(\cX \setminus A),
\end{align}
%\emph{The lower bound.} \\
$\bullet$ \eqref{eqn:lower_bound_A_times_B} follows by separating two cases (as either $y\in B$ or $y\notin \overline B$): 

If $y \notin \overline B$, 
then $W_n \cap B = \emptyset$ and so $\mu_n(A\times B) = \mu_n^1(A)\nu_n(B)$ for large $n$, whence 
\begin{align}
 \liminfn &  \tfrac1n \log \mu_n( A \times B)  = \liminfn 
	\bigg( 
		 \tfrac1n \log \mu_n^1(A) + \tfrac1n \log \nu_n(B)
	\bigg). 
	\label{eqn:lower_bound_A_times_B_2}
\end{align}

Suppose that $y \in  B$, i.e., $W_n\subset B$ for large $n$. 
By \eqref{eqn:liminf_sum_and_max} we obtain 
\begin{align}
\notag \liminfn &  \tfrac1n \log \mu_n( A \times B) \\
\notag  = \max \Big\{
& \liminfn \bigg(  \tfrac1n \log \mu_n^1(A) +\tfrac1n \log  \Big(\int_{W_n} \alpha_n \D \nu_n \Big) \bigg), \\
\notag & \liminfn  \bigg(\tfrac1n \log \mu_n^2(A) + \tfrac1n \log \Big( \int_{W_n} (1- \alpha_n) \D \nu_n  \Big) \bigg),  \\
 & \liminfn 
	\bigg( 
		 \tfrac1n \log \mu_n^1(A) + \tfrac1n \log \Big( \int_{W_n^c} \1_B \D \nu_n \Big)
	\bigg)
  \Big \} 
\end{align}
%%%%%%%%%%%%%%%%
%We have $\int_0^\frac1n \alpha_n \D \nu_n = 1-2e^{-1}$ and $\int_0^\frac1n (1-\alpha_n) \D \nu_n = e^{-1}$. % and $\int_0^\frac1n \D \nu_n= 1-e^{-1}$. 
%Therefore both \\ 
%$\liminfn \tfrac1n \log ( \int_0^\frac1n \alpha_n \D \nu_n )$ and 
%$\liminfn \tfrac1n \log ( \int_0^{\frac1n} (1- \alpha_n) \D \nu_n)$ equal $0$. 
%%%%%%%%%%%%%%%%
Using that $\liminfn \tfrac1n \log (\int_{W_n^c} \1_B \D \nu_n) \le 0$  together with 
\eqref{eqn:liminf_lambda_n_integrals_equal_zero} and
\eqref{eqn:logaritmic_inequality_mu_1_and_mu_2_liminf}, we obtain 
\begin{align}
\liminfn  \tfrac1n \log \mu_n( A \times B) 
\notag & = \max \Big\{
\liminfn \tfrac1n \log \mu_n^1(A), 
\liminfn \tfrac1n \log \mu_n^2(A)
\Big \} \\
&  \ge - \inf I(A) .
  \label{eqn:lower_bound_A_times_B_1}
\end{align}
Because $\inf L(B) \ge 0$, we conclude \eqref{eqn:lower_bound_A_times_B}. 

We leave it to the reader to check that 
$\eta_n$ is the 
weakly continuous product regular conditional probability under $\mu_n$ with respect to $\pi$.
\end{proof}

%\section*{References}
%\bibliography{/Users/Willem/Dropbox/references}{}
%\bibliographystyle{plain}

%%%%%%%%%%%%%%%%%%%%%%%%%%%%%%%%%%%%%%%%%%%%%%%%%%%%%%%%%%%%
%%%%%%%%%%%%%%%%%%%%%%%%%%%%%%%%%%%%%%%%%%%%%%%%%%%%%%%%%%%%
%%%%%%%%%%%%%%%%%%%%%%%%%%%%%%%%%%%%%%%%%%%%%%%%%%%%%%%%%%%%
%%%%%%%%%%%%%%%%%%%%%%%%%%%%%%%%%%%%%%%%%%%%%%%%%%%%%%%%%%%%
%%%%%%%%%%%%%%%%%%%%%%%%%%%%%%%%%%%%%%%%%%%%%%%%%%%%%%%%%%%%
%%%%%%%%%%%%%%%%%%%%%%%%%%%%%%%%%%%%%%%%%%%%%%%%%%%%%%%%%%%%
%%%%%%%%%%%%%%%%%%%%%%%%%%%%%%%%%%%%%%%%%%%%%%%%%%%%%%%%%%%%
%%%%%%%%%%%%%%%%%%%%%%%%%%%%%%%%%%%%%%%%%%%%%%%%%%%%%%%%%%%%

%\end{subappendices} %%% TERUGZETTEN!!!

%\end{appendices}

%%%%%%%%%%%%%%%%%%%%%%%%%%%%%%%%%%%%%%%%%%%%%%%%%%%%%%%%%%%%%%%%%%%%
%%%%%%%%%%%%%%%%%%%%%%%%%%%%%%%%%%%%%%%%%%%%%%%%%%%%%%%%%%%%%%%%%%%%
%%%%%%%%%%%%%%%%%%%%%%%%%%%%%%%%%%%%%%%%%%%%%%%%%%%%%%%%%%%%%%%%%%%%
%%%%%%%%%%%%%%%%%%%%%%%%%%%%%%%%%%%%%%%%%%%%%%%%%%%%%%%%%%%%%%%%%%%%

%\section*{References}
\bibliographystyle{abbrv}
%\bibliography{references}{}

\end{document}